\title{The Analyst's traveling salesman theorem in graph inverse limits}
\author{Guy C. David}
\address{Courant Institute of Mathematical Sciences, New York University, New York, NY 10012}
\email{guydavid@math.nyu.edu}
\author{Raanan Schul}
\address{Department of Mathematics\\ Stony Brook University\\ Stony Brook, NY 11794-3651}
\email{schul@math.sunysb.edu}
\date{\today} 
\thanks{R.~ Schul was partially supported by NSF  DMS 1361473.}
\begin{document}

\begin{abstract}
We prove a version of Peter Jones' Analyst's traveling salesman theorem in a class of highly non-Euclidean metric spaces introduced by Laakso and generalized by  Cheeger-Kleiner. These spaces are constructed as inverse limits of metric graphs, and include examples which are doubling and have a Poincar\'e inequality. We show that a set in one of these spaces is contained in a rectifiable curve if and only if it is quantitatively ``flat'' at most locations and scales, where flatness is measured with respect to so-called monotone geodesics. This provides a first examination of quantitative rectifiability within these spaces.

%We prove a version of Peter Jones' Analyst's traveling salesman theorem in a class of highly non-Euclidean metric spaces introduced by Laakso and Cheeger-Kleiner. These spaces are constructed as inverse limits of metric graphs. We show that a set in such a space is contained in a rectifiable curve if and only if it is quantitatively ``flat'' at most locations and scales, where flatness is measured with respect to so-called monotone geodesics. This provides a first examination of quantitative rectifiability within these spaces.
\end{abstract}

\maketitle
\newtheorem{thm}{Theorem}[section]
\newtheorem{lemma}[thm]{Lemma}%[section]
\newtheorem{prop}[thm]{Proposition}%[section]
\newtheorem{cor}[thm]{Corollary}%[section]
\newtheorem{claim}[thm]{Claim}

\theoremstyle{remark}
\newtheorem{rmk}[thm]{Remark}%[section]

\theoremstyle{definition}
\newtheorem{definition}[thm]{Definition}%[section]

\theoremstyle{remark}
\newtheorem{example}[thm]{Example}%[section]

\newcommand{\RS}[1]{{\color{red} [RS: #1]}}

\newcommand{\cB}{\mathcal{B}}
\newcommand{\bR}{\mathbb{R}}
\newcommand{\RR}{\mathbb{R}}
\newcommand{\length}{\textnormal{length}}
\newcommand{\dist}{\textnormal{dist}}
\newcommand{\diam}{\textnormal{diam}}
\newcommand{\um}{\underline{m}}
\newcommand{\rad}{\textnormal{radius}}
\newcommand{\bZ}{\mathbb{Z}}
\newcommand{\bH}{\mathbb{H}}
\newcommand{\cH}{\mathcal{H}}

\numberwithin{equation}{section}

\setcounter{tocdepth}{1}
\tableofcontents

\section{Introduction}

The Analyst's traveling salesman theorem (see Theorem \ref{t:Hilbert-tst}  below)  is a strong quantitative, geometric form of the result that finite-length curves have tangents at almost every point. 
It was first proven in the plane by Jones \cite{Jones-TSP}, and then extended to $\bR^d$ by Okikiolu \cite{Ok92} and to Hilbert space by the second author \cite{Sc07_hilbert}.  In order to state it, we need to first define the Jones $\beta$ numbers in the Hilbert space (or Euclidean) setting.
Let $B\subset \cH$ be a ball in a  (possibly infinite dimensional) Hilbert space $\cH$.
Let $E\subset \cH$ be any subset.
Define 
$$\beta_{\cH,E}(B) = \inf\limits_{\ell \textrm{ line}} \sup\limits_{x\in E\cap B}\frac{\dist(x,\ell)}{\diam(B)}\,$$
where the infimum is taken over all lines in $\cH$.

\begin{thm}[Euclidean Traveling salesman, \cite{Jones-TSP,Ok92,Sc07_hilbert}]\label{t:Hilbert-tst}
Let $\cH$ be a (possibly infinite dimensional) Hilbert space.
There is a constant $A_0>1$ such that for any $A>A_0$ there is a constant    $C=C(A)>0$ such that the following holds.
Let $E\subset \cH$.
For integer $n$, let $X_n\subset E$ be a $2^{-n}$ separated net such that $X_n\subset X_{n+1}$.  
Let $\cB=\{B(x,A2^{-n}) : n\in \bZ,\ \ x\in X_n\}$.
\begin{itemize}
 \item[(A)] If $\Gamma\supset E$, and $\Gamma$ is compact and connected then
 $$ \sum_{B\in \cB} \beta_{\cH,\Gamma}(B)^2 \diam(B) \leq C \cH^1(\Gamma). $$
 \item[(B)] There exists $\Gamma\supset E$ such that  $\Gamma$ is compact and connected and
$$ \cH^1(\Gamma) \leq C \left(\diam(E) + \sum_{B\in \cB} \beta_{\cH,E}(B)^2 \diam(B)\right). $$
\end{itemize}
\end{thm}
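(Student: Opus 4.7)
The plan is to treat the two directions separately, using the classical multiscale machinery developed in \cite{Jones-TSP,Ok92,Sc07_hilbert}.

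For part (A), the necessity direction, I would use a Pythagorean excess argument at each ball. Fix $B \in \cB$ with $\beta := \beta_{\cH,\Gamma}(B)$ and let $\ell$ be a line realizing the infimum up to a factor of $2$. Because $\Gamma$ is compact, connected, and meets the center of $B$ (since $B$ is centered at a point of $X_n \subset E \subset \Gamma$), $\Gamma \cap B$ must contain points $a,b$ essentially $\diam(B)$ apart as well as a point $c$ at height at least $\tfrac{1}{2}\beta \diam(B)$ from $\ell$. The inequality $\sqrt{r^2+h^2}\geq r + h^2/(2r)$ applied twice gives $\cH^1(\Gamma \cap B) \geq |ab| + c_0 \beta^2 \diam(B)$ for some absolute $c_0>0$. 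Summing over $\cB$, and using that the net hypothesis $X_n \subset X_{n+1}$ with $X_n$ being $2^{-n}$-separated forces the balls at each scale to have bounded overlap (depending on $A$), one telescopes these excesses against $\cH^1(\Gamma)$ to obtain $\sum_{B\in\cB} \beta_{\cH,\Gamma}(B)^2 \diam(B) \lesssim_{A} \cH^1(\Gamma)$.

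For part (B), the sufficiency direction, I would run the multiscale ``farthest insertion'' construction: starting from a single base point, inductively build a sequence of compact connected graphs $\Gamma_n$ containing $X_n$, in which $\Gamma_{n+1}$ is obtained from $\Gamma_n$ by attaching each new point of $X_{n+1} \setminus X_n$ via a short path that is routed along the best-approximating line associated to its scale-$n$ ball. The added length at scale $n$ should be bookkept so that each new segment \emph{replaces} a nearly-collinear piece of $\Gamma_n$ rather than simply being appended, producing a per-ball cost of $\beta^2\diam(B)$ plus a one-time base cost of $\diam(E)$. A Blaschke/Go{\l}\c{a}b limit of the $\Gamma_n$ then gives the compact connected $\Gamma\supset E$ with the advertised length bound.

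The main obstacle is extracting the quadratic $\beta^2$ in the length accounting of part (B): a naive insertion only gives a linear-in-$\beta$ contribution, which sums to something of order $\sum \diam(B)$, far too large. Squeezing the extra factor of $\beta$ out requires a coherent selection of approximating lines across consecutive scales so that the straight-line portions of $\Gamma_n$ and $\Gamma_{n+1}$ are compatible, allowing one to charge only the Pythagorean \emph{deviation} from $\Gamma_n$'s existing structure. In finite-dimensional $\bR^d$ one can quantize the direction space, but in a general Hilbert space $\cH$ this is the technical heart of \cite{Sc07_hilbert}; implementing (or invoking) that coherent multiscale flattening procedure is the step I expect to be most delicate.
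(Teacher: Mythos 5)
First, note that the paper does not prove Theorem \ref{t:Hilbert-tst} at all: it is stated as background and imported from \cite{Jones-TSP,Ok92,Sc07_hilbert}, so there is no internal proof to compare against, and your proposal has to stand on its own as a proof of the classical result. As it stands, part (A) has a genuine gap at the summation step. The local Pythagorean excess inequality is fine (modulo the minor point that $a,b$ at mutual distance $\simeq\diam(B)$ exist only when $\Gamma$ exits $B$, so balls comparable to $\diam(\Gamma)$ need separate treatment), but ``summing over $\cB$, using bounded overlap, one telescopes these excesses against $\cH^1(\Gamma)$'' is not an argument: bounded overlap controls the sum over balls of a \emph{single} scale, and there are infinitely many scales, so this bookkeeping by itself gives a divergent bound. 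Worse, the excess at a ball is measured against a chord that has no coherent relation to the chords used at other balls and scales, and $\Gamma\cap B$ may consist of several individually flat strands (in which case the three-point excess says nothing about $\beta^2$), so there is no canonical rule for deciding which piece of length a given ball's excess is charged to. Resolving exactly this is the content of the known proofs: Okikiolu's induction on scales, and Schul's decomposition of $\Gamma$ into filtrations of arcs, the split into ``non-flat'' and ``flat'' families of balls, and a martingale-type accounting. (This is precisely the machinery the present paper adapts for its own Theorem \ref{upperbound} in Part \ref{upperboundpart}: the filtrations of Section \ref{filtrations}, the split $\mathcal{G}_1/\mathcal{G}_2$, and Proposition \ref{p:martingale-prop}.) Without some version of this, (A) is unproven.

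For part (B) your outline has the right shape (per-ball cost $\beta^2\diam(B)$ via the flat/non-flat dichotomy, a one-time $\diam(E)$, then a compactness and lower-semicontinuity-of-length limit), and it is worth noting that balls with $\beta_{\cH,E}(B)\geq\epsilon_0$ are handled trivially since $\diam(B)\leq\epsilon_0^{-2}\beta^2\diam(B)$, so the only real issue is the flat regime. But you yourself identify the key step---the coherent choice of approximating lines across consecutive scales that converts the naive linear-in-$\beta$ insertion cost into a quadratic one---and propose to ``implement or invoke'' \cite{Sc07_hilbert}. Invoking it is circular, since that is the theorem being proven; implementing it is the actual content of the construction in \cite{Jones-TSP,Ok92,Sc07_hilbert}. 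So the proposal correctly names the classical ingredients, but in both parts the core multiscale accounting---which is where all the work lies---is missing.
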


The original motivation for proving Theorem \ref{t:Hilbert-tst} in the plane in \cite{Jones-TSP} was the study of singular integrals (see e.g. \cite{Jones87-El-Escorial}) and harmonic measure (see e.g. \cite{BJ90}). Significant work in the direction of singular integrals was done by David and Semmes (see for instance \cite{DS-sing-int-book, DS-blue-book} and the references within), as part of a field which now falls under the name ``quantitative rectifiability", to separate it from the older field of ``qualitative rectifiability". Several works tying the two fields exist, most recently by Tolsa \cite{Tolsa15-part-I}, Azzam and Tolsa \cite{Azzam-Tolsa15-part-II}, and Badger and the second author \cite{Badger-Schul-characterization}. (This last reference has a detailed exposition of the fields, to which we refer the interested reader.)  We remark that a  variant of   Theorem \ref{t:Hilbert-tst}  was needed to do the work in   \cite{Badger-Schul-characterization}.

Several papers generalizing the traveling salesman theorem to metric spaces have been written by Hahlomma and the second author.
With an a priori assumption on Ahlfors regularity (i.e, that the metric space supports a measure $\mu$ so that   the growth of $\mu(B(x,r))$ is bounded above and below by linear functions of $r$, as long as $r$ does not exceed the diameter of the space) one has results analogous to Theorem \ref{t:Hilbert-tst}: see  \cite{Hahlomaa-AR, Sc07_metric}.  
To this end, one needs to redefine $\beta$ to avoid direct reference to lines, for which we refer the reader to \cite{Sc07_metric}.
If one removes the assumption of Ahlfors regularity, then one may define $\beta$ in a natural way, and get part (B) of Theorem \ref{t:Hilbert-tst} \cite{Hahlomaa-non-AR}, however, then part (A) of Theorem \ref{t:Hilbert-tst} fails, as is seen by Example 3.3.1 in   \cite{Sc07_survey}.
 
The case in which the metric space is the first Heisenberg group, $\bH$, was studied by Ferrari, Franchi, Pajot as well as Juillet and  Li and the second author. There, the natural definition of $\beta_{\bH}$ uses so-called ``horizontal lines'' in place of Euclidean lines. This was first done in \cite{FFP07}, where the authors proved the Heisenberg group analogue of part (B) of Theorem \ref{t:Hilbert-tst}. However, an example that shows that the analogue of part (A) is false appeared in \cite{Juillet-counter}.  In \cite{LS14} it was shown that the analogue of part (A) does hold if one increases the exponent of $\beta_{\bH}$ from $2$ to $4$, and in \cite{LS15} it was shown that part (B) holds for any exponent $p<4$. (Note that by the definition of the Jones numbers, $\beta_{\bH} \leq 1$, so increasing the power reduces the quantity on the right-hand side of part (B).)

 The metric spaces that we consider in this paper are constructed as limits of metric graphs satisfying certain axioms. These spaces have their origin in a construction of Laakso \cite{La00}, which was later re-interpreted and significantly generalized by Cheeger and Kleiner in \cite{CK13_PI}. (See also \cite{LP01}, Theorem 2.3.) The interest in these constructions is that they provide spaces that possess many strong analytic properties in common with Euclidean spaces, while being geometrically highly non-Euclidean.

To illustrate this principle, we note that the metric measure spaces constructed in \cite{CK13_PI} are doubling and support Poincar\'e inequalities in the sense of \cite{HK98}. Hence, they possess many rectifiable curves, are well-behaved from the point of view of quasiconformal mappings, and by a theorem of Cheeger \cite{Ch99}, they admit a form of first order differential calculus for Lipschitz functions (in particular, a version of Rademacher's theorem) which has been widely studied (e.g. in \cite{Ke04}, \cite{Ba15}, \cite{CKS15}). The examples of \cite{CK13_PI} also admit bi-Lipschitz embeddings into the Banach space $L_1$.

On the other hand, these spaces are highly non-Euclidean in many geometric ways: they have topological dimension one but may have arbitrary Hausdorff dimension $Q\geq 1$, and, under certain mild non-degeneracy assumptions, have no manifold points and admit no bi-Lipschitz embeddings into any Banach space with the Radon-Nikodym property, including Hilbert space. (Other infinitesimal properties of these spaces are considered in Section 9 of \cite{CKS15}.)

Our focus will be on the metric properties of these spaces and so we drop any mention of a measure on these spaces from now on. We will show that, properly viewed, these spaces support a strong form of the Analyst's traveling salesman theorem described above.

\subsection{Definition of the spaces and notation}%\label{preliminaries}
Our general metric notation is fairly standard. If $(X,d)$ is a metric space, $x\in X$, and $r>0$, then we write
$$ B(x,r) = \{ y\in X: d(x,y)<r\}.$$
If we wish to emphasize the space $X$, we may write this as $B_X(x,r)$. If $B= B(x,r)$ is a ball and $\lambda>0$, we write $\lambda B$ for $B(x,\lambda r)$. Finally, if $E$ is a set in $X$ and $\delta>0$, we write
$$ N_\delta(E) = \{x\in X: \dist(x, E) < \delta\}.$$

Our spaces will be inverse limits of connected simplicial metric graphs. The inverse systems will be of the form
$$ X_0 \xleftarrow{\pi_0} X_1 \xleftarrow{\pi_1} \dots \xleftarrow{\pi_{i-1}} X_i \xleftarrow{\pi_i} \dots ,$$
where $\{X_i\}$ are metric graphs and $\pi_i$ are mappings that satisfy a few axioms. These are exactly axioms 1-3 from \cite{CK13_PI} (those that concern the metric and not the measure), with the added assumption that $X_0$ is isometric to $[0,1]$. 

\begin{definition}\label{admissiblesystem}
An inverse system is \textit{admissible} if, for some constants $\eta>0$, $2\leq m\in\mathbb{N}$, $\Delta>0$ and for each $i\in \mathbb{N}\cup\{0\}$, the following hold:
\begin{enumerate}[(1)]
\item\label{admsys1} $(X_i,d_i)$ is a nonempty connected graph with all vertices of valence at most $\Delta$, and such that every edge of $X_i$ is isometric to an interval of length $m^{-i}$ with respect to the path metric $d_i$.
\item\label{admsys2} $(X_0,d_0)$ is isometric to $[0,1]$
\item\label{admsys3} If $X'_i$ denotes the graph obtained by subdividing each edge of $X_i$ into $m$ edges of length $m^{-(i+1)}$, then $\pi_i$ induces a map $\pi_i : (X_{i+1}, d_{i+1}) \rightarrow (X'_i , d_i)$ which is open, simplicial, and an isometry on every edge.
\item\label{admsys4} For every $x_i \in X'_i$, the inverse image $\pi_i^{-1} (x_i) \subset X_{i+1}$ has $d_{i+1}$-diameter at most $\eta m^{-i}$.
\end{enumerate}
The constants $m, \eta, \Delta$ are called the \textit{data} of $X$.
\end{definition}

\begin{rmk}
The upper diameter bound $\eta m^{-i}$ for point-preimages under $\pi_i$ in \eqref{admsys4} is written as $\theta m^{-(i+1)}$ in \cite{CK13_PI} for some constant $\theta$; this cosmetic change is convenient in Part \ref{constructionpart} of the present paper.
\end{rmk}

The spaces $X$ we will consider will be inverse limits of admissible inverse systems. Namely, given an inverse system as in Definition \ref{admissiblesystem}, let
$$ X  = \{ (v_i) \in \prod_{i=0}^\infty X_i : \pi_i(v_{i+1}) = v_i \text{ for all } i\geq 0\}.$$
For any $(v_i), (w_i)\in X$, the sequence $d_i(v_i, w_i)$ is increasing (as $\pi_i$ are all $1$-Lipschitz) and bounded above (by Lemma \ref{piproperties} below). We can therefore define the metric on $X$ by
$$ d((v_i),(w_i)) = \lim_{i\rightarrow\infty} d_i(v_i,w_i).$$
The space $(X,d)$ is also isometric to the Gromov-Hausdorff limit of the metric spaces $(X_i,d_i)$ (as follows from Lemma \ref{piproperties2} below; see also Proposition 2.17 of \cite{CK13_PI}).

For each $i\geq 0$, there is also a $1$-Lipschitz mapping
$$\pi^\infty_i:X\rightarrow X_i \cong [0,1]$$
which sends $(v_j)_{j=0}^\infty$ to $v_i$. These mappings factor through the mappings $\pi_i$ in a natural way, i.e.,
$$\pi^\infty_i = \pi_i \circ \pi^\infty_{i+1}.$$ 

\begin{rmk}
To avoid cumbersome subscripts, we now make the following notational choice: For any $j> i\in\mathbb{N}\cup\{0\}$, we denote the mapping from $X_j$ to $X_i$ (determined by composing $\pi_i \circ \pi_{i+1} \circ \dots \circ \pi_{j-1}$) also by $\pi_i:X_j \rightarrow X_i$. Similarly, we generally denote the induced projection $\pi^\infty_i$ from $X$ to $X_i$ by $\pi_i$.

In other words, mappings are always labeled by their range, and the domain will be clear from context. If we wish to emphasize the domain, we will write $\pi^j_i:X_j\rightarrow X_i$ or $\pi^\infty_i:X\rightarrow X_i$.
\end{rmk}

\subsection{Main results}\label{mainresults}

For each integer $n\geq 0$ and each edge $e_n$ in $X_n$, let $p\in X$ be any preimage under $\pi_n$ of the center point of $e_n$. By Lemma \ref{piproperties2} below, for each $n\geq 0$ the (finite) collection of all such $p$ form a $m^{-n}$-separated set in $X$ that is $m^{-n}$-dense in $X$. (In other words, any two such points $p$ are distance at least $m^{-n}$ apart, and each point of $X$ is within distance at most $m^{-n}$ of some such point $p$.) 

Therefore, for each $n$, balls of the form $B(p,Am^{-n})$, for some integer $A$, form a reasonable $m$-adic collection of balls to measure $\beta$-numbers with respect to, analogous to the balls in Theorem \ref{t:Hilbert-tst} or the triples of dyadic cubes in \cite{Jones-TSP}. From now on, it will suffice to fix $A$  to be the least integer greater than $100 C_\eta$ (where $C_\eta$ appears in Lemma \ref{piproperties2} below).

Let $\mathcal{B}$ denote the collection of such balls, for all $n\geq 0$ and all edge-midpoints $p$. Given an edge $e_n\subset X_n$, we refer to the associated ball in $X$ by $B(e_n)$; keep in mind that $B(e_n)$ is a ball of radius $Am^{-n}$ in $X$, and not in $X_n$. Observe that for $A\geq C_\eta$, the ball $B(e_n)$ contains the full pre-image of $e_n$ under $\pi_n:X\rightarrow X_n$.

We will prove the following two results. In both of these results, the $\beta$-numbers we use are defined with respect to a distinguished family of geodesics, the so-called ``monotone geodesics'', which here play the role of lines in Hilbert space or horizontal lines in the Heisenberg group. See subsections \ref{furtherprelims} and \ref{betasubsection} for the definitions.

\begin{thm}\label{upperbound}
Let $X$ be the limit of an admissible inverse system (as in Definition \ref{admissiblesystem}).

For every $p>1$, there is a constant $C_{p,X}>0$ with the following property: Whenever $\Gamma\subset X$ is a compact, connected set, we have
$$ \sum_{B\in \mathcal{B}} \beta_\Gamma(B)^p \diam(B) \leq C_{p,X}\mathcal{H}^1(\Gamma). $$
The constant $C_{p,X}$ depends on $p$ and the data of $X$.
\end{thm}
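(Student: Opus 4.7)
My plan is to bound the left-hand side by charging each nonzero $\beta$-contribution to an ``excess length'' of $\Gamma$ inside a controlled neighborhood of the corresponding ball, and then to sum these contributions scale-by-scale. The geometric series across scales converges precisely for $p>1$, which is why the hypothesis is sharp. As a first reduction, I would parameterize $\Gamma$ as the image of a Lipschitz arc $\gamma\colon[0,L]\to X$ with $L\leq 2\mathcal{H}^1(\Gamma)$, using the standard parameterization of compact connected sets of finite $1$-dimensional Hausdorff measure.

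The technical heart of the proof is a geometric lemma stating that there exist constants $C\geq 1$ and $c>0$, depending only on the data of $X$, such that for every ball $B=B(e_n)\in\mathcal{B}$,
\[
\beta_\Gamma(B)\,\diam(B)\leq c^{-1}\bigl(\mathcal{H}^1(\Gamma\cap CB)-D(B)\bigr),
\]
where $D(B)$ denotes the distance in $X$ between the first and last points at which $\gamma$ meets $CB$. Intuitively, $\beta_\Gamma(B)\geq t$ produces a point of $\Gamma\cap B$ whose distance from every monotone geodesic near $B$ is at least $t\,\diam(B)$, and because monotone geodesics are genuine length-minimizers (their lengths agree with the lengths of their projections under $\pi_0$), connectedness of $\Gamma$ forces a detour arc of length $\gtrsim t\,\diam(B)$. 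This step relies essentially on the admissibility axioms and on structural properties of monotone geodesics extracted from the inverse-system definition.

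With the lemma in hand the remainder is accounting. At each fixed scale $n$, the balls $\{B(e_n)\}$ have bounded overlap from the data of $X$, so summing the lemma at scale $n$ yields $\sum_{B\in\mathcal{B}_n}\beta_\Gamma(B)\,\diam(B)\lesssim \mathcal{H}^1(\Gamma)$. This estimate is not summable over $n$ on its own, so the key next observation is that a single detour of natural scale $m^{-n_0}$ and height $h\sim m^{-n_0}$ contributes to $\beta$ at each coarser scale $n\leq n_0$ with $\beta\bigl(B(e_n)\bigr)\lesssim h/m^{-n}$. Its total weighted contribution telescopes as
\[
\sum_{n\leq n_0}\beta_\Gamma\bigl(B(e_n)\bigr)^p\,\diam\bigl(B(e_n)\bigr)\;\lesssim\; h^p\sum_{n\leq n_0}(m^{-n})^{1-p}\;\lesssim_p\; h,
\]
a geometric sum which converges exactly when $p>1$. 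Summing $h$ over all detours gives the desired bound by $\mathcal{H}^1(\Gamma)$.

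The main obstacle I expect is the geometric lemma itself: the metric inverse-limit setting provides no analogue of the Pythagorean theorem or of orthogonal projection onto a line, so one must exploit the specific structure of monotone geodesics together with the diameter bound $\eta m^{-i}$ on point-preimages of $\pi_i$ to convert the qualitative statement ``$x_B$ is far from every monotone geodesic'' into a quantitative excess-length estimate. A secondary bookkeeping challenge is ensuring that detours at different scales are charged only $O_p(1)$ times in the telescoped sum, which is precisely where the hypothesis $p>1$ enters critically.
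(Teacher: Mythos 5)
Your proposed geometric lemma is false in these spaces, and its failure is precisely the non-Euclidean feature that forces the structure of the actual proof. Already in the Laakso-type Example \ref{dyadiclaakso}, take $x\neq y$ with $\pi_0(x)=\pi_0(y)$ and $d(x,y)=h$, and let $\Gamma$ be a geodesic arc joining them (a ``tent'' that goes forward in $\pi_0$ by $h/2$ and returns). Then $\mathcal{H}^1(\Gamma)=h$, so for any ball $B\in\mathcal{B}$ of radius comparable to $h$ with $CB\supset\Gamma$ one has $\mathcal{H}^1(\Gamma\cap CB)-D(B)=h-h=0$. Yet $\beta_\Gamma(B)\gtrsim 1/A$: any monotone geodesic $L$ meets the fiber $\pi_0^{-1}(\pi_0(x))$ in a single point $z$, and Lemma \ref{nearestpoint} gives $d(x,z)\leq 2\dist(x,L)$, $d(y,z)\leq 2\dist(y,L)$, hence $\dist(x,L)+\dist(y,L)\geq h/2$, so every $L$ misses some point of $\Gamma\cap B$ by at least $h/4$. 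The point is that genuine length-minimizers here need not be monotone, so ``far from every monotone geodesic'' cannot be converted into ``excess length over the endpoint distance''; also, $\beta_\Gamma(B)\geq t$ does not produce a single point far from \emph{every} monotone geodesic (the witness depends on $L$). The excess-length estimates the paper actually proves (Lemmas \ref{flatarc} and \ref{bigdiameter}, Lemma \ref{farflatarc}, Proposition \ref{covering}) are of a different kind: they measure excess over the crossing length $4r$ of $2B$, they discard balls near the curve's endpoints (Lemma \ref{G0lemma}), and---crucially---they are only valid for the ``flat'' balls $\mathcal{G}_2$, i.e.\ under the hypothesis that every sufficiently long filtration arc in the ball has $\tilde{\beta}_{\gamma_0}<1/10$. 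Balls like the one in the tent example are ``non-flat'' ($\mathcal{G}_1$) and are handled by an entirely different mechanism: they are charged to the Jones-type square function $\sum_\tau\tilde{\beta}_{\gamma_0}(\tau)^2\diam(\tau)\lesssim\mathcal{H}^1(\Gamma)$ for filtrations of the projected curve $\gamma_0=\pi_0\circ\gamma$ in $\RR$, with $p>1$ entering through a geometric series over the balls sharing a given non-flat arc (inequality \eqref{doublingbound}).

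The second, equally serious gap is the multi-scale accounting. The objects you telescope over---``a single detour of natural scale $m^{-n_0}$ and height $h$''---are never defined for a general rectifiable curve, and the claims that the coarse-scale numbers satisfy $\beta(B(e_n))\lesssim h/m^{-n}$, that $\sum h\lesssim\mathcal{H}^1(\Gamma)$, and that each detour is charged only $O_p(1)$ times are exactly the content of the theorem, not a corollary of a per-ball estimate: a single coarse-scale $\beta$ can be produced by the cumulative effect of many fine-scale pieces, so there is no canonical detour to charge it to, and the same excess is seen by nested balls at every coarser scale. The paper makes this bookkeeping rigorous only after the flat/non-flat dichotomy, and for flat balls only via a stopping-time cube system whose scale separation $J'$ grows like $M\sim\log_m(1/\beta_\Gamma(B))$ together with a geometric martingale (Corollary \ref{c:four-seven} and Proposition \ref{p:martingale-prop}), which yields $\sum_Q\diam(Q)\lesssim\epsilon_0^{-1}m^M\mathcal{H}^1(\Gamma)$ on each $\beta$-dyadic class; $p>1$ is then used a second time to sum $M\,m^{-Mp}\,m^{M}$ over $M$. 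So while your heuristic for why $p>1$ is sharp matches the spike counterexample of Section \ref{counterexample}, the proposal as written rests on a false lemma and an unproved charging scheme, and would need essentially the paper's two-case machinery to be repaired.
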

This is sharp in the sense that when $p=1$ the theorem is false; see Section \ref{counterexample}.

\begin{thm}\label{construction}
Let $X$ be the limit of an admissible inverse system (as in Definition \ref{admissiblesystem}).

There are constants $C_X>1$ and $\epsilon>0$, depending only on the data of $X$, with the following property: Let $E\subset X$ be compact. Then there is a compact connected set $G\subset X$ containing $E$ such that
$$ \mathcal{H}^1(G) \leq C_X\left(\diam(E) + \sum_{B\in \mathcal{B}, \beta_E(B)\geq\epsilon} \diam(B)\right). $$
\end{thm}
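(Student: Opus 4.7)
The plan is to use a stopping-time tree decomposition in the spirit of Jones's original TST construction, with monotone geodesics playing the role of straight lines. Let $n_0$ be the smallest integer with $Am^{-n_0}\geq \diam(E)$, and choose a ball $B_0\in\mathcal{B}$ at scale $n_0$ containing $E$ (so $\diam(B_0)\lesssim \diam(E)$). With $\epsilon>0$ to be fixed small depending only on the data $m,\eta,\Delta$, call $B\in\mathcal{B}$ \emph{bad} if $\beta_E(B)\geq \epsilon$ and \emph{good} otherwise. Build a tree $\mathcal{T}\subset \mathcal{B}$ rooted at $B_0$: each bad node $B(e_n)\in \mathcal{T}$ has as children the balls $B(e_{n+1})$ for which $\pi_n(e_{n+1})$ is a sub-edge of $e_n$ and $B(e_{n+1})\cap E\neq \varnothing$, while good nodes have no children. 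Denote the internal (necessarily bad) nodes of $\mathcal{T}$ by $\mathcal{T}_I$ and the leaves by $\mathcal{T}_L$.

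\textbf{Construction of $G$.} At each leaf $L=B(e_n)\in \mathcal{T}_L$, use $\beta_E(L)<\epsilon$ to produce a monotone geodesic segment $\gamma_L\subset X$ of length $\lesssim \diam(L)$ whose $\epsilon\,\diam(L)$-neighborhood contains $E\cap L$, and arrange the endpoints of $\gamma_L$ to lie in the designated fibers $\pi_n^{-1}(\{v\})$ over the two endpoints $v$ of $e_n$. At each internal node $B(e_n)\in \mathcal{T}_I$, include a connector $C_B\subset \pi_n^{-1}(e_n)$ built from a bounded (in terms of $m,\Delta$) number of monotone geodesic arcs whose union meets the endpoint fibers of each child's geodesic or connector. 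Define
\[
G=\overline{\bigcup_{L\in\mathcal{T}_L}\gamma_L \,\cup\, \bigcup_{B\in\mathcal{T}_I}C_B}.
\]

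\textbf{Length estimate.} By construction $\mathcal{H}^1(C_B)\lesssim \diam(B)$ for each internal $B$ and $\mathcal{H}^1(\gamma_L)\lesssim \diam(L)$ for each leaf. The internal sum is directly bounded by $\sum_{B:\beta_E(B)\geq \epsilon}\diam(B)$ because every internal node of $\mathcal{T}$ is bad. Every leaf is a child of some internal node (or $B_0$ itself is a leaf, a trivial case), and each internal node has at most $O_{m,\Delta}(1)$ children, each of diameter $\diam(\text{parent})/m$; hence
\[
\sum_{L\in\mathcal{T}_L}\diam(L)\lesssim \diam(B_0)+\sum_{B\in\mathcal{T}_I}\diam(B)\lesssim \diam(E)+\sum_{B:\beta_E(B)\geq \epsilon}\diam(B).
\]
Combining the two telescoping sums yields the advertised bound on $\mathcal{H}^1(G)$.

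\textbf{The main obstacle} is simultaneously arranging that $G$ is connected, that $E\subset G$, and that the length estimates for the $C_B$ and $\gamma_L$ actually hold as claimed. Connectedness requires threading the endpoints of each $\gamma_L$ and each $C_B$ through common fibers of $\pi_n$, so that pieces link up across scales; this is essentially a matching problem on the tree, solved by fixing canonical endpoints in vertex fibers at every scale. The inclusion $E\subset G$ requires that points of $E\cap L$ lying within $\epsilon\diam(L)$ of $\gamma_L$ are actually captured by the construction (either by slightly thickening $\gamma_L$ or by enlarging the connector $C_B$ of $L$'s parent), while points of $E$ on infinite bad chains are recovered as limits via compactness of $X$. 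Calibrating $\epsilon$ small enough (in terms of $\eta,A,m,\Delta$) so that the $\epsilon\,\diam$-approximation error never exceeds the scale of the connector system is the core technical point on which the argument hinges, and where the specific geometry of monotone geodesics in $X$---their existence, their extendability, and their behavior under the projections $\pi_n$---is used in an essential way.
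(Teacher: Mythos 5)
There is a genuine gap, and it sits exactly where you park it as "the main obstacle": your tree stops at the first flat ball, and at a leaf $L$ you only place a single monotone geodesic segment $\gamma_L$ whose $\epsilon\,\diam(L)$-neighborhood contains $E\cap L$. Points of $E\cap L$ that lie near but off $\gamma_L$ are then at a fixed positive distance from $G$ and are never recovered: "slightly thickening $\gamma_L$" destroys the $\mathcal{H}^1$ bound, and "enlarging the connector $C_B$ of $L$'s parent" cannot work either, because the extra length needed to visit those points (there may be a Cantor set of them, e.g.\ a set that is $\epsilon$-flat at \emph{every} scale without lying on any single monotone geodesic, so that the right-hand side is just $C\diam(E)$) is not controlled by $\sum_{\beta_E(B)\geq\epsilon}\diam(B)$. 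Note also that the compactness remark only rescues points lying in infinite chains of bad balls; it does nothing for points inside good balls. The only way to capture $E$ in flat regions is to keep refining inside good balls at all scales, replacing (not adding to) the previous approximation by spliced optimal monotone geodesics; but then your length estimate collapses, since summing $\diam$ over good balls at all scales diverges. Controlling the \emph{net} length added by these flat refinements — with no $\beta^2\diam$ or even $\beta\diam$ budget over good balls available on the right-hand side — is the actual content of the theorem, and your sketch offers no mechanism for it.

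For comparison, the paper handles precisely this issue via the multi-scale lifting algorithm of Section \ref{liftingalgorithm} (good edges are re-lifted along optimal monotone geodesics spliced at special "break points", bad edges along the sets $S(e_n)$), the connectability analysis of Lemma \ref{connectable} (each reconnection is charged either to a cube with $\beta_E\geq\epsilon$ or to an $N_\eta$-overlapping family of pieces of $\Gamma_n$ with a factor $C_{\eta,\Delta}m^{-1}$, absorbed by making $m$ large through scale-skipping, Lemma \ref{mlargelemma} and \eqref{mlarge}), and then a recursion: Proposition \ref{constructionprop} splits off the portion of $E$ that drifts away from the constructed curve into sets $E(e_n)$, whose total edge-length carries a gain of order $m^{-2}|\Gamma|$ plus bad-cube terms (property \eqref{bubblesum}), so that iterating and using the disjointness of the collections $\mathcal{Q}_{\Gamma_\alpha}$ (Claim \ref{Qgammadisjoint}) gives a geometrically convergent total length. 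Your stopping-time tree and the bookkeeping $\sum_{L}\diam(L)\lesssim\diam(E)+\sum_{\beta\geq\epsilon}\diam(B)$ are fine as far as they go, but they only account for the bad balls; without an analogue of this refinement-and-absorption scheme inside the flat balls, the proposal does not prove $E\subset G$ within the stated length bound.
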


What these results together show is that, in limits of admissible inverse systems, one may characterize subsets of rectifiable curves as those which are quantitatively close to being monotone at most locations and scales.

\section{Preliminaries}
\subsection{Basic lemmas}\label{furtherprelims}
We now collect some further definitions and facts about limits of admissible inverse systems.

The following basic properties of the maps $\pi_i$ can be found in Section 2.3 of \cite{CK13_PI}.
\begin{lemma}\label{piproperties}
For each $i\in\mathbb{N}$, $x_{i+1}, y_{i+1}\in X_{i+1}$, and $r>0$, the map $\pi_i: X_{i+1}\rightarrow X_i$ has the following properties. 
\begin{enumerate}[(a)]
\item\label{piballsurjective} $\pi_i(B(x_{i+1},r)) = B(\pi_i(x_{i+1}),r)$
\item\label{pipathlift} If $c:[a,b]\rightarrow X_i$ is path parametrized by arc length and $c(a) = \pi(x_{i+1})$, then there is a lift $\tilde{c}:[a,b]\rightarrow X_{i+1}$ of $c$ under $\pi_i$, parametrized by arc length, such that $\tilde{c}(a) = x_{i+1}$. (That $\tilde{c}$ is a lift of $c$ under $\pi_i$ means that $\pi_{i}\circ \tilde{c} = c$.)
\item\label{pipreimage} For each $i\geq 0$, 
$$d_i(\pi_i(x_{i+1}),\pi_i(y_{i+1})) \leq d_{i+1}(x_{i+1}, y_{i+1}) \leq d_i(\pi_i(x_{i+1}),\pi_i(y_{i+1})) + 2\eta m^{-i}.$$ 
\end{enumerate}
\end{lemma}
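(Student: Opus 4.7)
My approach would prove the three parts in the order (b), then (a), then (c), since (a) and (c) both exploit the path-lifting property established in (b).

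\emph{Part (b).} I would construct the lift $\tilde{c}$ edge by edge. Since $c$ is parametrized by arclength in the metric graph $X_i$ viewed at its $(i+1)$-st subdivision $X'_i$, it traverses a sequence of edges of $X'_i$ joined at vertices. On any single edge, axiom \eqref{admsys3} says $\pi_i$ is an isometry from some edge of $X_{i+1}$ onto it, so the lift on each edge is uniquely determined and preserves arclength. The only nontrivial issue is continuing the lift across a vertex $v$ of $X'_i$: starting from the current lifted position $\tilde{v} \in \pi_i^{-1}(v)$, I need an edge of $X_{i+1}$ incident to $\tilde{v}$ that maps onto the next edge of $X'_i$ traversed by $c$. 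This is exactly what the openness of the simplicial map $\pi_i$ gives (openness for simplicial graph maps is equivalent to surjectivity on edge-germs at every vertex). Iterating produces $\tilde{c}$ on the full interval $[a,b]$.

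\emph{Part (a).} The inclusion $\pi_i(B(x_{i+1},r)) \subset B(\pi_i(x_{i+1}),r)$ follows from the fact that $\pi_i$ is $1$-Lipschitz, itself a direct consequence of axiom \eqref{admsys3} (isometric on each edge, simplicial). For the reverse inclusion, given $y_i \in B(\pi_i(x_{i+1}),r)$, take any path $c$ in $X_i$ from $\pi_i(x_{i+1})$ to $y_i$ parametrized by arclength with $\length(c) < r$ (the metric graph $X_i$ is a geodesic space), lift it to $\tilde{c}$ via (b) starting at $x_{i+1}$, and observe that the endpoint $\tilde{c}(\length(c))$ lies in $B(x_{i+1},r)$ and projects to $y_i$.

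\emph{Part (c).} The left inequality is once more the $1$-Lipschitz property. For the right inequality, let $\gamma$ be a geodesic in $X_i$ from $\pi_i(x_{i+1})$ to $\pi_i(y_{i+1})$, and lift it through $x_{i+1}$ using (b) to a path $\tilde{\gamma}$ ending at some point $z \in \pi_i^{-1}(\pi_i(y_{i+1}))$. Since lifts preserve length, $d_{i+1}(x_{i+1}, z) \le \length(\tilde{\gamma}) = d_i(\pi_i(x_{i+1}),\pi_i(y_{i+1}))$. Axiom \eqref{admsys4} gives $d_{i+1}(z, y_{i+1}) \le \eta m^{-i}$ (since both lie in the same fiber), and the triangle inequality concludes. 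This actually gives the bound with $\eta m^{-i}$; the factor of $2$ in the statement absorbs slack that one might need if endpoints do not lie at vertices of $X'_i$ or if one wants to apply the argument symmetrically in both coordinates.

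The main obstacle is part (b): one must be careful at vertices, where the purely local edgewise lifting could fail without some hypothesis, and openness of $\pi_i$ is precisely the axiom that makes the inductive construction go through.
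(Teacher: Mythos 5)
Your argument is correct. Note that the paper does not prove this lemma at all --- it simply cites Section 2.3 of \cite{CK13_PI} --- and your proof is essentially the standard one those axioms are designed to make work: part (\ref{pipathlift}) from openness of the simplicial map (surjectivity on edge-germs at each vertex) plus the edgewise isometry, part (\ref{piballsurjective}) from $1$-Lipschitzness in one direction and lifting a path of length $<r$ in the other, and part (\ref{pipreimage}) from lifting a near-geodesic and applying the fiber-diameter bound of axiom (\ref{admsys4}), which in fact yields the stronger constant $\eta m^{-i}$ in place of $2\eta m^{-i}$. Two small points worth tightening: an arclength-parametrized path need not ``traverse a sequence of edges joined at vertices'' --- it may reverse direction inside an edge or meet vertices infinitely often --- so the clean way to finish (\ref{pipathlift}) is a maximal-lift (open-and-closed) argument, where the local extension at a vertex is exactly your openness observation and continuity at accumulation parameters is automatic since excursions have length tending to zero; and in (\ref{piballsurjective}) you do not need $X_i$ to be geodesic, only that the path metric is an infimum over path lengths, so a path of length $<r$ exists by definition (similarly, in (\ref{pipreimage}) take a path of length $<d_i(\pi_i(x_{i+1}),\pi_i(y_{i+1}))+\delta$ and let $\delta\to 0$ if you prefer not to invoke existence of geodesics).
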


By iterating these properties, we immediately obtain the following about the mappings $\pi^\infty_i:X\rightarrow X_i$.
\begin{lemma}\label{piproperties2}
There is a constant $C_\eta\geq 1$, depending only on $\eta$, with the following properties. For each $i\geq 0$, $x, y\in X$, and $r>0$, we have:
\begin{enumerate}[(a)]
\item\label{piballsurjective2} $\pi^\infty_i(B(x,r)) = B(\pi^\infty_i(x),r)$.
\item\label{pipathlift2}  If $i\geq 0$, $c:[a,b]\rightarrow X_i$ is a path parametrized by arc length, and $\pi^\infty_i(x)=c(a)$, then there is a path $\tilde{c}:[a,b]\rightarrow X$, parametrized by arc length, such that $\pi^\infty_i \circ \tilde{c} = c$ and $\tilde{c}(a) = x$.
\item\label{pipreimage2} For each $i\geq 0$, 
$$d_i(\pi^\infty_i(x),\pi^\infty_i(y)) \leq d(x, y) \leq d_i(\pi^\infty_i(x),\pi^\infty_i(y)) + C_\eta m^{-i},$$
where $C_\eta$ is a constant depending only on $\eta$.
\item\label{piballpreimage2} $(\pi^\infty_i)^{-1}(B(\pi^\infty_i(x),r)) \subseteq B(x, r+C_\eta m^{-i})$.
\end{enumerate}
\end{lemma}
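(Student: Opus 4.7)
The plan is to deduce each of the four properties by iterating the corresponding statements in Lemma \ref{piproperties} and then passing to the limit in the inverse system. The main technical point will be to track constants carefully so that the accumulated geometric-series error is absorbed into a single constant $C_\eta$ depending only on $\eta$.

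I would begin with part \eqref{pipreimage2}. The lower bound $d_i(\pi_i^\infty(x),\pi_i^\infty(y)) \le d(x,y)$ is immediate from the fact that each $\pi_j$ is $1$-Lipschitz by Lemma \ref{piproperties}\eqref{pipreimage}, so each $\pi_i^\infty$ is as well. For the upper bound, iterating \eqref{pipreimage} telescopically gives
$$ d_{i+k}(\pi_{i+k}^\infty(x),\pi_{i+k}^\infty(y)) \le d_i(\pi_i^\infty(x),\pi_i^\infty(y)) + 2\eta \sum_{j=0}^{k-1} m^{-(i+j)} \le d_i(\pi_i^\infty(x),\pi_i^\infty(y)) + \tfrac{2\eta m}{m-1}\, m^{-i}.$$
Since $m \ge 2$, setting $C_\eta = 4\eta$ works, and letting $k\to\infty$ yields the desired inequality. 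Property \eqref{piballpreimage2} is then a one-line consequence: any $y$ with $\pi_i^\infty(y)\in B(\pi_i^\infty(x),r)$ satisfies $d(x,y) \le d_i(\pi_i^\infty(x),\pi_i^\infty(y)) + C_\eta m^{-i} < r + C_\eta m^{-i}$.

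For part \eqref{piballsurjective2}, the inclusion $\subseteq$ again follows from $\pi_i^\infty$ being $1$-Lipschitz. For $\supseteq$, given $y_i \in B(\pi_i^\infty(x), r) \subset X_i$, I would take a geodesic in $X_i$ from $\pi_i^\infty(x)$ to $y_i$ and iteratively apply the path-lifting property \eqref{pipathlift} of Lemma \ref{piproperties} to produce a compatible sequence of arc-length-parametrized lifts to $X_j$ for each $j > i$, starting at $\pi_j^\infty(x)$. Writing $y_j$ for the endpoint of the lift in $X_j$ and setting $v_j := y_j$ for $j \ge i$ and $v_j := \pi_j(y_i)$ for $j<i$, the compatibility $\pi_j(v_{j+1}) = v_j$ holds by construction, so $y = (v_j) \in X$. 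Since lifting is arc-length-preserving, $d_j(\pi_j^\infty(x), y_j) \le d_i(\pi_i^\infty(x), y_i) < r$ for all $j \ge i$, hence $d(x,y) < r$. Part \eqref{pipathlift2} is handled by exactly the same lifting scheme applied to each parameter value: the compatible lifts $c_j:[a,b]\to X_j$ combine to a map $\tilde c:[a,b]\to X$; parameter-by-parameter verification gives $\pi_i^\infty\circ\tilde c = c$ and $\tilde c(a)=x$, while the lower bound $d(\tilde c(s),\tilde c(t)) \ge d_i(c(s),c(t))$ combined with the $1$-Lipschitz upper bound on each $c_j$ forces $\tilde c$ to be parametrized by arc length.

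The main obstacle, such as it is, lies in making the lifting argument rigorous: one must verify that the inductively constructed sequences $(y_j)$ and $(c_j)$ are genuinely compatible under the bonding maps (so they determine a point of $X$, or a map into $X$), and one must confirm that the passage to the inverse limit preserves arc-length parametrization. Both of these follow cleanly from the fact that $\pi_i$ is an isometry on each edge of the subdivided graph $X_i'$ in axiom \eqref{admsys3}, but they need to be stated carefully. Everything else is bookkeeping with geometric series.
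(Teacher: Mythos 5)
Your proposal is correct and follows exactly the route the paper intends: the paper gives no written proof beyond the remark that the lemma is ``immediately obtained by iterating'' Lemma \ref{piproperties}, and your argument is precisely that iteration, with the $2\eta m^{-i}$ errors summed as a geometric series (giving $C_\eta=4\eta$ for $m\geq 2$) and the path-lifting property applied scale by scale to build compatible sequences in the inverse limit. The details you flag as needing care (compatibility of the lifted sequences and preservation of arc-length parametrization in the limit) are handled correctly, so nothing further is required.
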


The map $\pi_0:X\rightarrow X_0\cong [0,1]$ has an interesting property: it is ``Lipschitz light'' in the sense of \cite{CK13_inverse}. In particular, we will use the fact that $\pi_0$ preserves the diameter of connected sets up to a fixed multiplicative constant, as we briefly prove below. (For a much more general theorem which implies the next lemma, see Theorem 1.10 of \cite{CK13_inverse}.)

\begin{lemma}
Let $X$ be the limit of an admissible inverse system. There is a constant $C$, depending only on the data of $X$, such that if $E\subset X$ is connected, then
\begin{equation}\label{pipreservesdiam}
\diam(\pi_0(E)) \geq \frac{1}{C}\diam(E).
\end{equation}
\end{lemma}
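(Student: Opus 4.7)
My plan is to reduce the question to a combinatorial fact about the finite-level graphs $X_j$, using Lemma \ref{piproperties2}(c) to pass between $X$ and $X_j$ at the right scale. Let $E\subset X$ be connected and set $\ell:=\diam(\pi_0(E))$. When $\ell\gtrsim 1$ the conclusion follows from the universal bound $\diam(X)\leq 1+C_\eta$ given by Lemma \ref{piproperties2}(c) at $i=0$, so it is enough to treat $\ell<1/(2m)$. Choose the integer $j\geq 0$ with $\ell<m^{-j}\leq 2m\ell$. The image $\pi_j(E)\subset X_j$ is connected, and it projects under $\pi_0:X_j\to X_0$ into an interval $[a,a+\ell]$, so it lies inside a single connected component $F$ of $\pi_0^{-1}([a,a+\ell])\subset X_j$.

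The key sub-claim is that, since $\ell<m^{-j}$, one has $\diam_{X_j}(F)\leq \ell$. Recall that $\pi_0:X_j\to X_0$ is simplicial and isometric on every edge, with respect to the subdivision of $X_0$ into $m^j$ edges of length $m^{-j}$. Since $\ell<m^{-j}$, $[a,a+\ell]$ either lies in the interior of a single edge of this subdivision, in which case $\pi_0^{-1}([a,a+\ell])$ is a disjoint union of sub-intervals of length $\ell$ inside edges of $X_j$; or it straddles exactly one subdivision vertex $v$, in which case $\pi_0^{-1}([a,a+\ell])$ is a disjoint union of ``stars'' of total leg-length $\ell$, each centered at one vertex of $X_j$ lying in the fiber above $v$. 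In either situation every connected component has diameter at most $\ell$. The crucial observation for the second case is that any path in $\pi_0^{-1}([a,a+\ell])$ joining two \emph{distinct} vertices of $X_j$ above $v$ would have to traverse an entire edge of $X_j$, and the other endpoint of any such edge lies above a neighboring subdivision vertex of $X_0$ at distance $m^{-j}>\ell$ from $v$, and so outside $[a,a+\ell]$.

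Combining the sub-claim with Lemma \ref{piproperties2}(c) applied to any two points of $E$ gives
$$\diam(E)\leq \diam_{X_j}(\pi_j(E)) + C_\eta m^{-j}\leq \ell + 2m\,C_\eta\,\ell = (1+2m\,C_\eta)\,\ell,$$
establishing the lemma. The main point of difficulty is the sub-claim: the fibers of $\pi_0:X_j\to X_0$ can in general carry many vertices separated from each other by a uniformly positive distance, so a crude application of Lemma \ref{piproperties}(c) would yield only an additive bound $\diam_{X_j}(F)\leq \ell+\mathrm{const}$, which is useless for small $\ell$. The strict inequality $\ell<m^{-j}$ is precisely what prevents any single connected component of $\pi_0^{-1}([a,a+\ell])$ from meeting more than one point of the fiber above $v$, and that is what converts the additive estimate into the required linear-in-$\ell$ bound.
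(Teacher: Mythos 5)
Your argument is essentially the paper's: both pick the scale $j$ with $m^{-j}$ comparable to $\diam(\pi_0(E))$, show that the connected projection of $E$ to $X_j$ is confined to the star of a single vertex (your component analysis of $\pi_0^{-1}([a,a+\ell])$ is a more detailed version of that step, and your observation about fiber-mates is exactly what makes it work), and then conclude from part \eqref{pipreimage2} of Lemma \ref{piproperties2}. One small slip: the sub-claim $\diam_{X_j}(F)\leq \ell$ can fail by a factor of $2$, since two leg-tips lying on the same side of the central fiber vertex can be at $X_j$-distance up to $2(v-a)\leq 2\ell$; but the correct bound $\diam_{X_j}(F)\leq 2\ell < 2m^{-j}$ is all you need, so your proof still gives the lemma with the constant $2+2mC_\eta$ in place of $1+2mC_\eta$.
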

\begin{proof}
Choose $n\in\mathbb{N}$ such that
$$m^{-(n+1)}\leq \diam(\pi_0(E)) <m^{-n}.$$
It follows that $\pi_n(E)$ is contained in $B_{X_n}(v_n, m^{-n})$, i.e., the ``star'' of a single vertex $v_n$ in $X_n$. Indeed, if not, then the connected set $\pi_n(E)$ would contain two distinct vertices of $X_n$, and so $\pi_0(E)$ would contain two distinct points of $m^{-n}\mathbb{Z}$. But this is impossible as $\diam(\pi_0(E))<m^{-n}$.

It then follows from Lemma \ref{piproperties2} that
$$ \diam(E) \leq \diam(\pi_n(E)) + C_\eta m^{-n} \leq 2m^{-n} + C_\eta m^{-n} \leq C\diam(\pi_0(E)).$$
\end{proof}

Recall that a metric space is \textit{doubling} if there is a constant $N$ such that every ball in the space can be covered by $N$ balls of half the radius. A metric space is \textit{geodesic} if every two points can be joined by an arc whose length is equal to the distance between the two points; such an arc is called a \textit{geodesic}.

The following lemma is similar to Lemma 3.2 of \cite{CK13_PI}, but since here we have no measures we argue in a slightly different way.

\begin{lemma}
The limit $X$ of an admissible inverse system is doubling and geodesic. The doubling constant $N$ depends only on the data of $X$.
\end{lemma}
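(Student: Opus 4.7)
The plan is to prove both properties by reducing to the metric graphs $X_i$ via the projections $\pi_i^\infty$, exploiting the fact that each $X_i$ is a finite, connected graph with edge-length $m^{-i}$ and vertex valence at most $\Delta$.

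For the doubling property, given a ball $B(x,r) \subset X$ with $r \leq \diam X$, I would choose $i \in \mathbb{N}$ so that $m^{-i}$ is comparable to $r$, say $r/(10 C_\eta) \leq m^{-i} < r$. By Lemma \ref{piproperties2}(a), $\pi_i(B(x,r)) = B(\pi_i(x),r) \subset X_i$. Since $X_i$ has vertex valence at most $\Delta$ and edge length $m^{-i} \approx r$, a direct combinatorial count shows the number of edges meeting this ball is bounded by a constant depending only on $m$, $\Delta$, $\eta$, so $B(\pi_i(x),r)$ can be covered by $N = N(m,\Delta,\eta)$ balls $B_{X_i}(y_k, r/4)$. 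Using Lemma \ref{piproperties2}(a) again, for each $k$ pick $z_k \in B(x,r)$ with $\pi_i(z_k) = y_k$. If $w \in B(x,r)$, then $\pi_i(w) \in B_{X_i}(y_k, r/4)$ for some $k$, and Lemma \ref{piproperties2}(c) gives
$$ d(w,z_k) \leq d_i(\pi_i(w), \pi_i(z_k)) + C_\eta m^{-i} < \tfrac{r}{4} + \tfrac{r}{10} < \tfrac{r}{2},$$
so $\{B(z_k, r/2)\}_{k=1}^N$ covers $B(x,r)$.

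For the geodesic property, I would construct a geodesic between $x, y \in X$ as a subsequential limit of lifted paths. For each $i$, let $c_i : [0, L_i] \to X_i$ be a unit-speed geodesic in the finite connected graph $X_i$ joining $\pi_i(x)$ to $\pi_i(y)$, where $L_i = d_i(\pi_i(x), \pi_i(y))$. By Lemma \ref{piproperties2}(b), lift $c_i$ to a unit-speed path $\tilde c_i : [0, L_i] \to X$ with $\tilde c_i(0) = x$. The endpoint $y_i := \tilde c_i(L_i)$ satisfies $\pi_i(y_i) = \pi_i(y)$, so Lemma \ref{piproperties2}(c) yields $d(y_i, y) \leq C_\eta m^{-i} \to 0$ and $L_i \to d(x,y)$. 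Reparametrizing each $\tilde c_i$ linearly to the common domain $[0,1]$ produces uniformly Lipschitz maps into the space $X$, which is compact as a closed subset of the product $\prod_i X_i$. Arzel\`a--Ascoli then extracts a subsequential uniform limit $\gamma : [0,1] \to X$ that is $d(x,y)$-Lipschitz and joins $x$ to $y$, so its length is at most $d(x,y)$ and therefore equals $d(x,y)$.

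The main obstacle is justifying that the covering number $N$ in the doubling argument is genuinely independent of $i$: this requires the scale calibration $m^{-i} \approx r$ so that $B(\pi_i(x),r) \subset X_i$ meets only boundedly many graph edges, with the bound depending only on $m$, $\Delta$, $\eta$. The geodesic part is relatively routine once one has path lifting, $1$-Lipschitz projections, and the compactness of $X$ on hand.
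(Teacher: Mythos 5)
Your overall strategy is sound and is essentially the paper's: for doubling you reduce to a graph $X_i$ at a scale comparable to $r$, use the valence bound $\Delta$ to count edges, and absorb the projection/lift distortion $C_\eta m^{-i}$; for geodesics you build the geodesic as a limit of lifted graph geodesics, which is a hands-on version of the paper's one-line appeal to the fact that $X$ is a Gromov--Hausdorff limit of geodesic spaces (your Arzel\`a--Ascoli argument is correct, since $X$ is compact and the lifted paths are uniformly Lipschitz after reparametrization).

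However, the doubling argument as written has a calibration error: you choose $i$ with $r/(10C_\eta)\leq m^{-i}<r$, but then use $C_\eta m^{-i}<r/10$, which is exactly the \emph{reverse} inequality --- under your stated choice $C_\eta m^{-i}\geq r/10$, and it can be as large as $C_\eta r$, so the estimate $d(w,z_k)<r/2$ fails. The two requirements genuinely pull in opposite directions (you need $m^{-i}$ small compared to $r$ for the lifting error, but $m^{-i}$ not too small compared to $r$ for the bounded edge count), and a single symmetric window ``$m^{-i}\approx r$'' does not automatically satisfy both. The fix is to take $i$ to be the \emph{largest} integer with $C_\eta m^{-i}\leq r/10$ (such $i\geq 0$ exists for $r$ up to a constant times $\diam X$; larger $r$ is trivial). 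Then maximality gives $r\leq 10C_\eta m\, m^{-i}$, so $B(\pi_i(x),r)$ meets at most $C(\Delta,\eta,m)$ edges of $X_i$ and the covering number $N$ still depends only on the data, while the distance estimate $d(w,z_k)\leq r/4+C_\eta m^{-i}\leq r/4+r/10<r/2$ now goes through. (You should also note explicitly that the centers $y_k$ can be taken inside $B(\pi_i(x),r)$, e.g.\ as a maximal $r/4$-separated net, so that Lemma \ref{piproperties2}\eqref{piballsurjective2} lets you lift them into $B(x,r)$.) This is the same tension the paper resolves with its constant $K=\max(8m^2\eta/(m-1),1)$ and the geometric sum $2\eta(m^{-j}+\dots+m^{-(i+1)})\leq r/4$; with the corrected choice of $i$ your argument is a valid variant carried out directly in $X$ rather than via uniform doubling of the graphs $X_i$.
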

\begin{proof}
That $X$ is geodesic follows from the fact that it is a Gromov-Hausdorff limit of geodesic spaces.

To show $X$ is doubling, it suffices to show that the spaces $(X_i, d_i)$ are uniformly doubling. 

Let us first make the following observation: If $r \leq K m^{-j}$ and $x_j \in X_{j}$, then $B =B(x_j,r)$ in $X_j$ is covered by no more than $N_K$ balls of radius $r/2$, independent of $j$. Indeed, $B$ is a subset of a finite simplicial graph with at most $\Delta^K$ vertices, and finite graphs are doubling with constant depending only on the number vertices.

Now consider a ball $B=B(x_{i}, r)\subset X_{i}$. Let $K= \max(\frac{8m^2\eta}{m-1},1)$. Choose $j$ such that
$$ Km^{-(j+1)} < r \leq Km^{-j}.$$
If $j\geq i$, then the ball $B$ is covered by at most $N_K$ balls of radius $r/2$, by the observation above.

If $j<i$, consider the ball
$$ B_{X_j}(x_j,r) = \pi_j(B) \subset X_j $$
where $x_j=\pi_j(x_{i}) \in X_j$. (See Lemma \ref{piproperties}.)

Then $\pi_j(B)$ is covered by $N_K^2$ balls $\{B^j_\ell\}_{l=1}^{N_K^2}$ of radius $r/4$ in $X_j$. Let $B^{i}_\ell$ denote the preimage of $B^j_\ell$ under the projection from $X_{i}$ to $X_j$; these preimages cover $B$ in $X_{i}$. By Lemma \ref{piproperties}, each $B^{i}_\ell$ is contained in a ball of radius
$$ r/4 + 2\eta(m^{-j} + \dots + m^{-(i+1)}) \leq r/2.$$
So $B$ is covered by at most $N_K^2$ balls of radius $r/2$ in $X_{i+1}$. This completes the proof that the graphs $X_i$ are all uniformly doubling, and therefore that $X$ is doubling.
\end{proof}

The following definition describes the sub-class of geodesics that we will consider. In our spaces, they will play the role that lines play in Hilbert space or that horizontal lines play in the Heisenberg group.

\begin{definition}
Let $X$ be the limit of an admissible inverse system. A geodesic arc $\gamma$ in $X$ or in any $X_n$ is called a \textit{monotone geodesic} if the restriction $\pi_0|_\gamma$ is an isometry onto $X_0 \cong [0,1]$, i.e., if
$$ |\pi_0(x) - \pi_0(y)| = d(x,y) \text{ for all } x,y\in \gamma$$
A \textit{monotone geodesic segment} is a connected subset of a monotone geodesic.
\end{definition}

In the next lemma, we collect some basic observations about monotone geodesic segments. A \textit{path} in a metric space is a continuous image of $[0,1]$.

\begin{lemma}\label{monotonefacts}
Let $X$ be the limit of an admissible inverse system. Let $\gamma$ be a path in $X$ or in $X_n$ for some $n\geq 0$.
\begin{enumerate}[(a)]
\item\label{monotonefacts1} If $\gamma$ is a monotone geodesic segment, then $\pi_i(X)$ is a monotone geodesic segment for each $i\geq 0$ (with $i\leq n$ if $\gamma\subset X_n$).
\item\label{monotonefacts2} If $\gamma$ is a monotone geodesic segment, then $\gamma$ can be extended (not necessarily uniquely) to a monotone geodesic.
\item\label{monotonefacts3} The path $\gamma$ is a monotone geodesic segment if and only if the restriction $\pi_0|_\gamma$ is injective.
\end{enumerate}
\end{lemma}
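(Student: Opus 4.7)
I prove parts (a), (b), (c) in turn.

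For part (a), factor $\pi_0 = \pi_0|_{X_i} \circ \pi_i$; all three maps are $1$-Lipschitz. Let $\tilde\gamma$ be a monotone geodesic containing $\gamma$. For any $x,y\in\tilde\gamma$ one gets the sandwich
\begin{align*}
|\pi_0(x)-\pi_0(y)| \;\leq\; d_i(\pi_i(x),\pi_i(y)) \;\leq\; d(x,y) \;=\; |\pi_0(x)-\pi_0(y)|,
\end{align*}
forcing equality throughout. Hence $\pi_i|_{\tilde\gamma}$ is an isometry onto its image and $\pi_0|_{\pi_i(\tilde\gamma)}$ is an isometry onto $[0,1]$, so $\pi_i(\tilde\gamma)$ is a monotone geodesic of $X_i$ containing the connected subset $\pi_i(\gamma)$. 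The argument for $\gamma\subset X_n$ (with $i\leq n$) is identical, replacing $d$ by $d_n$.

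For part (b), let $[a,b]=\pi_0(\gamma)$ and denote by $\gamma(a),\gamma(b)\in\gamma$ the endpoints with $\pi_0(\gamma(a))=a$ and $\pi_0(\gamma(b))=b$. Apply Lemma \ref{piproperties2}(b) to lift the reversed identity path $s\mapsto a-s$ on $[0,a]$ starting at $\gamma(a)$, obtaining an arc-length lift $\tilde c_L$ from $\gamma(a)$ to a point over $0$; reverse it to obtain $\gamma_L$ with $\pi_0|_{\gamma_L}$ an isometry onto $[0,a]$. Construct $\gamma_R$ analogously with $\pi_0|_{\gamma_R}$ an isometry onto $[b,1]$. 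The concatenation $\tilde\gamma := \gamma_L\cup\gamma\cup\gamma_R$ has $\pi_0$-image $[0,1]$, and since the three pieces have disjoint $\pi_0$-images except at their shared endpoints, $\pi_0|_{\tilde\gamma}$ is injective. For any $x,y\in\tilde\gamma$ the arc length along $\tilde\gamma$ from $x$ to $y$ equals $|\pi_0(x)-\pi_0(y)|$ (each piece is arc-length parametrized with $\pi_0$ isometric on it), so
\begin{align*}
|\pi_0(x)-\pi_0(y)| \;\leq\; d(x,y) \;\leq\; \text{length}(x,y) \;=\; |\pi_0(x)-\pi_0(y)|,
\end{align*}
proving $\tilde\gamma$ is a geodesic arc and a monotone geodesic.

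For part (c), the forward implication is immediate. For the converse, suppose $\pi_0|_\gamma$ is injective; being a continuous bijection from the compact set $\gamma$ to its image, it is a homeomorphism onto an interval $[a,b]$. Iterating axiom \eqref{admsys3} shows every edge of $X_i$ is mapped isometrically by $\pi_0|_{X_i}$ onto an interval of length $m^{-i}$ in $[0,1]$. Since $\pi_i(\gamma)\subset X_i$ is compact and connected with $\pi_0|_{\pi_i(\gamma)}$ still injective, parametrizing $\pi_i(\gamma)$ by $\pi_0$ reveals it as a finite concatenation of edge segments traversed monotonically in $\pi_0$; in particular $\pi_i(\gamma)$ is a monotone geodesic segment of $X_i$, so
\begin{align*}
d_i(\pi_i(x),\pi_i(y)) = |\pi_0(x)-\pi_0(y)| \quad\text{for all } x,y\in\gamma.
\end{align*}
Lemma \ref{piproperties2}(c) then yields $d(x,y) \leq |\pi_0(x)-\pi_0(y)| + C_\eta m^{-i}$; letting $i\to\infty$ and combining with the trivial reverse bound shows $\pi_0|_\gamma$ is an isometry onto $[a,b]$. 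The extension construction of (b) now exhibits $\gamma$ as a subset of a monotone geodesic.

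\textbf{Main obstacle.} The reverse direction of (c) is the crux: promoting purely topological injectivity of $\pi_0|_\gamma$ to the metric identity $d(x,y)=|\pi_0(x)-\pi_0(y)|$. One must read off the combinatorial structure of $\pi_i(\gamma)$ inside each finite graph $X_i$ and then transfer the resulting isometric behavior back to $X$ using the quantitative $\pi_i$-approximation in Lemma \ref{piproperties2}(c).
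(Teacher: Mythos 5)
Your proposal is correct and follows essentially the same route as the paper: (a) via the $1$-Lipschitz sandwich, (b) via the path-lifting property of Lemma \ref{piproperties2}, and (c) by reducing to the graphs $X_i$, using that $\pi_0$ is an isometry on edges to get $d_i(\pi_i(x),\pi_i(y))=|\pi_0(x)-\pi_0(y)|$, and then passing to the limit in $X$. The only cosmetic difference is the order of operations in (c) — the paper first normalizes $\pi_0(\gamma)=[0,1]$ by lifting and then proves the isometry, while you prove the isometry on $\gamma$ and extend afterwards — which changes nothing of substance.
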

\begin{proof}
Part \eqref{monotonefacts1} follows immediately from the definition of monotone geodesic and the fact that all projections $\pi_i$ are $1$-Lipschitz.

Part \eqref{monotonefacts2} follows from part \eqref{pipathlift2} of Lemma \ref{piproperties2}.

Finally, we address part \eqref{monotonefacts3}. We need to show that if $\pi_0|_\gamma$ is injective, then $\pi_0|_\gamma$ is an isometry. By part \eqref{pipathlift2} of Lemma \ref{piproperties2}, we may assume without loss of generality that $\pi_0(\gamma) = X_0 \cong [0,1]$. Suppose first that $\gamma\subset X_n$ for some $n\geq 0$.

Since $\pi_0|_\gamma$ is an injective continuous function on a compact set, it has a continuous inverse $\alpha:[0,1]\rightarrow \gamma \subset X$, which parametrizes $\gamma$. As $\pi_0$ is an isometry on each edge, $\alpha$ is $1$-Lipschitz. Hence, for any $x=\alpha(s),y=\alpha(t)\in \gamma$, we have
$$ d(x,y) = d(\alpha(s),\alpha(t)) \leq |s-t| = |\pi_0(\alpha(s)) - \pi_0(\alpha(y))| = |\pi_0(x)-\pi_0(y)| \leq d(x,y), $$
and hence $\pi_0|_\gamma$ is an isometry.

Now, if $\gamma$ is a path in $X$ and $\pi_0|_\gamma$ is injective, then consider the paths $\gamma_n=\pi_n(\gamma)\subset X_n$ for all $n\geq 0$. The restriction $\pi_0|_{\gamma_n}$ is injective for each $n$, and hence an isometry by the above calculation. It follows that the path $\gamma$ in $X$ is a monotone geodesic segment.
\end{proof}

For a natural pictorial example of a monotone geodesic in a specific admissible inverse system, see Figure \ref{fig:monotonediamond} in Example \ref{langplaut} below.

Finally, the following definition will be convenient at various points below.

\begin{definition}\label{levelvertices}
A point $x$ of $X$ or any $X_n$ is said to be a \textit{vertex of level $i$}, for some natural number $i\geq 0$, if $\pi_0(x) \in m^{-i}\mathbb{Z}$. 
\end{definition}
In this language, the vertices of the graph $X_n$ are the vertices of level $n$ in $X_n$. If $n\leq i$, the vertices of level $i$ in $X_n$ are exactly the images in $X_n$ of the vertices of the graph $X_i$ under the projection map $\pi_n:X_i\rightarrow X_n$. Of course, if $i<j$, then a vertex of level $i$ is also a vertex of level $j$.

\subsection{Examples}
We now describe two (already well-known) examples of admissible inverse systems.

\begin{example}[\cite{LP01}, see also \cite{CK13_inverse}, Example 1.2]\label{langplaut}
In this example, the scale factor $m$ will be $4$. As required by the axioms, let $X_0$ be a graph with two vertices and one edge of length $1$.

Let $G$ denote the graph with six vertices and six edges depicted in Figure \ref{fig:langplautG}.
\begin{figure}
	\centering
		\includegraphics[scale=0.2]{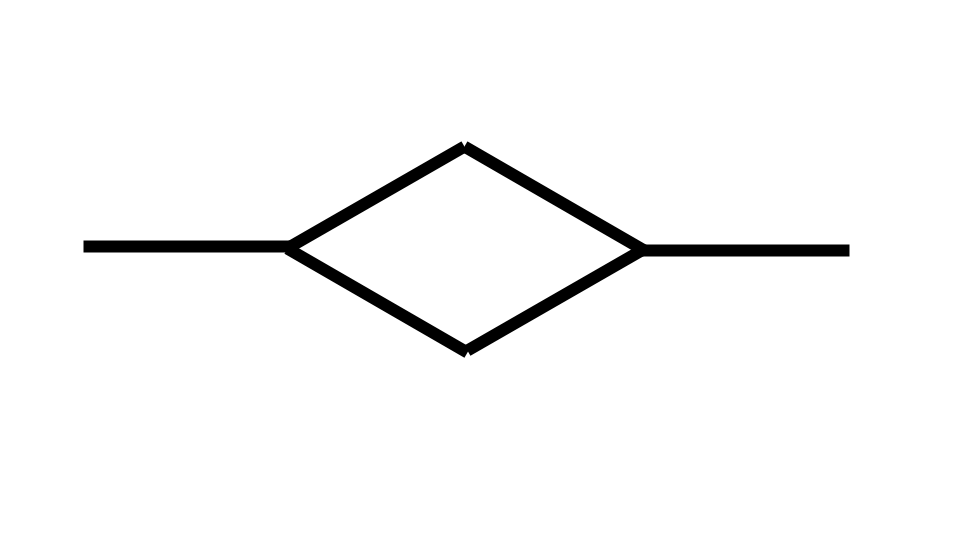}
		\caption{The graph $G$ in Example \ref{langplaut}}
	\label{fig:langplautG}
\end{figure}

For each $i\geq 1$, $X_i$ is formed by replacing each edge of $X_{i-1}$ with a copy of the graph $G$ in which each edge has length $m^{-i}$. The resulting graph is endowed with the shortest path metric. The projection map $\pi_{i-1}:X_i \rightarrow X_{i-1}$ is then defined by collapsing each copy of $G$ onto the edge of $X_{i-1}$ that it replaced.

This yields an inverse system
$$ X_0 \xleftarrow{\pi_0} X_1 \xleftarrow{\pi_1} \dots \xleftarrow{\pi_{i-1}} X_i \xleftarrow{\pi_i} \dots $$
and it is easy to see that this system satisfies the requirements of Definition \ref{admissiblesystem}.

Stages $X_1$ and $X_2$ in this construction are depicted in Figure \ref{fig:diamond}, and in figure \ref{fig:monotonediamond} we illustrate an example of a monotone geodesic in this space.

\begin{figure}
\centering
\begin{subfigure}{.5\textwidth}
  \centering
  \includegraphics[width=.4\linewidth]{diamond1}
  \caption{The graph $X_1$ from Example \ref{langplaut}.}
  \label{fig:diamond1}
\end{subfigure}%
\begin{subfigure}{.5\textwidth}
  \centering
  \includegraphics[width=.4\linewidth]{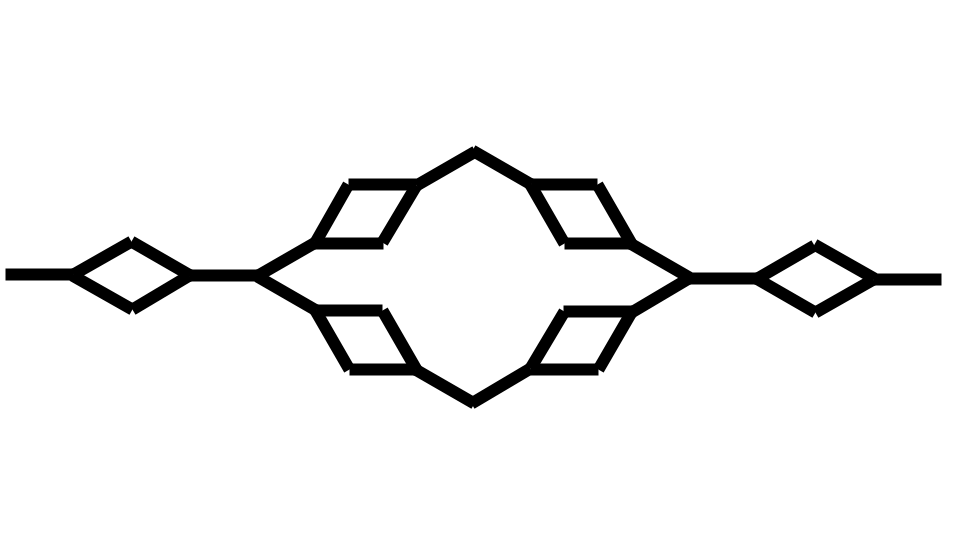}
  \caption{The graph $X_2$ from Example \ref{langplaut}.}
  \label{fig:diamond2}
\end{subfigure}
\caption{Graphs $X_1$ and $X_2$ in Example \ref{langplaut}.}
\label{fig:diamond}
\end{figure}

\begin{figure}
	\centering
		\includegraphics[scale=0.2]{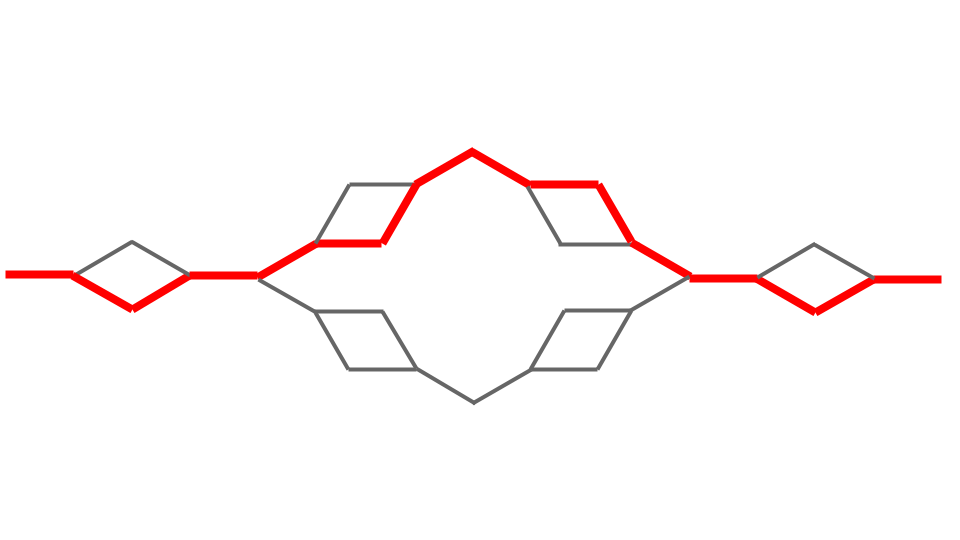}
		\caption{A monotone geodesic in Example \ref{langplaut}}
	\label{fig:monotonediamond}
\end{figure}

\end{example}

The second example we describe is a modification of a construction due to Laakso \cite{La00}. This interpretation of Laakso's' construction in terms of inverse limits of graphs, can be found in \cite{CK13_inverse}, Example 1.4 (with the only modification being that ours is ``dyadic'' rather than ``triadic'').

\begin{example}[\cite{La00}, \cite{CK13_inverse}]\label{dyadiclaakso}
In this example, the scale factor $m$ will be $2$. Let $X_0$ be a graph with two vertices and one edge of length $1$.

For each $i\geq 1$, $X_i$ is formed from $X_{i-1}$ as follows. Let $X'_{i-1}$ be the graph obtained by inserting an extra vertex in the middle of each edge of $X_{i-1}$. Let $B\subset X'_{i-1}$ denote the set of these bisecting vertices. Form $X_i$ by taking two copies of $X'_{i-1}$ and gluing them at the set $B$.

In other words,
$$ X_i \cong \left(X'_{i-1} \times \{0,1\} \right)/ \sim $$
where $(v,0)\sim (v,1)$ for all $v\in B$.
	
The resulting graph $X_i$ is endowed with the shortest path metric.	
	
The map $\pi_{i-1}:X_i\rightarrow X_{i-1}$ is formed by collapsing the two copies of $X_{i-1}$ into one; i.e., it is induced by the map from $X'_{i-1} \times \{0,1\}$ to $X_{i-1}$ that sends $(v,j)$ to $v$.

It is again easy to verify that this inverse system satisfies the requirements of Definition \ref{admissiblesystem}. Stages $X_1$ and $X_2$ in this construction are depicted in Figure \ref{fig:laakso}.

\begin{figure}
\centering
\begin{subfigure}{.5\textwidth}
  \centering
  \includegraphics[width=.6\linewidth]{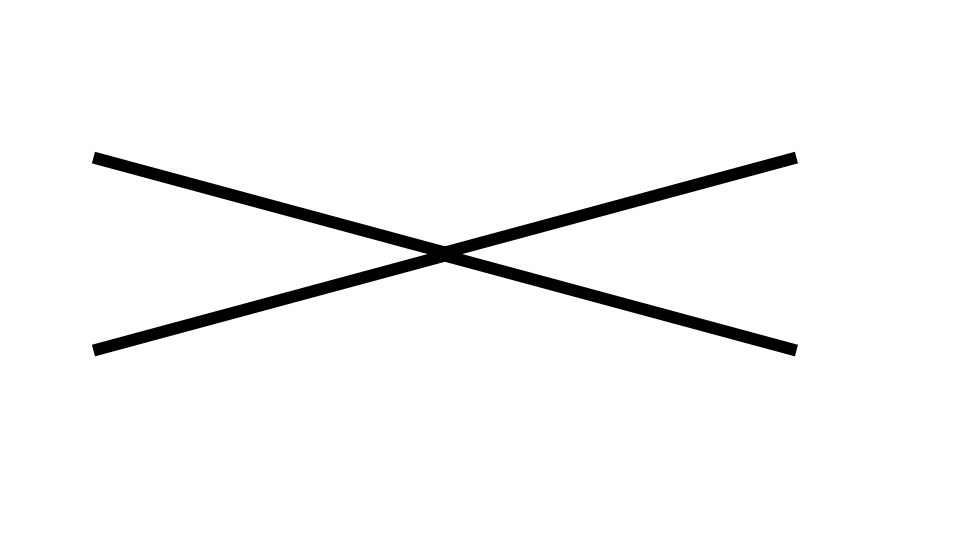}
  \caption{The graph $X_1$ from Example \ref{dyadiclaakso}.}
  \label{fig:laakso1}
\end{subfigure}%
\begin{subfigure}{.5\textwidth}
  \centering
  \includegraphics[width=.6\linewidth]{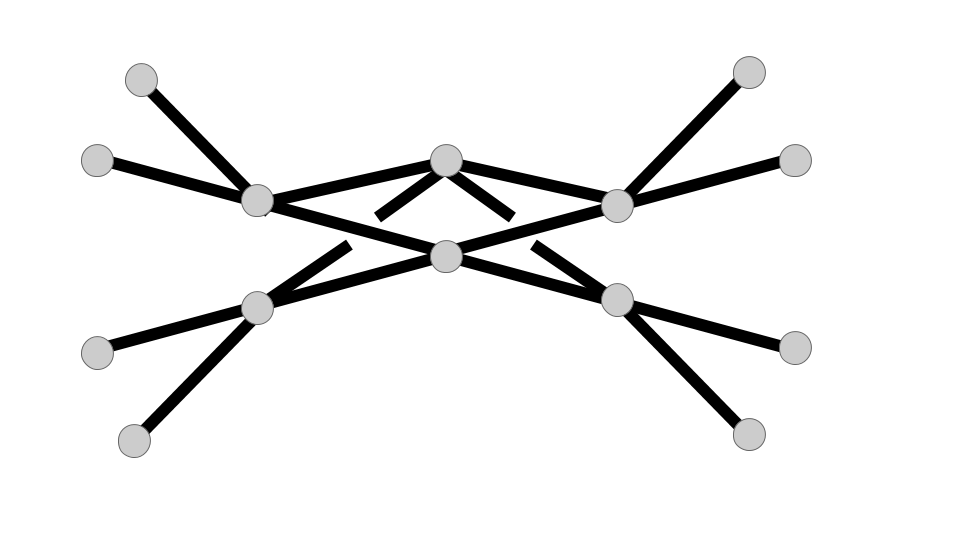}
  \caption{The graph $X_2$ from Example \ref{dyadiclaakso}.}
  \label{fig:laakso2}
\end{subfigure}
\caption{Graphs $X_1$ and $X_2$ in Example \ref{dyadiclaakso}.}
\label{fig:laakso}
\end{figure}
%pictures
\end{example}

Note that, while both Examples \ref{langplaut} and \ref{dyadiclaakso} are highly self-similar, this is not a general requirement for admissible inverse systems. 

\subsection{Beta numbers}\label{betasubsection}
For a set $E\subset X$ and a metric ball $B\subset X$, we define
$$ \beta_E(B) = \frac{1}{\diam(B)}\inf_{L} \sup_{x\in E\cap B} \dist(x,L),$$
where the infimum is taken over all monotone geodesics $L$.

For a function $f:[0,1]\rightarrow \RR$ and an arc $\tau=f|[a,b]$, we follow \cite{Sc07_hilbert} and define 
$$\tilde{\beta}_f(\tau) = \frac{1}{\diam(\tau)} \inf_{t\in [a,b]} \dist(f(t),[f(a),f(b)]).$$
Here $[f(a),f(b)]$ is shorthand for the closed interval $[\min(f(a),f(b)), \max(f(a),f(b))]$.

The definition of $\tilde{\beta}_f$ makes perfect sense for mappings $f$ into higher dimensional Euclidean spaces, but here we will only need it for mappings into $\RR$.

\part{The upper bound}\label{upperboundpart}

In part \ref{upperboundpart}, we prove Theorem \ref{upperbound}. For the remainder of this part, we fix an admissible inverse system as in Definition \ref{admissiblesystem}, with limit $X$.

\section{The curve, the balls, and a reduction}
Fix $\Gamma\subseteq X$ compact and connected with $\mathcal{H}^1(\Gamma)<\infty$. 

Let $K_1 \supset K_2 \supset K_3 \supset ...$ be a family of nested subsets of $\Gamma$ such that $K_i$ is a maximal $m^{-i}$-separated set in $\Gamma$ for each $i\geq 0$. For a fixed constant $A>0$, let
$$\mathcal{G} = \{B(x,2Am^{-n}): n\in\mathbb{N}, x\in K_n\}$$ 

We first observe that, in order to prove Theorem \ref{upperbound}, we can instead sum over the balls of $\mathcal{G}$.

\begin{lemma}\label{reductionlemma}
There is a constant $C_{p,X}$, depending only on $p$ and the data of $X$, with the following property: For any compact, connected set $\Gamma\subset X$, we have
$$ \sum_{B\in\mathcal{B}} \beta_\Gamma(B)^p \diam(B) \leq C_{p,X} \sum_{B'\in\mathcal{G}} \beta_\Gamma(B')^p \diam(B')$$
\end{lemma}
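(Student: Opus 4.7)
The plan is to construct a map $\phi: \mathcal{B} \to \mathcal{G}$ with two key properties: (i) $B \subset \phi(B)$ with $\diam(\phi(B))$ comparable to $\diam(B)$, which yields $\beta_\Gamma(B) \leq C \beta_\Gamma(\phi(B))$; and (ii) $\phi$ has uniformly bounded multiplicity. Summing then gives the lemma.

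First I would observe that any $B \in \mathcal{B}$ with $|B \cap \Gamma| \leq 1$ contributes zero, since a single point is a degenerate monotone geodesic segment, extendable to a full monotone geodesic by Lemma \ref{monotonefacts}\eqref{monotonefacts2}. So the remaining case is $B = B(p, Am^{-n})$ with $B \cap \Gamma$ of positive diameter; fix any $q_B \in B \cap \Gamma$. For $n \geq 1$, I would use maximality of $K_{n-1}$ to pick $x_B \in K_{n-1}$ with $d(q_B, x_B) \leq m^{-(n-1)}$, and set $\phi(B) := B(x_B, 2Am^{-(n-1)}) \in \mathcal{G}$. A triangle inequality then gives, for $y \in B$,
$$d(y, x_B) \leq d(y, p) + d(p, q_B) + d(q_B, x_B) < (2A + m) m^{-n} \leq 2A m^{-(n-1)},$$
the last step valid since $2A(m-1) \geq m$ for $m \geq 2$ and $A > 100 C_\eta$. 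Thus $B \subset \phi(B)$; combined with $\diam(\phi(B))/\diam(B) = 2m$, this gives $\beta_\Gamma(B) \leq 2m\,\beta_\Gamma(\phi(B))$. For the multiplicity: any preimage of $B' = B(x, 2Am^{-(n-1)}) \in \mathcal{G}$ has center $p$ satisfying $d(p, x) < (A + m) m^{-n}$, and these centers are $m^{-n}$-separated in $X$ (they project to distinct $m^{-n}$-separated edge midpoints of $X_n$ under the $1$-Lipschitz map $\pi_n$), so doubling bounds their number by some $N = N(\text{data})$.

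The scale-$0$ case needs only a small observation: since $\diam(X) \leq 1 + C_\eta < A$ (by Lemma \ref{piproperties2}\eqref{pipreimage2} applied at level $0$), every ball in $\mathcal{B}$ or $\mathcal{G}$ at scale $0$ equals $X$ as a set, so the single scale-$0$ contribution from $\mathcal{B}$ is directly comparable to any scale-$0$ contribution from $\mathcal{G}$. After combining with the $n \geq 1$ estimate,
$$\sum_{B \in \mathcal{B}, n \geq 1} \beta_\Gamma(B)^p \diam(B) \leq (2m)^p \sum_{B' \in \mathcal{G}} \beta_\Gamma(B')^p \sum_{B: \phi(B) = B'} \diam(B) \leq (2m)^{p-1} N \sum_{B' \in \mathcal{G}} \beta_\Gamma(B')^p \diam(B'),$$
and adding the scale-$0$ term, the lemma follows with $C_{p,X} = (2m)^{p-1} N + 2^{p-1}$.

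The main obstacle is the scale shift: a naive same-scale assignment $B \mapsto B(x_B, 2Am^{-n}) \in \mathcal{G}$ misses the containment $B \subset \phi(B)$ by a margin of order $m^{-n}$, and shifting down one scale (accepting the harmless diameter factor of $2m$) is exactly what repairs this.
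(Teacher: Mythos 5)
Your argument is correct and is essentially the paper's own proof: the paper likewise assigns to each $B\in\mathcal{B}$ meeting $\Gamma$ a containing ball $B'\in\mathcal{G}$ of comparable diameter, uses $\beta_\Gamma(B)\lesssim\beta_\Gamma(B')$, and bounds the multiplicity of the assignment by doubling, with your one-scale shift simply making the containment bookkeeping explicit. The only cosmetic caveat is that $\diam(\phi(B))/\diam(B)$ is not literally $2m$ (a ball's diameter need not be twice its radius), but since $X$ is geodesic this ratio is bounded by a constant depending only on the data, which is all your argument needs.
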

\begin{proof}
Observe that if a ball $B\in\mathcal{B}$ has non-empty intersection with $\Gamma$, then $B$ is contained in a ball $B'\in\mathcal{G}$ such that
\begin{equation}\label{samediameter}
\diam(B) \leq \diam(B') \leq 4\diam(B).
\end{equation}
Furthermore, because $X$ is a doubling metric space, there are at most $C_X$ balls $B'\in\mathcal{G}$ containing $B$ and satisfying \eqref{samediameter}, where $C_X$ depends only on the data of $X$.

In addition, if $B'$ contains $B$ and satisfies \eqref{samediameter}, then
$$ \beta_\Gamma(B)\leq 4\beta_\Gamma(B').$$

We then have
\begin{align*}
\sum_{B\in\mathcal{B}} \beta_\Gamma(B)^p \diam(B) &\leq \sum_{B'\in\mathcal{G}} \sum_{B\in\mathcal{B}, B\subset B', \eqref{samediameter}} \beta_\Gamma(B)^p \diam(B)\\
&\leq  C_{X,p} \sum_{B'\in\mathcal{G}} \beta_\Gamma(B')^p \diam(B')
\end{align*}
\end{proof}

Therefore, in order to prove Theorem \ref{upperbound}, it suffices to prove that
\begin{equation}\label{upperboundreduction}
\sum_{B\in\mathcal{G}} \beta_\Gamma(B)^p \diam(B) \leq C_{X,p} \mathcal{H}^1(\Gamma)
\end{equation}
where $C_{X,p}$ depends only on $p>1$ and the data of $X$.

Assume without loss of generality that $\mathcal{H}^1(\Gamma)<\infty$. Let $\gamma:[0,1]\rightarrow \Gamma \subset X$ be a parametrization proportional to arc length. (See, e.g., Lemma 2.14 of \cite{LS14} for the existence of this parametrization.) In particular, $\gamma$ is Lipschitz with constant at most $32\mathcal{H}^(\Gamma)$.

Given a ball $B$, we will write
$$ \Lambda(B) = \{\text{connected components of } \gamma^{-1}(2B) \text{ touching } B\}$$

We make one observation immediately, namely that we may focus only on those balls $B\in \mathcal{G}$ that are small and far from the two endpoints $\gamma(0)$ and $\gamma(1)$. In particular, for such balls every arc of $\Lambda(B)$ has both endpoints in $\partial{2B}$. Let  
$$\mathcal{G}_0 = \{B\in\mathcal{G} : \gamma(0)\notin 10m B, \gamma(1)\notin 10m B\}.$$ 

\begin{lemma}\label{G0lemma}
$$\sum_{\mathcal{G}\setminus\mathcal{G}_0} \beta_\Gamma(B)^p \diam(B) \leq C_{m,p} \mathcal{H}^1(\Gamma)$$
\end{lemma}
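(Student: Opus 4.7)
The plan is to exploit two simple trivial bounds and then split the sum dyadically in scale. Let $L = \mathcal{H}^1(\Gamma)$ and for each integer $n\geq 0$ let $\mathcal{G}_n \subset \mathcal{G}\setminus\mathcal{G}_0$ denote the subcollection of balls at scale $n$ (i.e.\ of radius $2Am^{-n}$ and diameter $\leq 4Am^{-n}$).

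The first key observation is a \emph{trivial $\beta$ bound}: for any ball $B\in\mathcal{G}$ with $\Gamma\cap B\neq \emptyset$,
$$ \beta_\Gamma(B) \;\leq\; \min\!\left(1, \frac{\diam(\Gamma)}{\diam(B)}\right) \;\leq\; \min\!\left(1, \frac{L}{\diam(B)}\right).$$
To see this, pick any $p\in\Gamma\cap B$ and use Lemma \ref{piproperties2}\eqref{pipathlift2} (with $i=0$) to lift a unit-length parametrization of $[0,1]$ starting at $\pi_0(p)$ to a path $L_p$ in $X$ through $p$; by Lemma \ref{monotonefacts}\eqref{monotonefacts3} this is a monotone geodesic. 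Then $\dist(x,L_p)\leq d(x,p) \leq \diam(\Gamma\cap B)$ for every $x\in\Gamma\cap B$.

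The second key observation is a \emph{counting bound}: there is a constant $N_0 = N_0(m,A,X)$ such that $|\mathcal{G}_n|\leq N_0$ for every $n\geq 0$. Indeed, the centers of balls in $\mathcal{G}_n$ form an $m^{-n}$-separated subset of $B(\gamma(0),20Am^{1-n})\cup B(\gamma(1),20Am^{1-n})$, and the doubling property of $X$ bounds the cardinality of such a set by a constant depending only on $m$, $A$, and the doubling constant.

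Combining these two facts,
$$ \sum_{B\in\mathcal{G}\setminus\mathcal{G}_0}\beta_\Gamma(B)^p\diam(B) \;\leq\; 4AN_0 \sum_{n=0}^{\infty} m^{-n}\min\!\left(1,\, (Lm^{n}/(4A))^{p}\right).$$
Split the sum at the threshold $n^* \approx \log_m(4A/L)$ where the two quantities in the minimum cross. For $n\geq n^*$ the minimum equals $1$ and we get a convergent geometric series $\sum m^{-n}$ bounded by a constant times $L$. For $n<n^*$, the minimum equals $(Lm^n/(4A))^p$, giving
$$ (L/(4A))^p\sum_{n<n^*} m^{n(p-1)},$$
which is geometric with ratio $m^{p-1}>1$; its sum is dominated by its last term $\approx (4A/L)^{p-1}$, yielding another bound of a constant times $L$. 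The crucial role of $p>1$ is exactly to make this second geometric series convergent when summed toward the largest scale. Adding the two contributions gives
$$ \sum_{B\in\mathcal{G}\setminus\mathcal{G}_0}\beta_\Gamma(B)^p\diam(B) \;\leq\; C_{m,p,X}\, L,$$
where $C_{m,p,X}$ blows up as $p\to 1^+$ and as $m\to 1^+$.

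There is no significant obstacle here: the whole argument is a counting plus two-sided geometric-series estimate, with the only mildly non-obvious ingredient being the trivial $\beta$ bound via path-lifting of monotone geodesics. (The statement's subscript $C_{m,p}$ appears to suppress the data dependence of $N_0$, but this has no bearing on the structure of the argument.)
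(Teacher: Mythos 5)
Your argument is correct and is essentially the paper's own proof: both rest on the observation that only boundedly many balls per scale can fail to be in $\mathcal{G}_0$ (doubling plus the $m^{-n}$-separation of the centers $K_n$ near $\gamma(0)$, $\gamma(1)$), on the trivial bounds $\beta_\Gamma(B)\leq 1$ and $\beta_\Gamma(B)\diam(B)\leq \diam(\Gamma)\leq\mathcal{H}^1(\Gamma)$ (via a monotone geodesic through a point of $\Gamma\cap B$), and on a two-sided geometric series in the scale in which $p>1$ is exactly what tames the coarse scales --- the paper merely splits the sum by whether $B\supseteq\Gamma$ instead of at your threshold $n^*$, an immaterial difference. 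The one small slip is the substitution $L/\diam(B)\leq Lm^{n}/(4A)$, which presumes $\diam(B)\geq 4Am^{-n}$ (false in general, since $4Am^{-n}$ is the \emph{upper} bound); since $X$ is geodesic and $\pi_0$ is onto $[0,1]$ one has $\diam(B)\geq m^{-n}$ for every $B\in\mathcal{G}$ at scale $n$, which repairs the display at the cost of a harmless factor depending only on $A$ and $p$.
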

\begin{proof}
The only way that a ball $B\in\mathcal{G}$ can fail to be in $\mathcal{G}_0$ is if $10m B$ contains $\gamma(0)$ or $\gamma(1)$. 

So it suffices to show the following: If $z$ is a point of $\Gamma$, then
$$ \sum_{B\in\mathcal{G}, z\in 10m B} \beta_\Gamma(B)^p \diam(B) \leq C_p \diam(\Gamma). $$

We write
\begin{align*}
\sum_{B\in\mathcal{G}, z\in 10m B} \beta_\Gamma(B)^p\diam(B) &= \sum_{B\in\mathcal{G}, z\in 10m B, B\supseteq \Gamma} \beta_\Gamma(B)^p\diam(B) + \sum_{B\in\mathcal{G}, z\in 10m B, B\not\supseteq \Gamma} \beta_\Gamma(B)^p\diam(B)\\
&\leq \sum_{B\in\mathcal{G}, z\in 10m B, B\supseteq \Gamma} \diam(\Gamma)^{p}\diam(B)^{1-p} + \sum_{B\in\mathcal{G}, z\in 10m B, B\not\supseteq \Gamma} \diam(B)\\
&\leq C_{m,p} \diam(\Gamma)\\
&\leq C_{m,p} \mathcal{H}^1(\Gamma)
\end{align*}
\end{proof}

\section{The filtrations}\label{filtrations}
The following definition is taken from Lemma 3.11 of \cite{Sc07_hilbert} (adapted to be $m$-adic rather than dyadic). Fix a large constant $J\in\mathbb{N}$.

\begin{definition}
A \textit{filtration} is a family $\mathcal{F}=\cup_{n=n_0}^\infty \mathcal{F}_n$ of sub-arcs of $\gamma$ with the following properties.
\begin{enumerate}[(1)]
\item The integer $n_0$ satisfies $m^{-n_0} \leq \diam(\gamma) < m^{-n_0+1}$.
\item If $\tau_{n+1}\in \mathcal{F}_{n+1}$, then there is a unique $\tau_n\in\mathcal{F}_n$ such that $\tau_{n+1}\subset \tau_n$.
\item If $\tau_n\in \mathcal{F}_n$, then $m^{-nJ}\leq \diam(\tau_n) \leq Am^{-nJ+2}$.
\item If $\tau_n, \tau'_n\in\mathcal{F}_n$, then $\tau_n \cap \tau'_n$ must be either empty, a single point, or two points.
\item $\cup_{\tau_0 \in\mathcal{F}_{n_0}} \tau_0 = \cup_{\tau_n \in\mathcal{F}_n} \tau_n$ for all $n\geq 0$.
\end{enumerate} 
\end{definition}

Let $\gamma_0 = \pi_0 \circ \gamma:[0,1]\rightarrow \RR$. We have to set up some appropriate filtrations, based on $\gamma_0$. There is an obvious correspondence between filtrations of $\gamma$ and filtrations of $\gamma_0$.

\begin{rmk}
When we discuss an arc $\tau$ in a curve $\gamma$ or $\gamma_0$, we are referring to the restriction $\gamma|_{[a,b]}$ or $\gamma_0|_{[a,b]}$ of the curve to an interval $[a,b]\subset [0,1]$. When we say that two arcs $\tau$ and $\tau'$ intersect, or that $\tau\subset \tau'$, we are referring to their associated intervals in the domain of $\gamma$ or $\gamma_0$.

On the other hand, when we refer to the diameter of an an arc $\tau$, we are referring to the diameter of its image. If $\tau$ is an arc of $\gamma$, this image is in $X$, whereas if $\tau$ is an arc of $\gamma_0$, this image is in $\RR$. Note, however, that by \eqref{pipreservesdiam} the diameter of $\tau$ is the same, up to an absolute multiplicative constant, whether it is considered as an arc of $\gamma$ in $X$ or an arc of $\gamma_0$ in $\RR$.

See \cite{LS14}, Remark 2.15 for a similar remark in the Heisenberg group context.
\end{rmk}

For $k\in\mathbb{N}\cup\{0\}$, let $D_k = m^{-k}\mathbb{Z}\subset \RR$ denote the $m$-adic grid at scale $k$ in $\RR$.

Let $E_k = \gamma_0^{-1}(D_k) \subset [0,1]$, and let
$$ F_k = \{t\in E_k : \gamma_0(\sup\{s\in E_k: s<t\}) \neq \gamma_0(t)\} $$
The set $F_k$ contains a time $t$ whenever $\gamma_0(t)$ is an element of $D_k$ different from the previous one. (We include in $F_k$ the first time $t\in E_k$.)

Write $F_k = \{t_1 < t_2 < \dots < t_N\}\subset [0,1]$. It has the following simple properties:
\begin{itemize}
\item $\gamma_0(F_k) = \gamma_0([0,1]) \cap D_k$. 
\item $|\gamma_0(t_i) - \gamma_0(t_{i+1})| = m^{-k}$ for each $i$. (So in particular, as $\gamma_0$ is Lipschitz, $F_k$ is finite.)
\item Each of the sets $\gamma_0([0,t_1])$, $\gamma_0([t_i, t_i+1])$, and $\gamma_0([t_N, 1])$ has diameter $<2m^{-k}$.
\end{itemize}

The following lemma will be useful later on.

\begin{lemma}\label{Fkdense}
Let $[a,b]\subset [0,1]$ be an interval such that $\diam\left(\gamma_0([a,b])\right) \geq 2m^{-k}$. Then
\begin{equation}\label{Fkdense1}
\gamma_0([a,b]) \subset N_{2m^{-k}}(\gamma_0([a,b]\cap F_k)) \subset \RR
\end{equation}
and
\begin{equation}\label{Fkdense2}
\gamma([a,b]) \subset N_{2Cm^{-k}}(\gamma([a,b]\cap F_k)) \subset X.
\end{equation}
\end{lemma}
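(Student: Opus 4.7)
The whole lemma follows by exploiting the third bullet property of $F_k$ — that $\gamma_0$ has diameter strictly less than $2m^{-k}$ on each of the intervals $[0,t_1]$, $[t_i,t_{i+1}]$, $[t_N,1]$ — together with Lipschitz light-type control \eqref{pipreservesdiam} for the upgrade from $\gamma_0$ to $\gamma$.

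First I would show that the hypothesis $\diam(\gamma_0([a,b]))\ge 2m^{-k}$ forces $F_k\cap[a,b]\ne\emptyset$. Indeed, the intervals $[0,t_1]$, $[t_1,t_2],\ldots,[t_{N-1},t_N]$, $[t_N,1]$ form a cover of $[0,1]$. If $[a,b]$ intersected no element of $F_k$, it would be contained in one of these intervals, and hence $\diam(\gamma_0([a,b]))<2m^{-k}$, contradicting the hypothesis. So I may write $[a,b]\cap F_k=\{t_p<t_{p+1}<\cdots<t_q\}$ with $p\le q$.

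Next I would verify \eqref{Fkdense1} by a short case analysis on where $s\in[a,b]$ lies relative to $t_p$ and $t_q$. If $t_p\le s\le t_q$, then $s\in[t_i,t_{i+1}]$ for some $p\le i<q$, so $|\gamma_0(s)-\gamma_0(t_i)|<2m^{-k}$ with $t_i\in[a,b]\cap F_k$. If $s<t_p$, then since $t_{p-1}$ (or $0$, if $p=1$) lies outside $[a,b]$ while $t_p\in[a,b]$, the interval $[a,t_p]$ is contained in $[t_{p-1},t_p]$ (or $[0,t_1]$), and the third bullet property gives $|\gamma_0(s)-\gamma_0(t_p)|<2m^{-k}$. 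The case $s>t_q$ is symmetric. This proves the first inclusion.

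For \eqref{Fkdense2} I would upgrade each of the estimates above from $\RR$ to $X$ using \eqref{pipreservesdiam}: since each sub-arc $\gamma([t_i,t_{i+1}])$ is connected with $\pi_0$-image $\gamma_0([t_i,t_{i+1}])$ of diameter $<2m^{-k}$, we get
$$\diam(\gamma([t_i,t_{i+1}]))\le C\,\diam(\gamma_0([t_i,t_{i+1}]))<2Cm^{-k},$$
and similarly for the boundary arcs $\gamma([0,t_1])$ and $\gamma([t_N,1])$. Repeating the same three-case analysis in $X$ yields $d(\gamma(s),\gamma(t_i))<2Cm^{-k}$ for some $t_i\in[a,b]\cap F_k$, which is exactly \eqref{Fkdense2}. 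There is no real obstacle here — the only slightly delicate point is the case where $p=1$ or $q=N$, where one must use the initial/terminal diameter bound rather than the between-consecutive-$t_i$'s bound.
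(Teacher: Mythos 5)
Your proof is correct and follows essentially the same route as the paper: both arguments rest on the fact that $\gamma_0$ has diameter $<2m^{-k}$ between consecutive points of $F_k$ (and on the initial/terminal intervals), use the hypothesis $\diam(\gamma_0([a,b]))\geq 2m^{-k}$ to guarantee a nearby point of $F_k\cap[a,b]$, and then upgrade to $X$ via \eqref{pipreservesdiam}. The only difference is presentational: the paper streamlines your three-case analysis by choosing consecutive points of $F_k\cup\{0,1\}$ around $s$ and noting one of them must lie in $[a,b]$, but the content is the same.
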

\begin{proof}
Take $s\in [a,b]$. Choose consecutive points $t_i < t_{i+1}$ in $F_k \cup \{0,1\}$ such that $s\in [t_i, t_{i+1})$. Because $\diam \left(\gamma_0([a,b])\right) \geq 2m^{-k}$, it must be the case that either $t_i$ or $t_{i+1}$ is in $[a,b]$. (Otherwise $\diam \left(\gamma_0([a,b])\right) \leq \diam \left(\gamma_0[t_i, t_{i+1}]\right) < 2m^{-k}$.)

It then follows that
$$ \dist(\gamma_0(s) , \gamma_0([a,b]\cap F_k)) \leq \diam \left(\gamma_0([t_i, t_{i+1}])\right) < 2m^{-k},$$
which proves (\ref{Fkdense1}).

The second part, (\ref{Fkdense2}), follows from \eqref{pipreservesdiam}. Indeed,
$$ \dist(\gamma(s) , \gamma([a,b]\cap F_k)) \leq \diam \left(\gamma([t_i, t_{i+1}])\right) \leq C\diam \left(\gamma_0([t_i, t_{i+1}])\right) \leq 2Cm^{-k}.$$
\end{proof}

We now define, for each $k\in\mathbb{N}$, a collection of arcs
$$\mathcal{F}^0_k  = \{ \gamma_0|_{[t_i, t_{i+2}]} : t_i<t_{i+1}<t_{i+2} \text{ are consecutive in } F_k\}.$$
Then the collections $\mathcal{F}^0_k$ satisfy the conditions of Lemma 3.13 of \cite{Sc07_hilbert}, with $C=4$ and $A=1$ (and dyadic scales replaced by $m$-adic scales). So (by a slight modification of Lemma 3.13 of \cite{Sc07_hilbert}), we obtain the following:

\begin{lemma}\label{filtration}
For some constant $J=J_X>0$, there are $J$ filtrations $\mathcal{F}^1, \dots, \mathcal{F}^{CJ}$ of $\gamma$ with the following property:

If $\tau = \gamma_0|[a, b] \subset \mathcal{F}^0_k$, then there is an arc $\tau'=\gamma_0|[a',b']$ in a filtration $\mathcal{F}^i$ such that $\tau'\supseteq \tau$ and 
$$ \max\{\diam \left(\gamma_0([a',a])\right), \diam \left( \gamma_0([b,b']) \right)\} \leq \frac{1}{10}\diam(\tau).$$
\end{lemma}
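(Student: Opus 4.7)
The plan is to verify the structural hypotheses of the families $\{\mathcal{F}^0_k\}_{k \ge 0}$ and then invoke an $m$-adic adaptation of Lemma~3.13 of \cite{Sc07_hilbert}. Each arc $\tau = \gamma_0|_{[t_i,t_{i+2}]} \in \mathcal{F}^0_k$ has $\gamma_0$-diameter between $m^{-k}$ and $2m^{-k}$, since the three values $\gamma_0(t_i)$, $\gamma_0(t_{i+1})$, $\gamma_0(t_{i+2})$ lie in $D_k$ with consecutive pairs exactly $m^{-k}$ apart. Consecutive arcs in $\mathcal{F}^0_k$ share a common subarc, and by Lemma~\ref{Fkdense} the arcs in $\mathcal{F}^0_k$ cover $\gamma_0([t_1,t_N])$ densely at scale $m^{-k}$. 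These are the structural hypotheses required by Schul's lemma.

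Next I would construct the $CJ$ filtrations top-down. Choose $J$ large compared to $\log_m A$, and an integer $C$ depending on $A$, $m$, and the buffer constant $1/10$. Index the filtrations by pairs $(r,s)$ with $r \in \{0,\dots,J-1\}$ and $s \in \{0,\dots,C-1\}$. At level $n$ of $\mathcal{F}^{(r,s)}$, the arcs are of the form $\gamma_0|_{[u,v]}$, where $u,v$ are consecutive block endpoints obtained by partitioning the grid $F_{nJ+r}$ into blocks of a fixed length starting at offset $s$. The block length is chosen so that every resulting arc has diameter in the range dictated by condition~(3) of the filtration definition, after a harmless shift of the level index by the residue~$r$. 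Nesting between levels is automatic from $F_{nJ+r} \subset F_{(n+1)J+r}$ together with a consistent refinement of blocks across levels, which gives conditions~(2) and~(5); conditions~(1),~(3),~(4) then follow directly.

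To verify the covering property, I would use a pigeonhole over the $C$ offsets. Given $\tau = \gamma_0|_{[t_i, t_{i+2}]} \in \mathcal{F}^0_k$ with $k = nJ + r$, at least one offset $s \in \{0,\dots,C-1\}$ places every block boundary of $\mathcal{F}^{(r,s)}$ at level $n$ at $\gamma_0$-distance greater than $\frac{1}{10}\diam(\tau)$ from both $t_i$ and $t_{i+2}$. The unique level-$n$ arc $\tau'$ of $\mathcal{F}^{(r,s)}$ enclosing $\tau$ is then the required arc, and the buffer condition $\max\{\diam(\gamma_0([a',a])), \diam(\gamma_0([b,b']))\} \le \frac{1}{10}\diam(\tau)$ is automatic.

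The main obstacle will be balancing the choice of $C$ and $J$: $C$ must be large enough that the pigeonhole step succeeds uniformly over all $\tau$, and both constants must depend only on the data of $X$ and on the buffer constant $1/10$, not on $\gamma$ or $k$. A secondary technical point is that because $\mathcal{F}^{(r,s)}$ uses scale $m^{-(nJ+r)}$ at its ``level $n$'', condition~(3) of the filtration definition must be interpreted up to a shift of the level index by $r$; this does not alter the strategy but must be tracked carefully. Once these constants are fixed and the block construction is carried out, the verification of conditions (1)--(5) and the buffer inequality becomes routine.
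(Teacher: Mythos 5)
Your opening paragraph is exactly the paper's proof: the paper verifies that the collections $\mathcal{F}^0_k$ satisfy the hypotheses of Lemma 3.13 of \cite{Sc07_hilbert} (with $C=4$, $A=1$, and dyadic scales replaced by $m$-adic ones) and then simply cites that lemma. Had you stopped there, the proposal would match the paper, apart from one small slip: an arc of $\mathcal{F}^0_k$ can have diameter up to nearly $3m^{-k}$, not $2m^{-k}$ (each of $\gamma_0([t_i,t_{i+1}])$ and $\gamma_0([t_{i+1},t_{i+2}])$ has diameter $<2m^{-k}$ and they can overhang in opposite directions), which is why the paper takes $C=4$.

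The trouble is that you then offer your own construction of the $CJ$ filtrations, i.e.\ a proof of the adapted Lemma 3.13, and that construction has genuine gaps. First, blocks of \emph{fixed cardinality} in $F_{nJ+r}$ do not satisfy condition (3) of the filtration definition: $\gamma_0$ can oscillate between two adjacent points of $D_{nJ+r}$, so an arc spanning arbitrarily many consecutive points of $F_{nJ+r}$ can have diameter $\approx 2m^{-(nJ+r)}$, far below the required lower bound; blocks must be chosen by diameter thresholds, hence with variable cardinality, and this is where the real work begins. Second, nesting is not ``automatic'' from $F_{nJ+r}\subset F_{(n+1)J+r}$: level-$(n+1)$ block boundaries obtained by counting points of the finer grid (with an offset) have no reason to lie inside the level-$n$ blocks; and if you force nesting by subdividing each coarse block, the finer boundaries are constrained, so a pigeonhole over a single global offset $s\in\{0,\dots,C-1\}$ no longer gives a choice that works uniformly in $k$ --- arranging nesting while keeping enough freedom to serve every $\tau\in\mathcal{F}^0_k$ is precisely the content of Schul's lemma. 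Third, your pigeonhole criterion points the wrong way: the lemma requires the containing filtration arc $\tau'=\gamma_0|[a',b']$ to exceed $\tau$ only by overhangs with $\diam(\gamma_0([a',a])),\diam(\gamma_0([b,b']))\le\frac{1}{10}\diam(\tau)$, i.e.\ the arc boundaries must be \emph{close} (in image diameter) to $t_i$ and $t_{i+2}$; choosing the offset so that every block boundary is at $\gamma_0$-distance \emph{greater} than $\frac{1}{10}\diam(\tau)$ from both endpoints forces $\diam(\gamma_0([a',a]))>\frac{1}{10}\diam(\tau)$ and thus contradicts, rather than implies, the required inequality. So either cite the $m$-adic version of Lemma 3.13 of \cite{Sc07_hilbert} as the paper does, or redo the construction with diameter-based, nested blocks and a correctly oriented counting argument over residues and offsets.
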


For convenience, let $\hat{\mathcal{F}}$ denote the union $\mathcal{F}^1 \cup \dots \cup \mathcal{F}^{J}$. Let $C'=C'_X=5Cm$, where $C$ is the constant from \eqref{pipreservesdiam}. Given a ball $B$, let $\hat{\mathcal{F}}(B)$ denote the collection of all arcs $\tau = \gamma_0|[a,b]$ in $\hat{\mathcal{F}}$ such that $\gamma([a,b]) \subseteq 4C' B$.

We now divide our collection $\mathcal{G}$ of balls into two types. Fix an absolute constant $\epsilon_0>0$, which will be chosen to be sufficiently small depending on the data of $X$. Recall the definitions of $\beta$ and $\tilde{\beta}$ from subsection \ref{betasubsection}.

$$ \mathcal{G}_1 = \{B\in\mathcal{G}_0 : \exists \tau \in \hat{\mathcal{F}}(B) \text{ with } \tilde{\beta}_{\gamma_0}(\tau) \geq 1/10 \text{ and } \diam (\tau) \geq \epsilon_0 \beta_\Gamma(B) \diam(B)\}$$

$$ \mathcal{G}_2 = \mathcal{G}_0\setminus\mathcal{G}_1 = \{B\in\mathcal{G}_0 : \tilde{\beta}_{\gamma_0}(\tau) < 1/10 \text{ for every arc } \tau\in \hat{\mathcal{F}}(B) \text{ with } \diam(\tau) \geq \epsilon_0 \beta_\Gamma(B) \diam(B)\}$$

Balls in $\mathcal{G}_1$ are ``non-flat'' (have one large, non-flat arc), while balls in  $\mathcal{G}_2$ are ``flat'' (i.e., consist of multiple far flat pieces).  The idea of dividing the collection of balls into two sub-collections with these qualitative features is now standard (see \cite{Ok92}, \cite{Sc07_hilbert}, \cite{Sc07_metric}, \cite{LS14}) but our particular division is sensitive to the current context.

\section{Non-flat balls}

\begin{prop}\label{nonflat}
For any $p>1$, we have 
$$\sum_{B\in \mathcal{G}_1} \beta_\Gamma(B)^p \diam(B) \leq C_{p,X} \mathcal{H}^1(\Gamma).$$
\end{prop}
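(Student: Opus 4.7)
The plan is to charge each ball of $\mathcal{G}_1$ to a witness arc from one of the filtrations, show that the contribution from balls sharing a single witness is controlled by the diameter of the witness, and close by applying the Pythagorean-type filtration inequality to the real-valued projection $\gamma_0 = \pi_0\circ\gamma$. For each $B \in \mathcal{G}_1$, using the definition of $\mathcal{G}_1$, fix an arc $\tau(B) \in \hat{\mathcal{F}}(B)$ with $\tilde{\beta}_{\gamma_0}(\tau(B)) \geq 1/10$ and $\diam(\tau(B)) \geq \epsilon_0\beta_\Gamma(B)\diam(B)$, together with an index $i(B)\in\{1,\dots,J\}$ for which $\tau(B)\in\mathcal{F}^{i(B)}$. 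Then
\[
\sum_{B\in\mathcal{G}_1} \beta_\Gamma(B)^p\diam(B) \leq \sum_{i=1}^{J} \sum_{\substack{\tau\in\mathcal{F}^i \\ \tilde{\beta}_{\gamma_0}(\tau)\geq 1/10}} \Bigl(\sum_{B:\,\tau(B)=\tau,\,i(B)=i} \beta_\Gamma(B)^p\diam(B)\Bigr).
\]

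The main technical step is the bound, for each fixed $\tau\in\hat{\mathcal{F}}$,
\begin{equation}\label{proposed_witness_bound}
\sum_{B:\,\tau(B)=\tau} \beta_\Gamma(B)^p\diam(B) \leq C_{p,X}\diam(\tau).
\end{equation}
Three observations enter. First, the requirement $\tau\in\hat{\mathcal{F}}(B)$ forces $\gamma(\tau)\subseteq 4C'B$, hence by \eqref{pipreservesdiam} applied to the connected set $\gamma(\tau)$ we get $\diam(\tau)=\diam(\gamma_0(\tau))\leq\diam(\gamma(\tau))\leq 4C'\diam(B)$, i.e.\ $\diam(B)\geq c_1\diam(\tau)$. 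Second, the centers of the balls in $\mathcal{G}$ of radius $2Am^{-n}$ are $m^{-n}$-separated in $\Gamma$, and $\gamma(\tau)\subseteq 4C'B$ confines the relevant centers to a ball of radius $O(m^{-n})$ about any fixed point of $\gamma(\tau)$; by the doubling property of $X$, at most $C_X$ balls at a given scale can share the witness $\tau$. Third, rearranging the size condition $\diam(\tau)\geq \epsilon_0\beta_\Gamma(B)\diam(B)$ yields $\beta_\Gamma(B)\leq \diam(\tau)/(\epsilon_0\diam(B))$, so each qualifying ball contributes at most a constant multiple of $\diam(\tau)^p\diam(B)^{1-p}$. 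Summing over the admissible scales $n$ (those with $2Am^{-n}\geq c_1\diam(\tau)$) yields a geometric series in $m^{-n}$; because $p>1$, this series converges and is dominated by its largest term, with total bounded by $C_{p,X}\diam(\tau)$, which is \eqref{proposed_witness_bound}.

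Given \eqref{proposed_witness_bound}, it remains to bound $\sum_i\sum_{\tau\in\mathcal{F}^i,\,\tilde{\beta}_{\gamma_0}(\tau)\geq 1/10}\diam(\tau)$ by $C\,\mathcal{H}^1(\Gamma)$. Since $\gamma_0=\pi_0\circ\gamma:[0,1]\to\RR$ is Lipschitz with $\length(\gamma_0)\leq\length(\gamma)\lesssim\mathcal{H}^1(\Gamma)$, the Pythagorean-type inequality for filtrations of a real-valued curve (as developed in \cite{Sc07_hilbert}) gives $\sum_{\tau\in\mathcal{F}^i}\tilde{\beta}_{\gamma_0}(\tau)^2\diam(\tau)\leq C\,\length(\gamma_0)$, and Chebyshev's inequality upgrades this to the needed bound on arcs with $\tilde{\beta}_{\gamma_0}(\tau)\geq 1/10$. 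Summing over the bounded number $J$ of filtrations completes the proof. The principal obstacle is \eqref{proposed_witness_bound}: controlling the overcount when balls at all scales, including arbitrarily small ones, claim a common witness. The hypothesis $p>1$ enters decisively there---at $p=1$ the geometric series diverges, consistent with the failure of Theorem~\ref{upperbound} at $p=1$ noted after its statement.
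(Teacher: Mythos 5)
Your proposal is correct and follows essentially the same route as the paper: bound $\beta_\Gamma(B)\diam(B)$ by $\diam(\tau_B)/\epsilon_0$ using the defining property of $\mathcal{G}_1$, regroup by witness arc, control the overcount via doubling and a geometric series in scales (the paper packages this as the bound \eqref{doublingbound}, which is exactly where $p>1$ enters), and finish with the real-valued filtration inequality of Lemma 3.11 of \cite{Sc07_hilbert} together with the Chebyshev-type observation $\tilde{\beta}_{\gamma_0}(\tau)\geq 1/10$. The only differences are cosmetic: you unpack the doubling/geometric-series estimate explicitly and index by filtration, where the paper cites \eqref{doublingbound} and sums over the $J$ filtrations at the end.
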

\begin{proof}
First note that if $E$ is a set in $X$ with at least one point and $q>0$, then
\begin{equation}\label{doublingbound}
\sum_{B \in \mathcal{G}, 4C'B\supseteq E} \diam(B)^{-q} \leq C_{q,X} \diam(E)^{-q}
\end{equation}
because of the doubling property of $X$. (At most $C_X$ balls of each scale bigger than $\diam(E)$ can contain $E$.)

Now, for each $B$ in $\mathcal{G}_1$, let $\tau_B\subset 4C'B$ be an arc in $\hat{\mathcal{F}}(B)$ such that
$$  \tilde{\beta}_{\gamma_0}(\tau_B) \geq 1/10 \text{ and } \diam(\tau_B) \geq \epsilon_0 \beta_\Gamma(B) \diam(B). $$
Then, using (\ref{doublingbound}), we have
\begin{align*}
\sum_{B\in \mathcal{G}_1} \beta_\Gamma(B)^p \diam(B) &= \sum_{B\in \mathcal{G}_1} \left(\beta_\Gamma(B) \diam(B)\right)^p \diam(B)^{1-p}\\
&\leq C_{p,X} \sum_{B\in \mathcal{G}_1} \diam (\tau_B)^p \diam(B)^{1-p}\\
&= C_{p,X} \sum_{\tau \in \hat{\mathcal{F}}, \tilde{\beta}_{\gamma_0}(\tau)\geq 1/10} \diam(\tau)^p \sum_{B\in\mathcal{G}_1, \tau_B = \tau} \diam(B)^{1-p}\\
&\leq C_{p,X} \sum_{\tau\in \hat{\mathcal{F}}, \tilde{\beta}_{\gamma_0}(\tau)\geq 1/10} \diam(\tau)^p \diam(\tau)^{1-p}\\
&= C_{p,X} \sum_{\tau\in \hat{\mathcal{F}}, \tilde{\beta}_{\gamma_0}(\tau)\geq 1/10} \diam(\tau)\\
&\leq C_{p,X} \sum_{\tau\in \hat{\mathcal{F}}, \tilde{\beta}_{\gamma_0}(\tau)\geq 1/10} \tilde{\beta}_{\gamma_0}(\tau)^2 \diam(\tau)\\
&= C_{p,X} \sum_{\tau\in \hat{\mathcal{F}}} \tilde{\beta}_{\gamma_0}(\tau)^2 \diam(\tau)\\
&\leq C_{p,X} \mathcal{H}^1(\Gamma)%\length(\Gamma)
\end{align*}
The last inequality follows from Lemma 3.11 of \cite{Sc07_hilbert} (in the case where the image is $\RR$), and the fact that there are a controlled number $J=J_X$ of different filtrations making up $\hat{\mathcal{F}}$. (Observe that the proof of Lemma 3.11 of \cite{Sc07_hilbert} works exactly the same way in the presence of an $m$-adic filtration, as we have here, rather than a dyadic filtration as in \cite{Sc07_hilbert}.)
\end{proof}

\section{Flat balls}
This section is devoted to the proof of the following proposition.

\begin{prop}\label{flat}
For any $p>1$, we have 
$$\sum_{B\in \mathcal{G}_2} \beta_\Gamma(B)^p \diam(B) \leq C_p \length(\Gamma).$$
\end{prop}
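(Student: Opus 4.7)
The underlying intuition is that for a flat ball $B\in\mathcal G_2$, the projection $\gamma_0=\pi_0\circ\gamma$ is nearly monotone on all relevant sub-arcs in $4C'B$, so the only way $\Gamma\cap 2B$ can fail to lie close to a single monotone geodesic is if $\gamma$ visits distinct branches of the inverse limit that project to (nearly) the same base interval in $X_0$. Accordingly, I plan to charge each flat ball's $\beta$-contribution to the arc length that $\gamma$ spends traveling between such branches.

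The first step is to extract, for each $B\in\mathcal G_2$ with $\beta_\Gamma(B)\geq\epsilon_0$, a pair of points $x_B,y_B\in\Gamma\cap 2B$ with $|\pi_0(x_B)-\pi_0(y_B)|\ll\beta_\Gamma(B)\diam(B)$ yet $d(x_B,y_B)\gtrsim \beta_\Gamma(B)\diam(B)$. Such a pair must exist: otherwise one could use Lemma \ref{monotonefacts} to piece together the nearly-monotone arcs of $\hat{\mathcal F}(B)$ into a single monotone geodesic approximating $\Gamma\cap B$ at scale strictly better than $\beta_\Gamma(B)\diam(B)$, contradicting the definition of $\beta_\Gamma(B)$. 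A sub-arc $\sigma_B$ of $\gamma$ joining the preimages of $x_B$ and $y_B$ (localized to lie inside a bounded dilate of $B$ by taking the minimal piece that realizes this separation) then has length at least $d(x_B,y_B)\gtrsim \beta_\Gamma(B)\diam(B)$; I will call this the \emph{bridge} associated to $B$.

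The remaining task is a Carleson-style packing estimate. Rewriting $\beta_\Gamma(B)^p\diam(B) = \beta_\Gamma(B)^{p-1}\bigl(\beta_\Gamma(B)\diam(B)\bigr)$, one would like to sum over flat balls by appealing to the doubling of $X$ together with a geometric series in scale of the form $\sum_k m^{-k(p-1)}$, which converges exactly when $p>1$. The counter-example in Section \ref{counterexample} shows that no such estimate can hold for $p=1$; this reflects the fact that in these spaces one has only triangle-inequality length gains ($\beta_\Gamma(B)\diam(B)$) per flat ball, rather than the Pythagorean gains ($\beta_\Gamma(B)^2\diam(B)$) available in Hilbert space.

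The main obstacle is precisely this packing step. A single sub-arc of $\gamma$ can serve as the bridge $\sigma_B$ for many different balls $B$ across many scales, and the argument must exhibit enough sparsity to extract a convergent geometric series. Concretely, one should show that a bridge of length $h$ is used only for balls $B$ whose ``branching height'' $\beta_\Gamma(B)\diam(B)$ is comparable to $h$, with quantitative decay as the scale $\diam(B)$ leaves this band. I would approach this via a stopping-time or martingale construction in the spirit of \cite{Sc07_hilbert}, but with the Pythagorean gains there replaced by the scale-structure gains afforded by the branching axioms \eqref{admsys3}--\eqref{admsys4} of Definition \ref{admissiblesystem}, which control how many distinct fibers of $\pi_n$ can meet a given ball and so bound the multiplicity with which bridges can be re-used.
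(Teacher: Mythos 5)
Your overall heuristic is the right one, and your first step is essentially the paper's Lemma \ref{farflatarc}: for a $\mathcal{G}_2$-ball, any single arc is nearly monotone (Lemma \ref{flatarc}), so a definite $\beta$-number forces a second piece of $\Gamma\cap 2B$ that is close to the main arc in $\pi_0$-projection but at distance $\gtrsim\beta_\Gamma(B)\diam(B)$ from it, yielding a ``linear'' length gain per ball and explaining why only $p>1$ can work. (Minor slip: the sum runs over all of $\mathcal{G}_2$, not only balls with $\beta_\Gamma(B)\geq\epsilon_0$; in the paper $\epsilon_0$ enters the definition of $\mathcal{G}_1,\mathcal{G}_2$ through arcs, and all $\beta$-levels must be handled, e.g.\ by decomposing into bands $\beta_\Gamma(B)\in[m^{-M},m^{-M+1})$.)

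The genuine gap is the packing step, which you explicitly defer, and which is the actual content of the proof. Moreover, the sparsity statement you propose --- that a bridge of length $h$ is reused only by balls of comparable branching height, with bounded multiplicity --- is not sufficient and, in its pointwise form, is false: at spiral-type points a finite-length curve can have $\beta_\Gamma\simeq 1$ for balls at every scale, and within a fixed band $\beta\simeq m^{-M}$ the gain per ball is only an $\epsilon_0 m^{-M}$-fraction of its diameter, so nested flat balls can stack to depth $\simeq m^{M}$ over the same piece of $\Gamma$. The correct bound, which the paper proves (Proposition \ref{p:martingale-prop}), is that within one band the total diameter is at most $C\epsilon_0^{-1}m^{M}\mathcal{H}^1(\Gamma)$ --- a loss of $1/\beta$, not $O(1)$ --- and it is precisely the factor $\beta^{p-1}\simeq m^{-M(p-1)}$ that absorbs this loss when $p>1$. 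To obtain that bound one needs more than the existence of a bridge: one needs the quantitative covering inequality of Proposition \ref{covering} (any cover of the main arc together with the far arc by balls of diameter $<10\epsilon_0\beta_\Gamma(B)\diam(B)$ has total diameter at least $4r+\epsilon_0\beta_\Gamma(B)\diam(B)$), where the separation $\dist(\xi,\tau')>10\epsilon_0\beta_\Gamma(B)\diam(B)$ and the choice of the cube parameter $J'$ depending on $M$ guarantee the gain is not absorbed by the children cubes; this feeds the martingale weights of Corollary \ref{c:four-seven} and Proposition \ref{p:martingale-prop}. Until you construct such a band-by-band cube/martingale (or an equivalent mechanism controlling within-band nesting), the proposal does not yield the proposition.
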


Since $\mathcal{G}=\mathcal{G}_0 \cup \mathcal{G}_1 \cup \mathcal{G}_2$, combining Proposition \ref{flat}, Proposition \ref{nonflat}, Lemma \ref{G0lemma}, and Lemma \ref{reductionlemma} proves Theorem \ref{upperbound}.

To prove Proposition \ref{flat}, we follow roughly the outline in Section 4 of \cite{LS14}. This involves first proving some geometric results about the structure of arcs in $\mathcal{G}_2$-balls, and then using a geometric martingale argument, of the kind appearing in Section 4.2 of \cite{LS14}, as well as in \cite{Sc07_hilbert}, \cite{Sc07_metric}.

Given one of our filtrations $\mathcal{F}=\mathcal{F}^i$ defined above, let $\mathcal{G}_\mathcal{F}$ be the collection of all balls in $\mathcal{G}$ such that there is an arc $\tau'\in \mathcal{F}$ containing the center of $B$ such that $\tau'$ contains an element of $\Lambda(B)$ and $\diam(\tau')\leq C'\diam(B)$. 

\begin{lemma}
Every ball $B$ in $\mathcal{G}_0$ is in $\mathcal{G}_\mathcal{F}$ for at least one of our filtrations $\mathcal{F}\in\{\mathcal{F}^1, \dots, \mathcal{F}^{CJ}\}$.
\end{lemma}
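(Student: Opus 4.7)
The plan is to exhibit, for each $B\in \mathcal{G}_0$, an arc $\tau'$ in some $\mathcal{F}^i\in\{\mathcal{F}^1,\dots,\mathcal{F}^{CJ}\}$ that (i) contains the center of $B$, (ii) contains an element of $\Lambda(B)$, and (iii) has $\diam(\tau')\leq C'\diam(B)$.

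I would first pick $t_x\in[0,1]$ with $\gamma(t_x)=x$, the center of $B$, and let $\tau_0=[a_0,b_0]$ be the connected component of $\gamma^{-1}(2B)$ containing $t_x$; by construction $\tau_0\in\Lambda(B)$. Since $B\in\mathcal{G}_0$, the endpoints $\gamma(0),\gamma(1)$ both lie outside $10mB\supseteq 2B$, so $\gamma(a_0),\gamma(b_0)\in\partial(2B)$. Combining the upper bound $\gamma(\tau_0)\subseteq 2B$ with the lower bound coming from $x\in\gamma(\tau_0)$ and an endpoint on $\partial(2B)$ pins down $\diam(\gamma(\tau_0))$ between a fixed multiple of $\diam(B)$ from below and $\diam(2B)$ from above; by \eqref{pipreservesdiam} the same comparability holds for $\diam(\gamma_0(\tau_0))$.

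Next I would select $k\in\mathbb{N}$ so that $\diam(\gamma_0(\tau_0))<m^{-k}\leq m\cdot\diam(\gamma_0(\tau_0))$. Since $\gamma_0(\tau_0)$ is an interval in $\mathbb{R}$ of length strictly less than $m^{-k}$, it contains at most one point of $D_k=m^{-k}\mathbb{Z}$, so $\tau_0\cap F_k$ has at most one element. I then claim one can choose consecutive $t_i<t_{i+1}<t_{i+2}$ in $F_k$ with $\tau_0\subseteq [t_i,t_{i+2}]$: the condition $\gamma(0),\gamma(1)\notin 10mB$ forces $\gamma([0,a_0])$ and $\gamma([b_0,1])$ each to have diameter much larger than $m^{-k}$, so Lemma \ref{Fkdense} populates $F_k$ on both sides of $\tau_0$, and a short case check (depending on whether $\tau_0\cap F_k$ is empty or a singleton) produces the required triple.

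Setting $\tau=\gamma_0|_{[t_i,t_{i+2}]}\in\mathcal{F}^0_k$, the standard oscillation bound for $\gamma_0$ between consecutive $F_k$-times gives $\diam(\gamma_0(\tau))\leq 4m^{-k}$. Lemma \ref{filtration} then supplies some $\mathcal{F}^i$ and an arc $\tau'\in\mathcal{F}^i$ with $\tau'\supseteq\tau$ and $\diam(\gamma_0(\tau'))\leq\tfrac{12}{10}\diam(\gamma_0(\tau))$; transferring through \eqref{pipreservesdiam} and the choice $C'=5Cm$, one gets $\diam(\tau')\leq C'\diam(B)$. The three conditions defining $\mathcal{G}_{\mathcal{F}^i}$ are then immediate: $t_x\in\tau_0\subseteq\tau\subseteq\tau'$ puts the center $x=\gamma(t_x)$ in $\gamma(\tau')$; $\tau'\supseteq\tau_0\in\Lambda(B)$ is clear; and the diameter bound was just verified. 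The main obstacle is calibrating constants: the enlargement factor $10m$ in the definition of $\mathcal{G}_0$ must be large enough relative to $C$ from \eqref{pipreservesdiam} to simultaneously guarantee $|F_k\cap\tau_0|\leq 1$ for the chosen $k$, that $F_k$ extends far enough past $\tau_0$ on each side to supply the triple, and that the resulting $\diam(\tau')$ still fits within $C'\diam(B)$.
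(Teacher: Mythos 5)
Your argument is essentially the paper's: fix a scale $m^{-k}$ comparable to $\diam(2B)$, use the spacing of the $F_k$-times together with the $\mathcal{G}_0$ condition to trap the $\Lambda(B)$-component through the center inside an arc of $\mathcal{F}^0_k$, and then pass to a filtration arc via Lemma \ref{filtration}; your case split (whether $F_k\cap\tau_0$ is empty or a singleton) just replaces the paper's check of which of the consecutive points $\gamma(t_i),\gamma(t_{i+1})$ leave $2B$. The constant calibration you flag --- needing the factor $10m$ in the definition of $\mathcal{G}_0$ to dominate the constant $C$ of \eqref{pipreservesdiam} so that $F_k$ has points on both sides of $\tau_0$ --- is equally implicit in the paper's own step ``we cannot have $t_x<t_2$ or $t_x>t_{N-1}$,'' and it is harmless rather than a gap, since Lemma \ref{G0lemma} holds verbatim with $10m$ replaced by any fixed constant (e.g.\ $10Cm$), which closes both arguments.
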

\begin{proof}
Choose $k\in\mathbb{N}$ such that
$$ m^{-k-1} \leq \diam(2B) < m^{-k}. $$

Let $\{t_1<t_2<\dots < t_N\}$ be the points of $F_k$, as defined in Section \ref{filtrations}. Let $x=\gamma(t_x)$ be the center of $B$. Observe that we cannot have $t_x<t_2$ or $t_x>t_{N-1}$, since $10m B$ does not contain either endpoint of $\gamma$ as $B\in\mathcal{G}_0$.

It follows that $t_i \leq t_x \leq t_{i+1}$ for some $i\in\{2,\dots, N-1\}$. Since $d(\gamma(t_j), \gamma(t_{j+1}))\geq m^{-k}$ for $t_j\in F_k$, at least one of $\gamma(t_i)$ or $\gamma(t_{i+1})$ is not in $2B$. If $\gamma(t_i)\notin B$, then $\gamma([t_i, t_{i+2}])$ contains an arc of $\Lambda(B)$, and if $\gamma(t_{i+1})\notin B$ then $\gamma([t_{i-1}, t_{i+1}])$ contains an arc of $\Lambda(B)$.

Thus, there is an arc of $\Lambda(B)$ containing $x$ and contained in an arc of $\mathcal{F}^0_k$, the diameter of which is at most $2Cm^{-k}\leq 4Cm\diam(B)$. By Lemma \ref{filtration}, that arc of $\mathcal{F}^0_k$ is contained in a slightly larger arc belonging to one of our filtrations $\mathcal{F}=\mathcal{F}^i$. That filtration arc will have diameter at most $5Cm\diam(B)\leq 4C'\diam(B)$.

Therefore, $B\in\mathcal{G}_\mathcal{F}$.
\end{proof}

At this point, we fix a \textbf{single} filtration $\mathcal{F}=\mathcal{F}^i$ in the collection defined earlier, and a constant $C'>0$. To prove Proposition \ref{flat}, it suffices to prove, for any such filtration, that
$$\sum_{B\in \mathcal{G}_2 \cap \mathcal{G}_\mathcal{F}} \beta_\Gamma(B)^p \diam(B) \leq C_p \length(\Gamma).$$

\subsection{Cubes}\label{cubes}
In this section, we describe the construction of a system of cubes based on appropriate collections of balls in our space. This construction will be applied repeatedly with various parameters below.

We will follow the outline of Section 2.4 of \cite{LS14}, citing results from that paper when necessary. We observe that our results apply to $m$-adic scales whereas those results are stated in terms of dyadic scales, but this means only that the implied constants below depend on $m$.

Let $\mathcal{D}$ be a sub-collection of $2\mathcal{G} = \{2B : B\in\mathcal{G}\}$. Fix constants $R>0$ and $J'>0$ for the remainder of this subsection. %R tracks \kappa and J' tracks J from section 2.4 of LS14

Then by (an $m$-adic adjustment of) Lemma 2.14 of \cite{Sc07_metric} (which applies in any doubling metric space) there exists $P=P(X,R)\in\mathbb{N}$ such that $\mathcal{D}$ can be divided into collections $\mathcal{D}^1, \dots, \mathcal{D}^{PJ'}$ satisfying
$$ \dist(B, B') \geq R \rad(B) \text{ whenever } B, B'\in \mathcal{D}^i \text{ and } \rad(B)=\rad(B'),$$
and
$$ \rad(B)/\rad(B') \in m^{J'\mathbb{Z}} \text{ whenever } B, B'\in \mathcal{D}^i.$$
(See also Section 2.4 of \cite{LS14}.)

Now fix any $i\in \{1, \dots, PJ'\}$. Exactly as in Lemma 2.12 of \cite{LS14}, we can construct a system of ``m-adic cubes'' based on $\mathcal{D}^i$. Each ball $B$ in $\mathcal{D}^i$ yields a ``cube'' $Q(B)$ with the following properties:

\begin{lemma}\label{l:JAn-11-16}
Assume $J'$ is sufficiently large (depending on the data of $X$). Then
\begin{enumerate}[(1)]
\item If $B\in \mathcal{D}^i$, then $2B \subset Q(B) \subset 2(1+m^{-J'+2})B$.
\item If $B,B'\in\mathcal{D}^i$ and $Q(B)\cap Q(B')\neq\emptyset$ and $\rad(B)>\rad(B')$, then $Q(B')\subseteq Q(B)$.
\item If $B,B'\in\mathcal{D}^i$ have the same radius $r$, then $\dist(Q(B),Q(B'))>2(R-1)r$. 
\end{enumerate}
\end{lemma}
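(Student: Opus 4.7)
The plan is to adapt the Christ-David $m$-adic cube construction for doubling metric spaces, following the proof of Lemma 2.12 of \cite{LS14} almost verbatim, with the sole change being the use of $m$-adic rather than dyadic scales. Since the argument there uses only the ambient doubling property and the separation/scale-spacing hypotheses already built into $\mathcal{D}^i$, the adaptation to our setting is essentially formal.

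I would proceed top-down through the scales present in $\mathcal{D}^i$. For each $B \in \mathcal{D}^i$, I would assign a unique parent $\pi(B) \in \mathcal{D}^i$ of strictly larger radius, chosen (when one exists) as the ball of smallest available radius strictly greater than $\rad(B)$ whose $2$-enlargement meets $2B$. The uniqueness of $\pi(B)$ at each scale is forced by the separation hypothesis: two balls $B_1^*, B_2^* \in \mathcal{D}^i$ of the same radius $r^*$ both meeting $2B$ in their $2$-enlargements would have centers within $4r^* + 2\rad(B) \leq 4(1 + m^{-J'})r^*$ of each other, contradicting $\dist(B_1^*, B_2^*) \geq R r^*$ once $R$ is large relative to $m^{-J'}$. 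This parent map turns $\mathcal{D}^i$ into a forest, and I would set $Q(B) = \bigcup_{B' \in D(B)} 2B'$, where $D(B) \subset \mathcal{D}^i$ denotes the descendants of $B$ (with $B$ counted as its own descendant).

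Property (1) is the quantitative crux. Along any descendant chain $B = B^{(0)}, B^{(1)}, \ldots, B^{(p)} = B'$, the overlap conditions $2B^{(\ell)} \cap 2B^{(\ell-1)} \neq \emptyset$ telescope into
\[
d(\mathrm{center}(B'), \mathrm{center}(B)) \;\leq\; 2\sum_{\ell=1}^{p}\bigl(\rad(B^{(\ell)}) + \rad(B^{(\ell-1)})\bigr).
\]
Invoking $\rad(B^{(\ell)}) \leq m^{-J'}\rad(B^{(\ell-1)})$ and summing the resulting geometric series in $m^{-J'}$ shows that, for $J'$ sufficiently large, the total displacement contributes only an $m^{-J'+2}$ fraction of $\rad(B)$ to the halo around $2B$. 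Hence $Q(B) \subseteq 2(1 + m^{-J'+2})B$. Property (2) then follows from the forest structure: if $Q(B) \cap Q(B') \neq \emptyset$ with $\rad(B) > \rad(B')$, then some descendant of $B$ meets some descendant of $B'$, and chasing both ancestor chains upward through the uniquely determined parents at each successive scale forces $B'$ itself to be a descendant of $B$, whence $Q(B') \subseteq Q(B)$. Property (3) combines the hypothesis $\dist(B, B') \geq R\rad(B)$ with the enlargement bound from (1); the enlargement absorbs only an $O(m^{-J'})$-fraction of the original separation, leaving the stated separation intact once $R$ and $J'$ are suitably calibrated.

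The main obstacle is the quantitative bookkeeping in property (1): the attached descendant cubes must fit within the narrow halo $2(1+m^{-J'+2})B$, which requires a delicate interplay between $R$ (the same-scale separation parameter) and $J'$ (the scale-gap parameter). This is exactly where the argument in \cite{LS14} does the real work, and the task here is to verify that the same calibration goes through once $R$ and $J'$ are chosen sufficiently large depending on the doubling constant of $X$ and on $m$.
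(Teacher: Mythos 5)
Your high-level plan coincides with the paper's: the paper gives no independent argument and simply invokes Lemma 2.12 of \cite{LS14} with $m$-adic scales. The problem is that the concrete construction you sketch does not actually yield property (2), and the failure occurs at exactly the point that makes the cube construction nontrivial. Consider three balls in $\mathcal{D}^i$: $B_0$ of radius $1$, $B_1$ of radius $m^{-J'}$ positioned so that $2B_1$ is disjoint from $2B_0$ but $\dist(2B_0,2B_1)<m^{-2J'}$, and $B_2$ of radius $m^{-2J'}$ placed in the gap so that $2B_2$ meets both $2B_0$ and $2B_1$. This configuration is compatible with the hypotheses: the radii lie in $m^{J'\mathbb{Z}}$ and no two of the balls have equal radius, so the $R$-separation condition is vacuous. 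Under your parent rule, the parent of $B_2$ is $B_1$ (the smallest admissible larger radius), while $B_1$ has no parent because $2B_1\cap 2B_0=\emptyset$; hence $Q(B_0)=2B_0$ and $Q(B_1)=2B_1\cup 2B_2$. Then $Q(B_0)\cap Q(B_1)\supseteq 2B_0\cap 2B_2\neq\emptyset$ and $\rad(B_0)>\rad(B_1)$, yet $Q(B_1)\not\subseteq Q(B_0)$, so (2) fails. Your ``chase the ancestor chains'' step tacitly assumes that if the double of a descendant of $B'$ meets the double of a descendant of $B$, then the ancestor chain of the former must pass into the tree of $B$; the configuration above is precisely a counterexample, since $B_2$'s chain terminates at the root $B_1$, which never becomes a descendant of $B_0$. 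Note that the lemma's conclusions are satisfiable here, but only if the construction forces $B_1$ (together with its entire cube) into $Q(B_0)$ even though $2B_1\cap 2B_0=\emptyset$: whether a larger ball belongs to a given cube must be allowed to be decided by finer-scale information (small balls bridging two nearby but disjoint cubes), not once and for all by whether the two doubles intersect.

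Consequently the repair cannot be the purely local forest you describe. One needs the scale-by-scale, adaptive bookkeeping that constitutes the actual content of \cite{LS14}, Lemma 2.12: cubes are grown from the top scale down, a new ball is attached to the already-grown cube(s) that its double meets, and when a smaller ball bridges two previously disjoint cubes the one with the smaller root is absorbed into the one with the larger root (the same-scale $R$-separation is what guarantees two roots of equal radius can never be bridged, and the geometric series in $m^{-J'}$ shows the absorbed material still fits in the $2(1+m^{-J'+2})B$ halo, so your telescoping estimate for (1) does survive). Relatedly, your treatment of (3) is not yet a proof: from the crude containment $Q(B)\subset 2(1+m^{-J'+2})B$ and $\dist(B,B')\geq R\,\rad(B)$ alone one only gets a separation of order $(R-2)r$, which is strictly smaller than the stated $2(R-1)r$ for every $R>0$, so the constant in (3) has to come from the actual construction and the radius/distance conventions of \cite{LS14} rather than from the calibration you indicate.
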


\subsection{Geometric lemmas about arcs}

\begin{lemma}\label{flatarc}
There is a constant $C_0$, depending only on the data of $X$, such that the following holds:
Let $B\in \mathcal{G}_2$ and let $\tau$ be any arc of $\gamma$ contained in $4C'B$. Then
\begin{equation}\label{flatarceqn}
\beta_\tau(4C'B) \leq C_0m\epsilon_0 \beta_\Gamma(B).
\end{equation}
\end{lemma}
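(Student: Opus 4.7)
The proof plan splits by the size of $\diam(\tau)$ relative to the threshold scale $\epsilon_0 \beta_\Gamma(B) \diam(B)$ that appears in the definition of $\mathcal{G}_2$. In the trivial case $\diam(\tau) \leq 4C'\epsilon_0 \beta_\Gamma(B) \diam(B)$, I would pick any $x_0 \in \tau$ and extend the singleton $\{x_0\}$ to a monotone geodesic $L$ via Lemma \ref{monotonefacts}(b). Since every $x \in \tau$ satisfies $\dist(x,L) \leq d(x,x_0) \leq \diam(\tau)$, this immediately yields
\[ \beta_\tau(4C'B) \leq \frac{\diam(\tau)}{\diam(4C'B)} \leq \epsilon_0 \beta_\Gamma(B), \]
which is stronger than required.

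For the nontrivial case $\diam(\tau) > 4C'\epsilon_0 \beta_\Gamma(B) \diam(B)$, my approach is to exploit the flatness built into the definition of $\mathcal{G}_2$. First, embed $\tau$ in a filtration arc $\tau_0 \in \hat{\mathcal{F}}(B)$ of diameter comparable to $\diam(\tau)$; this is possible because $\tau \subset 4C'B$ and $\mathcal{F}$ provides arcs at every $m$-adic scale (Lemma \ref{filtration}). Since $B \in \mathcal{G}_2$ and $\diam(\tau_0) \geq \epsilon_0 \beta_\Gamma(B) \diam(B)$, the flatness hypothesis forces $\tilde{\beta}_{\gamma_0}(\tau_0) < 1/10$, so that $\gamma_0|_{\tau_0}$ stays within $\diam(\tau_0)/10$ of the straight interval between its endpoints in $\mathbb{R}$. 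The same bound applies at every finer filtration sub-scale of $\tau$ down to the threshold $\epsilon_0 \beta_\Gamma(B) \diam(B)$.

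To build the approximating monotone geodesic $L$, I would iterate this flatness down to the threshold scale: at that scale each sub-filtration-arc of $\tau$ is locally close to a monotone geodesic segment by the argument of the easy case above. The near-monotonicity of $\gamma_0$ at every larger scale then forces these local segments to line up compatibly in the $\pi_0$ direction, and using Lemma \ref{monotonefacts}(c) together with path lifts provided by Lemma \ref{piproperties2}, one should be able to concatenate them into a single global monotone geodesic $L$ with $\dist(x,L) \leq C m \epsilon_0 \beta_\Gamma(B) \diam(B)$ for every $x \in \tau$, where the factor of $m$ absorbs the multiplicative overhead in passing between adjacent $m$-adic scales.

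The main obstacle I expect is precisely this bridge between one-dimensional flatness of $\gamma_0$ in $\mathbb{R}$ and closeness to a monotone geodesic inside the potentially highly non-Euclidean space $X$. Although Lemma \ref{monotonefacts}(c) characterizes monotone geodesic segments via the injectivity of $\pi_0$, obtaining the sharp quantitative bound $C_0 m \epsilon_0 \beta_\Gamma(B)$, rather than the cruder $O(\beta_\Gamma(B))$ bound that would follow from the realizer of $\beta_\Gamma(B)$ alone, requires exploiting flatness at all intermediate scales simultaneously and carefully tracking how monotone geodesic approximations refine as one goes up the filtration.
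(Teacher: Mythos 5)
There is a genuine gap, and it sits exactly where you flag ``the main obstacle'': you never supply a mechanism that actually produces a \emph{single} monotone geodesic in $X$ close to all of $\tau$. Your plan is to take, at the threshold scale, local monotone segments through points of $\tau$ (chosen as in your easy case, i.e.\ arbitrary monotone geodesics through single points) and to concatenate them, arguing that near-monotonicity of $\gamma_0$ at larger scales ``forces these local segments to line up compatibly in the $\pi_0$ direction.'' But compatibility in the $\pi_0$ direction is not the issue: in these inverse-limit spaces two monotone geodesics through the same point (or through nearby points of $\tau$) can head into different branches and separate by an amount comparable to the scale over which they travel, while having identical $\pi_0$-images. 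Flatness of the real-valued function $\gamma_0$ gives no control over this ``vertical'' branch choice, so the concatenation step does not go through as described, and Lemma \ref{monotonefacts}\eqref{monotonefacts3} plus path lifting do not by themselves repair it. A secondary unjustified step is the claim that an arbitrary arc $\tau\subset 4C'B$ can be embedded in a filtration arc $\tau_0\in\hat{\mathcal{F}}(B)$ of comparable diameter: Lemma \ref{filtration} only provides this for arcs of $\mathcal{F}^0_k$ (arcs between consecutive times of $F_k$), not for arbitrary sub-arcs of $\gamma$, which may straddle filtration boundaries and be contained only in much larger filtration arcs. (This step turns out to be unnecessary anyway.)

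The paper's proof resolves the obstacle by a single-scale, graph-level argument run as a contradiction. One fixes $k$ with $m^{-k}\approx \beta_\tau(4C'B)\diam(4C'B)/C_0$ and looks at the times $F_k$ inside the domain of $\tau$. If their $\gamma_0$-images were in monotone order, then their images under $\pi_k$ would be consecutive adjacent vertices of the graph $X_k$ in monotone order, hence would lie on a monotone geodesic of $X_k$; lifting it to $X$ (Lemma \ref{piproperties2}) costs only $C_\eta m^{-k}$, and Lemma \ref{Fkdense} upgrades closeness of the net points to closeness of all of $\tau$, contradicting the choice of $k$. Therefore some three consecutive times of $F_k$ are mapped out of order by $\gamma_0$; this U-turn sits in an arc of $\mathcal{F}^0_k$, which Lemma \ref{filtration} places inside an arc of $\hat{\mathcal{F}}(B)$ with $\tilde{\beta}>1/10$ and diameter $\geq m^{-k}$, and the defining property of $\mathcal{G}_2$ then forces $m^{-k}\lesssim \epsilon_0\beta_\Gamma(B)\diam(B)$, which is the desired bound. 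The step you are missing is precisely this passage through the graph $X_k$ (adjacent vertices in monotone $\pi_0$-order automatically lie on a common monotone geodesic), which is what converts one-dimensional order information about $\gamma_0$ into an honest monotone geodesic in $X$; without it, your multi-scale concatenation does not close.
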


We emphasize that the quantity on the lefthand side of \eqref{flatarceqn} is not $\tilde{\beta}(\tau)$ but rather $\beta_\tau$, i.e., the arc $\tau$ is being treated as a set in $X$. 

\begin{proof}[Proof of Lemma \ref{flatarc}]
Assume without loss of generality that $\beta_{\tau}(4C'B)>0$. For convenience, write $[a,b]\subset [0,1]$ for the domain of $\tau$.

Let $C_0 = 10(C+C_\eta)$.

Fix $k\in\mathbb{Z}$ such that
\begin{equation}\label{kdef}
m^{-k} < \beta_{\tau}(4C'B) \diam(4C'B)/C_0 \leq  m^{-k+1}.
\end{equation}

Consider $F_k = \{t_1 < t_2 < \dots < t_N\} \subset [0,1]$, as defined above, and write 
$$ F_k \cap [a,b] = t_j < t_{j+1} < \dots < t_\ell. $$
Note that (\ref{kdef}) implies that $\diam(\tau) \geq C_0m^{-k} \geq 10Cm^{-k}$ and therefore it follows from Lemma \ref{Fkdense} that $\ell\geq j+4$.

Consider $\gamma_0(t_j), \gamma_0(t_{j+1}), \dots, \gamma_0(t_\ell)$. We claim that for some $j\leq i \leq \ell-3$, there are three consecutive times $t_i < t_{i+1} < t_{i+2}$ in $F_k\cap [a,b]$ that get mapped out of (forward or backward) order by $\gamma_0$. In other words, there exist $t_i < t_{i+1} < t_{i+2}$ in $F_k \cap [a,b]$ such that neither
$$\gamma_0(t_i)<\gamma_0(t_{i+1})<\gamma_0(t_{i+2})$$
nor
$$\gamma_0(t_i)>\gamma_0(t_{i+1})>\gamma_0(t_{i+2})$$
holds.

Indeed, suppose there did not exist such $t_i, t_{i+1}, t_{i+2}$. Then we would have $\pi(\gamma(t_j)) < \pi(\gamma(t_{j+1})) < \dots < \pi(\gamma(t_\ell))$ (or vice versa). Let $\gamma_k = \pi^\infty_k \circ \gamma$. Then $\gamma_k(t_j), \dots, \gamma_k(t_\ell)$ are adjacent vertices of $X_k$ that form a monotone sequence.

Therefore, there is a monotone geodesic $L_k$ in $X_k$ passing through the points $\{\gamma_k(t_j), \gamma_k(t_{j+1}), \dots, \gamma_k(t_\ell)\}$. Let $L$ be any lift of $L_k$ to a monotone geodesic in $X$. By Lemma \ref{piproperties2}, we have that 
$$ \{ \gamma(t_j), \gamma(t_{j+1}), \dots, \gamma(t_\ell) \} \subset N_{C_\eta m^{-k}}(L). $$

By Lemma \ref{Fkdense}, a $2Cm^{-k}$-neighborhood of the set
$$ \{ \gamma(t_j), \gamma(t_{j+1}), \dots, \gamma(t_\ell) \} $$
contains $\tau$. Therefore a $(2C+C_\eta)m^{-k}$-neighborhood of $L$ contains $\tau$, contradicting the inequality $C_0 m^{-k} < \beta_\tau(4C'B) \diam(4C'B)$ from (\ref{kdef}).

So we have three consecutive $t_i < t_{i+1} < t_{i+2}$ in $F_k\cap[a,b]$ that get mapped out of order by $\gamma_0$. Therefore, we have
$$ \gamma_0(t_i) = \gamma_0(t_{i+2}) $$
and
$$ |\gamma_0(t_{i+1}) - \gamma_0(t_i) | = m^{-k}.$$

The arc $\gamma_0|[t_i,t_{i+2}]$ is in $\mathcal{F}^0_k$. Let $\tau'$ be a slightly larger arc of $\hat{\mathcal{F}}$ (not necessarily of $\mathcal{F}$) containing $\gamma_0|[t_i,t_{i+2}]$, as in Lemma \ref{filtration}. It is clear from Lemma \ref{filtration} that
$$ \tilde{\beta}(\tau') > 1/10.$$
In addition,
\begin{equation}\label{diamtaularge}
\diam(\tau') \geq m^{-k} \geq  \frac{1}{C_0 m} \beta_{\tau}(4C'B) \diam(4C'B) \geq  \frac{1}{C_0 m} \beta_{\tau}(4C'B) \diam(B)
\end{equation}

On the other hand, since $B\in \mathcal{G}_2$ and $\tau'\in\hat{\mathcal{F}}(B)$ with $\tilde{\beta}(\tau') > 1/10$, we must have
\begin{equation}\label{diamtausmall}
 \diam(\tau') \leq \epsilon_0 \beta_\Gamma(B) \diam(B).
\end{equation}

Combining \eqref{diamtaularge} and \eqref{diamtausmall}, it follows that
$$ \beta_\tau(4C'B) \leq C_0 m\epsilon_0 \beta_\Gamma(B).$$
\end{proof}

\begin{lemma}\label{nearestpoint}
Let $L$ be a monotone geodesic in $X$, and let $x\in X$. Let $x_L$ be the unique point on $L$ such that $\pi_0(x_L) = \pi_0(x)$. Then
$$\dist(x, L) \leq d(x,x_L) \leq 2\dist(x,L).$$
\end{lemma}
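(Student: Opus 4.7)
The left inequality is immediate since $x_L \in L$, so the whole content is the right inequality $d(x,x_L) \le 2\dist(x,L)$. The plan is to produce a nearest point $y \in L$ to $x$ and then show that $y$ and $x_L$ are themselves close, using that $\pi_0$ is an isometry on $L$ but only $1$-Lipschitz in general.

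First I would note that, since $L$ is a monotone geodesic, $\pi_0|_L : L \to X_0 \cong [0,1]$ is an isometric bijection, so the point $x_L \in L$ with $\pi_0(x_L) = \pi_0(x)$ is well-defined and unique. Since $L$ is compact (being the image of $[0,1]$ under an isometric map), there exists $y \in L$ with $d(x,y) = \dist(x,L) =: r$.

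Next I would apply the two properties of $\pi_0$ in sequence. Because $\pi_0$ is $1$-Lipschitz on all of $X$,
\[
|\pi_0(y) - \pi_0(x)| \;\le\; d(x,y) \;=\; r.
\]
Because $\pi_0|_L$ is an isometry and both $y, x_L \in L$,
\[
d(y, x_L) \;=\; |\pi_0(y) - \pi_0(x_L)| \;=\; |\pi_0(y) - \pi_0(x)| \;\le\; r.
\]
The triangle inequality then yields
\[
d(x, x_L) \;\le\; d(x,y) + d(y, x_L) \;\le\; r + r \;=\; 2\dist(x,L),
\]
finishing the proof.

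There is no real obstacle here; the lemma is a short consequence of the two distinct behaviors of $\pi_0$ on $L$ (isometric) versus on $X$ (merely $1$-Lipschitz). The factor of $2$ is sharp because the nearest point $y$ and the ``vertical'' projection $x_L$ can genuinely be on opposite sides of the foot of $x$ along $L$.
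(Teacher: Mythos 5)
Your proof is correct and follows essentially the same argument as the paper: pick a nearest point $p\in L$, use that $\pi_0$ is $1$-Lipschitz to bound $|\pi_0(p)-\pi_0(x)|$, use that $\pi_0|_L$ is an isometry to bound $d(p,x_L)$, and conclude by the triangle inequality. The only (harmless) addition is your explicit remark on compactness of $L$ to guarantee the nearest point exists.
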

\begin{proof}
The first inequality is obvious. Let $p\in L$ be a point such that $\dist(x,L) = d(x,p)$. Then 
$$ |\pi_0(p) - \pi_0(x)| \leq d(x,p) = \dist(x,L). $$
Since $p$ and $x_L$ are on the same monotone geodesic, we have
$$ d(p,x_L) = |\pi_0(p) - \pi_0(x_L)| = |\pi_0(p) - \pi_0(x)| \leq d(p,x) = \dist(x,L).$$
So
$$ d(x,x_L) \leq d(x,p) + d(p, x_L) \leq 2\dist(x,L). $$
\end{proof}

\begin{lemma}\label{hausdorffclose}
Let $\tau$ be an arc in $X$ and let $L$ be a monotone geodesic such that $\tau \subset N_\epsilon(L)$. Let $s$ be the subsegment of $L$ such that $\pi_0(s) = \pi_0(\tau)$. Then 
\begin{equation}\label{hausdorffclose1}
\sup_{x\in \tau} \dist(x,s) < 2\epsilon
\end{equation}
and
\begin{equation}\label{hausdorffclose2}
\sup_{y\in s} \dist(y,\tau) < 2\epsilon
\end{equation}
\end{lemma}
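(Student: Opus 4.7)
The plan is to reduce both inequalities directly to Lemma \ref{nearestpoint}, using the fact that $\pi_0|_L$ is an isometry (and hence a bijection) from $L$ onto its image in $[0,1]$. The key structural observation is that, by continuity of $\pi_0\circ\tau$, the set $\pi_0(\tau)$ is a connected subset of $[0,1]$, hence an interval, and so $s = (\pi_0|_L)^{-1}(\pi_0(\tau))$ is genuinely a well-defined subsegment of $L$ with $\pi_0(s) = \pi_0(\tau)$.

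For \eqref{hausdorffclose1}, I would fix $x\in\tau$ and let $x_L\in L$ be the unique point with $\pi_0(x_L) = \pi_0(x)$, as in Lemma \ref{nearestpoint}. Since $\pi_0(x) \in \pi_0(\tau) = \pi_0(s)$ and $\pi_0|_L$ is injective, we automatically have $x_L \in s$. Because $\tau\subset N_\epsilon(L)$, we have $\dist(x,L) < \epsilon$, and Lemma \ref{nearestpoint} gives $d(x,x_L) \leq 2\dist(x,L) < 2\epsilon$. Thus $\dist(x,s) \leq d(x,x_L) < 2\epsilon$.

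For \eqref{hausdorffclose2}, I would fix $y\in s$ and use $\pi_0(s) = \pi_0(\tau)$ to pick $x\in\tau$ with $\pi_0(x) = \pi_0(y)$. Then the point $x_L$ associated to $x$ by Lemma \ref{nearestpoint} satisfies $\pi_0(x_L) = \pi_0(x) = \pi_0(y)$, and since both $x_L$ and $y$ lie on $L$ and $\pi_0|_L$ is injective, we conclude $x_L = y$. Another application of Lemma \ref{nearestpoint} and $\tau\subset N_\epsilon(L)$ yields $d(x,y) = d(x,x_L) \leq 2\dist(x,L) < 2\epsilon$, so $\dist(y,\tau) < 2\epsilon$.

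There is essentially no obstacle here beyond bookkeeping: the whole content of the lemma is that, thanks to monotonicity of $L$, the "nearest point projection" along $L$ used in Lemma \ref{nearestpoint} automatically lands inside $s$ as soon as $\pi_0(\tau) = \pi_0(s)$. The only place to be slightly careful is the verification that $\pi_0(\tau)$ is actually an interval so that $s$ makes sense as a subsegment, which follows from continuity of $\gamma$ (parametrizing $\tau$) together with the fact that $\pi_0|_L$ is an isometric bijection from $L$ onto an interval in $[0,1]$.
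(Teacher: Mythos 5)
Your proof is correct and follows essentially the same route as the paper: both inequalities are reduced to Lemma \ref{nearestpoint} via the point $x_L\in L$ with $\pi_0(x_L)=\pi_0(x)$, exactly as in the paper's argument. The extra remarks (that $\pi_0(\tau)$ is an interval so $s$ is well defined, and that uniqueness forces $x_L=y$ in the second part) are just slightly more explicit bookkeeping than the paper bothers to record.
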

\begin{proof}
If $x\in \tau$, let $x_L$ be the unique point of $L$ with $\pi_0(x_L)=\pi_0(x)$. Then $x_L\in s$ and $d(x,x_L) \leq 2\epsilon$ by Lemma \ref{nearestpoint}, which proves \eqref{hausdorffclose1}.

If $y\in s$, then there is a point $x\in \tau$ with $\pi_0(x)=\pi_0(y)$ and therefore again $d(x,y)\leq 2\epsilon$ by Lemma \ref{nearestpoint}. This proves \eqref{hausdorffclose2}.
\end{proof}

\begin{lemma}\label{pidiameter}
Let $B$ be a ball in $X$, let $h>0$, and let $\tau\subset B$ be an arc such that $\beta_\tau(B)\diam(B) < h$. Then 
$$ \diam(\pi_0 \circ \tau) \geq \diam(\tau) - 4h. $$
\end{lemma}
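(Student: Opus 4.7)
The plan is to unpack the hypothesis via the definition of $\beta_\tau(B)$, pick a pair of points in $\tau$ realizing the diameter, lift them to a nearby monotone geodesic using Lemma \ref{nearestpoint}, and then use monotonicity of the geodesic to convert metric distances into $\pi_0$-distances.

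More precisely, I would first note that the hypothesis $\beta_\tau(B)\diam(B)<h$, together with the definition of $\beta_\tau(B)$, produces a monotone geodesic $L$ in $X$ with $\sup_{z\in\tau}\dist(z,L)<h$, i.e.\ $\tau\subset N_h(L)$. Since $\tau$ is a compact arc in a metric space, the diameter is attained: pick $x,y\in\tau$ with $d(x,y)=\diam(\tau)$.

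Next I would apply Lemma \ref{nearestpoint} to $x$ and to $y$. Let $x_L,y_L\in L$ be the unique points on $L$ with $\pi_0(x_L)=\pi_0(x)$ and $\pi_0(y_L)=\pi_0(y)$. By Lemma \ref{nearestpoint},
\[
d(x,x_L)\leq 2\dist(x,L)<2h,\qquad d(y,y_L)\leq 2\dist(y,L)<2h.
\]
Because $L$ is a monotone geodesic, the restriction $\pi_0|_L$ is an isometry, so
\[
d(x_L,y_L)=|\pi_0(x_L)-\pi_0(y_L)|=|\pi_0(x)-\pi_0(y)|.
\]

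Finally I would apply the triangle inequality twice:
\[
|\pi_0(x)-\pi_0(y)|=d(x_L,y_L)\geq d(x,y)-d(x,x_L)-d(y,y_L)>\diam(\tau)-4h.
\]
Since $\pi_0(x),\pi_0(y)\in\pi_0(\tau)\subset\mathbb{R}$, we conclude $\diam(\pi_0\circ\tau)\geq |\pi_0(x)-\pi_0(y)|\geq \diam(\tau)-4h$, as desired.

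There is really no main obstacle here; the statement is essentially a bookkeeping consequence of Lemma \ref{nearestpoint} combined with the fact that monotone geodesics are $\pi_0$-isometric to a subset of $[0,1]$. The only minor care needed is that the infimum in the definition of $\beta_\tau$ is over monotone geodesics in $X$ and may not be attained, so one should either pick a nearly minimizing $L$ (e.g.\ one with $\sup_{z\in\tau}\dist(z,L)<h$, which exists by the strict inequality in the hypothesis) or replace $h$ by $h+\varepsilon$ and let $\varepsilon\to 0$; either way the $4h$ bound survives.
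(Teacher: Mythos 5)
Your proof is correct and follows essentially the same route as the paper: both hinge on choosing a monotone geodesic $L$ with $\tau\subset N_h(L)$, the fact that $\pi_0|_L$ is an isometry, and the factor-$2$ comparison of Lemma \ref{nearestpoint}. The only cosmetic difference is that the paper packages the pointwise estimate through Lemma \ref{hausdorffclose} (Hausdorff closeness of $\tau$ and the subsegment $s$ of $L$ with $\pi_0(s)=\pi_0(\tau)$) and then compares diameters, whereas you apply Lemma \ref{nearestpoint} directly to two diameter-realizing points of $\tau$; both give the same $4h$ loss.
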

\begin{proof}
Let $L$ be a monotone geodesic such that $\tau \subset N_h(L)$. Let $s$ be the sub-segment of $L$ such that $\pi_0(s) = \pi_0(\tau)$. Note that $s$ is connected, because $\tau$, and therefore $\pi_0(\tau)$, is connected.

By Lemma \ref{hausdorffclose}, the Hausdorff distance between $\tau$ and $s$ is at most $2h$. So
$$ \diam(\tau) \geq \diam(s) - 4h = \diam(\pi_0(s)) - 4h = \diam(\pi\circ \tau) - 4h. $$
\end{proof}

\begin{lemma}\label{overlap}
Let $\gamma([a,b])$ be contained a ball $B\subset X$. Suppose that for some $c\in[a,b]$ and $\delta\in(0,1)$, 
$$ | \gamma_0([a,c]) \cap \gamma_0([c,b]) | > \delta. $$
Then there is an arc $\tau\in \hat{\mathcal{F}}(B)$ such that $\tilde{\beta}_{\gamma_0}(\tau) > 1/10$ and $\diam(\tau) > \delta/5m$.
\end{lemma}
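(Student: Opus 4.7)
The hypothesis forces $\gamma_0$ to fold back on itself somewhere near $c$ by at least $\delta/2$ in height: since $\gamma_0([a,c])$ and $\gamma_0([c,b])$ are connected subsets of $\mathbb{R}$ each containing $\gamma_0(c)$, their intersection is an interval $[p,q]$ of length greater than $\delta$ with $\gamma_0(c)\in[p,q]$, and at least one of $q-\gamma_0(c)$ or $\gamma_0(c)-p$ exceeds $\delta/2$. Reflecting $\gamma_0$ if necessary, I may assume $q-\gamma_0(c)>\delta/2$; the intermediate value theorem then yields $t_1\in[a,c]$ and $t_2\in[c,b]$ with $\gamma_0(t_1)=\gamma_0(t_2)=q$, so that $\gamma_0|_{[t_1,t_2]}$ descends from $q$ to $\gamma_0(c)\le q-\delta/2$ and returns to $q$, all with $\gamma([t_1,t_2])\subseteq B$.

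Next, I choose $k$ so that $m^{-k}$ is comparable to $\delta$, with $m^{-k}>\delta/(5m)$ (for the desired diameter lower bound) and $m^{-k}$ a sufficiently small fraction of $\delta$ for the forthcoming reversal argument. The $\gamma_0$-values on the $F_k$-points inside $[t_1,t_2]$ form a nearest-neighbor walk $v_\alpha,\ldots,v_\beta$ on $m^{-k}\mathbb{Z}$. Continuity of $\gamma_0$ at $t_1$ and $t_2$, combined with the characterization of what happens between consecutive $F_k$-points, traps $v_\alpha,v_\beta$ within $2m^{-k}$ of $q$, while Lemma \ref{Fkdense} applied to $[t_1,t_2]$ shows $\min_iv_i$ lies within $2m^{-k}$ of $\gamma_0(c)$, hence roughly $\delta/2$ below $q$. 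For $m^{-k}$ small enough, $\min_iv_i<v_\alpha$ and $\min_iv_i<v_\beta$, so the minimum is attained at an interior index $j$; and at any interior local minimum of a $\pm m^{-k}$ walk, one must have $v_{j-1}=v_{j+1}=v_j+m^{-k}$. The arc $\tau_0:=\gamma_0|_{[\tau_{j-1},\tau_{j+1}]}\in\mathcal{F}^0_k$ then has both endpoints at the same value $v:=v_{j-1}$, while $\gamma_0(\tau_j)$ lies at distance $m^{-k}$ from $v$; since the image of $\tau_0$ sits in an interval of length strictly less than $3m^{-k}$, this gives $\tilde\beta_{\gamma_0}(\tau_0)\ge 1/3$.

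To conclude, I apply Lemma \ref{filtration} to extend $\tau_0$ to some $\tau\in\hat{\mathcal{F}}$ by adding pieces of $\gamma_0$-diameter at most $\diam(\tau_0)/10$ on each end. This keeps the endpoints $\gamma_0(a'),\gamma_0(b')$ within $0.3m^{-k}$ of $v$ and $\diam(\tau)<3.6m^{-k}$, preserving $\tilde\beta_{\gamma_0}(\tau)>1/10$, while $\diam(\tau)\ge\diam(\tau_0)\ge m^{-k}>\delta/(5m)$ gives the desired diameter bound. Containment $\tau\in\hat{\mathcal{F}}(B)$ follows by translating the $\gamma_0$-diameter bounds on the two extensions into $\gamma$-diameter bounds of comparable size via \eqref{pipreservesdiam}; combined with $\gamma([\tau_{j-1},\tau_{j+1}])\subseteq B$ and the fact that $|I|>\delta$ forces $\diam(B)>\delta$, this places $\gamma([a',b'])$ comfortably inside $4C'B$. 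The delicate point is calibrating $m^{-k}$ so that both the upper bound needed for the interior-minimum step and the lower bound $\delta/(5m)$ are simultaneously satisfied by some integer power of $m$---a narrow window of width one factor of $m$ which relies on $m\ge 2$ and, if necessary, on sharpening the boundary-walk estimates by replacing $q$ with a nearby $q'\in m^{-k}\mathbb{Z}$.
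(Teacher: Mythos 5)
Your overall architecture matches the paper's: pick $k$ with $m^{-k}$ comparable to $\delta$, find three consecutive times of $F_k$ whose outer $\gamma_0$-values agree, observe that the resulting arc of $\mathcal{F}^0_k$ has $\tilde\beta\geq 1/3$ and diameter $\geq m^{-k}$, and then extend via Lemma \ref{filtration}, checking $\tilde\beta>1/10$ and the containment in $4C'B$ via \eqref{pipreservesdiam} and $\delta<\diam(B)$. That last bookkeeping is fine (indeed more explicit than the paper, which leaves it implicit as in Lemma \ref{flatarc}). The genuine problem is exactly the calibration issue you flag at the end, and it is not a cosmetic one. Your reversal detection anchors at the single value $q$ and the single point $\gamma_0(c)$, so the fold you exploit has height only $q-\gamma_0(c)>\delta/2$; after the $2m^{-k}$ boundary allowances at $t_1,t_2$ and the $2m^{-k}$ allowance from Lemma \ref{Fkdense} at $c$, the interior-minimum step needs roughly $4m^{-k}\leq\delta/2$, i.e.\ $m^{-k}\leq\delta/8$, while the diameter conclusion forces $m^{-k}>\delta/(5m)$. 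The window $(\delta/(5m),\,\delta/8]$ has multiplicative width $5m/8<m$, so for $\delta$ in a range of the form $[5m^{-k},8m^{-k})$ no integer power of $m$ lies in it (e.g.\ $m=2$, $\delta=0.9$: you need $m^{-k}\in(0.09,0.1125]$, but the nearby powers are $0.125$ and $0.0625$). Your proposed repair, snapping $q$ to a nearby $q'\in m^{-k}\mathbb{Z}$, does not close this: the snap itself costs up to $m^{-k}$, so you still end up needing $q-\gamma_0(c)$ to exceed roughly $4m^{-k}$, and the window stays strictly narrower than one factor of $m$.

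The paper avoids the factor-of-two loss by never splitting the overlap at the value $\gamma_0(c)$. It fixes $\delta/(5m)<m^{-k}\leq\delta/5$ (a window of width exactly $m$, hence always nonempty) and argues by contradiction: if no three consecutive points of $F_k\cap[a,b]$ have equal outer values, the whole walk on $[a,b]$ is strictly monotone; splitting the walk at the index straddling $c$ and applying Lemma \ref{Fkdense} to $[a,c]$ and $[c,b]$ separately shows $|\gamma_0([a,c])\cap\gamma_0([c,b])|\leq 4m^{-k}\leq 4\delta/5<\delta$, contradicting the hypothesis. This uses the full overlap length $\delta$, so the single constraint $m^{-k}\leq\delta/5$ suffices. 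If you prefer to keep your direct fold-detection, the fix is to upgrade to a full-height fold before discretizing: since $\gamma_0([a,c])$ and $\gamma_0([c,b])$ each contain the entire interval $[p,q]$ with $q-p>\delta$, a short case analysis (comparing the times at which $p$ and $q$ are attained on the two sides) produces $t_1<t_2$ with $\gamma_0(t_1)=\gamma_0(t_2)\in\{p,q\}$ and an interior time where $|\gamma_0-\gamma_0(t_1)|\geq q-p>\delta$; with that extra step your argument runs with $m^{-k}\leq\delta/5$ (up to the boundary allowances) and the window problem disappears. As written, though, the choice of $k$ can fail, so the proof has a genuine gap.
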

\begin{proof}
Let $k\in\mathbb{N}$ be such that
$$ \delta/5m <  m^{-k} \leq \delta/5.$$

To prove the lemma, it suffices to show that there exist consecutive points $t_i < t_{i+1} < t_{i+2}$ in $F_k \cap [a,b]$ such that $\gamma_0(t_i) = \gamma_0(t_{i+2})$.

Suppose not. Let $t_j, t_{j+1}, \dots, t_\ell$ enumerate the points of $F_k \cap [a,b]$ in order. (Note that there must be at least two of them, by our assumptions and Lemma \ref{Fkdense}.) Then we have
$$ \gamma_0(t_j) < \gamma_0(t_{j+1}) < \gamma_0(t_{j+2}) < \dots < \gamma_0(t_{\ell}), $$
or the reverse order.

Choose $j\leq i \leq k$ such that 
$$ t_i \leq c < t_{i+1}. $$

Then, using Lemma \ref{Fkdense}, we see that
$$ \gamma_0([a,c]) \subset N_{2m^{-k}}(\gamma_0(\{t_1, \dots, t_i\})) \subset N_{2m^{-k}}([\gamma_0(t_1), \gamma_0(t_i)])$$
and
$$ \gamma_0([c,b]) \subset N_{2m^{-k}}(f(\{t_{i+1}, \dots, t_\ell\})) \subset N_{2m^{-k}}([\gamma_0(t_{i+1}), \gamma_0(t_{\ell})])$$

It follows that
$$ |\gamma_0([a,c]) \cap \gamma_0([c,b])| \leq 4m^{-k} < \delta, $$
which is a contradiction.
\end{proof}

\begin{lemma}\label{bigdiameter}
Let $B=B(x,r)\in \mathcal{G}_2$ and let $\tau\in\Lambda(B)$ be an arc containing $x$. Let $h=4C_0 C'm\epsilon_0\beta_\Gamma(B)\diam(B)$. 

Then there is a monotone segment of diameter at least $4r-20h$ within Hausdorff distance $2h$ of $\tau$.

In particular,
$$ \diam(\tau) \geq 4r - 30h. $$
\end{lemma}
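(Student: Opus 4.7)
The plan is to apply Lemma \ref{flatarc} to $\tau$ to obtain a monotone geodesic $L$ that closely approximates $\tau$, observe that the endpoints of $\tau$ lie on $\partial(2B)$ (since $B\in\mathcal{G}_0$), and then use the $\mathcal{G}_2$ hypothesis via Lemma \ref{overlap} to force these endpoints to project onto opposite sides of $\pi_0(x)$ along $L$. The resulting spread of $\pi_0(a)$ and $\pi_0(b)$ will yield the desired monotone segment of diameter close to $4r$.

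We may assume $r\gg h$, as otherwise $4r-20h\leq 0$ and $4r-30h\leq 0$ and the conclusion is vacuous (take any single point as the degenerate segment). Applying Lemma \ref{flatarc} gives
$$\beta_\tau(4C'B)\,\diam(4C'B) \leq C_0m\epsilon_0\beta_\Gamma(B)\cdot 8C'r = h,$$
so, by Lemma \ref{monotonefacts}(b) and approximation of the infimum defining $\beta$, we may choose a monotone geodesic $L$ in $X$ with $\tau\subset N_h(L)$. Writing $\tau=\gamma|_{[c_1,c_2]}$, $a=\gamma(c_1)$, $b=\gamma(c_2)$, and $\gamma(c)=x$ for some $c\in[c_1,c_2]$, the fact $B\in\mathcal{G}_0$ gives $d(a,x)=d(b,x)=2r$. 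Let $a',b',x_L\in L$ denote the points with matching $\pi_0$-values. Lemma \ref{nearestpoint} yields $d(a,a'),\,d(b,b'),\,d(x,x_L)\leq 2h$, so using the fact that $\pi_0|_L$ is an isometry,
$$|\pi_0(a)-\pi_0(x)|,\;|\pi_0(b)-\pi_0(x)| \geq 2r-4h.$$

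The main obstacle is ruling out that $\pi_0(a)$ and $\pi_0(b)$ lie on the same side of $\pi_0(x)$ along $L$. If they did, the intervals $\pi_0(\gamma([c_1,c]))$ and $\pi_0(\gamma([c,c_2]))$ would overlap on a set of length at least $2r-4h>r$. By Lemma \ref{overlap} (applied to $\tau\subset 2B$ with $\delta=r$) this produces $\tau^*\in\hat{\mathcal{F}}$ with $\tilde{\beta}_{\gamma_0}(\tau^*)>1/10$ and $\diam(\tau^*)>r/(5m)$. A short computation using the diameter control in the proof of Lemma \ref{overlap} together with \eqref{pipreservesdiam} shows that, for $\epsilon_0$ small enough depending on the data of $X$, $\gamma$ of the domain of $\tau^*$ lies inside $4C'B$, so in fact $\tau^*\in\hat{\mathcal{F}}(B)$. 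For $\epsilon_0<1/(10m)$, we also have $\diam(\tau^*)>r/(5m)>2\epsilon_0\beta_\Gamma(B)r=\epsilon_0\beta_\Gamma(B)\diam(B)$, which contradicts $B\in\mathcal{G}_2$. This is the step requiring the most bookkeeping, because Lemma \ref{overlap} is most natural to apply with the ball $2B$ containing $\tau$, whereas the $\mathcal{G}_2$ property is attached to $B$.

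Having excluded the same-side case, $\pi_0(a)$ and $\pi_0(b)$ lie on opposite sides of $\pi_0(x)$, so
$$d(a',b') = |\pi_0(a)-\pi_0(x)|+|\pi_0(x)-\pi_0(b)| \geq 4r-8h.$$
Letting $s$ be the subsegment of $L$ with $\pi_0(s)=\pi_0(\tau)$ (which contains both $a'$ and $b'$), we get $\diam(s)\geq d(a',b')\geq 4r-8h\geq 4r-20h$, while Lemma \ref{hausdorffclose} gives Hausdorff distance at most $2h$ between $\tau$ and $s$. Finally,
$$\diam(\tau)\geq d(a,b) \geq d(a',b')-d(a,a')-d(b,b') \geq 4r-12h \geq 4r-30h,$$
completing the proof.
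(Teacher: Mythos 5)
Your proof is correct and draws on the same toolkit as the paper (Lemma \ref{flatarc}, Lemmas \ref{nearestpoint} and \ref{hausdorffclose}, Lemma \ref{overlap}, and the $\mathcal{G}_2$ hypothesis), but the endgame is organized differently. The paper argues by contradiction on the conclusion itself: if the segment $s$ had diameter less than $4r-20h$, then Lemma \ref{pidiameter} forces the $\pi_0$-images of the two halves of $\tau$ to overlap in a set of measure at least $12h$, and Lemma \ref{overlap} with $\delta=12h$ yields a non-flat arc of diameter $>12h/5m>\epsilon_0\beta_\Gamma(B)\diam(B)$, contradicting $B\in\mathcal{G}_2$; that contradiction leans on the factor $4C_0C'm$ built into $h$. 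You instead prove an endpoint dichotomy: either $\pi_0(a)$ and $\pi_0(b)$ lie on opposite sides of $\pi_0(x)$, which gives the monotone segment of diameter at least $4r-8h$ directly, or they lie on the same side, which produces an overlap of size roughly $2r$, so the contradiction via Lemma \ref{overlap} with $\delta\approx r$ needs only $\epsilon_0\lesssim 1/m$, independently of $C_0$ and $C'$, and you land on the slightly sharper constants $4r-8h$ and $4r-12h$. The price of your route is that the non-flat arc $\tau^*$ has diameter comparable to $r$ rather than $h$, so its membership in $\hat{\mathcal{F}}(B)$ (containment of its image in $4C'B$) genuinely requires the bookkeeping you flag, via the choice $m^{-k}\leq \delta/5$ inside the proof of Lemma \ref{overlap} together with \eqref{pipreservesdiam} and $C'=5Cm$; in the paper's version the arc is tiny and this containment is immediate. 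Two small repairs: when $4r-20h\leq 0$ the Hausdorff-distance assertion is not vacuous and a single point will not in general lie within Hausdorff distance $2h$ of $\tau$, but the segment $s$ you build from Lemmas \ref{flatarc} and \ref{hausdorffclose} already satisfies it, so nothing is lost; and your identity $C_0m\epsilon_0\beta_\Gamma(B)\cdot 8C'r=h$ tacitly takes $\diam(B)=2r$, the same harmless normalization the paper's own proof makes when it asserts $\beta_\tau(4C'B)\diam(4C'B)<h$.
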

\begin{proof}
By Lemma \ref{flatarc} we have
$$ \beta_\tau(4C'B)\diam(4C'B) < h.$$
So by Lemma \ref{hausdorffclose} there is a monotone segment $s$ such that $\pi_0(s) = \pi_0(\tau)$ and $s$ is within Hausdorff distance $2h$ of $\tau$.

We want to show that $\diam(s) \geq 4r-20h$. Suppose not. Then
\begin{equation}\label{pitaudiam}
\diam(\pi_0(\tau)) = \diam(\pi_0(s)) = \diam(s) < 4r-20h.
\end{equation}

Suppose $\tau$ is parametrized by $\gamma$ on $[a,b]$, so that $\gamma(a)$ and $\gamma(b)$ are on $\partial (2B)$ and $\gamma(c) = x$.

Observe that $\gamma ([a,c])$ and $\gamma ([c,b])$ each have diameter at least $2r$. Therefore, by Lemma \ref{pidiameter}, $\pi_0 \circ \gamma ([a,c])$ and $\pi_0 \circ \gamma ([c,b])$ each have diameter at least $R= 2r-4h$.

Therefore, (\ref{pitaudiam}) implies that $|\gamma_0([a,c]) \cap \gamma([c,b])|\geq 12h$.

It follows from Lemma \ref{overlap} that there is $\tau\in\hat{\mathcal{F}}(B)$ of diameter at least
$$12h/5m = \frac{48C_0 C'}{5}\epsilon_0\beta_\Gamma(B)\diam(B) > \epsilon_0\beta_\Gamma(B)\diam(B)$$ 
with $\tilde{\beta}(\tau)>1/10$. 

This contradicts our assumption that $B\in \mathcal{G}_2$.
\end{proof}

\begin{lemma}\label{farflatarc}
Assume $\epsilon_0<1/(400C_0 C'm)$. Let $B\in\mathcal{G}_2$ have radius $r$. Let $\tau\in \Lambda(B)$ contain the center of $B$, and let $\tau'$ be an arc of $\mathcal{F}$ containing $\tau$ and contained in $4C'B$. Then there is a sub-arc $\xi$ in $\Gamma\cap 2B$ such that
$$ \diam(\xi) > 200C_0C'm\epsilon_0 \beta_\Gamma(B)\diam(B) $$
and
$$ \dist(\xi, \tau') > 10\epsilon_0 \beta_\Gamma(B)\diam(B).$$
\end{lemma}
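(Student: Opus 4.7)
The plan is to exhibit a point $z \in \Gamma \cap B$ that is quantitatively far from $\tau'$ and then to carve out, from the arc of $\gamma$ passing through $z$, a maximal sub-arc $\xi$ that stays away from $\tau'$. The point $z$ is supplied by the very definition of $\beta_\Gamma(B)$, while the flatness of $\tau'$ (guaranteed by Lemma \ref{flatarc}) makes $z$ automatically far from $\tau'$ as well.

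First I would apply Lemma \ref{flatarc} to the arc $\tau' \subset 4C'B$, producing $\beta_{\tau'}(4C'B) \leq C_0 m \epsilon_0 \beta_\Gamma(B)$, and then almost-attain the infimum by choosing a monotone geodesic $L$ so that $\tau' \subset N_h(L)$ with $h := 4 C_0 C' m \epsilon_0 \beta_\Gamma(B)\diam(B)$. Because the infimum defining $\beta_\Gamma(B)$ is taken over all monotone geodesics, this particular $L$ furnishes a point $z \in \Gamma \cap B$ with $\dist(z,L) \geq \tfrac{9}{10} \beta_\Gamma(B)\diam(B)$. The assumption $\epsilon_0 < 1/(400 C_0 C'm)$ makes $h$ negligible compared to $\tfrac{9}{10} \beta_\Gamma(B)\diam(B)$, so by the triangle inequality $\dist(z,\tau') \geq \tfrac{8}{10}\beta_\Gamma(B)\diam(B)$, which already dwarfs the required separation $10 \epsilon_0 \beta_\Gamma(B)\diam(B)$.

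Next, writing $z = \gamma(s)$, I would let $\tau_z$ be the component of $\gamma^{-1}(2B)$ containing $s$; this lies in $\Lambda(B)$ since $z \in B$, and because $B \in \mathcal{G}_0$ both endpoints of $\tau_z$ land on $\partial(2B)$. Setting $c := 11 \epsilon_0 \beta_\Gamma(B)\diam(B)$, I would then define $\xi$ as the restriction of $\gamma$ to the maximal sub-interval of $\tau_z$ containing $s$ on which $\dist(\gamma(\cdot),\tau') \geq c$. By construction $\xi \subset \tau_z \subset 2B \cap \Gamma$ and $\dist(\xi,\tau') \geq c > 10 \epsilon_0 \beta_\Gamma(B)\diam(B)$.

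Finally, to produce the diameter lower bound I would split into two cases. If this maximal sub-interval is all of $\tau_z$, then because the endpoints of $\tau_z$ lie on $\partial(2B)$ while $z \in B$, one already has $\diam(\xi) \geq r = \diam(B)/2$. Otherwise, at least one endpoint $p$ of $\xi$ is an interior point of $\tau_z$ with $\dist(p,\tau') = c$, and the reverse triangle inequality yields $d(z,p) \geq \dist(z,\tau') - c \geq \tfrac{3}{4}\beta_\Gamma(B)\diam(B)$. In each case the hypothesis $\epsilon_0 < 1/(400 C_0 C'm)$, together with $\beta_\Gamma(B) \leq 1$, makes the right-hand side exceed $200 C_0 C' m \epsilon_0 \beta_\Gamma(B)\diam(B)$. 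The only real delicacy is the bookkeeping of competing constants: one has to pick the cutoff $c$ large enough to guarantee the required $\tau'$-separation yet small enough that the Case B estimate $\dist(z,\tau') - c$ still dominates $200 C_0 C' m \epsilon_0 \beta_\Gamma(B)\diam(B)$, which is precisely where the bound on $\epsilon_0$ is used.
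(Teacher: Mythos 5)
Your proposal is correct and follows essentially the same route as the paper: Lemma \ref{flatarc} together with the definition of $\beta_\Gamma(B)$ produces a point of $\Gamma\cap B$ quantitatively far from $\tau'$, from which a sub-arc $\xi$ with the required diameter and separation is extracted. The only difference is cosmetic --- the paper takes $\xi$ to be a sub-arc of $\Gamma$ reaching the boundary of a small ball centered at that far point, while you trim the $\Lambda(B)$-arc through it by a distance threshold --- and your constant bookkeeping (using $\beta_\Gamma(B)\leq 1$ and $\epsilon_0<1/(400C_0C'm)$) checks out.
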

\begin{proof}
Assume without loss of generality that $\beta_\Gamma(B)>0$. Let $L$ be a monotone geodesic such that 
$$ \sup_{y\in \tau'}\dist(y,L) < \beta_{\tau'}(4C'B)\diam(4C'B) < 4C_0C'm\epsilon_0 \beta_\Gamma(B) \diam(B), $$
where the last inequality is from Lemma \ref{flatarc}.

There is a point $x\in \Gamma \cap B$ such that
$$ \dist(x,L) > 400C_0C'm\epsilon_0 \beta_\Gamma(B)\diam(B).$$
Indeed, if not, then
$$ \Gamma \cap B \subset N_{400C_0C'm\epsilon_0 \beta_\Gamma(B)\diam(B)}(L),$$
in which case
$$ \beta_\Gamma(B) \diam(B) \leq 400C_0C'm\epsilon_0 \beta_\Gamma(B)\diam(B)$$
which is a contradiction as $\epsilon_0<1/(400C_0C'm)$.

It immediately follows that $\dist(x,\tau')\geq 300C_0C'm\epsilon_0 \beta_\Gamma(B)\diam(B)$. 

Now let $\xi$ be a sub-arc of $\Gamma$ that contains $x$, stays in $B(x, 250C_0C'm\epsilon_0 \beta_\Gamma(B)\diam(B))$, and touches the boundary of $B(x, 250C_0C'm\epsilon_0 \beta_\Gamma(B)\diam(B))$.
\end{proof}

\begin{prop}\label{covering}
Let $B\in\mathcal{G}_\mathcal{F}$ be a ball of radius $r$. Let $\tau\in \Lambda(B)$ contain the center of $B$, and let $\tau'\in\mathcal{F}$ contain $\tau$ and be contained in $4C'B$. Let $\xi$ be a sub-arc of $\Gamma \cap 2B$ as in Lemma \ref{farflatarc}. Let $E = (\tau' \cup \xi) \cap 2B$.

Suppose we cover $E$ by balls $\{B_i\}$ such that $\diam(B_i) < 10\epsilon_0 \beta_\Gamma(B)\diam(B)$. Then
$$\sum_i \diam(B_i) \geq 4r + \epsilon_0 \beta_\Gamma(B)\diam(B).$$
\end{prop}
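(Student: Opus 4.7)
The argument will split the cover into two non-interacting groups based on whether they meet $\tau'\cap 2B$ or $\xi$, and then bound the total diameter of each group from below by the diameter of the respective connected set it covers.

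First I would partition: set $I_\tau = \{i : B_i \cap (\tau'\cap 2B) \neq \emptyset\}$ and $I_\xi = \{i : B_i \cap \xi \neq \emptyset\}$. The hypothesis $\diam(B_i) < 10\epsilon_0 \beta_\Gamma(B)\diam(B)$, combined with the separation $\dist(\xi,\tau') > 10\epsilon_0\beta_\Gamma(B)\diam(B)$ granted by Lemma \ref{farflatarc}, forces $I_\tau \cap I_\xi = \emptyset$, while every $B_i$ meeting $E$ lies in $I_\tau \cup I_\xi$. So the sum we want to bound is at least $\sum_{I_\tau}\diam(B_i) + \sum_{I_\xi}\diam(B_i)$.

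Next I would record (or briefly verify) the following elementary metric-space principle: for any connected subset $K$ of any metric space and any family of sets $\{U_i\}$ covering $K$, one has $\sum_i \diam(U_i) \geq \diam(K)$. The proof is one line: pick $x,y \in K$ with $d(x,y)=\diam(K)$; the $1$-Lipschitz function $f(z)=d(z,x)$ sends $K$ to a connected subset of $\RR$, hence an interval containing $[0,\diam(K)]$, and any cover of this interval by subsets of $\RR$ has total diameter at least $\diam(K)$ (by Lebesgue outer measure).

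Applying this principle twice, I would use Lemma \ref{bigdiameter} on $\gamma(\tau)\subseteq \tau'\cap 2B$ to obtain
$$\sum_{i\in I_\tau}\diam(B_i) \geq \diam(\gamma(\tau)) \geq 4r - 30h, \quad h = 4C_0C'm\epsilon_0\beta_\Gamma(B)\diam(B),$$
and Lemma \ref{farflatarc} on $\xi$ to obtain
$$\sum_{i\in I_\xi}\diam(B_i) \geq \diam(\xi) \geq 200C_0C'm\epsilon_0\beta_\Gamma(B)\diam(B).$$
Adding these yields $\sum_i \diam(B_i) \geq 4r + 80C_0C'm\epsilon_0\beta_\Gamma(B)\diam(B)$, which exceeds $4r + \epsilon_0\beta_\Gamma(B)\diam(B)$ since $80C_0C'm \geq 1$.

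There is no serious obstacle here: the constants in Lemmas \ref{bigdiameter} and \ref{farflatarc} have been tuned so that $\xi$'s contribution dominates the deficit $30h$ from $\tau$. The one point worth flagging is that we should resist the temptation to estimate $\sum_{I_\xi}\diam(B_i)$ by projecting $\xi$ via $\pi_0$ into $[0,1]$ — that would incur the constant from Lemma \ref{pipreservesdiam} and break the arithmetic — whereas the purely metric diameter-covering inequality loses no constant and is all that is needed.
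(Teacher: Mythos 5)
Your proposal is correct and follows essentially the same route as the paper's proof: the separation from Lemma \ref{farflatarc} shows no small ball can meet both $\tau'$ and $\xi$, and the total diameter of each group of balls is bounded below by the diameter of the connected set it covers, with Lemma \ref{bigdiameter} giving $\diam(\tau)\geq 4r-30h$ and Lemma \ref{farflatarc} giving the $200C_0C'm\epsilon_0\beta_\Gamma(B)\diam(B)$ lower bound for $\xi$. Your only additions are the explicit proof of the diameter-covering inequality (which the paper leaves implicit) and the slightly more careful use of $\gamma(\tau)\subseteq 2B$ in place of $\tau'$, neither of which changes the argument.
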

\begin{proof}
Note that each ball $B_i$ can intersect at most one of $\tau'$ or $\xi$. So
$$ \sum_i \diam(B_i) \geq \diam(\tau') + \diam(\xi).$$

We know $\diam(\xi) > 200C_0C'm\epsilon_0 \beta_\Gamma(B)\diam(B)$. We also know from Lemma \ref{bigdiameter} that
$$ \diam(\tau')\geq \diam(\tau) > 4r - 120C_0C'm\epsilon_0 \beta_\Gamma(B)\diam(B).$$

Putting these together proves the proposition.
\end{proof}

\subsection{A geometric martingale}\label{ss:mart}
Now fix an integer $M\geq 0$ and consider the collection
$$ \mathcal{G}_{\mathcal{F},M} = \{B\in \mathcal{G}_\mathcal{F} : \beta_\Gamma(B)\in [m^{-M},m^{-M+1})\}$$

We apply the construction of subsection \ref{cubes} to $\mathcal{G}_{\mathcal{F},M}$, with parameters $R=3$ and $J'$ being the smallest integer larger than $M -\log_m(10\epsilon_0)+20$. This firsts separates $\mathcal{G}_{\mathcal{F},M}$ into $PJ'$ different collections 
$$ \mathcal{G}^1_{\mathcal{F},M}, \dots, \mathcal{G}^{PJ'}_{\mathcal{F},M} $$
(where $P=P(X)$), with the properties outlined in subsection \ref{cubes}

For each $t\in\{1, \dots, PJ'\}$, that construction then assigns a collection $\Delta_t$ of cubes associated to the balls of $\mathcal{G}^t_{\mathcal{F},M}$ and having the properties outlined in Lemma \ref{l:JAn-11-16}. (The collection $\Delta_t$ also depends on the filtration $\mathcal{F}$ and the integer $M$, but we suppress these in the notation for convenience.)

For $Q\in \Delta_t$ we write
\begin{equation}\label{e:Q-decomposition}
 Q=R_Q\cup \left(\cup_j Q^j\right),
\end{equation}
where
$Q^j\in \Delta_t$  is in maximal such that $Q^j\subset Q$, and  $R_Q=\Gamma\setminus\left(\cup_j Q_j\right)$.
We will use this decomposition to construct a geometric martingale.

Combining Proposition \ref{covering} with Lemma \ref{l:JAn-11-16} gives the following corollary, exactly as in Proposition 4.7 of \cite{LS14}.

\begin{cor}\label{c:four-seven}
Suppose $Q=Q(B)\in \Delta_t$  and $Q=R_Q\cup \left(\cup_j Q^j\right)$ as in \eqref{e:Q-decomposition}.
Then 
 $$\mathcal{H}^1(R_Q)+ \sum_j \diam(Q^j)  \geq \diam(Q)(1+ \frac1{10}\epsilon_0 m^{-M}.)$$
\end{cor}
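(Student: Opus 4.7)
The plan is to apply Proposition \ref{covering} to the ball $B$ with $Q(B)=Q$, using the decomposition \eqref{e:Q-decomposition} to produce a cover of the test set $E=(\tau'\cup\xi)\cap 2B$ supplied by Lemma \ref{farflatarc}. The proof of Proposition \ref{covering} uses only that each covering element has diameter less than $\dist(\tau',\xi)$, so it applies equally well to covers by arbitrary sets of diameter below the threshold; I will use this mild observation.

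Since $B\in\mathcal{G}_\mathcal{F}\cap\mathcal{G}_2$, there is an arc $\tau\in\Lambda(B)$ through the center of $B$ and an arc $\tau'\in\mathcal{F}$ with $\tau\subseteq\tau'\subseteq 4C'B$. Lemma \ref{farflatarc} furnishes a subarc $\xi\subseteq\Gamma\cap 2B$ with $\dist(\xi,\tau')>10\epsilon_0\beta_\Gamma(B)\diam(B)$. Setting $E=(\tau'\cup\xi)\cap 2B$, the inclusion $2B\subseteq Q$ from Lemma \ref{l:JAn-11-16}(1) gives $E\subseteq\Gamma\cap Q\subseteq R_Q\cup\bigcup_j Q^j$.

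Next I check that this is an admissible cover. By Lemma \ref{l:JAn-11-16}(1), $Q^j\subseteq 2(1+m^{-J'+2})B^j$, so $\diam(Q^j)\leq 4(1+m^{-J'+2})\rad(B^j)$. Since distinct radii within $\mathcal{G}^t_{\mathcal{F},M}$ differ by factors of $m^{J'}$, we have $\rad(B^j)\leq m^{-J'}\rad(B)$, and the choice $J'\geq M-\log_m(10\epsilon_0)+20$ guarantees $\diam(Q^j)<10\epsilon_0\beta_\Gamma(B)\diam(B)$ with room to spare. For the residue, given $\epsilon>0$ I cover $E\cap R_Q$ by sets $\{U_k\}$ of diameter below the same threshold with $\sum_k\diam(U_k)\leq\mathcal{H}^1(R_Q)+\epsilon$, as permitted by the definition of $\mathcal{H}^1$.

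Applying (the set form of) Proposition \ref{covering} to the cover $\{Q^j\}\cup\{U_k\}$ then yields
$$\sum_j\diam(Q^j)+\mathcal{H}^1(R_Q)+\epsilon\geq 4r+\epsilon_0\beta_\Gamma(B)\diam(B).$$
Letting $\epsilon\to 0$, using $\beta_\Gamma(B)\geq m^{-M}$, and comparing against the sandwich $Q\subseteq 2(1+m^{-J'+2})B$ (which, together with $X$ being geodesic, yields both $4r\geq\diam(Q)/(1+m^{-J'+2})$ and $\diam(B)\geq \diam(Q)/[C(1+m^{-J'+2})]$ for an absolute constant $C$) delivers
$$\sum_j\diam(Q^j)+\mathcal{H}^1(R_Q)\geq \diam(Q)\cdot\frac{1+c\,\epsilon_0 m^{-M}}{1+m^{-J'+2}}$$
for an absolute $c>0$. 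The only substantive point is then to verify that the spare factor of $1+m^{-J'+2}$ in the denominator is small enough, relative to the gain $\epsilon_0 m^{-M}$ in the numerator, to still produce the clean constant $\tfrac{1}{10}$ on the right. This reduces to $m^{-J'+2}\lesssim \epsilon_0 m^{-M}$, which the choice $J'\geq M-\log_m(10\epsilon_0)+20$ guarantees with an $m^{18}$-fold buffer.
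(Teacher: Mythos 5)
Your proposal is correct and follows essentially the route the paper intends: the paper proves this corollary by exactly the same combination of Proposition \ref{covering} (applied to the cover of $(\tau'\cup\xi)\cap 2B$ by the maximal subcubes $Q^j$ together with a near-optimal Hausdorff cover of $R_Q$) with the cube properties of Lemma \ref{l:JAn-11-16}, as in Proposition 4.7 of \cite{LS14}. The only point you leave implicit is the value of your absolute constant $c$, which must exceed $\tfrac1{10}$ for the stated constant; your own sandwich $2B\subset Q\subset 2(1+m^{-J'+2})B$ gives $\diam(B)\geq \diam(Q)/(4(1+m^{-J'+2}))$, hence $c=\tfrac14$, so the final reduction to $m^{-J'+2}\lesssim\epsilon_0 m^{-M}$ indeed closes the argument.
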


We now repeat Proposition 4.8 of  \cite{LS14} verbatim (except for adjusting dyadic scales to $m$-adic scales).
The proof is the same, and hinges on our Corollary  \ref{c:four-seven} in place of  Proposition 4.7 of \cite{LS14}.
Let $\Delta=\Delta_t$ for $t\in \{1,...,PJ'\}$.
\begin{prop}\label{p:martingale-prop}
$$\sum\limits_{Q\in\Delta_t} \diam(Q) \leq \frac{20}{\epsilon_0} m^M \cH^1(\Gamma)$$
\end{prop}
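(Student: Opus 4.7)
The plan is a telescoping (``geometric martingale'') argument on the tree of cubes $\Delta_t$, with Corollary \ref{c:four-seven} supplying the per-cube energy gain.

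Rearranging Corollary \ref{c:four-seven} gives, for every $Q\in\Delta_t$,
$$\mathcal{H}^1(R_Q) + \sum_j \diam(Q^j) - \diam(Q) \;\geq\; \tfrac{1}{10}\,\epsilon_0\, m^{-M}\, \diam(Q).$$
I then sum this inequality over all $Q\in\Delta_t$ and control the left-hand side by $\mathcal{H}^1(\Gamma)$ via two elementary observations. First, by the decomposition \eqref{e:Q-decomposition}, every $x\in\Gamma$ lies in $R_Q$ for at most one $Q$ (namely the smallest cube of $\Delta_t$ containing $x$, if any), so $\sum_Q \mathcal{H}^1(R_Q) \leq \mathcal{H}^1(\Gamma)$. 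Second, the quantity $\sum_Q \bigl(\sum_j \diam(Q^j) - \diam(Q)\bigr)$ telescopes along the cube tree: every non-maximal $Q\in\Delta_t$ contributes $+\diam(Q)$ once (as a child of its parent) and $-\diam(Q)$ once (as itself), while every maximal cube contributes only $-\diam(Q)$, so the total equals $-\sum_{Q\text{ maximal in }\Delta_t}\diam(Q)\leq 0$. Combining these two points yields
$$\tfrac{1}{10}\,\epsilon_0\, m^{-M} \sum_{Q\in\Delta_t}\diam(Q) \;\leq\; \mathcal{H}^1(\Gamma),$$
which is in fact a factor of $2$ stronger than the stated bound.

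The only genuine technicality is that $\Delta_t$ may contain infinitely many cubes, so the sums and telescoping above must be justified as limits. I would handle this by first restricting to the finite sub-collection $\Delta_t^N = \{Q\in\Delta_t : \diam(Q)\geq m^{-N}\}$, enlarging $R_Q$ for every $\Delta_t^N$-maximal cube $Q$ to absorb all of its $\Delta_t$-descendants (this only inflates $\mathcal{H}^1(R_Q)$, so Corollary \ref{c:four-seven} continues to hold with these enlarged remainders), running the finite telescoping inside $\Delta_t^N$, and then letting $N\to\infty$ by monotone convergence. I expect no real obstacle: all the geometric content has already been packaged into Corollary \ref{c:four-seven}, and the remaining argument is exactly the tree-bookkeeping of Proposition 4.8 in \cite{LS14}, which the authors have explicitly signaled they will follow verbatim.
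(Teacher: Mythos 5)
Your overall strategy is sound and genuinely different from the paper's. The paper, following Proposition 4.8 of \cite{LS14}, constructs for each top cube a weight function $w_Q$ that redistributes the mass $\diam(Q)$ down the cube tree (proportionally to $\diam(Q'^i)$ on children, uniformly on $R_{Q'}$) and then bounds $\sum_Q w_Q(x)$ pointwise by the geometric series with ratio $q=(1+\tfrac{1}{10}\epsilon_0 m^{-M})^{-1}$, which is exactly where the factor $\frac{20}{\epsilon_0}m^M$ comes from. You instead sum the rearranged Corollary \ref{c:four-seven} and telescope. Your two structural observations are correct: the sets $R_Q$ (read as $(\Gamma\cap Q)\setminus\bigcup_j Q^j$) are pairwise disjoint, since if $x\in R_Q\cap R_{Q'}$ with $Q'\subsetneq Q$ then $Q'$ sits inside a maximal child of $Q$, so $x\notin R_Q$; and each non-maximal cube has a unique parent (the containing cubes form a chain with radii in the discrete set $m^{J'\mathbb{Z}}$), so the child/parent diameters cancel leaving only $-\sum_{\mathrm{maximal}}\diam(Q)\le 0$. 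For a finite family this gives the bound, indeed with constant $10/\epsilon_0$; it is a clean ``dual'' form of the same martingale idea.

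The step that does not work as you wrote it is the truncation, i.e.\ the one point you waved off. When you pass to $\Delta_t^N$ and absorb the dropped descendants of $Q$ into its remainder, the needed inequality is \emph{not} a formal consequence of Corollary \ref{c:four-seven}: dropping a child $Q^j$ removes $\diam(Q^j)$ from the left-hand side and adds back only $\cH^1(\Gamma\cap Q^j)$, and in general $\cH^1(\Gamma\cap Q^j)$ is only a fixed fraction of $\diam(Q^j)$ (the paper itself uses just $\cH^1_\Gamma(Q)\ge\tfrac15\diam(Q)$; the connected arc through the center of $B(Q^j)$ gives at best about $\tfrac12\diam(Q^j)$). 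So ``this only inflates $\mathcal{H}^1(R_Q)$'' does not justify the enlarged-remainder version of the corollary. The statement you need is nevertheless true, because the retained children still satisfy the smallness condition forced by the choice of $J'$ (their diameters are below $10\epsilon_0\beta_\Gamma(B)\diam(B)$), so one can rerun the proof of Corollary \ref{c:four-seven} --- that is, Proposition \ref{covering} together with Lemma \ref{l:JAn-11-16} --- for the truncated decomposition; alternatively, sum the original corollary over cubes of generation at most $k$ and bound the bottom-generation boundary term $\sum_{\mathrm{gen}=k}\diam(Q)$ by $5\cH^1(\Gamma)$ using disjointness of same-generation cubes, which costs only a larger absolute constant (harmless for the application, though it no longer yields the stated $\frac{20}{\epsilon_0}$). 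With either patch your argument is complete; without one, the limiting step is a genuine gap rather than mere bookkeeping.
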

We observe that this gives (by considering all possible values of $t$ and inserting the value for $\beta_\Gamma$)
\begin{align*}
\sum_{B\in \mathcal{G}_2 \cap \mathcal{G}_\mathcal{F}} \beta_\Gamma(B)^p \diam(B) &\leq \sum_{M=1}^\infty \sum_{t=1}^{PJ'} \sum_{B\in \mathcal{G}^t_{\mathcal{F},M}} \beta_\Gamma(B)^p \diam(B)\\
&\leq \sum_{M=1}^\infty 2P(M-\log(10/\epsilon_0)+20) m^{-(M+1)p} \sum_{B\in \mathcal{G}^t_{\mathcal{F},M}} \diam(Q(B))\\
&\leq \sum_{M=1}^\infty 2P(M-\log(10/\epsilon_0)+20) m^{-(M+1)p} \frac{20}{\epsilon_0} m^M \cH^1(\Gamma)\\
&\leq C_{p, X} \cH^1(\Gamma)
\end{align*}
%$$
%\sum_{B\in \mathcal{G}_2 \cap \mathcal{G}_\mathcal{F}} \beta_\Gamma(B)^p \diam(B) \leq 
%\sum\limits_{M=1}^\infty 2P m^{-p(M+1)} \log(20/\epsilon_0) \mathcal{H}^1(\Gamma)(M-\log(10/\epsilon_0)+20) 
%\leq
%C_{p,X} \mathcal{H}^1(\Gamma)
%$$
for $p>1$.
Thus we will have   Proposition \ref{flat}  as soon as we complete the 
proof of Proposition \ref{p:martingale-prop}.
\begin{proof}[Proof of Proposition \ref{p:martingale-prop}]
In the same manner as \cite{Sc07_hilbert, Sc07_metric, LS14} 
we define positive function $w_Q:X\to \mathbb{R}$ such that
\begin{enumerate}[(i)]
\item
$\int_Q w_Qd\cH^1_\Gamma \geq \diam(Q)$
\item
For almost all $x\in \Gamma$, 
$$\sum\limits_{Q\in \Delta} w_Q(x)\leq \frac{20}{\epsilon_0} m^M$$
\item $w_Q$ is supported inside $Q$
\end{enumerate}
The functions $w_Q$ will be constructed as a martingale.
Denote $w_Q(Z)=\int_Z w_Qd\cH^1_\Gamma$.
Set
$$
w_Q(Q)=\diam(Q) .
$$
Assume now that $w_Q(Q')$ is defined.  We define $w_Q(Q'^i)$ and $w_Q(R_{Q'})$,
where
$$ 
Q'=(\cup  Q'^i) \cup R_{Q'},
$$
a decomposition as given by equation \eqref{e:Q-decomposition}.

Take
$$
w_Q(R_{Q'})=\frac{w_Q( Q')}{s'} \cH^1_\Gamma(R_{Q'}) 
$$
(uniformly distributed)
and
$$
w_Q( Q'^i)=\frac{w_Q(Q')}{s'}\diam(Q'^i),
$$
where 
$$
s'=\cH^1_\Gamma(R_{Q'})+\sum_i \diam(Q'^i).
$$
This will give us $w_Q$.
Note that 
$s'\leq 2 \cH^1(\Gamma\cap Q')$.
Clearly (i) and (iii) are satisfied.
Furthermore, 
If $x\in R_{Q'}$, we have from (a rather weak use of) Corollary   \ref{c:four-seven} that
\begin{equation}\label{e:r-Q-estimate}
w_Q(x)\leq \frac{w_Q(Q')}{s'}\leq \frac{w_Q(Q')}{\diam(Q')}\,. 
\end{equation}

To see (ii), note that for any $j$ we may write:

\begin{eqnarray*}
\frac{w_Q( Q'^{j})}{\diam(Q'^{j}) }
&=&
\frac{w_Q( Q')}{s'}\\
&=&
\frac{w_Q( Q')}{\diam(Q') }
\frac{\diam(Q' )}{s'}\\
&=&
\frac{w_Q( Q')}{\diam(Q') }
\frac{\diam(Q') }
	{\cH^1_\Gamma(R_{Q'}) + 
		\sum\limits_{i} \diam(Q'^i) }\\
&\leq&
\frac{w_Q( Q')}{\diam(Q') }
\frac{1}
	{1+ c_0m^{-M}}\,,\\
\end{eqnarray*}
where $c_0=\frac1{10}\epsilon_0$ is obtained from  Corollary  \ref{c:four-seven}.

And so,
\begin{eqnarray*}
\frac{w_Q( Q'^{j})}{\diam(Q'^{j})} \leq 
 	q   \frac{w_Q( Q')}{\diam(Q')}
\end{eqnarray*}
with $q=\frac{1}
{1+ c_0m^{-M}}$.  Now, suppose 
that  $x\in Q_N \subset ...\subset Q_1$.
we  get:
\begin{eqnarray*}
\frac{w_{Q_1}(Q_N)}{\diam(Q_N)} &\leq& 
  q\frac{w_{Q_1}(Q_{N-1})}{\diam(Q_{N-1})} \\
  &\leq&...\\
  &\leq&
  q^{N-1}\frac{w_{Q_1}(Q_{1})}{\diam(Q_1)}=q^{N-1}.
\end{eqnarray*}
We have using \eqref{e:r-Q-estimate} that for 
$x\in R_{Q_{N}}$
\begin{equation}
w_{Q_1}(x) \leq   \frac{w_{Q_1}(Q_N)}{\diam(Q_N)} \leq  q^{N-1}. 
\end{equation}
Let $E$ denote the collection of all  elements $x$ which are in an infinite sequence of $\Delta$ i.e. can be written as elements $x\in .... \subset Q_N \subset ...\subset Q_1$,  for any positive integer $N$.
Then,
as $\cH^1_\Gamma(Q)\geq r(B(Q))\geq \frac15\diam(Q)$,
we have that for any $N$
\begin{equation}
w_{Q_1}(Q_N) \leq   \diam(Q_N) q^{N-1} \leq  5q^N\cH^1_\Gamma(Q_N)
\end{equation}
which yields that for $\cH^1_\Gamma$-almost-every $x\in E$ we have that $w_{Q_1}(x)=0$.

This will give us (ii) as a sum of a geometric series  since
$$\sum q^n = \frac1{1-q}
\leq  1+ \frac1{c_0 m^{-M}}=\frac{20}{\epsilon_0}m^{M}.$$

Now,
\begin{eqnarray*}
\sum\limits_{Q \in \Delta}\diam(Q)
&=&
\sum\limits_{Q \in \Delta}\int w_{Q}(x)d\cH^1_\Gamma(x)\\
&=&
\int \sum\limits_{Q \in \Delta} w_{Q}(x)d\cH^1_\Gamma(x)\\
&\leq&
\frac{20}{\epsilon_0}
\int  m^{M} d\cH^1_\Gamma(x)
=
\frac{20}{\epsilon_0}
m^{M}\cH^1(\Gamma).
\end{eqnarray*}

\end{proof}

\section{Counterexample for $p=1$}\label{counterexample} 
In this section, we show that Theorem \ref{upperbound} is false if $p=1$. The constructed counterexample will essentially be the same as Example 3.3.1 in \cite{Sc07_survey}, suitably imported to one of the metric spaces of this paper. 

Consider the admissible inverse system of Example \ref{langplaut}, and let $X$ denote its limit. In this case, $m=4$, and so the $n$th stage $X_n$ of the construction is a graph with edges of length $m^{-n}$.

Let $V_k(X_n)$ or $V_k(X)$ denote the $k$th level vertices of $X_n$ or $X$, respectively, as defined in Definition \ref{levelvertices}. In addition, for $k\geq 1$, define
$$\tilde{V}_k(X_n) = V_k(X_n) \setminus V_{k-1}(X_n).$$

Fix an arbitrary constant $A>1$. For each $j\in \mathbb{N}$, let $\tilde{\mathcal{B}}_j$ denote the collection of balls $\{B(v,A4^{-k})\}$ in $X$, where $v$ runs over $V_k(X)$. Let $\tilde{\mathcal{B}} = \cup_j \tilde{\mathcal{B}}_j$.

If $\mathcal{B}$ is as in Theorem \ref{upperbound}, then it is easy to see that, for all compact, connected sets $\Gamma\subset X$, the sums
$$ \sum_{B\in \tilde{\mathcal{B}}} \beta_\Gamma(B) \diam B \text{ and } \sum_{B\in \mathcal{B}}  \beta_\Gamma(B) \diam B $$
are comparable, up to an absolute constant depending only on $A$ and the data of $X$.

Therefore, to show that Theorem \ref{upperbound} is false for $p=1$, it suffices to show the following.

\begin{prop}
There is no constant $C>0$ such that
$$ \sum_{B\in \tilde{\mathcal{B}}}  \beta_\Gamma(B) \diam(B) \leq C\mathcal{H}^1(\Gamma)$$
for all rectifiable curves $\Gamma\subset X$. 
\end{prop}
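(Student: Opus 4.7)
The plan is to construct a family $\{\Gamma_m\}_{m \geq 1}$ of compact, connected, rectifiable subsets of $X$ whose ratio $\sum \beta \diam/\mathcal{H}^1$ tends to infinity, in the spirit of Example 3.3.1 of \cite{Sc07_survey}. For each $m$, fix any edge $e$ of $X_m$; the copy of $G$ in $X_{m+1}$ replacing $e$ contains two distinct monotone geodesic arcs $\sigma^+, \sigma^-$ joining the endpoints of $e$ (visible in Figure \ref{fig:langplautG}). I lift these to monotone geodesic arcs $\eta^+, \eta^- \subset X$ using Lemma \ref{piproperties2}(b), with common endpoints $p, q \in X$ satisfying $\pi_{m+1}(p)$ and $\pi_{m+1}(q)$ equal to the endpoints of $e$. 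Define $\Gamma_m := \eta^+ \cup \eta^-$. This is a connected compact set, parametrized as a rectifiable curve by concatenating $\eta^+$ with $\eta^-$ traversed in reverse, and $\mathcal{H}^1(\Gamma_m) \leq 2 \cdot 4^{-m}$.

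The key geometric observation for bounding $\beta_{\Gamma_m}$ from below is the following. Let $M^+, M^- \in X$ be the midpoints of $\eta^+, \eta^-$, both with $\pi_0(M^\pm) = t^* := \pi_0(p) + 4^{-m}/2$. By Lemma \ref{piproperties2}(c), $d_X(M^+, M^-) \geq d_{X_{m+1}}(\pi_{m+1}(M^+), \pi_{m+1}(M^-))$, and the latter quantity is a positive multiple of $4^{-m}$ by the explicit structure of $G$, since $\sigma^+ \neq \sigma^-$ are the two distinct monotone paths (so their midpoints in $G$ are distinct at a distance proportional to the edge length $4^{-(m+1)}$). Thus $d_X(M^+, M^-) \geq c_0 \cdot 4^{-m}$ for some absolute $c_0 > 0$. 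For any monotone geodesic $L \subset X$, $\pi_0|_L$ is injective, so $L$ contains exactly one point $L(t^*)$ at level $t^*$, and the triangle inequality gives
\begin{equation*}
\max\bigl(d(L(t^*), M^+), d(L(t^*), M^-)\bigr) \geq \tfrac{1}{2} d(M^+, M^-) \geq \tfrac{c_0}{2} 4^{-m}.
\end{equation*}
Hence for any ball $B \subset X$ containing $\{M^+, M^-\}$, $\beta_{\Gamma_m}(B) \diam(B) \geq \tfrac{c_0}{2} 4^{-m}$.

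To finish, I produce such a ball in $\tilde{\mathcal{B}}_j$ for each $j \in \{0, \dots, m\}$. Since $\pi_0(p) \in 4^{-m}\mathbb{Z} \subset 4^{-j}\mathbb{Z}$, the point $p$ lies in $V_j(X)$, and $B_j := B(p, A 4^{-j}) \in \tilde{\mathcal{B}}_j$. The geodesic segments give $d(p, M^\pm) = 4^{-m}/2 \leq A \cdot 4^{-m} \leq A \cdot 4^{-j}$ (since $A > 1$), so $M^\pm \in B_j$. Summing over $j$,
\begin{equation*}
\sum_{B \in \tilde{\mathcal{B}}} \beta_{\Gamma_m}(B) \diam(B) \geq \frac{c_0(m+1)}{2} \cdot 4^{-m}, \qquad \frac{\sum \beta \diam}{\mathcal{H}^1(\Gamma_m)} \geq \frac{c_0(m+1)}{4},
\end{equation*}
which is unbounded as $m \to \infty$, so no uniform $C$ exists.

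The primary technical obstacle is verifying the lower bound $d(M^+, M^-) \geq c_0 \cdot 4^{-m}$. This reduces via Lemma \ref{piproperties2}(c) to a finite combinatorial check in the graph $G$, namely that the two distinct monotone paths $\sigma^\pm$ through $G$ have midpoints at positive graph distance (comparable to the diameter of $G$). The remainder of the argument is essentially a ``single bifurcation at scale $4^{-m}$'' observation: it forces $\beta$-contributions of size $\approx 4^{-m}$ at each of the $m$ larger scales, producing a factor of $m$ blow-up of $\sum \beta \diam$ compared to $\mathcal{H}^1(\Gamma_m)$.
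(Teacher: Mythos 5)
Your construction is correct in spirit but genuinely different from the paper's. The paper imports Example 3.3.1 of \cite{Sc07_survey}: for each $N$ it builds a curve $\Gamma_N$ of fixed length $\mathcal{H}^1(\Gamma_N)=2$ consisting of a full monotone geodesic with $4^j$ ``spikes'' of length $4^{-j}/N$ attached at level-$j$ vertices for $j=1,\dots,N$; each spike contributes $\approx 4^{-j}/N$ to $\beta\cdot\diam$ at the $\approx\log N$ scales between its length and its separation, and summing over spikes gives $\gtrsim \log N$ against a bounded length. You instead use a single bifurcation: the two lifted monotone geodesics over one copy of $G$ at scale $4^{-m}$, a set of length $\approx 4^{-m}$ whose unnormalized flatness defect is $\approx 4^{-m}$ at every one of the $\approx m$ coarser scales, giving ratio $\approx m$. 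Your route is shorter and exploits the branching of $X$ directly (it is the analogue of the ``small circle'' obstruction to $p=1$ in the plane), while the paper's version additionally exhibits failure for curves of fixed unit length with the bad $\beta$'s spread over many locations; both suffice for the proposition, since only the unboundedness of the ratio is needed.

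Three points need patching, all local. First, the inclusion ``$\pi_0(p)\in 4^{-m}\mathbb{Z}\subset 4^{-j}\mathbb{Z}$'' is backwards: for $j<m$ one has $4^{-j}\mathbb{Z}\subset 4^{-m}\mathbb{Z}$, so $p$ need not be a level-$j$ vertex and $B(p,A4^{-j})$ need not lie in $\tilde{\mathcal{B}}_j$. Fix this either by choosing $e$ to be an edge of $X_m$ with an endpoint projecting to $0$ under $\pi_0$ (so that $\pi_0(p)=0$ lies in every grid), or by recentering: the extension of $\eta^+$ to a monotone geodesic contains a level-$j$ vertex $w_j$ with $d(p,w_j)\leq \tfrac12 4^{-j}$, and then $M^\pm\in B(w_j,A4^{-j})\in\tilde{\mathcal{B}}_j$ since $A>1$. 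Second, $\beta_{\Gamma_m}(B)\diam(B)$ involves $\dist(M^\pm,L)$ to the whole geodesic $L$, not the distance to the same-level point $L(t^*)$; your triangle-inequality bound controls the latter, so you must invoke Lemma \ref{nearestpoint} (nearest point versus same-$\pi_0$-level point differ by at most a factor $2$), which only costs a constant: one gets $\inf_L\sup_{x\in\Gamma_m\cap B}\dist(x,L)\geq \tfrac14 d(M^+,M^-)$. Third, the assertion that the two lifts $\eta^\pm$ can be taken with \emph{common} endpoints $p$ and $q$ is not justified by Lemma \ref{piproperties2}\eqref{pipathlift2}: lifting $\sigma^+$ and $\sigma^-$ from the same initial point $p$ gives terminal points in the same fiber over the far endpoint of $e$, but they may differ. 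This does not matter: $\eta^+\cup\eta^-$ is already compact, connected, and of finite length because the lifts share $p$, so it is a rectifiable curve and the rest of your argument is unaffected.
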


\begin{proof}
For each integer $N>4$, we will construct a curve $\Gamma_N$ in $X$ such that $\mathcal{H}^1(\Gamma_N) = 2$ and 
\begin{equation}\label{betalog}
\sum_{B\in \tilde{\mathcal{B}}}  \beta_{\Gamma_N}(B) \diam(B) \geq c\log(N).
\end{equation}
for some constant $c$ depending only on $A$. This will prove the proposition.

Given $n\geq 0$ and $\epsilon \in (0,1/4)$, we define $\Gamma(n,\epsilon) \subset X_n$ as follows: $\Gamma(n, \epsilon)$ consists of the union of a monotone geodesic $L$ with one ``spike'' of length $\epsilon$ emerging from a vertex of $L\cap\pi_0^{-1}(1/4)$, four ``spikes'' of length $\epsilon/4$ emerging from $L \cap \pi_0^{-1}(\{1/16, 5/16, 9/16, 13/16\})$, up until $4^n$ ``spikes'' of length $\epsilon/4^n$, as illustrated in the figure \ref{counterexpic}.

\begin{figure}
\includegraphics[scale=0.4]{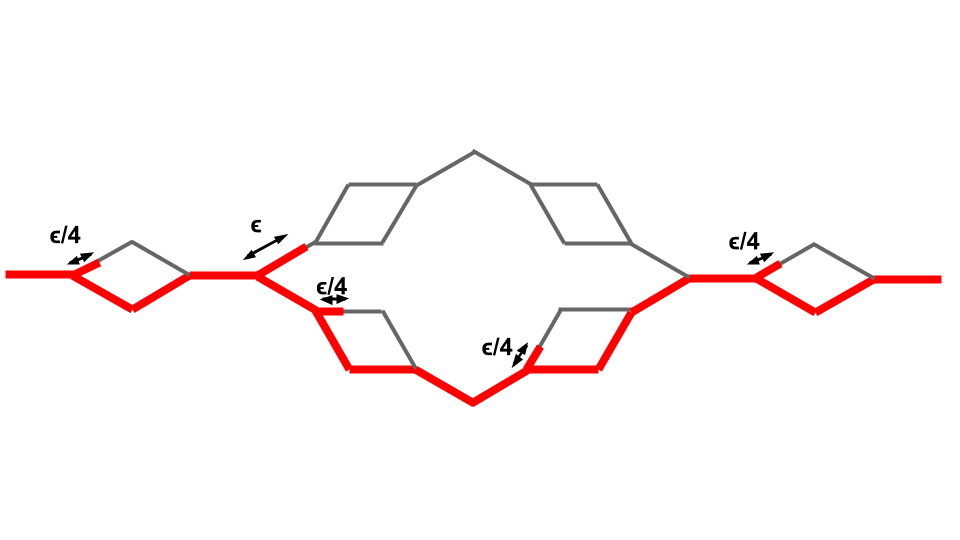}
\caption{The red curve is $\Gamma(2,\epsilon)$.}
    \label{counterexpic}
\end{figure}

It follows that
$$ \mathcal{H}^1(\Gamma(n,\epsilon)) = 1 + n\epsilon. $$

For each integer $N>4$, we can define a rectifiable curve $\Gamma_N \subset X$ as any one of the obvious lifts to $X$ of the curve $\Gamma(N, 1/N)$ in $X_N$.

We then see that 
$$ \mathcal{H}^1(\Gamma_N) = 2. $$

In addition, it is clear in this example that
$$ \sum_{B\in \tilde{\mathcal{B}}}  \beta_{\Gamma_N}(B) \diam(B) = \sum_{j=1}^N\sum_{v\in \tilde{V}_j(X_N)}\sum_{i=j}^\infty\beta_{\Gamma(N,1/N)}(B(v,A4^{-i})) \diam\left(B(v,A4^{-i})\right),$$
where the sum on the right is taken over vertices in $X_N$ and the $\beta$-numbers on the right are taken with respect to monotone geodesics in $X_N$. In other words, to show \eqref{betalog} about the curve $\Gamma_N$ in $X$, it suffices to look at $\beta$-numbers in $X_N$ with respect to $\Gamma(N,1/N)\subset X_N$.

Let $\epsilon=1/N$. Fix a vertex $v\in L\cap \tilde{V}_j(X_N)$ from which a spike of length $\epsilon/4^j$ emerges. We see that
$$ \sum_{i=j}^\infty  \beta_{\Gamma(N,1/N)}(B(v,A4^{-i})) \diam \left(B(v,A4^{-i})\right) \geq c\epsilon 4^{-j}\log(1/\epsilon), $$
where $c$ is a constant depending on $A$.

Therefore, 
\begin{align*}
\sum_{B\in \tilde{\mathcal{B}}}  \beta_\Gamma(B) \diam(B) &\geq c\left(\epsilon \log(1/\epsilon) + 4\frac{\epsilon}{4}\log(1/\epsilon) + \dots + 4^N\frac{\epsilon}{4^N}\log(1/\epsilon)\right)\\
%&\geq c\epsilon (\log(1/\epsilon) + \log(1/\epsilon) + \dots + \log(1/\epsilon))\right)\\
&\geq c\epsilon N \log(1/\epsilon)\\
&= c\log(N),
\end{align*}
because $\epsilon=1/N$.

This completes the proof.
\end{proof}

\part{The construction}\label{constructionpart}

In this part, we prove Theorem \ref{construction}. Let $X$ be the limit of an admissible inverse system, as in Definition \ref{admissiblesystem}.

The structure of this part of the paper is as follows: After some additional setup in Section \ref{additionalsetup}, we state the key intermediate step, Proposition \ref{constructionprop}, in Section \ref{constructionpropsection} and give its proof in Sections \ref{liftingalgorithm} through \ref{propositionproof}. Finally, the proof of Theorem \ref{construction} is given in Section \ref{theoremproof}.

\section{Some additional setup}\label{additionalsetup}

It will be convenient in Part \ref{constructionpart} to assume that the scale factor $m$ is sufficiently large, depending on the other parameters $\eta, \Delta$ of the space $X$ (see Definition \ref{admissiblesystem}). To achieve this, we will ``skip'' some scales, using the following lemma.

\begin{lemma}\label{mlargelemma}
Fix any integer $K\geq 1$. For each $n\geq 0$, let $\tilde{X}_n = X_{Kn}$ and let $\tilde{\pi}_n:\tilde{X}_{n+1}\rightarrow \tilde{X}_n$ be defined by composing $\pi_n \circ \pi_{n+1} \circ \dots \circ \pi_{nK-1}$.

Then $\tilde{X}_n$ forms an admissible inverse system with $m$ replaced by $m^K$, the same constant $\Delta$, and $\eta$ replaced by $4\eta$. The inverse limit of $\tilde{X}_n$ is isometric to the inverse limit of $X_n$, and the spaces $\tilde{X}$, $\{\tilde{X}_n\}$ are uniformly doubling with the same doubling constant as $X$, $\{X_n\}$.
\end{lemma}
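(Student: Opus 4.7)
The plan is to verify the three assertions of the lemma in turn, with the main work being the verification of the four axioms of Definition \ref{admissiblesystem} for the new system $\{\tilde{X}_n\}$ with data $(m^K, 4\eta, \Delta)$. Axioms \eqref{admsys1} and \eqref{admsys2} are immediate: $\tilde{X}_n = X_{Kn}$ is connected with vertex valences bounded by $\Delta$, each edge has length $m^{-Kn} = (m^K)^{-n}$, and $\tilde{X}_0 = X_0$.

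For axiom \eqref{admsys3}, write $\tilde{\pi}_n = \pi_{Kn} \circ \pi_{Kn+1} \circ \cdots \circ \pi_{K(n+1)-1}$. As a composition, $\tilde{\pi}_n$ is open and simplicial. To check it is an isometry on every edge, I trace through: starting from an edge $e$ of $X_{K(n+1)}$ of length $m^{-K(n+1)}$, axiom \eqref{admsys3} for the original system sends $e$ isometrically onto an edge of $X'_{K(n+1)-1}$, which is a sub-arc of length $m^{-K(n+1)}$ inside some edge of $X_{K(n+1)-1}$. Each subsequent $\pi_i$ is an isometry on full edges of $X_i$ and hence on sub-arcs thereof, so after $K$ applications the edge $e$ is mapped isometrically to a sub-arc of length $m^{-K(n+1)}$ in $X_{Kn}$, which is precisely an edge of the $m^K$-fold subdivision $\tilde{X}'_n$.

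For axiom \eqref{admsys4}, I iterate Lemma \ref{piproperties}\eqref{pipreimage}: if $y_1, y_2 \in X_{K(n+1)}$ satisfy $\tilde{\pi}_n(y_1) = \tilde{\pi}_n(y_2)$, telescoping the estimate $d_{i+1}(x,y) \leq d_i(\pi_i x, \pi_i y) + 2\eta m^{-i}$ from $i = Kn$ through $i = K(n+1)-1$ yields
$$d_{K(n+1)}(y_1, y_2) \leq 2\eta \sum_{j=0}^{K-1} m^{-(Kn+j)} \leq \frac{2m\eta}{m-1}\, m^{-Kn} \leq 4\eta\, m^{-Kn},$$
where the final inequality uses $m \geq 2$; this is the source of the $4\eta$ in the new data. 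This geometric-series estimate is the only real computation in the proof, so it will be the crux, though it is entirely routine.

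Finally, for the isometry of inverse limits, I send each $(v_i)_{i\geq 0}$ in the inverse limit of $\{X_i\}$ to its subsequence $(v_{Kn})_{n\geq 0}$ in the inverse limit of $\{\tilde{X}_n\}$. This is a bijection: injectivity follows because each $v_i$ is determined by $v_{i+1}$ via $v_i = \pi_i(v_{i+1})$, hence by the next $v_{Kn}$ above it, and surjectivity follows by uniquely filling in intermediate entries $v_{Kn+j} = \pi_{Kn+j}\circ\cdots\circ\pi_{K(n+1)-1}(u_{n+1})$ from a compatible $(u_n)$. The map is distance-preserving because the defining monotone bounded sequence $d_i(v_i,w_i)$ has the same limit as its subsequence $d_{Kn}(v_{Kn}, w_{Kn})$. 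The doubling assertion is then immediate, since $\tilde{X}$ equals $X$ as a metric space and each $\tilde{X}_n$ is one of the original $X_i$.
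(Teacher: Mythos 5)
Your proof is correct and follows essentially the same route as the paper: the only real computation in both is the telescoped estimate from Lemma \ref{piproperties}\eqref{pipreimage}, giving the geometric-series bound $2\eta m^{-Kn}\frac{m}{m-1}\leq 4\eta (m^K)^{-n}$ for fiber diameters (hence $\eta\mapsto 4\eta$), while the subsequence observation handles the identical inverse limit and the doubling constant. You simply spell out the verification of axioms \eqref{admsys1}--\eqref{admsys3} and the limit bijection in more detail than the paper, which treats these points as clear.
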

\begin{proof}
Since $\{\tilde{X}_n\}$ is a subsequence of $\{X_n\}$, it has the same limit. The fact that $m$ is replaced by $m^K$ and $\Delta$ remains unchanged is clear. Furthermore, since we consider the same spaces, the fact that the doubling constant is unchanged is also clear.

To see what happens to $\eta$, consider an arbitrary point $x_n\in \tilde{X}'_n \subset X_{Kn}$. Using Lemma \ref{piproperties}, we see that
$$ \diam((\tilde{\pi}_n)^{-1}(x_n)) \leq 2\eta(m^{-Kn} + m^{-(Kn+1)} + \dots + m^{-(K(n+1))-1}) \leq 2\eta m^{-Kn} \left(\frac{m}{m-1}\right) \leq 4\eta (m^K)^{-n}.$$
\end{proof}

Therefore, in proving Theorem \ref{construction}, we may without loss of generality assume that $m$ is large compared to $\eta$ and $\Delta$. In fact the necessary assumption will be that 
\begin{equation}\label{mlarge}
C_{\eta,\Delta}m^{-1}\frac{\log(20C_\eta m)}{\log(m)} < \frac{1}{100},
\end{equation}
where $C_{\eta,\Delta}$ is a constant depending only on $\eta$ and $\Delta$ that will arise in the proof. This can be achieved by making $m$ sufficiently large, i.e., by replacing $m$ by $m^K$, where $K$ is the smallest integer such that \eqref{mlarge} holds with $m^K$.  This assumption is in force from now on.

Recall from subsection \ref{mainresults} our choice of $A>1$, which depends only on $\eta$. We fix a small constant $\epsilon$ which depends on $m,A,\eta,\Delta$. Taking $\epsilon = (10C_\eta A m \delta)^{-10}$ suffices.

From now on, constants which depend only on $\eta$ and $\Delta$ (but not $m$) will be denoted by $C_\eta$ (if only depending on $\eta$) or $C_{\eta,\Delta}$. Constants which may depend on on $m$ as well are denoted $C_X$.

Given an edge $e_n$ in $X_n$ we set
\begin{align*}
Q(e_n) &= \{x\in X : \dist(\pi_n(x), e_n)\leq 2m^{-n}\}\\
&=\{x\in X: \pi_n(x) \text{ lies within two edges of } e_n\}.
\end{align*}
Observe that if $e'_n$ is an edge of $X_n$ that shares both endpoints with $e_n$, then $Q(e_n)=Q(e'_n)$.

We set $\mathcal{Q}_n$ to be the collection of all $Q(e_n)$ as $e_n$ ranges over the edges of $X_n$, and $\mathcal{Q}=\bigcup_{n=0}^\infty \mathcal{Q}_n$. Recall our choice of $A\geq 100C_\eta$ from subsection \ref{mainresults}. Thus, observe that if $e_n$ is an edge of $X_n$, then $Q(e_n)\subset B(e_n)$ and
$$\diam (Q(e_n)) \geq m^{-n} \geq \frac{1}{2A}\diam(B(e_n)).$$
For $Q(e_n)\in \mathcal{Q}$, we set
$$ \beta_E(Q(e_n)) = \frac{1}{\diam (B(e_n))}\inf_{L} \sup_{x\in E\cap Q(e_n)} \dist(x,L) \leq \beta_E(B(e_n)).$$
It will therefore suffice to control the length of the curve $G$ in Theorem \ref{construction} by
$$ \mathcal{H}^1(G) \leq C\left(\diam(E) + \sum_{Q\subset \mathcal{Q}, \beta_E(Q)\geq\epsilon} \diam(Q)\right), $$
which is what we will actually do below.

For each $Q(e_n)\in\mathcal{Q}$, there is at least one monotone geodesic in $X$ which achieves the minimum in $\beta_E(Q(e_n))$. Although it is not unique, we will fix one such monotone geodesic for each $Q(e_n)$ and call it ``the'' optimal monotone geodesic for $Q(e_n)$.

As a final notational convenience, we will use the more compact notation $|E|$ below to denote $\mathcal{H}^1(E)$ for a subset $E$ of $X$ or any $X_n$.

\section{The main preliminary step: Proposition \ref{constructionprop}}\label{constructionpropsection}
The main part of the proof of Theorem \ref{construction} is encapsulated in the following proposition, which is then iterated to produce the curve $G$ of Theorem \ref{construction}. Although technical to state, the idea of Proposition \ref{constructionprop} is that it splits the set $E$ into a portion $E\cap\Gamma$ contained in our ``first pass'' at a curve $\Gamma$, and subsets $E(e_n)\subset E\setminus \Gamma$ (indexed by edges of $X_n$ for varying $n\in\mathbb{N}$) on which we will repeat the construction.

\begin{prop}\label{constructionprop} 
Let $E\subset X$ be a compact set such that $\pi_{n_0}(E)$ is contained in an edge $e_{n_0}\in X_{n_0}$, for some $n_0\geq 0$. Assume that, for every $n\in\mathbb{N}$, $\pi_n(E)$ does not contain any vertex or edge-midpoint of $X_n$.

Then there is a compact connected set $\Gamma\subset X$, a sub-collection $\mathcal{Q}_\Gamma\subseteq \mathcal{Q}$, and sets $E(e_n)$ ($n_0+2\leq n\in\mathbb{N}$, $e_n$ an edge of $X_n$) whose union contains $E\setminus \Gamma$, such that:
\begin{enumerate}[(i)]
\item \label{gammalength} $|\Gamma| \leq 2m^{-n_0} + C_{X}\sum_{Q\subset \mathcal{Q}_\Gamma, \beta_E(Q)\geq\epsilon} \diam(Q)$
\item \label{gammanearE} $\pi_{n_0}(\Gamma)\supseteq e_{n_0}$.
\item \label{gammaprojection} If $Q(e_n)\in \mathcal{Q}_\Gamma$ for some edge $e_n$ of $X_n$, then $\pi_{n_0}(e_n)\subset e_{n_0}$.
\item \label{bubblenotingamma} For each $E(e_n)\neq\emptyset$, we have $\pi_n(E(e_n))\subset e_n$, $\pi_{n_0}(e_n)\subset e_{n_0}$, and $\pi_n(e_\ell)\not\subseteq e_n$ for any $e_\ell$ with $Q(e_\ell)\in \mathcal{Q}_\Gamma$. In particular, $\diam(E(e_n))\leq C_\eta m^{-n}$.
\item \label{bubblesdisjoint} If $E(e_n)$ and $E(e'_{n'})$ are non-empty and $n\leq n'$, then $\pi_n(e'_{n'})\not\subset e_n$. %then neither $e_n$ nor $e'_{n'}$ project into the other.
\item \label{bubbleneargamma} $\dist(E(e_n)), \Gamma) \leq C_{\eta}m^{-(n-1)}$, 

\item \label{bubblesum} $\sum_{n\geq n_0} \sum_{e_n: E(e_n)\neq\emptyset} |e_n| \leq \frac{C_{\eta,\Delta}}{m^2}|\Gamma| + C_{X}\sum_{Q\subset \mathcal{Q}_\Gamma, \beta_E(Q)\geq\epsilon} \diam(Q)$. 

\end{enumerate}
\end{prop}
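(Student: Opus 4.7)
The plan is a top-down, scale-by-scale refinement. I start at scale $n_0$ with the single active edge $e_{n_0}$ and an initial curve $\Gamma^{(n_0)}\subset X$ obtained by lifting $e_{n_0}$ to a monotone geodesic via Lemma \ref{monotonefacts}(b) and the path-lifting property of Lemma \ref{piproperties2}(b). At each scale $n\geq n_0+1$ I maintain a set of active edges, inherited as children of refined edges from scale $n-1$, with forced refinement at scales $n_0$ and $n_0+1$ (which explains why bubbles in (iv)--(vii) only start at scale $n_0+2$). For each active edge $e_n$ with $n\geq n_0+2$ I test whether $\beta_E(Q(e_n))\geq \epsilon$: if so, I add $Q(e_n)$ to $\mathcal{Q}_\Gamma$, replace the segment of the current curve lying over $e_n$ by a lift of the optimal monotone geodesic for $Q(e_n)$, and promote to active all children of $e_n$ in $X_{n+1}$ whose images meet $\pi_{n+1}(E)$. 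If $\beta_E(Q(e_n))<\epsilon$, I freeze the monotone approximation there and declare $E(e_n):=E\cap\pi_n^{-1}(e_n)$ a bubble.

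The length bound (i) follows by telescoping: the initial lift plus forced refinements at scales $n_0, n_0+1$ contribute at most $2m^{-n_0}$, while each later refinement of $Q(e_n)\in\mathcal{Q}_\Gamma$ swaps one monotone arc for another of comparable length, together with short connecting arcs at the two endpoints whose length is bounded by $C_\eta m^{-n}$ (via Lemma \ref{piproperties2}(c) applied to two points sharing an image in $X_n$); summing gives the $C_X \sum_{Q\in\mathcal{Q}_\Gamma,\beta_E(Q)\geq \epsilon}\diam(Q)$ term. Properties (ii)--(vi) then follow by inspection: (ii) from the initial lift; (iii) because every active edge descends from $e_{n_0}$; (iv) and (v) because bubbles are terminal leaves of the refinement tree, so no $\mathcal{Q}_\Gamma$-edge projects into them and two distinct bubbles cannot nest; (vi) and the diameter bound in (iv) because both $E(e_n)$ and the frozen monotone approximation lie in a $C_\eta m^{-n}$-neighborhood of $\pi_n^{-1}(e_n)$ by Lemma \ref{piproperties2}(c).

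The main obstacle is (vii), the bubble mass bound. My strategy is to charge each bubble edge $e_n$ (of length $m^{-n}$) to its refined parent at scale $n-1$: by the valence bound, any such parent has at most $\Delta$ bubble children, so for each $m^{-(n-1)}$ of parental curve length in $\Gamma$ there is at most $\Delta m^{-n}$ of bubble length, yielding a per-scale ratio of $\Delta/m$. Iterating across scales and using the $m$-largeness assumption \eqref{mlarge} collapses this geometric sum to the prefactor $C_{\eta,\Delta}m^{-2}$ on $|\Gamma|$; any residual contribution that cannot be so charged must originate at a refined ancestor with $\beta_E\geq\epsilon$, and that contribution enters the $\sum\diam(Q)$ term. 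The delicate points are (a) ensuring the refinement process keeps $\Gamma$ connected while controlling connector lengths, handled by choosing lift endpoints to lie on common fibers $\pi_n^{-1}(v)$ where $v$ is a vertex shared between adjacent edges of $X_n$, and (b) verifying that a frozen bubble $e_n$ truly prevents any $\mathcal{Q}_\Gamma$-element from projecting into it, which holds because once a branch is frozen no further refinement occurs along it and distinct branches correspond to disjoint sub-trees of descendants.
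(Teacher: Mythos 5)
Your refinement criterion is inverted, and this is fatal to property \eqref{bubblesum}. In your scheme you refine only where $\beta_E(Q(e_n))\geq\epsilon$ and you freeze every small-$\beta$ active edge into a bubble $E(e_n)=E\cap\pi_n^{-1}(e_n)$. But the flat (small-$\beta$) part of $E$ is exactly the part the curve is supposed to capture; the bubbles must be only the exceptional portion of $E$ that the curve misses. Concretely, take $E$ to be a single monotone geodesic segment with $\diam(E)\approx m^{-n_0}$. Then every $\beta_E(Q)$ vanishes, your $\Gamma$ is just the initial lift of $e_{n_0}$ (length $\leq 2m^{-n_0}$), and all of $E$ gets declared bubbles at scale $n_0+2$: about $m^2$ edges of length $m^{-(n_0+2)}$, so $\sum_{e_n}|e_n|\approx m^{-n_0}\approx|\Gamma|$, which violates the required bound $\frac{C_{\eta,\Delta}}{m^2}|\Gamma|+C_X\sum_{\beta\geq\epsilon}\diam(Q)$ by a factor of order $m^2$ (and $m$ is taken large via \eqref{mlarge}). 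The factor $m^{-2}$ is not cosmetic: Proposition \ref{constructionprop} is iterated in Section \ref{theoremproof}, and with your bubbles each generation of the recursion contributes length $\sim m^{-n_0}$, so the iteration does not converge to a finite-length $G$. Moreover, since $E$ may lie on a different ``sheet'' than your initial lift, $\dist(E(e_{n_0+2}),\Gamma)$ can be as large as $C_\eta m^{-n_0}$, so even \eqref{bubbleneargamma} can fail at the first bubble scale. Your charging count for (vii) is also off: an edge of $X_{n-1}$ has on the order of $\Delta m$ (not $\Delta$) children meeting $\pi_n(E)$, so the per-scale ratio you get is $\sim\Delta$, not $\Delta/m$.

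The paper's algorithm does essentially the opposite of yours: it refines everywhere, using the optimal monotone geodesics of the \emph{small}-$\beta$ cubes (over ``good'' edges, spliced at special vertices/break points) to steer the lift, while the \emph{large}-$\beta$ edges are the places where one pays $\diam(Q)$ by adding the whole spread $S(e_n)$ and continuing. Bubbles are then defined a posteriori as the parts of $E$ left far from $\Gamma_n$ in the fiber sense, and these occur only near ``bad'' edges (where $\pi_n(E)$ fails to span the edge) or near break points. The bound \eqref{bubblesum} is obtained in Claim \ref{bubblefreebeta}: each bubble edge is either inside a cube with $\beta\geq\epsilon$ a bounded number of scales up, or is charged (with multiplicity $\leq C_\Delta m$, controlled via Lemmas \ref{projectdata} and \ref{onetoone}) to a bad edge of $\Gamma_{n-3}$, which persists in all later $\Gamma_N$ by Lemma \ref{badedgeslift}; the three-scale gap times the multiplicity is what produces the crucial $m^{-2}$. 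Connectivity is likewise a genuine issue your ``common fiber'' remark does not resolve: distinct points of a fiber $\pi_n^{-1}(v)$ can be $C_\eta m^{-n}$ apart, and when all nearby $\beta$'s are small these connectors cannot be charged to $\diam(Q)$ terms; the paper spends Section \ref{connectability} charging them against an $N_\eta$-overlapping family of arcs of $\Gamma_n$ that stay far from $\pi_n(E)$, which is where the small fraction of $|\Gamma_n|$ in the length budget comes from. Without these ingredients --- refine through flatness, define bubbles by distance to the construction rather than by $\beta$, and charge both connectors and bubbles to persistent bad-edge arcs --- the proposal does not prove the proposition.
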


We now begin to prove Proposition \ref{constructionprop}. This is done by constructing preliminary (disconnected) sets $\Gamma_n\subset X_n$ in Section \ref{liftingalgorithm}, augmenting these into connected sets in Section \ref{connectability}, and showing that these connected sets converge to a set $\Gamma\subset X$ with the above properties, in Sections \ref{convergence} and \ref{propositionproof}.

\section{The lifting algorithm}\label{liftingalgorithm}
In this section, we will construct, for each $n\geq n_0$, a (not necesarily connected) set $\Gamma_n$ in $X_n$. The sets $\Gamma_n$ will be simplicial, i.e., for each $n\geq n_0$, $\Gamma_n$ will be a union of edges of $X_n$. The sets $\Gamma_n$ will then be augmented to become connected sets in Section \ref{connectability}, and the limit of those augmented sets will be the continuum $\Gamma\subset X$ of Proposition \ref{constructionprop}.

The sets $\Gamma_n$ will be constructed inductively, so we will construct $\Gamma_{n_0}$ and then describe how to construct $\Gamma_{n+1}\subset X_{n+1}$ given $\Gamma_n\subset X_n$. 

We first make some more definitions.

For the purposes of this section, a \textit{monotone set} in $X$ or some $X_n$ is a set $E$ such that $E \cap \pi_0^{-1}(t)$ contains at most one point for all but a finite number of points $t\in [0,1]$, and at most two points for all $t\in [0,1]$. Of course, a monotone geodesic segment is a monotone set, but a monotone set need not be connected. As an example, one should think of a union of finitely many monotone geodesic segments $L_1, L_2, \dots, L_k$ that project onto intervals in $X_0\cong [0,1]$ that have disjoint interiors.

\begin{definition}\label{Sdefinition}
For an edge $e_n$ in $X_n$, we define a subset $S(e_n)\subset X$ as follows: Consider each edge $e_{n+1}$ of $X_{n+1}$ that projects into $e_n$. Take a single connected lift $\tilde{e}_{n+1}$ to $X$ of each such $e_{n+1}$, with $|\tilde{e}_{n+1}|=m^{-(n+1)}$. We call the union of all these lifts $S(e_n)$.
\end{definition}
Observe that $S(e_n)$ is a compact (though not necessarily connected) subset of $X$, and satisfies $|S(e_n)|\leq C_X m^{-n}$ (see Lemma \ref{piproperties}). Moreover, by using Lemma \ref{piproperties}, we observe that $S(e_n)$ is even contained in connected subset of $X$ with $\mathcal{H}^1$-measure at most $C'_X m^{-n}$.

\begin{definition}\label{goodedgedef}
For each $n\geq n_0$, we make the following definitions:
\begin{enumerate}[(1)]
\item An edge $e_n$ in $X_n$ is called \textit{good} if $\beta_E(Q(e_n))<\epsilon$ and
$$ B_{X_n}(v_n, (1-2A\epsilon)m^{-n}) \cap \pi_n(E_n) \cap \pi_0^{-1}(\pi_0(e_n)) \neq \emptyset$$
and
$$ B_{X_n}(w_n, (1-2A\epsilon)m^{-n}) \cap \pi_n(E_n) \cap \pi_0^{-1}(\pi_0(e_n)) \neq \emptyset,$$
where $v_n$ and $w_n$ are the endpoints of $e_n$.
\item An edge $e_n$ in $X_n$ is called \textit{bad} if it is not good.
\item A vertex $v'_n$ of level $i\geq n+1$ in $X_n$ is called \textit{special} if there exists an edge $e_n\subset X_n$ containing $v'_n$ and a point $x\in E$ such that
$$ \pi_0(x) \subset [\pi_0(v'_n) - \frac{1}{2}m^{-i}, \pi_0(v'_n) + \frac{1}{2}m^{-i}] $$  
and
$$ d(\pi_n(x), e_n \cap \pi_0^{-1}(\pi_0(x))) < m^{-n}.$$
\end{enumerate}
\end{definition}

Let $\Gamma_{n_0} = e_{n_0}$, the edge containing the full projection of $E$ (by assumption). If $\beta_E(Q(e_{n_0}))<\epsilon$ it is convenient to also call $e_{n_0}$ a good edge, even if it does not satisfy the other criteria of the definition above. Observe that there is a choice of optimal geodesic $L$ for $Q(e_{n_0})$ such that $\pi_{n_0}(L)\supseteq e_{n_0}$.

\begin{rmk}\label{goodedgeremark}
The following simple observation is the most important property of a  good edge: Suppose $e_n$ is a good edge in $X_n$ and $Q\in\mathcal{Q}_n$ has $\beta_E(Q)<\epsilon$ and $e_n\subset\pi_n(Q)$. Then $\pi_n(L)$ contains both endpoints of $e_n$, where $L$ is the optimal monotone geodesic $Q$.
\end{rmk}

Now for each $n\geq n_0$, we will inductively construct a simplicial (but not necessarily connected set) $\Gamma_n$ in $X_n$. These sets will have the the following properties for each $n>n_0$:

\begin{enumerate}[(I)]
\item\label{goodedgeprojection} If a point $x_{n-1}\in X_{n-1}$ is contained in the symmetric difference $\Gamma_{n-1} \triangle \pi_{n-1}(\Gamma_n)$, then $x_{n-1}$ is in a good edge $e_{n-1}$ of $X_{n-1}$. Furthermore, there is a unique good edge $f_{n-1}$ in $\Gamma_{n-1}$ with the same endpoints as $e_{n-1}$.
\item \label{largebetabreak} If two distinct edges $e_{n}, e'_{n}$ of $\Gamma_{n}$ are adjacent at a vertex $v_{n}\in \Gamma_{n}$ and have $\pi_0(e_{n})=\pi_0(e'_{n})$ then, for some $k<n$, $v_{n}$ is a vertex of level $k+1$ and $\pi_k(e_{n})=\pi_k(e'_{n})\subset e_k$, where $e_k$ is an edge of $X_k$ with $\beta_E(Q(e_k))\geq\epsilon$.
\item \label{doubleedges} If two edges $e_n, e'_n$ of $\Gamma_n$ share both endpoints, then $\pi_{n-1}(e_n)=\pi_{n-1}(e'_n) \subset e_{n-1}$, where $e_{n-1}$ is an edge of $X_{n-1}$ with $\beta_E(Q(e_{n-1}))\geq \epsilon$.
\end{enumerate}
The inductive hypothesis (\ref{goodedgeprojection}) says that, up to ``double'' edges, $\Gamma_n$ is a lift of $\Gamma_{n-1}$. The hypotheses (\ref{largebetabreak}) and (\ref{doubleedges}) say that any ``non-monotone'' behavior in $\Gamma_n$ can be traced back to an edge at an earlier scale with $\beta\geq \epsilon$.

Suppose now that we have constructed simplicial sets $\Gamma_1, \Gamma_2, \dots, \Gamma_n$ with the above properties. We now construct $\Gamma_{n+1}\subset X_{n+1}$. We do this by addressing each edge of $\Gamma_n$ separately, and using it to define some addition to $\Gamma_{n+1}$.

There are three possible cases for each edge of $\Gamma_n$, which we address in Cases \ref{largebetalift}, \ref{recentlargebetalift}, and \ref{otheredgeslift} below.

\subsection{Edges with large $\beta$}\label{largebetalift}
For each edge $e_n$ of $\Gamma_n$ with $\beta_E(Q(e_n))\geq \epsilon$, add $\pi_{n+1}(S(e_n))$ to $\Gamma_{n+1}$. (Recall the definition of $S(e_n)$ from Definition \ref{Sdefinition}.)

\subsection{Edges with recently large $\beta$}\label{recentlargebetalift}
Suppose an edge $e_n$ of $\Gamma_n$ has $\beta_E(Q(e_n))<\epsilon$, and $\pi_{n-1}(e_n)$ is contained in an edge $e_{n-1}$ with $\beta_E(Q(e_{n-1}))\geq \epsilon$. Then (by \eqref{goodedgeprojection} and the algorithm of \ref{largebetalift}) it must be that $e_n\subseteq \pi_{n}(S(e_{n-1}))$. We then add $\pi_{n+1}(L(e_n))$ to $\Gamma_{n+1}$, where $L(e_n)$ is the portion of $S(e_{n-1})$ that projects onto $e_n$. Note that $L(e_n)$ is a monotone geodesic segment in $X$.

\subsection{The remaining edges}\label{otheredgeslift}
Now consider the edges of $\Gamma_n$ that do not fall into the cases of \ref{largebetalift} or \ref{recentlargebetalift}.

Observe that all such remaining edges $e_n$ of $\Gamma_n$ have $\beta_E(Q(e_n))<\epsilon$. Furthermore, if $e_n$ is in this case, then the edge $e_{n-1}$ of $X_{n-1}$ that contains $\pi_{n-1}(e_n)$ has $\beta_E(Q(e_{n-1}))< \epsilon$.

Let
\begin{equation}\label{Vndef}
V_n = \{v_n\in V(\Gamma_n): \pi_k(v_n) \text{ is a vertex of level } k+1 \text{ in } e_k, \text{ for some } k\leq n \text{ and some edge } e_{k} \text{ in } \Gamma_{k} \text{ with } \beta_E(Q(e_{k}))\geq \epsilon\}.
\end{equation}

Every edge of $\Gamma_n$ falls into some connected component of $\Gamma_n \setminus V_n$. Each such connected component $P_n$ is a simplicial monotone geodesic segment, since by (\ref{largebetabreak}) any vertices that are adjacent to two edges running in the same direction must be in $V_n$. 

If an edge is of the type considered in Case \ref{largebetalift}, then both its endpoints are in $V_n$ and it is its own component. If an edge is of the type considered in Case \ref{recentlargebetalift}, then the the segment $\pi_{n+1}(L(e_n))$ added in Case \ref{recentlargebetalift} has both endpoints in $V_n$ and so is a union of components. For these components, we do nothing, since we have already addressed these edges in the previous cases.

Each remaining edge of $\Gamma_n$ (i.e., one not falling into Cases \ref{largebetalift} or \ref{recentlargebetalift}) lies in some connected component $P_n$ of $\Gamma_n \setminus V_n$ consisting only of this type of edge (those in the current Case \ref{otheredgeslift}). For short, we will call these components $P_n$ of $\Gamma_n\setminus V_n$ ``monotone components''. 

We now perform an inductive ``lifting'' procedure on each such connected monotone component $P_n$ in $\Gamma_n$. This will assign to $P_n$ a (possibly disconnected) monotone set $L(P_n)$ in $X$ such that $\pi_0(L(P_n)) = \pi_0(P_n)$. The set $\pi_{n+1}(L(P_n))$ will be added to $\Gamma_{n+1}$.

Let $P_n$ be such a component in $\Gamma_n$. Observe that, by our inductive assumption \eqref{goodedgeprojection} (and the fact that no edges of $P_n$ are even one above a large $\beta$ edge), there is a unique monotone component $P_{n-1}$ of $\Gamma_{n-1}$ such that $P_n \subset \pi_n(L(P_{n-1}))$.

Order the edges of $P_n$ in monotone order by $e^1_n, e^2_n, \dots, e^r_n$. (Note that $r$ may be $1$.)

Recall Definition \ref{goodedgedef}. If $e^i_n$ is a good edge in $P_n$, define the following
\[ a(e^i_n) = \begin{cases} 
			\text{the left endpoint of } e^i_n & \text{ if } i=1\\
      \text{the first special vertex of level } n+1 \text{ on } e^i_n & \text{ if } i>1 \text{ and } e^{i-1}_n \text{ is bad} \\
     \text{the first special vertex of level } n+2 \text{ on } e^i_n & \text{ if } i>1 \text{ and } e^{i-1}_n \text{ is good}
   \end{cases}
\]

\[ b(e^i_n) = \begin{cases} 
			\text{the right endpoint of } e^i_n & \text{ if } i=r\\
      \text{the last special vertex of level } n+1 \text{ on } e^i_n & \text{ if } i<r \text{ and } e^{i+1}_n \text{ is bad} \\
     \text{the last special vertex of level } n+2 \text{ on } e^i_n & \text{ if } i<r \text{ and } e^{i+1}_n \text{ is good}
   \end{cases}
\]
We call these points ``break points''. Observe that there are at most two break points on any edge, and that all break points are contained in good edges.

Note that $\pi_0(a(e^i_n))\leq \pi_0(b(e^i_n)) \leq \pi_0(a(e^{i+1}_n))$ for each $i$, with possible equality in either or both cases.

We now define the monotone set $L(P_n)$ in $X$. This will be a monotone set such that $\pi_0(L(P_n))=\pi_0(P_n)$. To define it, we need only specify it above a set of intervals covering $\pi_0(P_n))$.

For each good edge $e^i_n$ in $P_n$, $L(P_n)$ is defined above the open interval $(\pi_0(a(e^i_n), \pi_0(b(e^i_n)))$ to be the corresponding segment of the optimal geodesic for $Q(e^i_n)$. If $e^i_n$ and $e^{i+1}_n$ are both good edges, then $L(P_n)$ is defined above $(\pi_0(b(e^i_n)), \pi_0(a(e^{i+1}_n)))$ to be the optimal geodesic for $Q(e^{i+1}_n)$. Finally, for every other point of $\pi_0(P_n)$, $L(P_n)$ above that point is equal to $L(P_{n-1})$ above that point. We then take the closure of this set in $X$ and call it $L(P_n)$.

Figure \ref{fig:constructionpic} gives a pictorial representation of this construction for a given monotone component $P_n\subset \Gamma_n$.

%Picture
\begin{figure}
	\centering
		\includegraphics[scale=0.4]{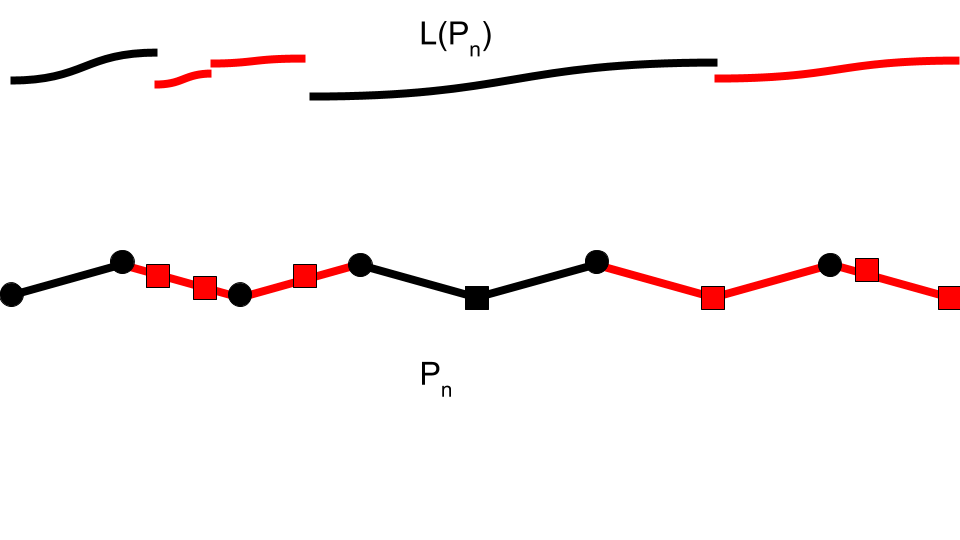}
		\caption{The construction of $L(P_n)\subset X$ given $P_n\subset \Gamma_n$. The red edges represent good edges of $P_n$ while the black edges represent bad edges of $P_n$. The red squares on $P_n$ represent the break points $a(e^i_n)$ and $b(e^i_n)$. A black square represents a possible special vertex on a bad edge of $P_n$. The red segments in $L(P_n)$ represent the portions on which $L(P_n)$ is defined as part of the optimal monotone geodesic segment for some $Q(e^i_n)$, while the black segments in $L(P_n)$ represent the portions on which $L(P_n)$ is defined to be $L(P_{n-1})$.}
	\label{fig:constructionpic}
\end{figure}

Finally, we add $\pi_{n+1}(L(P_n))$ to $\Gamma_{n+1}$, for each monotone component $P_n$ in $\Gamma_n$ as defined above.

This completes the definition of the set $\Gamma_{n+1}$ in $X_{n+1}$.

\subsection{Verifying the inductive hypotheses}
We must now verify our inductive assumptions, namely that $\Gamma_{n+1}$ is simplicial and properties (\ref{goodedgeprojection}), (\ref{largebetabreak}), and (\ref{doubleedges}) hold for $\Gamma_{n+1}$.

Observe that each edge $e_{n+1}$ in $\Gamma_{n+1}$ was added to $\Gamma_{n+1}$ by the above algorithm in exactly one way, namely either in \ref{largebetalift}, \ref{recentlargebetalift}, or \ref{otheredgeslift}.

\begin{claim}\label{simpliciallift}
$\Gamma_{n+1}$ is simplicial in $X_{n+1}$. 

Furthermore, If $e_n^i, e_n^{i+1}, \dots, e_n^j$ are consecutive good edges in a monotone component $P_n$ of $\Gamma_n$, then the portion of $\pi_{n+2}(L(P_n))$ that projects onto $(a(e^i_n), b(e^j_n))\subset X_0 \cong [0,1]$ is connected
\end{claim}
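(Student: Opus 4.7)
The plan is to verify both parts of the claim by analyzing $\Gamma_{n+1}$ case-by-case according to the three pieces of the construction in Sections \ref{largebetalift}, \ref{recentlargebetalift}, \ref{otheredgeslift}. Cases \ref{largebetalift} and \ref{recentlargebetalift} are manifestly simplicial at scale $n+1$: $\pi_{n+1}(S(e_n))$ is a union of edges of $X_{n+1}$ by Definition \ref{Sdefinition}, while $L(e_n) \subset S(e_{n-1})$ is a monotone geodesic lift of $e_n$ of length $m^{-n}$, so $\pi_{n+1}(L(e_n))$ is a concatenation of $m$ edges of $X_{n+1}$. Thus the real work is in Case \ref{otheredgeslift}: one must show that $\pi_{n+1}(L(P_n))$ is simplicial for each monotone component $P_n$, and that its $\pi_{n+2}$-projection is connected over any run of consecutive good edges.

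Recall that $L(P_n)$ is built by gluing the optimal monotone geodesics $L_i$ of $Q(e^i_n)$ (on good edges) with pieces of $L(P_{n-1})$ (on bad edges), meeting at break points. A break point of level $\leq n+1$ projects to a vertex of $X_{n+1}$, so the junction is simplicial provided the two adjacent pieces are themselves unions of edges of $X_{n+1}$ ending at that vertex; this holds for the optimal-geodesic pieces by Remark \ref{goodedgeremark} and for the $L(P_{n-1})$-pieces by the inductive structure of $\Gamma_n$. The essentially new issue, which will be the focus, is a break point of level $n+2$: such a point arises as $v'_n = b(e^k_n)$ for two consecutive good edges $e^k_n, e^{k+1}_n$, where $L(P_n)$ switches from $L_1 := L(Q(e^k_n))$ to $L_2 := L(Q(e^{k+1}_n))$ at a point whose $\pi_{n+1}$-image lies in the interior of an edge of $X_{n+1}$.

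The key step will be a quantitative closeness estimate for $L_1$ and $L_2$ near $v'_n$. By Definition \ref{goodedgedef}, there exists $x \in E$ with $|\pi_0(x) - \pi_0(v'_n)| \leq \tfrac{1}{2}m^{-(n+2)}$ and $d_n(\pi_n(x), e^k_n \cap \pi_0^{-1}(\pi_0(x))) < m^{-n}$; since $v'_n$ lies on both $e^k_n$ and $e^{k+1}_n$, the point $x$ lies in $E \cap Q(e^k_n) \cap Q(e^{k+1}_n)$. Applying $\beta_E(Q(e^k_n)), \beta_E(Q(e^{k+1}_n)) < \epsilon$ and Lemma \ref{nearestpoint}, the unique points $z_i \in L_i$ at $\pi_0$-value $\pi_0(x)$ satisfy $d(x, z_i) \leq 2A\epsilon m^{-n}$, whence $d(z_1, z_2) \leq 4A\epsilon m^{-n}$; similarly, the corresponding points $z'_i \in L_i$ at $\pi_0$-value $\pi_0(v'_n)$ satisfy $d(z'_1, z'_2) \leq m^{-(n+2)} + 4A\epsilon m^{-n}$ using the monotone-geodesic identity $d(z_i, z'_i) = |\pi_0(x) - \pi_0(v'_n)|$.

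From these estimates both conclusions will follow, and here lies the main obstacle: converting quantitative closeness into sharp combinatorial statements. For simpliciality in $X_{n+1}$, the images $\pi_{n+1}(z_1)$ and $\pi_{n+1}(z_2)$ share a $\pi_0$-value and are within $4A\epsilon m \cdot m^{-(n+1)}$; since $\pi_0$ is an isometry on each edge of $X_{n+1}$, any two points of $X_{n+1}$ at a common $\pi_0$-value but on distinct edges are at distance $\geq m^{-(n+1)}$, and the smallness of $\epsilon$ from Section \ref{additionalsetup} (in particular $4A\epsilon m < 1$) forces them onto a common edge $f_{n+1}$. Two monotone geodesics of $X_{n+1}$ through a common interior point of $f_{n+1}$ must coincide on $f_{n+1}$, so $\pi_{n+1}(L_1)$ and $\pi_{n+1}(L_2)$ traverse $f_{n+1}$ identically and together cover it across the break point. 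For the $X_{n+2}$ connectedness, $\pi_{n+2}(z'_1)$ and $\pi_{n+2}(z'_2)$ are vertices of $X_{n+2}$ at the same $\pi_0$-value; distinct such vertices are at distance $\geq 2m^{-(n+2)}$ (no single edge connects two points of equal $\pi_0$-value since $\pi_0$ is monotone on each edge), and $m^{-(n+2)} + 4A\epsilon m^{-n} < 2m^{-(n+2)}$ for small $\epsilon$ forces $\pi_{n+2}(z'_1) = \pi_{n+2}(z'_2)$. Iterating over all level-$(n+2)$ break points in $(a(e^i_n), b(e^j_n))$ then yields connectedness in $X_{n+2}$.
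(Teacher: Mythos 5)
Your treatment of the second half of the claim is essentially the paper's own argument: at each level-$(n+2)$ splice point you use the witness $x\in E$ from the definition of a special vertex, the bounds $\beta_E(Q(e^k_n)),\beta_E(Q(e^{k+1}_n))<\epsilon$ together with Lemma \ref{nearestpoint} to bound the gap between the two spliced optimal geodesics over $\pi_0(v'_n)$ by roughly $m^{-(n+2)}+CA\epsilon m^{-n}<2m^{-(n+2)}$, and then the fact that distinct vertices of level $n+2$ in $X_{n+2}$ lying over the same point of $X_0$ are at distance at least $2m^{-(n+2)}$. (One small inaccuracy: $v'_n=b(e^k_n)$ need not lie on $e^{k+1}_n$ --- it is in general interior to $e^k_n$ --- but $x\in Q(e^k_n)\cap Q(e^{k+1}_n)$ holds anyway because $Q(\cdot)$ reaches within two edges, so both $\beta$-bounds do apply.)

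The problem is in your separate simpliciality argument at scale $n+1$. The assertion that two points of $X_{n+1}$ with a common $\pi_0$-value lying on distinct edges are at distance at least $m^{-(n+1)}$ is false: at a vertex $v$ of $X_{n+1}$ from which two edges emanate in the same $\pi_0$-direction (as happens at every gluing vertex in Example \ref{dyadiclaakso}), points on the two branches at a common $\pi_0$-value $t$ near $\pi_0(v)$ are at distance at most $2|t-\pi_0(v)|$, which is arbitrarily small; so the smallness of $\epsilon$ does not by itself force $\pi_{n+1}(z_1)$ and $\pi_{n+1}(z_2)$ onto a common edge. The step is repairable in two ways. First, simpliciality is only threatened at a break point of level $n+2$ that is \emph{not} of level $n+1$; in that case $\pi_0(x)$ lies at distance at least $\frac{1}{2}m^{-(n+2)}$ from $m^{-(n+1)}\mathbb{Z}$, so any path in $X_{n+1}$ joining two points at that $\pi_0$-value on distinct edges must pass through a vertex of $X_{n+1}$ and hence has length at least $m^{-(n+2)}$, which still dominates $CA\epsilon m^{-n}$. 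Simpler still --- and this is what the paper does --- the first statement follows directly from the second: if $\pi_{n+2}(L(P_n))$ does not break over a level-$(n+2)$ break point, then neither does $\pi_{n+1}(L(P_n))$, and a failure of simpliciality could only arise from such a break, since every other junction occurs over a point of $m^{-(n+1)}\mathbb{Z}$, i.e.\ at a vertex of $X_{n+1}$.
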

\begin{proof}
Let us first prove the second part of the claim. Consider the portion of $L(P_n)$ projecting onto
$$(\pi_0(a(e^i_n)),\pi_0( b(e^j_n))).$$
It is a monotone set that is a finite union of monotone geodesic segments that project onto consecutive adjacent intervals in $X_0$. By the algorithm in \ref{otheredgeslift} (and Lemma \ref{nearestpoint}), the ``vertical'' gap in $X$ between any two consecutive monotone geodesic segments in this span is at most
\begin{equation}\label{gap1}
2(4\epsilon A m^{-n} + \frac{1}{2}m^{-(n+2)}) < 2m^{-(n+2)}.
\end{equation}
and occurs at a vertex of level $n+2$. (The two optimal monotone segments we splice together above a break point $\pi_0(v'_n)$ are within distance $4\epsilon A m^{-n}$ above some point $\pi_0(x)$, where $x\in E$ and $|\pi_0(x)-\pi_0(v'_n)|\leq\frac{1}{2}m^{-(n+2)}$, since $v'_n$ is a special vertex of level $n+2$. The two monotone geodesics can thus diverge from each other only a further distance of $2(\frac{1}{2}m^{-(n+2)})$ above the point $\pi_0(v'_n)$.) 

Any two distinct vertices of level $n+2$ in $X_{n+2}$ with the same image under $\pi_0$ have distance at least $2m^{-(n+2)}$. It follows that the portion of $\pi_{n+2}(L(P_n))$ projecting onto $(a(e^i_n), b(e^j_n))$ is connected, which proves the second part of the claim.

Note that this also implies that the portion of $\pi_{n+1}(L(P_n))$ between $a(e^i_n)$ and $b(e^{j}_n)$ is connected. For the first part of the claim, note that the only way that $\Gamma_{n+1}$ could fail to be simplicial was if two adjacent good edges in $\Gamma_n$ ``broke apart'' at a vertex of level $n+2$ when lifted to $\Gamma_{n+1}$. But by the second part of the claim, proven above, this cannot occur.
\end{proof}

\begin{claim}\label{edgeslift}
If a point $x_n\in X_{n}$
is contained in $\Gamma_{n} \triangle \pi_{n}(\Gamma_{n+1})$, then $x_n$ is in a good edge $e_{n}$ of $X_{n}$.

Furthermore, there is a unique good edge $f_{n}$ in $\Gamma_{n}$ with the same endpoints as $e_{n}$.

In particular, the inductive hypothesis \eqref{goodedgeprojection} holds for $\Gamma_{n+1}$.
\end{claim}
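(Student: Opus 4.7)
The plan is a case analysis following the three construction steps of Subsections~\ref{largebetalift}, \ref{recentlargebetalift}, and \ref{otheredgeslift}. First I would establish the inclusion $\pi_n(\Gamma_{n+1}) \subseteq \Gamma_n$, reducing the symmetric difference to $\Gamma_n \setminus \pi_n(\Gamma_{n+1})$. For edges of $\Gamma_n$ handled in Cases~\ref{largebetalift} and~\ref{recentlargebetalift}, this inclusion is immediate: the added material $\pi_{n+1}(S(e_n))$ or $\pi_{n+1}(L(e_n))$ has $\pi_n$-image equal to $e_n$, an edge of $\Gamma_n$. For a monotone component $P_n$ handled in Case~\ref{otheredgeslift}, I would decompose $\pi_n(\pi_{n+1}(L(P_n)))$ along $\pi_0(P_n)$. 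Above the interior of a good edge $e^i_n$ we use the optimal geodesic $L_i$, and Remark~\ref{goodedgeremark} gives $\pi_n(L_i) \supseteq e^i_n$, so the contribution lies in $e^i_n \subseteq \Gamma_n$. Above a transition between two consecutive good edges $e^i_n, e^{i+1}_n$ we use $L_{i+1}$; because $e^i_n$ is also good and $e^i_n \subseteq \pi_n(Q(e^{i+1}_n))$, Remark~\ref{goodedgeremark} applied to $Q(e^{i+1}_n)$ yields $\pi_n(L_{i+1}) \supseteq e^i_n \cup e^{i+1}_n$, so the contribution lies in $e^i_n \cup e^{i+1}_n \subseteq \Gamma_n$. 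On the remaining intervals $L(P_n) = L(P_{n-1})$, and $\pi_n(L(P_{n-1}))$ is exactly the material added to $\Gamma_n$ at the preceding step via Case~\ref{otheredgeslift}, hence already lies in $\Gamma_n$.

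With the inclusion in hand, consider $x_n \in \Gamma_n \setminus \pi_n(\Gamma_{n+1})$ and pick an edge $e_n$ of $\Gamma_n$ containing $x_n$. I would rule out $e_n$ being processed in Cases~\ref{largebetalift} or~\ref{recentlargebetalift}, since both fully cover $e_n$ in $\pi_n(\Gamma_{n+1})$ as noted above. I would also rule out $e_n$ being a bad edge in Case~\ref{otheredgeslift}: in that scenario, above $\pi_0(e_n)$ we have $L(P_n) = L(P_{n-1})$, so $\pi_n$ of the added material coincides with $\pi_n(L(P_{n-1}))$ restricted to $\pi_0^{-1}(\pi_0(e_n))$, which is precisely the material inserted into $\Gamma_n$ above $\pi_0(e_n)$ at the $(n-1)$-st step. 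Inductive hypothesis~\eqref{doubleedges} forbids double edges of $\Gamma_n$ above $\pi_0(e_n)$ because $\pi_{n-1}(e_n)$ has small $\beta$, and together with~\eqref{largebetabreak} this forces the previously inserted material to coincide with $e_n$ itself, giving $e_n \subseteq \pi_n(\Gamma_{n+1})$---a contradiction. Hence $e_n$ must be a good edge processed in Case~\ref{otheredgeslift}, and we may take $f_n = e_n$. Uniqueness of $f_n$ in $\Gamma_n$ again follows from~\eqref{doubleedges}, since any other good edge of $\Gamma_n$ with the same endpoints as $e_n$ would be a double edge above $\pi_0(e_n)$, contradicting the small-$\beta$ assumption on $\pi_{n-1}(e_n)$.

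The main obstacle is the bad-edge step in the second paragraph: verifying that the $\pi_n$-projection of the $(n-1)$-st-step material above $\pi_0(e_n)$ coincides with $e_n$, rather than some parallel edge of $X_n$ projecting to the same edge of $X_{n-1}$. This requires careful bookkeeping, using~\eqref{largebetabreak} and~\eqref{doubleedges} to exclude alternative lifts during the previous construction step and to trace through the specific material that $L(P_{n-1})$ contributed to $\Gamma_n$ above $\pi_0(e_n)$.
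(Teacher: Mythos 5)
There is a genuine gap, and it sits in your very first step: the inclusion $\pi_n(\Gamma_{n+1})\subseteq\Gamma_n$ is false in general, and your argument for it rests on a misreading of Remark \ref{goodedgeremark}. The remark only asserts that $\pi_n(L)$ contains the two \emph{endpoints} of a good edge $e^i_n$, not the edge itself. Since $\pi_n(L)$ is a monotone geodesic in $X_n$, between those two endpoints it runs through exactly one edge of $X_n$ joining them, and in an admissible graph this may be a \emph{parallel} edge distinct from $e^i_n$ (the possibility of distinct edges sharing both endpoints is built into the whole construction: it is why hypothesis (\ref{doubleedges}) exists, why the paper observes that $Q(e_n)=Q(e'_n)$ for such pairs, and why the conclusion of the claim is phrased as ``there is a unique good edge $f_n$ in $\Gamma_n$ with the same endpoints as $e_n$'' rather than $e_n\subset\Gamma_n$). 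Consequently $\pi_n(\Gamma_{n+1})\setminus\Gamma_n$ can be nonempty, and this is precisely the nontrivial half of the symmetric difference; your case analysis, having discarded it, never addresses the only phenomenon the claim is designed to capture. The same misuse of the remark appears in your transition-interval step, where you assert $\pi_n(L_{i+1})\supseteq e^i_n\cup e^{i+1}_n$.

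The paper's proof handles the two halves of the symmetric difference separately. For $x_n\in\Gamma_n\setminus\pi_n(\Gamma_{n+1})$, it observes directly from the algorithm that $x_n$ must lie between break points $a(\cdot),b(\cdot)$ of consecutive good edges, hence on a good edge of $\Gamma_n$ itself. For $x_n\in\pi_n(\Gamma_{n+1})\setminus\Gamma_n$, it notes that any preimage $x_{n+1}$ was added in Case \ref{otheredgeslift}, so $x_{n+1}\in\pi_{n+1}(L)$ for the optimal geodesic $L$ of a nearby good edge, whence $x_n\in\pi_n(L)$ and, by Remark \ref{goodedgeremark}, $x_n$ lies in an edge sharing both endpoints with a good edge of $\Gamma_n$. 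Uniqueness of $f_n$ is then obtained not from a small-$\beta$ assumption on $\pi_{n-1}(e_n)$ alone, but by noting that two good edges of $\Gamma_n$ with the same endpoints would, by (\ref{doubleedges}), have been produced in Case \ref{largebetalift} at the previous stage and would therefore be lifted via Case \ref{recentlargebetalift}, so that no point of the symmetric difference could arise over them. You should restructure your argument along these lines; also note that the obstacle you flag at the end (the bad-edge bookkeeping) is comparatively harmless, since over bad edges $L(P_n)=L(P_{n-1})$ and $P_n\subset\pi_n(L(P_{n-1}))$ by hypothesis (\ref{goodedgeprojection}), while the real difficulty is the parallel-edge issue over good edges.
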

\begin{proof}
Suppose $x_n\in \Gamma_{n} \setminus \pi_{n}(\Gamma_{n+1})$. Then, by construction, the only possibilities are: $x_n$ lies between $a(e^i_n)$ and $b(e^i_n)$ for some good edge $e^i_n$ in $\Gamma_n$, or $x_n$ lies between $b(e^i_n)$ and $a(e^{i+1}_n)$ for some pair of adjacent good edges $e^i_n$ and $e^{i+1}_n$ in $\Gamma_n$, or possibly $x_n$ lies between $a(e^i_n)$ and $b(e^{i+2}_n)$ for some triple of adjacent consecutive good edges $e^i_n, e^{i+1}_n, e^{i+2}_n$ in $\Gamma_n$. In any case, $x_n$ lies on a good edge of $\Gamma_n$ itself.

On the other hand, suppose now that $x_n\in \pi_{n}(\Gamma_{n+1}) \setminus \Gamma_n$. Let $x_{n+1}$ be a point of $\Gamma_{n+1}$ such that $\pi_n(x_{n+1})=x_n$. Then $x_{n+1}$ must have been added to $\Gamma_{n+1}$ in Case \ref{otheredgeslift}. Furthermore, there must be a string of consecutive good edges $e^i_n, e^{i+1}_n, \dots, e^j_n$ (possibly with $j=1$) in $\Gamma_n$ such that
$$\pi_0(x_n) \in [\pi_0(a(e^i_n)), \pi_0(b(e^j_n))]$$
and $x_{n+1}\in \pi_{n+1}(L)$, where $L$ is the optimal geodesic for some $e^k_n$ that is within two edges of $x_n$. It follows that $x_n\in \pi_n(L)$, and therefore, by Remark \ref{goodedgeremark}, $x_n$ is contained in an edge that shares both endpoints with one of the good edges $e^\ell_n$.

Now note that if there were two such good edges $f_n$ in $\Gamma_n$, then by (\ref{doubleedges}) for $\Gamma_n$, they would both have been added in the algorithm of \ref{largebetalift} applied to $\Gamma_{n-1}$. Hence, they (and any edge of $\Gamma_n$ sharing both endpoints with them) would both simply be lifted as in \ref{recentlargebetalift} applied to $\Gamma_n$, and hence the situation of the Claim could not occur.
\end{proof}

\begin{claim}\label{largebetabreakclaim}
Suppose that two distinct edges $e_{n+1}, e'_{n+1}$ of $\Gamma_{n+1}$ are adjacent at a vertex $v_{n+1}\in \Gamma_{n+1}$ and have $\pi_0(e_{n+1})=\pi_0(e'_{n+1})$.

Then, for some $k<n+1$, $v_{n+1}$ is a vertex of level $k+1$ and $\pi_k(e_{n+1})=\pi_k(e'_{n+1})\subset e_k$, where $e_k$ is an edge of $X_k$ with $\beta_E(Q(e_k))\geq\epsilon$.

In particular, the inductive hypothesis (\ref{largebetabreak}) holds for $\Gamma_{n+1}$.
\end{claim}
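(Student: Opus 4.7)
The plan is to prove the claim by case analysis on the position of the shared vertex $v_{n+1}$ and by tracking which of the three cases of the lifting algorithm (\ref{largebetalift}, \ref{recentlargebetalift}, \ref{otheredgeslift}) introduced $e_{n+1}$ and $e'_{n+1}$ into $\Gamma_{n+1}$. Set $v_n := \pi_n(v_{n+1}) \in X'_n$ and distinguish whether $v_n$ is a vertex of $X_n$ or lies interior to some $X_n$-edge (i.e., corresponds to a level-$(n+1)$ subdivision point of $X'_n$).

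\emph{Case 1: $v_n$ lies interior to an edge $e_n$ of $X_n$.} Since $\pi_n$ is isometric on each edge of $X_{n+1}$, both $\pi_n(e_{n+1})$ and $\pi_n(e'_{n+1})$ must be the unique $m$-th-edge of $e_n$ emanating from $v_n$ in the prescribed $\pi_0$-direction, hence they coincide. Thus $e_{n+1}, e'_{n+1}$ are two distinct lifts of a single $m$-th-edge of $e_n$, i.e., parallel edges in the bubble of $X_{n+1}$ above $e_n$. I would then argue that $\beta_E(Q(e_n)) \geq \epsilon$: if instead $\beta_E(Q(e_n)) < \epsilon$, then $e_n \in \Gamma_n$ was handled either by Case~\ref{recentlargebetalift} (which adds only the single monotone path $\pi_{n+1}(L(e_n))$) or by Case~\ref{otheredgeslift} (which contributes $\pi_{n+1}(L(P_n))$ for the monotone component $P_n$ containing $e_n$). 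Neither produces more than one $X_{n+1}$-edge above any given $m$-th-edge of $e_n$, since $L(e_n)$ is a monotone geodesic segment and the doubling of $L(P_n)$ occurs only at isolated break points rather than across whole $m$-th-edges. For $\epsilon$ small enough, no contribution from a neighboring component can drift onto $e_n$ either, so two parallel edges cannot coexist. Taking $k = n$ and $e_k = e_n$ then completes this case: $v_{n+1}$ is a vertex of $X_{n+1}$, i.e., of level $k+1$, and $\pi_n(e_{n+1}) = \pi_n(e'_{n+1}) \subset e_n$.

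\emph{Case 2: $v_n$ is a vertex of $X_n$.} Then $\pi_n(e_{n+1})$ and $\pi_n(e'_{n+1})$ are $m$-th-edges emanating from $v_n$ in the same $\pi_0$-direction, contained in $X_n$-edges $\hat{e}_n, \hat{e}'_n$. If $\hat{e}_n = \hat{e}'_n$, the two $m$-th-edges coincide and the argument of Case 1 applies. If $\hat{e}_n \neq \hat{e}'_n$, I would first check that both $\hat{e}_n, \hat{e}'_n$ lie in $\Gamma_n$: this is immediate when the originating step is Case~\ref{largebetalift} or Case~\ref{recentlargebetalift} (where the $X_n$-parent of the added $X_{n+1}$-edge equals $\hat{e}_n$), and in Case~\ref{otheredgeslift} it follows from the smallness of $\epsilon$, which forces the optimal monotone geodesic $L_i$ for $Q(e^i_n)$ to project under $\pi_n$ inside $e^i_n$ itself (so that $\hat{e}_n = e^i_n \in \Gamma_n$). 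Granted $\hat{e}_n, \hat{e}'_n$ are distinct edges of $\Gamma_n$ meeting at $v_n$ with the same $\pi_0$-direction, the inductive hypothesis~\ref{largebetabreak} for $\Gamma_n$ supplies some $k < n$ with $v_n$ a vertex of level $k+1$ and $\pi_k(\hat{e}_n) = \pi_k(\hat{e}'_n) \subset e_k$ for an edge $e_k$ of $X_k$ with $\beta_E(Q(e_k)) \geq \epsilon$. Since $\pi_0(v_{n+1}) = \pi_0(v_n)$, also $v_{n+1}$ is a vertex of level $k+1$; and since $\pi_k$ is an isometry on each $X_n$-edge, $\pi_k(e_{n+1}) \subset \pi_k(\hat{e}_n) \subset e_k$ and likewise for $\pi_k(e'_{n+1})$. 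Equality $\pi_k(e_{n+1}) = \pi_k(e'_{n+1})$ then follows from equality of their $\pi_0$-projections together with $\pi_0$-injectivity on $e_k$.

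The main obstacle I anticipate is the Case~\ref{otheredgeslift} sub-case of Case 2 (and its analog in Case 1): checking that the optimal monotone geodesic $L_i$ for $Q(e^i_n)$ never drifts, under $\pi_n$, off of $e^i_n$ onto a neighboring parallel $X_n$-edge. The point is that $L_i$ is only $O(\epsilon A m^{-n})$-close to $e^i_n$ in $X$, so one must play this against the separation (controlled by $\eta$) of parallel edges in $X_n$ to rule out drift. This is exactly the quantitative role of the smallness condition on $\epsilon$ fixed in the setup. Once drift is excluded, every remaining step amounts to bookkeeping: tracking $\pi_k$-projections along edges and invoking the inductive hypothesis on $\Gamma_n$.
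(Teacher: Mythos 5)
Your overall skeleton (reduce either to the inductive hypothesis at level $n$ or to a large-$\beta$ edge at level $n$ with $k=n$) matches the paper, but the step you flag as the ``main obstacle'' is resolved incorrectly, and it is load-bearing. The assertion that for $\epsilon$ small the optimal monotone geodesic for $Q(e^i_n)$ must project under $\pi_n$ into $e^i_n$ itself is false: the geodesic may project onto a parallel edge sharing \emph{both} endpoints with $e^i_n$, and no smallness of $\epsilon$ can prevent this, because parallel edges share endpoints and so are not separated by any definite amount --- indeed the set $E$ itself may lie over the parallel edge, since goodness of $e^i_n$ (Definition \ref{goodedgedef}) only constrains $\pi_n(E)$ near the endpoints within the $\pi_0$-range, not to lie on $e^i_n$. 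This is precisely why property (\ref{goodedgeprojection}) and Claim \ref{edgeslift} are phrased in terms of the symmetric difference $\Gamma_n\triangle\pi_n(\Gamma_{n+1})$ lying in good edges that share both endpoints with a unique good edge of $\Gamma_n$, rather than asserting $\pi_n(\Gamma_{n+1})\subseteq\Gamma_n$. Consequently, in your Case 2 the edges $\hat e_n,\hat e'_n$ containing $\pi_n(e_{n+1}),\pi_n(e'_{n+1})$ need not lie in $\Gamma_n$, so you cannot invoke hypothesis (\ref{largebetabreak}) for them directly; and in your Case 1 the exclusion of contributions ``drifting in'' from another component cannot be done metrically.

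The paper's substitute is structural, not quantitative: by Claim \ref{edgeslift}, $\hat e_n$ and $\hat e'_n$ share both endpoints with edges $f_n,f'_n$ of $\Gamma_n$. If $f_n\neq f'_n$, apply (\ref{largebetabreak}) to $f_n,f'_n$ and then use that edges sharing both endpoints have equal projections one level down, so $\pi_k(\hat e_n)=\pi_k(f_n)$ and $\pi_k(\hat e'_n)=\pi_k(f'_n)$ for the resulting $k<n$ --- a transfer step your write-up also omits. If only one $f_n$ exists and $\hat e_n=\hat e'_n$, inspection of the algorithm forces Case \ref{largebetalift}, giving $k=n$; and if only one $f_n$ exists but $\hat e_n\neq\hat e'_n$, then $f_n$ is good and both $e_{n+1},e'_{n+1}$ would have to come from the single monotone set $L(P_n)$ of the component containing $f_n$, contradicting monotonicity. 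Your worry about two different components (or a Case \ref{recentlargebetalift} edge and a neighboring component) contributing over the same edge is likewise ruled out by hypothesis (\ref{doubleedges}): two distinct parallel edges of $\Gamma_n$ can only sit over a large-$\beta$ edge of $X_{n-1}$, in which case they are lifted via Case \ref{recentlargebetalift} and never enter the splicing of Case \ref{otheredgeslift}. So the gap is genuine but repairable: replace the false ``no drift'' claim with the bookkeeping through Claim \ref{edgeslift}, Remark \ref{goodedgeremark}, and hypotheses (\ref{largebetabreak})--(\ref{doubleedges}).
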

\begin{proof}
Let $e_n$ and $e'_n$ be the (not necessarily distinct) edges of $X_{n}$ containing $\pi_n(e_{n+1})$ and $\pi_n(e'_{n+1})$, respectively.

By Claim \ref{edgeslift}, $e_n$ and $e'_n$ share both endpoints with (not necessarily distinct) edges $f_n$ and $f'_n$ of $\Gamma_n$, respectively. 

Suppose first that $f_n$ and $f'_n$ can be chosen so that $f_n\neq f'_n$. Observe that in this case it must be that $f_n$ and $f'_n$ share at least one vertex $v_n=\pi_n(v_{n+1})$. It then follows from (\ref{largebetabreak}) in $\Gamma_{n}$ that, for some $k<n$, $v_{n}$ is a vertex of level $k+1$ and $\pi_k(f_{n})=\pi_k(f'_{n})\subset e_k$, where $e_k$ is an edge of $X_k$ with $\beta_E(Q(e_k))\geq\epsilon$. Since $k<n$, we have $\pi_k(f_n)=\pi_k(e_n)$ and $\pi_k(f'_n)=\pi_k(e'_n)$, and therefore the result follows.

Now suppose that we cannot choose $f_n$ and $f'_n$ to be distinct. Then $e_n$ and $e'_n$ both share both their endpoints with exactly one edge $f_n$ of $\Gamma_n$.

If $e_n=e'_n$, then, by inspecting the algorithm, the only possibility is that $e_{n+1}$ and $e'_{n+1}$ were both added to $\Gamma_{n+1}$ as in \ref{largebetalift}, and hence the conclusion of the claim is satisfied with $k=n$.

If $e_n\neq e'_n$, then by Claim \ref{edgeslift}, $f_n$ is a good edge of $\Gamma_n$, and both edges $e_{n+1}$ and $e'_{n+1}$ were added to $\Gamma_{n+1}$ using the algorithm in \ref{otheredgeslift} applied to the montone component containing $f_n$. But this is impossible, since the algorithm in \ref{otheredgeslift} generates monotone sets and $e_{n+1}$ and $e'_{n+1}$ cannot be in the same monotone set.
\end{proof}

\begin{claim}
If $e_{n+1}$ and $e'_{n+1}$ are edges of $\Gamma_{n+1}$ that share both endpoints, then the edge $e_n$ of $X_n$ containing $\pi_n(e_{n+1})=\pi_n(e'_{n+1})$ has $\beta_E(Q(e_n))\geq \epsilon$.

In particular, the inductive hypothesis (\ref{doubleedges}) holds for $\Gamma_{n+1}$.
\end{claim}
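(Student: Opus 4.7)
I argue by contradiction. Since $e_{n+1}$ and $e'_{n+1}$ are distinct edges of $\Gamma_{n+1}$ sharing both endpoints in $X_{n+1}$, they have the same $\pi_n$-image, a single half of the unique edge $e_n$ of $X_n$ in the statement. Suppose $\beta_E(Q(e_n))<\epsilon$ for contradiction. Each edge of $\Gamma_{n+1}$ is produced by exactly one of the three subroutines of Sections~\ref{largebetalift}, \ref{recentlargebetalift}, \ref{otheredgeslift}, applied to some edge or monotone component of $\Gamma_n$; I analyze each possible provenance for the pair $(e_{n+1}, e'_{n+1})$.

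First, any edge added by Section~\ref{largebetalift} on $\tilde e_n\in\Gamma_n$ projects via $\pi_n$ into $\tilde e_n$, so if either $e_{n+1}$ or $e'_{n+1}$ arose this way then $\tilde e_n=e_n$ and $\beta_E(Q(e_n))\ge\epsilon$, contradicting our assumption. Applying the same projection argument to Section~\ref{recentlargebetalift} shows that if either edge came from this subroutine on some $\tilde e_n$, then again $\tilde e_n=e_n$, so $e_n\in\Gamma_n$ was processed there; but the single addition $\pi_{n+1}(L(e_n))$ is a monotone geodesic segment consisting of exactly two edges of $X_{n+1}$ meeting at a single interior vertex and not sharing both endpoints. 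Hence $e_{n+1}$ and $e'_{n+1}$ cannot both come from Section~\ref{recentlargebetalift}.

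The remaining cases involve at least one edge from Section~\ref{otheredgeslift} applied to a monotone component $P_n$. By Remark~\ref{goodedgeremark}, for each good edge $e^i_n\in P_n$ the optimal geodesic $L^i$ for $Q(e^i_n)$ has $\pi_n(L^i)$ containing both endpoints of $e^i_n$, so $\pi_n(L^i)$ covers either $e^i_n$ itself or the unique edge of $X_n$ sharing both endpoints with $e^i_n$. Thus every edge of $X_n$ covered by $\pi_n(L(P_n))$ either lies in $P_n$ or is parallel to some $e^i_n\in P_n$. If $e_n$ were parallel to $e^i_n\in P_n$ and also lay in $\Gamma_n$, then by the inductive hypothesis (\ref{doubleedges}) at scale $n$ we would have $\pi_{n-1}(e_n)=\pi_{n-1}(e^i_n)\subset e_{n-1}$ with $\beta_E(Q(e_{n-1}))\ge\epsilon$, forcing $e^i_n$ to be processed by Section~\ref{recentlargebetalift}, contradicting $e^i_n\in P_n$. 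Consequently either $e_n\in P_n$, or $e_n\notin\Gamma_n$ is parallel to a unique good $f_n\in P_n\subset\Gamma_n$ supplied by Claim~\ref{edgeslift}; in both situations $e_{n+1}$ and $e'_{n+1}$ must arise from Section~\ref{otheredgeslift} applied to this same $P_n$, the mixed case ``one from \ref{recentlargebetalift}, one from \ref{otheredgeslift}'' being ruled out since it would require $e_n\in\Gamma_n$ to undergo two mutually exclusive subroutines.

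It then remains to rule out the scenario that $\pi_{n+1}(L(P_n))$ contains two distinct edges of $X_{n+1}$ sharing both endpoints above the $\pi_0$-interval of the chosen half of $e_n$. The monotone set $L(P_n)\subset X$ is built as a single connected ``chain'' of monotone geodesic segments (the $L^i$'s and pieces of $L(P_{n-1})$) joined only at break points, which lie at level-$(n+2)$ vertices strictly interior to each half-edge. Two distinct edges of $\Gamma_{n+1}$ sharing both endpoints above this half-edge would force $L(P_n)$ to contain two \emph{disjoint} monotone geodesic strands spanning the entire $\pi_0$-interval and projecting via $\pi_{n+1}$ to vertices of $X_{n+1}$ identified at both level-$(n+1)$ endpoints. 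An inductive analysis of the $L(P_k)$ construction shows that such disjoint strands must originate from a Case~\ref{largebetalift} at some earlier scale $k<n$, whose contributions propagate through subsequent scales as monotone arcs whose $\pi_n$-images are \emph{distinct} parallel edges of $X_n$, not a common $e_n$; hence this scenario is impossible and we have reached a contradiction. The main obstacle is this last step: carefully formalizing how strand multiplicity in $L(P_n)$ propagates through scales and interfacing with Lemma~\ref{piproperties2}(c) to conclude that two strands producing the same $X_{n+1}$-endpoints must have the same $\pi_n$-image at every scale, which is incompatible with the separation of the large-$\beta$ lifts coming out of Section~\ref{largebetalift}.
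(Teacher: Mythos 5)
Your provenance-by-cases strategy is mostly sound (the \ref{largebetalift} and \ref{recentlargebetalift} cases, the mixed cases, and the reduction to a single monotone component all check out, modulo the harmless slips noted below), but there is a genuine gap, and you flag it yourself: the final case, in which both $e_{n+1}$ and $e'_{n+1}$ arise from the lift $\pi_{n+1}(L(P_n))$ of a single monotone component $P_n$ in Section \ref{otheredgeslift}, is never actually ruled out. You only sketch an ``inductive analysis of strand multiplicity'' tracing hypothetical disjoint strands back to a Case \ref{largebetalift} at an earlier scale, and you explicitly call formalizing this the main obstacle; as written it is neither proved nor clearly correct. It is also unnecessary: the missing step has a one-line resolution you did not use. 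By construction $L(P_n)$ is a \emph{monotone set} in $X$ (its $\pi_0$-fiber is a single point for all but finitely many $t$, and never more than two points), and this property passes to $\pi_{n+1}(L(P_n))$; but two distinct edges of $X_{n+1}$ sharing both endpoints lie over a common nondegenerate $\pi_0$-interval, so every interior point of that interval would have a two-point fiber, which is impossible. This monotonicity argument is exactly how the paper disposes of the same configuration, in the proof of Claim \ref{largebetabreakclaim}.

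For comparison, the paper's proof of the present claim is structurally different and much shorter: it does not revisit the lifting algorithm at all, but applies the already-established Claim \ref{largebetabreakclaim} (inductive hypothesis (\ref{largebetabreak}) for $\Gamma_{n+1}$) to the parallel pair $e_{n+1},e'_{n+1}$, which are in particular adjacent with $\pi_0(e_{n+1})=\pi_0(e'_{n+1})$. That claim produces $k<n+1$ with the shared endpoints being vertices of level $k+1$ and $\pi_k(e_{n+1})=\pi_k(e'_{n+1})\subset e_k$, where $\beta_E(Q(e_k))\geq\epsilon$; since the shared endpoints are adjacent vertices of $X_{n+1}$ (their $\pi_0$-images differ by $m^{-(n+1)}$), both can be vertices of level $k+1$ only if $k+1=n+1$, i.e.\ $k=n$, which is the desired conclusion. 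Two minor inaccuracies in your write-up, neither fatal: the common projection $\pi_n(e_{n+1})$ is one of the $m$ sub-edges of $e_n$ in $X'_n$, not a ``half,'' and $\pi_{n+1}(L(e_n))$ in Case \ref{recentlargebetalift} consists of $m$ edges of $X_{n+1}$, not two (the relevant point, which you do use correctly, is that it is a monotone arc and so contains no parallel pair).
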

\begin{proof}
Let $v_{n+1}$ and $w_{n+1}$ be the shared endpoints of $e_{n+1}$ and $e'_{n+1}$.

Claim \ref{largebetabreakclaim} says that for some $k<n+1$, both $v_{n+1}$ and $w_{n+1}$ are vertices of level $k+1$, and $\pi_k(e_{n+1})=\pi_k(e'_{n+1})\subset e_k$, where $e_k$ is an edge of $X_k$ with $\beta_E(Q(e_k))\geq\epsilon$.

Since $v_{n+1}$ and $w_{n+1}$ are adjacent in $X_{n+1}$, the only way that they can both be vertices of level $k+1$ is if $k+1=n+1$. Therefore $k=n$ and the conclusion of the claim follows.
\end{proof}

This completes the construction of the sets $\Gamma_n$.

It is immediate from the lifting algorithm in Cases \eqref{largebetalift}, \eqref{recentlargebetalift}, and \eqref{otheredgeslift} that, for each $n\geq n_0$,
$$ |\Gamma_n| \leq m^{-n_0} + C_X \sum_{Q\in\mathcal{Q}: \beta_E(Q)\geq\epsilon} \diam(Q).$$
Indeed, Case \eqref{largebetalift} is the only case in which $|\Gamma_n|$ increases from $|\Gamma_{n-1}|$.

\section{Additional lemmas about the algorithm}\label{algorithmlemmas}

\begin{lemma}\label{projection}
For any $n>n_0$,
$$\pi_{n-1}(\Gamma_{n+1}) = \pi_{n-1}(\Gamma_n).$$
\end{lemma}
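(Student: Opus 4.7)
The plan is to write $\pi_{n-1}(\Gamma_{n+1}) = \pi_{n-1}(\pi_n(\Gamma_{n+1}))$ and then compare $\pi_n(\Gamma_{n+1})$ with $\Gamma_n$ using Claim~\ref{edgeslift}, which controls their symmetric difference inside $X_n$. The guiding principle is that the modifications made when passing from $\Gamma_n$ to $\pi_n(\Gamma_{n+1})$ only replace edges of $X_n$ by other edges of $X_n$ sharing the same pair of endpoints, and such replacements become invisible after one more projection down to $X_{n-1}$.

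First I would prove a short combinatorial observation: if $e_n, e'_n$ are two edges of $X_n$ with the same pair of endpoints, then $\pi_{n-1}(e_n) = \pi_{n-1}(e'_n)$ as subsets of $X_{n-1}$, and moreover for any $x \in e_n$ and $x' \in e'_n$ with $\pi_0(x) = \pi_0(x')$ we have $\pi_{n-1}(x) = \pi_{n-1}(x')$. This should follow directly from Definition~\ref{admissiblesystem}(\ref{admsys3}): $\pi_{n-1}: X_n \to X'_{n-1}$ is simplicial and an isometry on each edge, $X'_{n-1}$ is a subdivision of $X_{n-1}$ so that between any two vertices of $X'_{n-1}$ at distance $m^{-n}$ there is exactly one subedge, and $\pi_0$ is injective on each such subedge.

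Next I would verify both inclusions using Claim~\ref{edgeslift}. For $\pi_{n-1}(\Gamma_n) \subseteq \pi_{n-1}(\Gamma_{n+1})$, take $x_n \in \Gamma_n$; the interesting case is $x_n \in \Gamma_n \setminus \pi_n(\Gamma_{n+1})$, in which the proof of Claim~\ref{edgeslift} shows that $x_n$ lies on a good edge $e^*_n$ of $\Gamma_n$, and that the piece of $\Gamma_n$ containing $x_n$ is replaced in $\pi_n(\Gamma_{n+1})$ by (part of) $\pi_n(L)$ for $L$ an optimal monotone geodesic of some $Q(e^k_n)$ with $e^k_n$ a good edge of $\Gamma_n$ within two edges of $e^*_n$. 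By Remark~\ref{goodedgeremark} the set $\pi_n(L)$ contains both endpoints of $e^*_n$, so its portion projecting onto the $\pi_0$-image of $e^*_n$ is a single edge of $X_n$ sharing endpoints with $e^*_n$. Choosing the point $x'_n$ on this edge with $\pi_0(x'_n) = \pi_0(x_n)$, the combinatorial observation above gives $\pi_{n-1}(x_n) = \pi_{n-1}(x'_n) \in \pi_{n-1}(\Gamma_{n+1})$. The reverse inclusion will be symmetric: for $x_n \in \pi_n(\Gamma_{n+1}) \setminus \Gamma_n$, Claim~\ref{edgeslift} provides a good edge $f_n$ of $\Gamma_n$ with the same endpoints as the edge of $X_n$ containing $x_n$, and the corresponding point $x'_n \in f_n$ has the same $\pi_{n-1}$-image.

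The main subtlety I anticipate is pairing up points correctly in the cases that go beyond a single good edge, namely when $x_n$ lies in the ``gap'' between two consecutive good edges $e^i_n$ and $e^{i+1}_n$ of a monotone component, where the replacement geodesic is the optimal one for $Q(e^{i+1}_n)$ rather than for the edge actually containing $x_n$. One must verify that Remark~\ref{goodedgeremark} still applies there — the good edge containing $x_n$ is within two edges of $e^{i+1}_n$ and is hence contained in $\pi_n(Q(e^{i+1}_n))$ — so that $\pi_n(L)$ does pass through both of its endpoints and the replacement point $x'_n$ indeed lies on an edge parallel to the one carrying $x_n$.
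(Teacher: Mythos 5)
Your proposal is correct and follows essentially the same route as the paper, whose proof is a one-line appeal to Claim~\ref{edgeslift} together with exactly your observation that edges of $X_n$ sharing both endpoints have the same image under $\pi_{n-1}$. You simply spell out the details the paper treats as immediate (including the gap between consecutive good edges, handled via Remark~\ref{goodedgeremark}), and those details are handled correctly.
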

\begin{proof}
This follows immediately from Claim \ref{edgeslift} and the following simple observation: If two edges $e_n$ and $f_n$ in $X_n$ share the same endpoints, then $\pi_{n-1}(e_n) = \pi_{n-1}(f_n)$.
\end{proof}

\begin{lemma}\label{badedgeslift}
Let $e_n$ be a bad edge of $\Gamma_n$, and let $\ell\geq n$. Then $e_n \subset \pi_n(\Gamma_\ell)$.
\end{lemma}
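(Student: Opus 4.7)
The plan is to first use Lemma \ref{projection} to reduce the statement to the single scale $\ell = n+1$, and then to verify by inspection of the three cases of the lifting algorithm (Sections \ref{largebetalift}, \ref{recentlargebetalift}, \ref{otheredgeslift}) that the portion of $\Gamma_{n+1}$ added for a bad edge always covers $e_n$ under $\pi_n$.

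For the reduction, I would apply Lemma \ref{projection} with $k = \ell - 1$ to get $\pi_{\ell-2}(\Gamma_\ell) = \pi_{\ell-2}(\Gamma_{\ell-1})$, and since $\pi_n$ factors through $\pi_{\ell-2}$ whenever $n \leq \ell - 2$, this yields $\pi_n(\Gamma_\ell) = \pi_n(\Gamma_{\ell-1})$. Iterating down to $\ell = n+1$ gives $\pi_n(\Gamma_\ell) = \pi_n(\Gamma_{n+1})$ for every $\ell \geq n+1$, and $\pi_n(\Gamma_n) = \Gamma_n \supseteq e_n$ trivially. So it suffices to show $e_n \subseteq \pi_n(\Gamma_{n+1})$.

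Next I would split on how $e_n$ is handled in building $\Gamma_{n+1}$. If $\beta_E(Q(e_n)) \geq \epsilon$, then Case \ref{largebetalift} adds $\pi_{n+1}(S(e_n))$, and by the definition of $S(e_n)$ every edge of $X_{n+1}$ projecting into $e_n$ is represented, so $\pi_n(\pi_{n+1}(S(e_n))) = e_n$. If instead $\beta_E(Q(e_n)) < \epsilon$ but the edge $e_{n-1}$ of $X_{n-1}$ containing $\pi_{n-1}(e_n)$ has $\beta_E(Q(e_{n-1})) \geq \epsilon$, then Case \ref{recentlargebetalift} adds $\pi_{n+1}(L(e_n))$, where $L(e_n)$ is the portion of $S(e_{n-1})$ lying over $e_n$; again $\pi_n(\pi_{n+1}(L(e_n))) = e_n$.

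The one case where something has to be checked is Case \ref{otheredgeslift}, where $e_n$ is a bad edge sitting inside a monotone component $P_n$ of $\Gamma_n \setminus V_n$. The key observation is that the break points $a(e^i_n), b(e^i_n)$ are defined only on the \emph{good} edges of $P_n$, so a bad edge $e_n$ carries no break points and the interval $\pi_0(e_n)$ lies entirely in the ``elsewhere'' part of the definition of $L(P_n)$. In other words, above $\pi_0(e_n)$ we have $L(P_n) = L(P_{n-1})$. Since by the construction in Section \ref{liftingalgorithm} $P_n \subseteq \pi_n(L(P_{n-1}))$, this gives $e_n \subseteq \pi_n(L(P_{n-1})) = \pi_n(L(P_n))$ over $\pi_0(e_n)$, and since $\pi_n \circ \pi_{n+1} = \pi_n$ as maps out of $X$, we conclude $e_n \subseteq \pi_n(\pi_{n+1}(L(P_n))) \subseteq \pi_n(\Gamma_{n+1})$.

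The main obstacle is purely bookkeeping inside Case \ref{otheredgeslift}: one must track both that bad edges fall outside every break-point interval (so that their lift is inherited from $L(P_{n-1})$ rather than from an optimal monotone geodesic which only covers the good-edge regions) and that the inductive inclusion $P_n \subseteq \pi_n(L(P_{n-1}))$ is genuinely available from the construction. Once these two points are isolated, the case analysis is routine.
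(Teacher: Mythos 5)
Your proposal is correct and follows essentially the paper's own argument: the paper likewise reduces to consecutive scales via Lemma \ref{projection} (phrased as an induction on $\ell-n$) and settles the base case $\ell=n+1$ by the fact that only points lying in good edges can belong to $\Gamma_n\setminus\pi_n(\Gamma_{n+1})$, which is exactly the first half of Claim \ref{edgeslift}. Your inline case analysis of Cases \ref{largebetalift}, \ref{recentlargebetalift}, and \ref{otheredgeslift} simply re-derives that half of Claim \ref{edgeslift}, which the paper cites directly instead.
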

\begin{proof}
We induct on $\ell-n$. If $\ell=n+1$, the conclusion of the lemma follows from Claim \ref{edgeslift}. If $\ell>n+1$, then by Lemma \ref{projection} and the inductive hypothesis,
$$ \pi_n(\Gamma_\ell) = \pi_n \pi_{\ell-2} \Gamma_\ell = \pi_n \pi_{\ell-2} \Gamma_{\ell-1} = \pi_n \Gamma_{\ell-1} \supset e_n.$$
\end{proof}

\begin{lemma}\label{onetoone}
Let $n_0\leq i < n$ and let $e_n$ and $e'_n$ be edges in $\Gamma_n\subset X_n$ such that $\pi_i(e_n)=\pi_i(e'_n)$.

Suppose that, for each $i\leq k < n$, the edge $e_k\subset X_k$ containing $\pi_k(e_n)$ has $\beta_E(Q(e_k))<\epsilon$, and that the same holds for $e'_n$.

Then $e_n = e'_n$.
\end{lemma}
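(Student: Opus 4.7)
The plan is to argue by induction on $\ell=n-i\ge 1$, with $i$ held fixed. In the base case $\ell=1$, we have $\pi_{n-1}(e_n)=\pi_{n-1}(e'_n)=:\sigma$, an $m^{-n}$-sub-arc of the edge $e_{n-1}\subset X_{n-1}$, and by hypothesis $\beta_E(Q(e_{n-1}))<\epsilon$. If $e_n$ and $e'_n$ share both endpoints as vertices of $X_n$, the inductive property \ref{doubleedges} from the construction of $\Gamma_n$ (Section \ref{liftingalgorithm}) immediately forces $\beta_E(Q(e_{n-1}))\ge\epsilon$, a contradiction. Otherwise $e_n$ and $e'_n$ are parallel edges of $X_n$ with distinct endpoint vertices but the same $\pi_{n-1}$-image $\sigma$, which I would exclude by inspecting how each of them was added to $\Gamma_n$: the hypothesis $\beta_E(Q(e_{n-1}))<\epsilon$ rules out the large-$\beta$ subcase (Section \ref{largebetalift}), and the remaining two subcases (Sections \ref{recentlargebetalift} and \ref{otheredgeslift}) each generate the edges of $\Gamma_n$ above $e_{n-1}$ as the $\pi_n$-projection of a single monotone curve in $X$, which contributes at most one $X_n$-edge with each possible endpoint pair above $\sigma$.

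For the inductive step $\ell>1$, let $e_{n-1}$ (resp.\ $e'_{n-1}$) denote the edge of $X_{n-1}$ containing $\pi_{n-1}(e_n)$ (resp.\ $\pi_{n-1}(e'_n)$). Iterating Claim \ref{edgeslift} shows that each of $e_{n-1},e'_{n-1}$ is either an edge of $\Gamma_{n-1}$ itself or shares both endpoints with a good edge of $\Gamma_{n-1}$; since $Q(\cdot)$, and therefore $\beta_E(Q(\cdot))$, depends only on the endpoint pair, we may replace $e_{n-1}$ and $e'_{n-1}$ by their $\Gamma_{n-1}$-representatives if necessary. The hypothesis in Proposition \ref{constructionprop} that $\pi_n(E)$ contains no vertex or edge-midpoint of $X_n$ forces $\pi_0(e_n)=\pi_0(e'_n)$ to lie strictly inside a single $m^{-(n-1)}$-interval of $[0,1]$, so the $m^{-(n-1)}$-sub-arc of $X_i$ containing $\pi_i(e_n)=\pi_i(e'_n)$ is uniquely determined, giving $\pi_i(e_{n-1})=\pi_i(e'_{n-1})$. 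The small-$\beta$ conditions at scales $i\le k<n-1$ carry over directly, so the inductive hypothesis applied at level $(n-1,i)$ yields $e_{n-1}=e'_{n-1}$. Consequently $\pi_{n-1}(e_n)$ and $\pi_{n-1}(e'_n)$ are two $m^{-n}$-sub-arcs of the same edge with identical $\pi_0$-image, hence equal as sets, and applying the base case with $(n,n-1)$ in place of $(n,i)$ concludes $e_n=e'_n$.

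The main obstacle is the distinct-endpoint subcase of the base case, which the inductive property \ref{doubleedges} does not cover. To dispose of it one has to unpack how the lifting algorithm assembles $\pi_n(L(\cdot))$ in the two non-excluded subcases: although $L(P_{n-1})$ may acquire a second strand at a break-point vertex of level $n+1$ sitting in the interior of $\sigma$, the quantitative gap estimate from the proof of Claim \ref{simpliciallift}, namely that the two strands differ by less than $2m^{-(n+2)}$ in $X$, forces them to project to the same pair of vertices of $X_n$ above $\sigma$, producing at most one edge of $\Gamma_n$ above $\sigma$ for each endpoint configuration. Combined with the simpliciality of $\Gamma_n$, this should close the argument.
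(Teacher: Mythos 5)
Your overall skeleton (induction on $n-i$, killing the shared-endpoint case with the inductive property (\ref{doubleedges}), and reducing the inductive step to the base case after identifying the parent edges) is the same as the paper's. The genuine gap is in the step you flag yourself as "the main obstacle": the assertion that Cases \ref{recentlargebetalift} and \ref{otheredgeslift} generate the edges of $\Gamma_n$ above $e_{n-1}$ as the $\pi_n$-projection of a \emph{single} monotone curve. That is precisely what must be proved, and the gap estimate you borrow from Claim \ref{simpliciallift} does not prove it. That estimate only controls the splice between two strands of one and the same lift $L(P_{n-1})$ at a good--good break point (and for $\Gamma_n$ the relevant bound is $2m^{-(n+1)}$, not $2m^{-(n+2)}$); it says nothing about edges produced by two \emph{different} lifts. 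A priori, two different processing steps at stage $n-1$ can both deposit edges of $\Gamma_n$ over the same edge $e_{n-1}$: by Remark \ref{goodedgeremark}, the optimal monotone geodesic attached to a good edge $f_{n-1}$ of $\Gamma_{n-1}$ that merely shares both endpoints with $e_{n-1}$ projects onto an edge sharing those endpoints, which may be $e_{n-1}$ itself, and such a parallel edge $f_{n-1}\neq e_{n-1}$ can lie in a different monotone component of $\Gamma_{n-1}$ (parallel strands are created whenever Case \ref{largebetalift} has been invoked at an earlier scale). Simpliciality of $\Gamma_n$ does not exclude two distinct edges over the same level-$n$ interval with different endpoint pairs.

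What closes this in the paper is a uniqueness statement one scale down: by (\ref{goodedgeprojection}) and (\ref{doubleedges}) applied to $\Gamma_{n-1}$, there is a \emph{unique} edge $f_{n-1}$ of $\Gamma_{n-1}$ with the endpoints of $e_{n-1}$, provided $\pi_{n-2}(e_{n-1})$ lies in an edge with $\beta_E(Q(\cdot))<\epsilon$; then both $e_n$ and $e'_n$ must arise from the single component containing $f_{n-1}$, and monotonicity of $L(P_{n-1})$ yields at most one edge over each level-$n$ interval. In the base case $n=i+1$ the hypothesis of the lemma does \emph{not} control scale $n-2$, so an additional case split is required: if $\beta_E(Q(e_{n-2}))\geq\epsilon$, double edges over the endpoints of $e_{n-1}$ can genuinely exist, and one argues instead that $e_{n-1}$ is then of the type in Case \ref{recentlargebetalift}, hence lifted one-to-one along a single monotone geodesic segment of $S(e_{n-2})$ projecting onto $e_{n-1}$ (while the lift of a parallel edge $f_{n-1}$ projects onto $f_{n-1}$, not into $e_{n-1}$). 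Your proposal contains neither the uniqueness argument nor this case split, so the distinct-endpoint subcase is not actually disposed of. A smaller point: in your inductive step, the identification $\pi_i(e_{n-1})=\pi_i(e'_{n-1})$ follows because the interior of the level-$(n-1)$ interval contains no level-$i$ vertices, not from the assumption that $\pi_n(E)$ avoids vertices and edge-midpoints; that assumption plays no role here.
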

\begin{proof}
We prove this by induction on $n-i$. Suppose first that $n=i+1$, so that $\pi_{n-1}(e'_n) = \pi_{n-1}(e_n) \subset e_{n-1}$. Note that, by assumption, $\beta_E(Q(e_{n-1}))<\epsilon$.

If $\pi_{n-2}(e_{n-1})$ lies in an edge $e_{n-2}$ with $\beta_E(Q(e_{n-2}))\geq \epsilon$, then $e_{n-1}\subset \Gamma_{n-1}$ and $e_n=e'_n$, as then $e_{n-1}$ would have been lifted in a one-to-one way in \ref{recentlargebetalift}.

Otherwise, by (\ref{goodedgeprojection}) and (\ref{doubleedges}) we have that there is a unique edge $f_{n-1}$ in $\Gamma_{n-1}$ with the same endpoints as $e_{n-1}$, and therefore having $\beta_E(Q(f_{n-1}))<\epsilon$. Thus, both edges $e_n$ and $e'_n$ must have been added to $\Gamma_n$ by applying the algorithm of \ref{otheredgeslift} to the monotone component of $f_n$, and so we must have $e_n=e'_n$.

Now suppose that $n>i+1$. Let $e_{n-1}$ and $e'_{n-1}$ be edges of $X_{n-1}$ containing $\pi_{n-1}(e_n)$ and $\pi_{n-1}(e'_n)$, respectively. Then there are edges $f_{n-1}$ and $f'_{n-1}$ of $\Gamma_{n-1}$ sharing both endpoints with $e_{n-1}$ and $e'_{n-1}$, respectively. Then $f_{n-1}$ and $f'_{n-1}$ have the same property as $e_n$ and $e'_n$ in $\Gamma_{n-1}$. Therefore, by induction, $f_{n-1}=f'_{n-1}$, and so $e_{n-1}$ and $e'_{n-1}$ share both endpoints.

Since $n>i+1$, the assumptions imply that $e_n$ and $e'_n$ must have been added to $\Gamma_n$ in case \ref{otheredgeslift}, applied to the component containing $f_{n-1}$. It follows that $e_n=e'_n$.

\end{proof}

\begin{lemma}\label{projectdata}
Let $e_n$ be an edge of $\Gamma_n$. Suppose there is a point $x\in E$ such that
$$ \dist(\pi_n(x), e_n \cap \pi_0^{-1}(\pi_0(x))) < m^{-n}. $$
Let $e_{n-1}$ be the edge of $X_{n-1}$ containing $\pi_{n-1}(e_n)$, and suppose that $\beta_E(Q(e_{n-1}))<\epsilon$. Then one of the following possibilities must occur:
\begin{enumerate}[(i)]
\item\label{projectdata1} $e_{n-1}$ is a bad edge of $\Gamma_{n-1}$, and $\pi_{n-1}(e_n)$ contains an endpoint of $e_{n-1}$.
\item\label{projectdata2} $e_{n-1}$ is an edge of $\Gamma_{n-1}$ of the type in \ref{recentlargebetalift}.
\item\label{projectdata3} $e_{n-1}$ shares both endpoints with a good edge $f_{n-1}$ of $\Gamma_{n-1}$, and $e_n\subset \pi_n(L)$, where $L$ is the optimal monotone geodesic of $Q(f_{n-1})$ 
\item\label{projectdata4} $e_{n-1}$ shares both endpoints with a good edge $f_{n-1}$ of $\Gamma_{n-1}$, and $e_{n}\subset \pi_n(L) \cup \pi_n(L')$, where $L,L'$ are the optimal monotone geodesics for good edges in $\Gamma_{n-1}$ within two edges of $f_{n-1}$; furthermore, in this case, $\pi_{n+2}(L \cup L')$ is connected.
\item\label{projectdata5} $e_{n-1}$ is a good edge of $\Gamma_{n-1}$, and $\pi_{n-1}(e_n)$ contains either $a(e_{n-1})$ or $b(e_{n-1})$, and there is a bad edge $f_{n-1}$ of $\Gamma_{n-1}$ adjacent to $e_{n-1}$ with $\beta_E(Q(f_{n-1}))<\epsilon$. 
\end{enumerate}
\end{lemma}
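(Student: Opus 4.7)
I would prove this by case analysis on which rule of the lifting algorithm of Section~\ref{liftingalgorithm} produced $e_n\in \Gamma_n$, combined with the piecewise description of $L(P_{n-1})$. Since $\beta_E(Q(e_{n-1}))<\epsilon$, the edge $e_n$ was not added via Case~\ref{largebetalift}. If it was added via Case~\ref{recentlargebetalift}, then by construction $e_{n-1}\in \Gamma_{n-1}$ and $\pi_{n-2}(e_{n-1})$ lies in an edge of $X_{n-2}$ with $\beta\geq \epsilon$, which is option (ii). The substance of the proof is therefore in the remaining case: $e_n$ was added via Case~\ref{otheredgeslift} applied to some monotone component $P_{n-1}\subset \Gamma_{n-1}\setminus V_{n-1}$, so that $e_n\subset \pi_n(L(P_{n-1}))$.

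The first step in that case is to use Claim~\ref{edgeslift}, together with the observation that single-edge components arising from Cases~\ref{largebetalift} and~\ref{recentlargebetalift} are not processed inside any monotone component, to produce an edge $f_{n-1}\in P_{n-1}$ with $\pi_0(f_{n-1})=\pi_0(e_{n-1})$: when $e_{n-1}\in P_{n-1}$ I take $f_{n-1}=e_{n-1}$, and when $e_{n-1}\notin \Gamma_{n-1}$, Claim~\ref{edgeslift} says $e_{n-1}$ is good and shares both endpoints with a good edge $f_{n-1}\in P_{n-1}$; since two edges of $X_{n-1}$ with overlapping $\pi_0$-images must share the same endpoints, $\pi_0(f_{n-1})=\pi_0(e_{n-1})$. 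The problem then reduces to locating the length-$m^{-n}$ slot $\pi_0(e_n)\subset \pi_0(f_{n-1})$ relative to the break points of the construction in Case~\ref{otheredgeslift}.

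This locating step yields four sub-cases. (a) If $\pi_0(e_n)$ lies in a region where $L(P_{n-1})$ equals the optimal geodesic $L$ for $Q(f_{n-1})$ (either the interior $(\pi_0(a(f_{n-1})),\pi_0(b(f_{n-1})))$ or a ``gap'' region between consecutive good edges that feeds $Q(f_{n-1})$), then $e_n\subset \pi_n(L)$, giving option (iii). (b) If $\pi_0(e_n)$ contains a break point at a level-$(n{+}1)$ vertex --- which by the algorithm occurs only when the adjacent edge in $P_{n-1}$ is also good --- then $L(P_{n-1})$ there is the union of two optimal segments $L,L'$ for good edges within two edges of $f_{n-1}$, and Claim~\ref{simpliciallift} gives that $\pi_{n+2}(L\cup L')$ is connected, yielding (iv). (c) If $\pi_0(e_n)$ contains a break point at a level-$n$ vertex --- which occurs only when the adjacent edge in $P_{n-1}$ is bad, and this neighbor must have $\beta<\epsilon$ because any edge with $\beta\geq \epsilon$ is excluded from $P_{n-1}$ --- then $f_{n-1}=e_{n-1}$ is a good edge of $\Gamma_{n-1}$ with a bad $\beta<\epsilon$ neighbor and $\pi_{n-1}(e_n)$ contains $a(e_{n-1})$ or $b(e_{n-1})$, yielding (v). (d) In the only remaining situation, $f_{n-1}=e_{n-1}$ is a bad edge of $P_{n-1}$ and $L(P_{n-1})=L(P_{n-2})$ throughout $\pi_0(e_{n-1})$, and one must deduce (i).

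Case (d) is the main obstacle. Using the ``bad'' condition, some endpoint $v_{n-1}$ of $e_{n-1}$ admits no point of $\pi_{n-1}(E)\cap \pi_0^{-1}(\pi_0(e_{n-1}))$ in $B_{X_{n-1}}(v_{n-1},(1-2A\epsilon)m^{-(n-1)})$, while the $\beta<\epsilon$ hypothesis puts $E\cap Q(e_{n-1})$ inside a $C A\epsilon m^{-(n-1)}$-neighborhood of a single monotone geodesic. The plan is to combine these to show the $E$-image in the column is concentrated near the opposite endpoint $w_{n-1}$; applying this to the hypothesized $x\in E$, whose $\pi_{n-1}$-image lies within $d_{n-1}$-distance $m^{-n}$ of $\pi_{n-1}(e_n)\subset e_{n-1}$, and using the smallness of $\epsilon$ chosen in Section~\ref{additionalsetup} (in particular $2A\epsilon m<1$) to convert the resulting $d_{n-1}$-proximity into the $\pi_0$-bound $|\pi_0(x)-\pi_0(w_{n-1})|<m^{-n}$, one concludes that the slot $\pi_0(e_n)$ containing $\pi_0(x)$ must be the end-slot of $\pi_0(e_{n-1})$ adjacent to $w_{n-1}$, so $\pi_{n-1}(e_n)$ contains $w_{n-1}$, giving (i). The delicate point is precisely this translation of the $d_{n-1}$-ball constraint from the ``bad'' definition into a $\pi_0$-location statement on $x$, which requires carefully exploiting both the bad-endpoint structure and the geodesic optimality realizing $\beta<\epsilon$.
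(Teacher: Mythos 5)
Your reduction by lifting case is sound, and your dispatch of options (\ref{projectdata2}), (\ref{projectdata3}), (\ref{projectdata4}) (including deferring the connectivity of $\pi_{n+2}(L\cup L')$ to the gap estimate of Claim \ref{simpliciallift}) agrees with the paper, which organizes the same material via property (\ref{goodedgeprojection}). However, there are two genuine gaps. First, your sub-case list (a)--(d) inside the Case \ref{otheredgeslift} branch is not exhaustive: after (a)--(c), the ``only remaining situation'' is \emph{not} that $e_{n-1}$ is bad. It can happen that $e_{n-1}=f_{n-1}$ is a good edge of $P_{n-1}$ while the slot $\pi_{n-1}(e_n)$ lies strictly beyond $a(e_{n-1})$ (or before $b(e_{n-1})$) on the side of a bad neighbour, \emph{without} containing the break point; there the lift is the old data $L(P_{n-2})$, so neither your (a) nor (c) applies, and conclusion (\ref{projectdata1}) is unavailable because the edge is good. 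The paper rules this scenario out ``by the definition of these points'': since $\pi_0(x)\in\pi_0(e_n)$ and $d(\pi_{n-1}(x),e_{n-1}\cap\pi_0^{-1}(\pi_0(x)))<m^{-n}<m^{-(n-1)}$, one of the two level-$n$ vertices bounding $\pi_{n-1}(e_n)$ is a special vertex of level $n$ on $e_{n-1}$, whereas $a(e_{n-1})$ is by definition the \emph{first} such vertex when the neighbour is bad; hence the slot must contain $a(e_{n-1})$ (or symmetrically $b(e_{n-1})$), which is exactly how (\ref{projectdata5}) is obtained. This is the main place where the hypothesis on $x$ acts in the good-edge branch, and it is absent from your write-up (you use $x$ only in case (d)).

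Second, your case (d) does not close as stated. The bad-edge condition plus $d_{n-1}(\pi_{n-1}(x),y)<m^{-n}$, where $y$ is the point of $e_{n-1}$ over $\pi_0(x)$, yields only $|\pi_0(x)-\pi_0(w_{n-1})|\le m^{-n}+2A\epsilon m^{-(n-1)}$, not $<m^{-n}$; so $\pi_0(x)$ could a priori lie just inside the second slot from $w_{n-1}$ and your ``end-slot'' conclusion would fail. Moreover the intermediate claim that all of $\pi_{n-1}(E)\cap\pi_0^{-1}(\pi_0(e_{n-1}))$ concentrates near $w_{n-1}$ is false in general (the bad condition only removes a ball around $v_{n-1}$; points of the column far from $e_{n-1}$ in the graph are unconstrained), and the optimal geodesic realizing $\beta_E(Q(e_{n-1}))<\epsilon$ controls transverse distance rather than position in the $\pi_0$-direction, so it does not supply the missing $2A\epsilon m^{-(n-1)}$. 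The paper's argument is the contrapositive and avoids all of this: if $\pi_{n-1}(e_n)$ contained neither endpoint of $e_{n-1}$, then, using that $\pi_n(E)$ avoids vertices (so the ball of radius $m^{-n}$ about $y$ stays inside $e_{n-1}$, where $\pi_0$ is injective), one gets $\pi_{n-1}(x)=y\in\pi_{n-1}(e_n)$, a point of $\pi_{n-1}(E)\cap\pi_0^{-1}(\pi_0(e_{n-1}))$ within $(1-2A\epsilon)m^{-(n-1)}$ of \emph{both} endpoints, contradicting badness since $\beta_E(Q(e_{n-1}))<\epsilon$. Replacing your direct localization by this contrapositive (and adding the special-vertex argument above) would repair the proof; note also that this argument needs no information about which lifting case produced $e_n$, which is why the paper can treat the bad-edge case uniformly at the outset.
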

\begin{proof}
By (\ref{goodedgeprojection}), $e_{n-1}$ must either be a bad edge inside $\Gamma_{n-1}$ or share both endpoints with a good edge of $\Gamma_{n-1}$.

If $e_{n-1}$ is a bad edge of $\Gamma_{n-1}$,  then $\pi_{n-1}(e_n)$ must contain an endpoint of $e_{n-1}$, and so \eqref{projectdata1} holds. Indeed, suppose not. There is a point $x\in E$ with $\pi_0(x)\in\pi_0(e_n)\subset\pi_0(e_{n-1})$ such that 
$$ \dist(\pi_{n-1}(x), e_{n-1} \cap \pi_0^{-1}(\pi_0(x))) \leq \dist(\pi_n(x), e_n \cap \pi_0^{-1}(\pi_0(x))) < m^{-n}.$$
If $\pi_{n-1}(e_n)$ does not contain an endpoint of $e_{n-1}$, then $\pi_{n-1}(x)$ must be in $\pi_{n-1}(e_n) \subset e_{n-1}$, therefore within $(1-2A\epsilon)m^{-(n-1)}$ of both endpoints of $e_{n-1}$, and this violates the assumption that $e_{n-1}$ is bad.

Now suppose that $e_{n-1}$ shares both endpoints with a good edge $f_{n-1}$ of $\Gamma_{n-1}$. Suppose that none of \eqref{projectdata2}, \eqref{projectdata3}, or \eqref{projectdata4} hold. In that case, by (\ref{goodedgeprojection}) and the algorithm of \ref{otheredgeslift}, we must have that $e_{n-1}=f_{n-1}$ and that $\pi_{n-1}(e_n)$ projects to the ``left'' of $a(e_{n-1})$ or to the ``right'' of $b(e_{n-1})$. In either case, the adjacent edge in that direction must be a bad edge of $\Gamma_{n-1}$ (otherwise we would be in either case \eqref{projectdata3} or \eqref{projectdata4}), and $\pi_{n-1}(e_n)$ must contain either $a(e_{n-1})$ or $b(e_{n-1})$ by the definition of these points. In other words, \eqref{projectdata5} holds.

The fact that, if \eqref{projectdata4} holds, then $\pi_{n+2}(L \cup L')$ is connected follows from the same argument as in Claim \ref{simpliciallift}, equation \eqref{gap1}.
\end{proof}

\section{Connectability}\label{connectability}
In this section, we prove that we can make the sets $\Gamma_n\subset X_n$ that we constructed in Section \ref{liftingalgorithm} into connected sets, by adding suitable connections with controlled length. The length of such an added connection may be controlled in one of two ways: either by $\diam(Q)$ for some $Q\in\mathcal{Q}$ with $\beta_E(Q)\geq \epsilon$, or by some fraction of the length of $\Gamma_n$ itself. Of course, we must be careful to show that there is no overcounting.

It will be convenient to introduce the notion of an $N$-overlapping collection of subsets: A collection $\mathcal{T}$ of subsets of some $X_n$ is called \textit{$N$-overlapping}, for some constant $N$, if no point of $X_n$ is contained in more than $N$ elements of $\mathcal{T}$.

The remainder of this section is devoted to the proof of of the following lemma.

\begin{lemma}\label{connectable}
There are constants $N_{\eta,\Delta}$, $C_{\eta,\Delta}$,  and $C_X$, such that for each $n\geq n_0$, there are
\begin{itemize}
\item two finite collections $\mathcal{C}^1_n$ and $\mathcal{C}^2_n$ consisting of simplicial geodesic arcs $C:[0,1]\rightarrow X_n$,
\item an $N_{\eta}$-overlapping collection $\mathcal{T}_n$ of subsets of $\Gamma_n$ that are each disjoint from $\pi_n(E)$,
\item and maps $c_n^1:\mathcal{C}^1_n\rightarrow \mathcal{T}_n$ and $c_n^2:\mathcal{C}_n^2\rightarrow \mathcal{Q}$,
\end{itemize}
with the property that $\Gamma_n \cup \bigcup_{C\in\mathcal{C}_n^1 \cup \mathcal{C}_n^2} C$ is connected,
\begin{equation}\label{connectable1}
\sum_{C\in \mathcal{C}_n^1, c^1(C) = T} |C| \leq C_{\eta,\Delta} m^{-1}|T|
\end{equation}
for each $T\in \mathcal{T}_n$, and
\begin{equation}\label{connectable2}
 \sum_{C\in \mathcal{C}_n^2, c^2(C) = Q} |C| \leq C_{X} \diam(Q)
\end{equation}
for each $Q\in \mathcal{Q}$.

Furthermore, we have
\begin{equation}\label{connectable3}
 \beta_E(c_n^2(C)) \geq \epsilon
\end{equation}
for each $n\geq 0$ and each $C\in \mathcal{C}_n^2$.

\end{lemma}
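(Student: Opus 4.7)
The plan is to prove this by induction on $n \geq n_0$. The base case is immediate since $\Gamma_{n_0} = e_{n_0}$ is a single edge, so all collections may be taken empty. For the inductive step, given the system at scale $n$, I would first lift each arc of $\mathcal{C}_n^1 \cup \mathcal{C}_n^2$ to a simplicial geodesic of the same length in $X_{n+1}$ via Lemma \ref{piproperties}\eqref{pipathlift}, and lift each $T \in \mathcal{T}_n$ to the corresponding portion $\tilde{T}$ of $\Gamma_{n+1}$; the disjointness $T \cap \pi_n(E) = \emptyset$ passes to $\tilde{T} \cap \pi_{n+1}(E) = \emptyset$ via $\pi_n$-projection, and the length ratios and overlap constant from scale $n$ transfer. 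It then remains to add new arcs bridging the disconnections introduced by the lifting algorithm in passing from $\Gamma_n$ to $\Gamma_{n+1}$.

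These new disconnections split into two types. Type~A arises at interfaces between adjacent replacements in $\Gamma_{n+1}$. If one of the two is a bubble $\pi_{n+1}(S(e_n))$ from Case \ref{largebetalift}, then that bubble equals $\pi_n^{-1}(e_n) \cap X_{n+1}$ and contains every vertex of $\pi_n^{-1}(\{v_n, w_n\})$, so it automatically shares a vertex with the adjacent replacement and no bridging is needed. The only real Type~A disconnections occur when both adjacent replacements are single monotone geodesics from Case \ref{recentlargebetalift} (coming from an older bubble $S(e_{k-1})$) and their chosen lifts diverge in the shared fiber $\pi_n^{-1}(v_n)$; by axiom \eqref{admsys4}, this divergence is at most $\eta m^{-n}$, so a simplicial bridging arc of length $\leq \eta m^{-n}$ in $X_{n+1}$ suffices. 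I add each such arc to $\mathcal{C}_{n+1}^2$ with $c_{n+1}^2$ mapping to $Q(e_{k-1}) \in \mathcal{Q}$; each such $Q(e_{k-1})$ is charged by a number of arcs bounded by a constant depending on $\Delta$, giving total length $\leq C_X \diam(Q(e_{k-1}))$, and $\beta_E(Q(e_{k-1})) \geq \epsilon$ by construction.

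Type~B arises at break points within a monotone component $P_n$: at $a(e^i_n)$ (or $b(e^i_n)$) where the adjacent edge $f_n$ is bad, $L(P_n)$ transitions between the optimal geodesic $L$ of $Q(e^i_n)$ and $L(P_{n-1})$. The key estimate is that the $X_{n+1}$-gap between $\pi_{n+1}(L)$ and $\pi_{n+1}(L(P_{n-1}))$ at this point is at most $C_\eta m^{-(n+1)}$. Indeed, since $a(e^i_n)$ is a special vertex of level $n+1$, some $x \in E$ lies within $\pi_0$-distance $\tfrac12 m^{-(n+1)}$ and metric distance $<m^{-n}$ of $e^i_n$, placing $x$ in both $Q(e^i_n)$ and its scale-$(n-1)$ counterpart; by the $\epsilon$-flatness of $L$ and (inductively) of the geodesic defining $L(P_{n-1})$ near $x$, Lemma \ref{nearestpoint} gives that the $X$-distance between the lifts of the two curves above $a(e^i_n)$ is at most $CA\epsilon m^{-(n-1)}$, which is $\leq C_\eta m^{-(n+1)}$ by the smallness \eqref{mlarge} of $\epsilon$. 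A simplicial bridging arc of this length in $X_{n+1}$ then exists.

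To charge each Type~B arc, the adjacent bad edge $f_n$ supplies the budget: by Definition \ref{goodedgedef}, one endpoint of $f_n$ admits no $\pi_n(E)$-point within radius $(1-2A\epsilon)m^{-n}$ in its $\pi_0$-fiber, so the portion $T_{f_n}$ of the replacement $R(f_n) \subset \Gamma_{n+1}$ above this ``empty half'' of $f_n$ has length $\geq \tfrac12 m^{-n}$ and is disjoint from $\pi_{n+1}(E)$. Adding $T_{f_n}$ to $\mathcal{T}_{n+1}$ and letting $c_{n+1}^1$ send each of the $\leq 2$ Type~B arcs at the ends of $f_n$ to $T_{f_n}$ yields the required ratio $\leq 4 C_\eta / m \leq C_{\eta,\Delta} m^{-1}$; the bounded overlap of $\mathcal{T}_{n+1}$ follows since each new $T_{f_n}$ lives in a distinct bad-edge replacement and the lifted sets from $\mathcal{T}_n$ retain their overlap bound. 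The main obstacle is the Type~B gap estimate: ensuring that $L(P_{n-1})$ remains $CA\epsilon m^{-(n-1)}$-close to $E$ near every special vertex requires a strengthened inductive hypothesis threaded through the algorithm's recursion into prior $L(P_{n-2})$ segments, rather than a direct application of $\epsilon$-flatness at the current scale alone.
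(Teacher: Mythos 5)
Your overall architecture matches the paper's proof of Lemma \ref{connectable} (induct on $n$, lift the old connections and charged sets, classify the new breaks, pay for breaks traceable to a large-$\beta$ edge via $\mathcal{C}^2$ and for breaks at bad-edge transitions via $\mathcal{C}^1$ and a ``free'' arc in $\mathcal{T}$), but two of your central claims do not hold. First, ``no bridging is needed'' at a Case \ref{largebetalift} bubble is false: $\pi_{n+1}(S(e_n))$ is the full preimage of $e_n$, and admissibility does not force point- or edge-preimages to be connected (one can glue the two sheets of a Laakso-type system only over edges other than $e_n$ while keeping fiber diameters $\leq \eta m^{-n}$), so the bubble can itself be disconnected even though each of its components contains lifts of $v_n$ and $w_n$. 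Moreover, junction vertices tracing back to an old large-$\beta$ edge can split again at \emph{every} later scale, when both adjacent pieces are lifted in Case \ref{otheredgeslift}; your scheme bridges only the one-scale-later Case \ref{recentlargebetalift} interfaces and otherwise lifts old connections between arbitrarily chosen endpoint lifts, which need not attach to the vertices where $\pi_{n+1}(L(P_n))$ and $\pi_{n+1}(L(P'_n))$ actually terminate. The paper's remedy is the $V_n$ device: at each scale it joins \emph{all} lifts in $\Gamma_{n+1}$ of every vertex of $V_n$, charges these connections to $Q(e_i)$ for the most recent scale $i$ with $\beta_E(Q(e_i))\geq\epsilon$, and bounds the multiplicity using Lemma \ref{onetoone}; some mechanism of this kind is needed and is missing from your argument.

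Second, the Type~B gap estimate you rely on (that the two lifts at a break point are within $C_\eta m^{-(n+1)}$ because $L(P_{n-1})$ stays $CA\epsilon m^{-(n-1)}$-close to $E$ near special vertices) is not only unproven --- you acknowledge it needs a strengthened inductive hypothesis --- it is the wrong target: in the recursion $L(P_{n-1})$ copies $L(P_{n-2})$ and so on, and near a bad edge there is no point of $E$ forcing the copied data to agree with the current optimal geodesic to precision $\epsilon m^{-n}$; this is exactly why breaks occur at all. The paper uses only the crude bound $d(v_{n+1},w_{n+1})\leq C_\eta m^{-n}$ (both lifts project onto $v'_n$ or an adjacent edge) and instead controls how many connections are charged to each $T$, via Lemmas \ref{projectdata} and \ref{onetoone}; in particular it treats the case, absent from your proof, in which the break traces back $j>1$ scales to an old good/bad interface (the paper's Case B), where the charged arc is taken at scale $n-j+1$. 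Relatedly, your overlap claim for $\mathcal{T}_{n+1}$ does not follow from ``each $T_{f_n}$ lives in a distinct bad-edge replacement'': new sets can stack over lifted old ones across many scales. The paper prevents this by choosing each $T$ inside a bad edge with $\dist(T,\pi(E))$ bounded above and below by constants times its defining scale, which is what caps the number of scales meeting at a point by $N_\eta$; your ``empty half'' arcs carry no such two-sided calibration, so the $N_\eta$-overlap bound is unjustified as stated.
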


For a curve $C$ in $\mathcal{C}^1$ or $\mathcal{C}^2$, we freely identify the parametrized curve $C\colon [0,1]\rightarrow X_n$ with its image $C([0,1])$. Such an arc $C$ has two endpoints, namely $C(0)$ and $C(1)$, which are vertices of $X_n$.

The entirety of this section is devoted to the proof of Lemma \ref{connectable}. The proof is by induction on $n$. For $n=n_0$, the set $\Gamma_{n_0}$ is simply the edge $e_{n_0}$ and thus connected, so we set $\mathcal{C}^1_{n_0} = \mathcal{C}^2_{n_0} = \emptyset.$

Suppose now that $\mathcal{C}^1_{n}$ and $\mathcal{C}^2_{n}$ have been constructed. We now construct $\mathcal{C}^1_{n+1}$ and $\mathcal{C}^2_{n+1}$. To do so, we consider only cases where a connected component in $\Gamma_n$ may ``break'' when lifted to $\Gamma_{n+1}$ by the algorithm of Section \ref{liftingalgorithm}. This may happen at any vertex of $V_n$ (which was defined in \eqref{Vndef}), or it may happen while applying the algorithm of \ref{otheredgeslift} within a monotone component $P_n$ of $\Gamma_n$.

First, consider all vertices $v_n\in V_n$ of $\Gamma_n$, where $V_n$ was defined in \ref{otheredgeslift}. 
Let $i\leq n$ be the largest natural number such that $\pi_i(v_n)$ lies in an edge $e_i$ of $X_i$ with $\beta_E(Q(e_i))\geq \epsilon$. Then in $\Gamma_{n+1}$, add connections $C^2_{n+1}$ to $\mathcal{C}^2_{n+1}$ between all lifts of $v_n$ lying in $\Gamma_{n+1}$. Set $c_{n+1}^2(C^2_{n+1})=Q(e_i)$ for each of these connections. 
Observe that the total length of these connections for each $v_n$ is at most $C_X m^{-n}$. 

Now consider any monotone component $P_n$ of $\Gamma_n$ as in Case \ref{otheredgeslift}. Let $v'_n$ be a vertex of level $n+1$ in $P_n$ which has two different ``lifts'' $v_{n+1}$ and $w_{n+1}$ in $V(\Gamma_{n+1})$. (We have already argued in Claim \ref{simpliciallift} that this cannot occur if $v'_n$ is a vertex of level $n+2$.) 

In other words, in the language of \ref{otheredgeslift}, the monotone set $\pi_{n+1}(L(P_n))$ contains two different vertices above $\pi_0(v'_n)$, namely $v_{n+1}$ and $w_{n+1}$.

We need to join $v_{n+1}$ and $w_{n+1}$ by adding a connection to $\mathcal{C}^1_{n+1}$ or $\mathcal{C}^2_{n+1}$. To do so, we need to examine the various cases in which this ``break'' could have occurred.  Observe that $d_{X_{n+1}}(v_{n+1},w_{n+1})\leq C_\eta m^{-n}$, as both project onto $v'_n$ or an adjacent edge by (\ref{goodedgeprojection}). Furthermore, since $v_{n+1}$ and $w_{n+1}$ are distinct vertices of $X_{n+1}$ and have the same projection to $X_0$, it must be that $d_{X_{n+1}}(v_{n+1}, w_{n+1}) \geq 2m^{-(n+1)}$.

\textbf{(Case A)} The point $v'_n$ is equal to $a(e^i_n)$ for some good edge $e^i_n$ of $P_n$ such that $e^{i-1}_n$ is in $P_n$ and is bad. (Or, similarly, $v'_n$ is equal to $b(e^i_n)$ for some good edge $e^i_n$ of $P_n$ such that $e^{i+1}_n$ is in $P_n$ and is bad. We just handle the first possibility, since the second is identical.)

We first claim that a neighborhood of $v'_n$ in $P_n$ cannot be part of $\pi_n(L \cap Q(e_{n-1}))$ where $L$ is the optimal geodesic for $Q(e_{n-1})$, for some good edge $e_{n-1}$ of $\Gamma_{n-1}$. Indeed, suppose it were. Then to the left of $v'_n$, $L(P_n)$ would be equal to $L$, and to the right of $v'_n$, $L(P_n)$ would be equal to the optimal geodesic for $Q(e^i_n)$, $Q(e^{i+1}_n)$, or $Q(e^{i+2}_n)$. In particular, by a similar argument as in \eqref{gap1} the size of the ``gap'' at $v'_n$ between these two geodesics, and hence between $v_{n+1}$ and $w_{n+1}$ is at most
\begin{equation}\label{connectabilitygap}
 (4A\epsilon m^{-n} + \frac{1}{2}m^{-(n+1)}) + (4A\epsilon m^{-(n-1)} + \frac{1}{2}m^{-(n+1)}) < 2m^{-(n+1)},
\end{equation}
contradicting our assumption that $v_{n+1}\neq w_{n+1}$. 

Therefore, as in the proof of Claim \ref{edgeslift}, it must be that $\pi_{n-1}(v'_n)$ is in $\Gamma_{n-1}$. Let $e_n$ be an edge of $P_n$ containing $v'_n$ such that 
\begin{equation}\label{env'n}
\dist(e_n, \pi_0^{-1}(\pi_0(x))) < m^{-n}
\end{equation}
for some $x\in E$, which must exist as $v'_n$ is a special vertex on $P_n$.

Consider $\pi_{n-1}(e_n)$, which must be in an edge of $\Gamma_{n-1}$ by Claim \ref{edgeslift}.

\textbf{(Case A1)}
Suppose first that the edge of $\Gamma_{n-1}$ containing $\pi_{n-1}(e_n)$ is an edge as in \ref{recentlargebetalift}. Then there is an edge $e_{n-2}$ in $\Gamma_{n-2}$ with $\beta_E(Q(e_{n-2}))\geq\epsilon$. We add a connection $C_{n+1}$ to $\mathcal{C}^2_{n+1}$ connecting $v_{n+1}$ and $w_{n+1}$, with $|C_{n+1}|\leq C_\eta m^{-n}$. We then set $c^2_{n+1}(C_{n+1}))=Q(e_{n-2})$.

\textbf{(Case A2)}
Suppose now that the edge of $\Gamma_{n-1}$ containing $\pi_{n-1}(e_n)$ is not an edge as in \ref{recentlargebetalift}, and is therefore in a monotone component $P_{n-1}$ as in \ref{otheredgeslift}. It follows from the discussion leading to \eqref{connectabilitygap} above that this edge of $\Gamma_{n-1}$ is either bad itself, or adjacent to a bad edge  of $P_{n-1}$.

In either case, there is a bad edge $e_{n-1}$ in $\Gamma_{n-1}$ within distance $m^{-(n-1)}$ from $\pi_{n-1}(v'_n)$ that has $\beta_E(Q(e_{n-1}))<\epsilon$. Therefore, there is an arc $I_{n-1} \subset e_{n-1}$ such that $|I_{n-1}| = m^{-(n-1)}/10$ and
$$m^{-(n-1)}/10 \leq \dist(I_{n-1}, \pi_{n-1}(E)) \leq 2m^{-(n-1)}.$$
That arc lifts to a (possibly not connected) set $T_{n+1}$ in $\Gamma_{n+1}$, by Lemma \ref{badedgeslift}.

We add $T_{n+1}$ to $\mathcal{T}_{n+1}$. We also add a connection $C_{n+1}$ to $\mathcal{C}^1_{n+1}$ that joins $v_{n+1}$ to $w_{n+1}$, and set $c_1(C_{n+1})=T_{n+1}$. Note that $|C_{n+1}|=C_\eta m^{-n} \leq C_\eta m^{-1} |T_{n+1}|$.

\textbf{(Case B)} The point $v'_n$ is not in $[a(e^i_n), b(e^i_n)]$ or $[b(e^i_n), a(e^{i+1}_n)]$ for any good edges $e^i_n, e^{i+1}_n$ in $P_n\subset \Gamma_n$.

In other words, in a neighborhood of $v'_n$ we are lifting according to $L(P_{n-1})$, where $P_{n-1}$ is a monotone component of $\Gamma_{n-1}$, and $\pi_{n+1}(L(P_{n-1}))$ ``breaks'' at this point.

Let $j\geq 1$ be the smallest integer such that $\pi_{n-j}(v'_n)$ is in $[a(e^i_{n-j}), b(e^i_{n-j})]$ or $[b(e^i_{n-j}), a(e^{i+1}_{n-j})]$ for some good edge or edges $e^i_{n-j}$ and $e^{i+1}_{n-j}$ in $\Gamma_{n-j}$ (or, if $j=1$, an edge that shares both endpoints with such a good edge). Note that this must have occured for some $j\geq 1$, otherwise there could be no break here when lifting at $v'_n$.

If $j=1$, then it must be that $\pi_{n-1}(v'_n)$ is either $a(e^i_{n-1})$ or $b(e^i_{n-1})$ for a good edge $e^i_{n-1}$. Suppose the former; the latter case is identical. In that case, it must be that $e^{i-1}_{n-1}$ is bad in $\Gamma_{n-1}$, otherwise there would be no break. (Indeed, similar to \eqref{gap1}, we would have a gap of size at most
$$ 2\left(\frac{1}{2}m^{-(n+1)} + 4A\epsilon m^{-(n-1)}\right) < 2m^{-(n+1)},$$
and so $v'_n$ would have a unique lift in $X_{n+1}$.)
Therefore if $j=1$, $e^{i-1}_{n-1}$ must be a bad edge. We then run the exact same argument as in Case A2, adding the lift $T_{n+1}$ of an arc $I_{n-1}\subset \Gamma_{n-1}$ to $\mathcal{T}_{n+1}$, and defining a connection $C_{n+1}\in \mathcal{C}^1_{n+1}$ between the two lifts of $v'_n$ with $c^1_{n+1}(C_{n+1})=T_{n+1}$.

If $j>1$, we look at scale $n-j+1$. By definition of $j$, $\pi_{n-j}(v'_n)$ lies in a good edge of $\Gamma_{n-j}$, but $\pi_{n-j+1}(v'_n)$ lies in or adjacent to an edge $e_{n-j+1}$ of $\Gamma_{n-j+1}$ which is bad and has $\beta_E(Q(e_{n-j+1}))<\epsilon$. There is therefore an arc $I_{n-j+1} \subset e_{n-j+1}$ such that $|I_{n-j+1}| = m^{n-j+1}/10$ and
$$\dist(I_{n-j+1}, \pi_{n-j+1}(E))\geq m^{-(n-j+1)}/10.$$
Furthermore, since $\pi_{n-j}(e_n)$ is in or adjacent to a good edge, we must have that
$$\dist(I_{n-j+1}, \pi_{n-j+1}(E))\leq C_{\eta}m m^{-(n-j+1)}.$$
We lift this arc to a (possibly disconnected) set $T_{n+1}$ in $\Gamma_{n+1}$, add $T_{n+1}$ to $\mathcal{T}_{n+1}$, and define a connection $C_{n+1}\in\mathcal{C}^1_{n+1}$ between $v_{n+1}$ and $w_{n+1}$ with $c_1(C_{n+1})=T_{n+1}$ and $|C_{n+1}|\leq C_\eta m^{-n}$.

Observe also that in Case (B), it must be that, for each $n-j \leq k < n$, $\pi_{k}(v'_n)$ is not contained in an edge $e_k$ with $\beta_E(Q(e_k))\geq \epsilon$. Indeed, if it were, then a neighborhood of $v'_n$ would simply be lifted according to a connected component of $S(e_k)$, as a connected set, and there would be no break here.

Finally, we also add to $\mathcal{C}^1_{n+1}$, $\mathcal{C}^2_{n+1}$, and $\mathcal{T}_{n+1}$ appropriate ``lifts'' of the elements of $\mathcal{C}^i_{n}$ and $\mathcal{T}_n$. Namely, by Lemma \ref{badedgeslift}, each element $T_n$ of $\mathcal{T}_n$ has a corresponding (not necessarily connected) lift to $\Gamma_{n+1}$, of the same length. Add that lift to $\mathcal{T}_{n+1}$.

For every $C_n\in \mathcal{C}^1_n$ such that $c_n(C_n) = T_n$, the vertices $C_n(0)$ and $C_n(1)$ lift (possibly non-uniquely) to vertices in $\Gamma_{n+1}$. Choose a lift of each endpoint of $C_n$ in $\Gamma_{n+1}$, and add a geodesic arc $C_{n+1}$ to $\mathcal{C}^1_{n+1}$ joining those two lifts. Each new connection $C_{n+1}$ defined in this way is longer than the corresponding $C_n$ by at most $C_\eta m^{-n}$. We then set $c^1_{n+1}(C_{n+1})$ to be the lift of $T_n$.

For each $C_n\in\mathcal{C}^2_n$, we similarly add an arc $C_{n+1}$ to $\mathcal{C}^2_{n+1}$ whose endpoints are any lifts in $\Gamma_{n+1}$ of those of $C_n$, and we set $c^2_{n+1}(C_{n+1}) = c^2_n(C_n) \in \mathcal{Q}$.

Note that if a connection $C_{n+1}$ was originally added to $\mathcal{C}^2_{n+1}$ or $\mathcal{C}^2_{n+1}$ by one of the above cases, then the length of any of its lifts to higher scales in this way is bounded by $C_\eta m^{-n}$.

This completes the definition of the collections $\mathcal{T}_{n+1}, \mathcal{C}^1_{n+1}, \mathcal{C}^2_{n+1}$ and the maps $c^1_{n+1}$ and $c^2_{n+1}$. We now must verify the properties in Lemma \ref{connectability}, for each $n\geq n_0$.

The fact that 
$$\Gamma_{n+1} \cup \bigcup_{C\in\mathcal{C}_{n+1}^1 \cup \mathcal{C}_{n+1}^2} C$$
is connected follows by induction. Indeed, the only way that it could fail to be connected is if 
\begin{itemize}
\item a vertex $v_n\in V_n$ lifts to two different vertices in $\Gamma_{n+1}$ (which can happen in Cases \ref{largebetalift} or \ref{otheredgeslift} of the lifting algorithm), or
\item for some monotone component $P_n\subset \Gamma_n$, $\pi_{n+1}(L(P_n))$ is not connected
\end{itemize}
and in both of these cases we have added connections between the relevant lifts.

Now we show the other properties stated in the lemma. 
\begin{claim}
The collection $\mathcal{T}_n$ is $N_{\eta}$-overlapping, for a constant $N_\eta$ that is bounded above in terms of $\eta$.
\end{claim}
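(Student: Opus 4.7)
The plan is to fix an arbitrary $y\in \Gamma_n$ and bound the number of $T\in \mathcal{T}_n$ with $y\in T$. By construction, each such $T$ is the preimage in $\Gamma_n$ under $\pi_k$ of a specific arc $I_k\subset e_k$, where $e_k$ is a bad edge of $\Gamma_k$ for some $n_0\leq k\leq n$, with $|I_k|=m^{-k}/10$ and
$$ m^{-k}/10 \leq \dist(I_k,\pi_k(E)) \leq C_\eta m\cdot m^{-k};$$
the lower bound and the diameter are built into the definition of $I_k$, while the upper bound is the estimate $\leq 2m^{-(n-1)}$ from Case (A2) and the analogous $\leq C_\eta m\cdot m^{-(n-j+1)}$ from Case (B). Since $T\cap \pi_n(E)=\emptyset$, I may assume $y\notin \pi_n(E)$ and set $\delta:=d_n(y,\pi_n(E))>0$.

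Using $\pi_k(y)\in I_k$ together with $\diam(I_k)=m^{-k}/10$, the preceding distance estimates transfer to $\pi_k(y)$ itself:
$$ m^{-k}/10 \leq \dist(\pi_k(y),\pi_k(E)) \leq 2C_\eta m\cdot m^{-k}.$$
I then apply the iterated version of Lemma~\ref{piproperties}(c) (the same estimate as Lemma~\ref{piproperties2}(c) but for the projection $X_n\to X_k$). The $1$-Lipschitz direction yields $\dist(\pi_k(y),\pi_k(E))\leq \delta$, hence $m^{-k}\leq 10\delta$; the reverse direction gives $\delta \leq \dist(\pi_k(y),\pi_k(E))+C_\eta m^{-k}\leq 3C_\eta m\cdot m^{-k}$, hence $m^{-k}\geq \delta/(3C_\eta m)$.

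Together these pin the scales $k$ contributing a $T$ through $y$ to an $m$-adic interval in which $m^{-k}$ varies by at most a factor of $30C_\eta m$. Hence there are at most $\log_m(30C_\eta m)+1\leq 2+\log_2(30C_\eta)$ such scales, a number depending only on $\eta$. At each such scale $k$, the point $\pi_k(y)$ lies in at most $\Delta$ edges of $X_k$ by axiom~\eqref{admsys1} of Definition~\ref{admissiblesystem}, and our construction fixes a single arc $I_k$ inside each bad edge of $\Gamma_k$; so at most $\Delta$ distinct elements of $\mathcal{T}_n$ arising at scale $k$ can contain $y$. Multiplying, the number of $T\in\mathcal{T}_n$ containing $y$ is at most $N_{\eta,\Delta}:=\Delta(2+\log_2(30C_\eta))$, which is the required bound.

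The main step that needs care is verifying the two-sided distance estimate for $I_k$ uniformly across the subcases, i.e.\ checking that both of the instances in Section~\ref{connectability} where a new arc is added to $\mathcal{T}$ (Case (A2) and the $j\geq 1$ cases of Case (B)) produce an $I_k$ satisfying the same scale-invariant bounds displayed above; once this bookkeeping is done, the rest is a short logarithmic-scale count combined with the valence estimate.
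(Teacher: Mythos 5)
Your proof is correct and takes essentially the same route as the paper's: in both arguments the heart of the matter is that the two-sided estimate $m^{-k}/10 \leq \dist(\,\cdot\,,\pi_k(E)) \leq C_\eta m\, m^{-k}$ pins the scale $k$ of any $T\in\mathcal{T}_n$ containing a given point to a window of at most $O\bigl(\log_m(C_\eta m)\bigr)$ scales, a quantity bounded independently of $m$ (you carry out the scale-transfer via the iterated Lemma \ref{piproperties}\eqref{pipreimage} more explicitly than the paper, which simply asserts the estimate for $T$ against $\pi_n(E)$ ``by construction''). The only deviation is your per-scale count: you allow up to $\Delta$ arcs through $\pi_k(y)$ via the valence bound and so obtain an overlap constant depending on $\eta$ and $\Delta$, whereas the paper observes that distinct arcs at the same scale lie in different edges and hence have disjoint lifts, giving at most one element per scale and the $\eta$-only constant asserted in the claim; your weaker constant is still independent of $m$, so it feeds into the length estimate of Section \ref{convergence} (where the paper itself uses $N_{\eta,\Delta}$) without any harm.
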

\begin{proof}
We will show that each point of $\Gamma_n$ is contained in at most $N_{\eta}=1+\log(20C_{\eta} m)/\log(m)$ elements of $\mathcal{T}_n$. The constant $N_\eta$ is bounded above in terms of $\eta$, by $2+\frac{\log(20C_\eta)}{\log(2)}$.

Suppose $T$ and $T'$ are in $\mathcal{T}_n$ and distinct and intersect. Then $T$ and $T'$ are lifts of arcs $I_i\subset \Gamma_i\subset X_i$ and $I_{i'}\subset \Gamma_{i'} \subset X_{i'}$, for some $i,i'<n$, as above.

Note first that we must have $i\neq i'$: distinct arcs formed in $X_i$ are disjoint, as they are subsets of different edges.

Now, by construction, we have that
$$ m^{-i}/10 \leq \dist(T,\pi_n(E)) \leq (C_{\eta} m )m^{-i} \text{ and } m^{-i}/10 \leq \diam(T) \leq (C_\eta  m) m^{-i},$$
and the analogous statement holds for $T'$ and $i'$.

It follows that
$$ |i-i'| \leq 1+\log(20C_{\eta})/\log(m) = N_{\eta}. $$

Thus, each $x\in X_n$ can be contained in at most $N_\eta$ different elements $T\in\mathcal{T}_n$.
\end{proof}

\begin{claim}\label{freeintervalbound}
For each $n\geq n_0$ and each $T\in \mathcal{T}_n$,
$$ \sum_{C\in \mathcal{C}_n^1, c_n^1(C) = T} |T| \leq C_{\eta,\Delta} m^{-1}|T|.$$
In other words, the collection $\mathcal{C}_n^1$ and the map $c_n^1$ satisfy \eqref{connectable1}.
\end{claim}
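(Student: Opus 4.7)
The plan is to induct on the scale $n$, with the base case $n = n_0$ being immediate since $\mathcal{C}^1_{n_0} = \emptyset$ by definition.

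Before handling the inductive step, I would first establish the key structural observation: at every scale $\ell$, each $T \in \mathcal{T}_\ell$ has exactly one connection $C \in \mathcal{C}^1_\ell$ mapped to it under $c^1_\ell$. Inspecting Cases A2 and B in the construction of Section \ref{connectability}, every time a new $T$ is added to $\mathcal{T}_\ell$ (i.e., the lift of some arc $I_{n-1}$ or $I_{n-j+1}$ in a bad edge), it is paired with precisely one newly added $C \in \mathcal{C}^1_\ell$. Moreover, the lifting procedure at the end of the section states ``Choose a lift of each endpoint of $C_n$ in $\Gamma_{n+1}$, and add a geodesic arc $C_{n+1}$''\,---\,exactly one lift per prior connection, and similarly one lift per prior $T$ via Lemma \ref{badedgeslift}. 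No new connection is ever attached to a previously existing $T$. So the $C \leftrightarrow T$ bijection persists at all scales, and the sum in the claim collapses to a single term.

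Given this, I would reduce the claim to showing $|C_n| \leq C_{\eta,\Delta} m^{-1} |T_n|$ for the unique $C_n$ paired with $T_n$. Let $n_1 \leq n$ be the scale at which $T$ was created. Examining Case A2, Case B with $j = 1$, and Case B with $j > 1$ in turn, in every situation the initial connection satisfies $|C_{n_1}| \leq C_\eta m^{-(n_1-1)}$ (it is constructed as a simplicial geodesic joining two lifts of a level-$(n_1)$ vertex), while the initial $T_{n_1}$ is a lift of an arc of length at least $m^{-(n_1 - 2)}/10$, the worst case being the arc $I_{n_1-2}$ in Case A2 or Case B with $j=1$. So at the creation scale itself one already has $|C_{n_1}| \leq 10 C_\eta m^{-1} |T_{n_1}|$. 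At each subsequent scale $\ell > n_1$, the lift of $C$ grows in length by at most $C_\eta m^{-(\ell-1)}$, while $|T_\ell|$ can only increase since $\pi_{\ell - 1}$ is $1$-Lipschitz and $T_\ell$ surjects onto $T_{\ell-1}$.

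A telescoping geometric-series estimate then gives
\[
|C_n| \;\leq\; C_\eta m^{-(n_1-1)} + \sum_{\ell = n_1 + 1}^{n} C_\eta m^{-(\ell - 1)} \;\leq\; \frac{m}{m-1} C_\eta m^{-(n_1 - 1)} \;\leq\; 2 C_\eta m^{-(n_1 - 1)},
\]
and combining with $|T_n| \geq |T_{n_1}| \geq m^{-(n_1 - 2)}/10$, which gives $m^{-(n_1 - 1)} \leq 10 m^{-1} |T_n|$, yields $|C_n| \leq 20 C_\eta m^{-1} |T_n|$. So the claim holds with $C_{\eta,\Delta} = 20 C_\eta$, a constant depending only on $\eta$.

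The main obstacle I anticipate is not in the arithmetic but in verifying the structural observation rigorously. I need to confirm, by tracing through every case of the construction in Section \ref{connectability}, that (i) no additional connection is ever added to an already-existing $T$ (only to freshly created ones), and (ii) the lifting step really produces exactly one new connection per old one even when the endpoints of $C_n$ split into multiple lifts in $\Gamma_{n+1}$. Once this bijective bookkeeping is pinned down, the remainder of the proof is the clean geometric-series calculation above, and the factor of $m^{-1}$ in the bound is ultimately inherited from the single source inequality $|C_{n_1}| \leq C_\eta m^{-(n_1-1)}$ against $|T_{n_1}| \geq m^{-(n_1-2)}/10$ at the creation scale, which buys one extra factor of $m$.
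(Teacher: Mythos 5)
Your proof hinges on the ``key structural observation'' that $c^1_n$ is a bijection, i.e.\ that each $T\in\mathcal{T}_n$ has exactly one connection assigned to it, so that the sum collapses to a single term. That observation is exactly what fails, and bounding the multiplicity of $c^1_n$ is the entire content of the claim. Nothing in the construction of Section \ref{connectability} guarantees that distinct break points receive distinct sets $T$: in Case A2, the bad edge $e_{\ell-1}$ (and hence the arc $I_{\ell-1}$ and its lift $T$) is determined only by being within distance $m^{-(\ell-1)}$ of $\pi_{\ell-1}(v'_\ell)$, and several break points --- up to two on each of the edges $e_\ell$ meeting the two endpoints of $e_{\ell-1}$ --- can all be served by the same arc; in Case B with $j>1$ the arc lives many scales down, in $e_{\ell-j+1}\subset\Gamma_{\ell-j+1}$, and many break points (on the order of $\Delta^2$ of them, since up to $6\Delta$ break points sit near $e_{\ell-j+1}$ and edges above it can multiply by Lemma \ref{onetoone} only up to controlled factors) can be assigned arcs in that same edge. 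The paper's argument is precisely a counting of this multiplicity, using Lemma \ref{projectdata} to pin down where the relevant edges can project and Lemma \ref{onetoone} to prevent uncontrolled numbers of preimages, yielding at most $8$ connections per $T$ in Case A2 (and Case B with $j=1$) and at most $12\Delta^2$ in Case B with $j>1$. This is also why the constant in the claim is $C_{\eta,\Delta}$ and not, as you conclude, a constant depending only on $\eta$: the $\Delta$-dependence enters exactly through the multiplicity bound you assumed away.

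The remainder of your argument --- the length estimate $|C|\leq C_\eta m^{-\ell}$ at the creation scale, the observation that lifted connections grow by at most $C_\eta m^{-(\ell-1)}$ per scale so their length stays $\lesssim C_\eta m^{-\ell}$, and the comparison $|T|\geq m^{-i}/10$ giving the factor $m^{-1}$ --- is sound and matches what the paper uses per connection. But without a proof that the fiber $(c^1_n)^{-1}(T)$ has cardinality bounded by a constant (depending on $\Delta$), the claimed inequality does not follow; as written, the proposal has a genuine gap at its central step.
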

\begin{proof}
Fix $n\geq n_0$ and $T\in\mathcal{T}_n$. By construction, $T$ is a lift of an arc $I$ in $e_{i} \subset \Gamma_i \subset X_i$, for some $i<n$.

Let $C_n\in\mathcal{C}^1_n$ have $c^1_n(C_n) = T$. The connection $C_n$ is a lift of a connection $C_{\ell+1}\in \mathcal{C}^1_{\ell+1}$, for some $\ell< n$, which was added in either Case A2 or Case B above. 

Suppose the connection $C_{\ell+1}$ was added to $\mathcal{C}^1_{\ell+1}$ in Case A2. In other words, $C_{\ell+1}$ connects two different lifts of a point $v'_\ell$ in a monotone component $P_\ell\subset \Gamma_\ell$ . Since we are in Case A2, $i=\ell-1$ and $T$ is defined as a lift of a sub-arc $I$ of a bad edge $e_{\ell-1}$ in $\Gamma_{\ell-1}$. Let $e_\ell$ be the edge of $P_\ell \subset \Gamma_\ell$ associated to the point $v'_\ell$ as in \eqref{env'n}. 

If $e_\ell$ projects into a bad edge $e_{\ell-1}$ of $\Gamma_{\ell-1}$, then $\pi_{\ell-1}(e_\ell)$ must contain the endpoint of $e_{\ell-1}$, by Lemma \ref{projectdata}.

Observe that since $e_\ell$ is contained in a monotone component as in \ref{otheredgeslift}, it must be that $\pi_{\ell-1}(e_\ell)$ is not contained in an edge with $\beta\geq \epsilon$. Therefore, Lemma \ref{onetoone} applies: By Lemma \ref{onetoone}, there are at most $2$ such edges $e_\ell$ (one for each endpoint of $e_{\ell-1}$), each of which can contain at most $2$ break points $v'_\ell$. Therefore in this case, the arc in $e_{\ell-1}$ is used at most $4$ times.

If $e_\ell$ projects into a good edge of $X_{\ell-1}$, then by Lemma \ref{projectdata}, $f_{\ell-1}$ is in $\Gamma_{\ell-1}$. Furthermore, by Lemma \ref{projectdata}, $e_\ell$ must both contain and project to the left of $a(f_{\ell-1})$, and $f_{\ell-1}$ is preceded by a bad edge $e_{\ell-1}$ in a monotone component of $\Gamma_{\ell-1}$. (Or similarly with $b(f_{\ell-1})$ on the other side.) The set $T$ is then be defined as a lift of an arc in $e_{\ell-1}$.

Thus, by Lemma \ref{onetoone}, there are at most $2$ such edges $e_\ell$, each of which can contain at most $2$ break points like $v'_\ell$ for which we need a connection. Therefore in this case, the arc $e_{\ell-1}$ is used at most $4$ times.

Therefore, if $T$ was originally defined in Case A2, there are at most $8$ connections that can use it, each of which have length at most $C_\eta m^{-\ell} \leq C_{\eta}m^{-1}|T|$. This completes the argument for Case A2.

Now suppose that $T$ was originally defined in Case B in a scale $\ell$. Let $j\geq 1$ be the integer defined in Case B above. If $j=1$, we can argue as in Case A2 of this lemma to show that $T$ is equal to $c^\ell_1(C)$ for at most $8$ different $C\in \mathcal{C}^1_\ell$. 

Otherwise suppose that $T$ was originally defined at a scale $\ell$ with $j>1$, as a lift of an arc in $e_{\ell-j+1}\subset\Gamma_{\ell-j+1}$. If $n\geq \ell$ and $C\in \mathcal{C}^1_n$ is a lift of a connection added originally to $\mathcal{C}^1_{\ell+1}$, then the only way that $c^1_n(C)$ can be $T$ is if there is a break point of level $\leq \ell-j+1$ within one edge of $e_{\ell-j+1}$ that both endpoints of $C$ project onto, and furthermore, the edges containing the endpoints of $C$ do not project into any edge $e_k$ with $\beta_E(Q(e_k))\geq \epsilon$ for $\ell-j+1\leq k < \ell$. Since there can be at most $6\Delta$ such break points near $e_{\ell-j+1}$, it follows by Lemma \ref{onetoone} that there can be at most $12\Delta^2$ such $C\in\mathcal{C}^1_n$ with $c^1_n(C)=T$.

Therefore, if $T$ was originally defined in Case B, there are at most $12\Delta^2+8$ connections that can use it, each of which have length at most $C_{\eta}m^{-1}|T|$. This completes the argument for Case B.
\end{proof}

\begin{claim}
For each $n\geq n_0$ and each $Q\in \mathcal{Q}$,
$$ \sum_{C\in \mathcal{C}_n^2, c_n^2(C) = Q} \mathcal{H}^1(C) \leq C_{X} \diam(Q).$$
Furthermore, if $Q\in c_n^2(\mathcal{C}_n^2)$, then $\beta_E(Q)\geq \epsilon$.

In other words, the collection $\mathcal{C}_n^2$ and the map $c_n^2$ satisfy \eqref{connectable2} and \eqref{connectable3}.
\end{claim}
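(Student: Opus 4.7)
The verification that $\beta_E(c_n^2(C)) \geq \epsilon$ for every $C \in \mathcal{C}_n^2$ is immediate from inspection: every connection originally added to $\mathcal{C}^2$ is assigned to $Q(e_i)$ for some edge $e_i$ satisfying $\beta_E(Q(e_i)) \geq \epsilon$ (explicitly so in Case 1 by the maximality of $i$, and explicitly so in Case A1 by the hypothesis that $\pi_{n-1}(e_n)$ lies in an edge as in \ref{recentlargebetalift} arising from some $e_{n-2}$ with large $\beta$), and the label $c^2(C)$ is preserved verbatim under the lifting procedure applied to connections at higher scales.

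For the length bound, fix $Q = Q(e_i) \in \mathcal{Q}$ with $\beta_E(Q) \geq \epsilon$. Every connection in $\mathcal{C}^2_n$ mapping to $Q$ is obtained as a unique iterated lift of an ``original'' connection added at some scale $\ell + 1 \leq n$. As recorded in the text, each lifting step from scale $j$ to $j+1$ adds at most $C_\eta m^{-j}$ extra length, so the length of the lift at scale $n$ exceeds the original length by a geometric sum $\leq C_\eta m^{-\ell}/(m-1)$. Since original connections have length at least $2m^{-(\ell+1)}$ (distinct lifts of a vertex are at least this far apart in $X_{\ell+1}$), the length at scale $n$ is bounded by $C_X$ times the original length. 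It therefore suffices to bound the total length, summed over all scales, of original connections mapping to $Q$.

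The Case A1 contribution is straightforward: such original connections occur only at scale $i + 3$, at break-point vertices of monotone components in $\Gamma_{i+2}$ whose adjacent edges come via Case \ref{recentlargebetalift} from edges of $\pi_{i+1}(S(e_i))$. The set $\pi_{i+1}(S(e_i)) = \pi_i^{-1}(e_i) \cap X_{i+1}$ has diameter at most $2\eta m^{-i}$ and so contains at most $C_X$ edges; each subdivides into $m$ sub-edges in $\Gamma_{i+2}$, each carrying at most two break-points. With each such connection of length at most $C_\eta m^{-(i+2)}$, the total Case A1 contribution is at most $C_X m \cdot m^{-(i+2)} \leq C_X m^{-i} \leq C_X \diam(Q)$.

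The Case 1 contribution is the main obstacle. Such original connections appear at scale $n' + 1$, for any $n' \geq i$, at each vertex $v_{n'} \in V_{n'} \cap \Gamma_{n'}$ whose maximal large-$\beta$ index is exactly $i$ and which has at least two distinct lifts in $\Gamma_{n'+1}$; the connections at each such $v_{n'}$ have total length at most $C_X m^{-n'}$. Every such $v_{n'}$ projects under $\pi_i$ to one of the three level-$(i+1)$ vertices of $e_i$, but a naive count of candidates in $\pi_i^{-1}(\{v^*_L,v^*_M,v^*_R\}) \cap \Gamma_{n'}$ grows like $m^{n'-i}$, producing a divergent series. The plan is to adapt the argument of Claim \ref{freeintervalbound}: each $v_{n'}$ whose lifts split in $\Gamma_{n'+1}$ creates a splitting of edges of $\Gamma_{n'}$ meeting at $v_{n'}$, which by the inductive hypotheses \eqref{goodedgeprojection}, \eqref{largebetabreak}, \eqref{doubleedges} and a repeated application of Lemma \ref{onetoone} can be traced injectively back to a specific edge of $\pi_{i+1}(S(e_i)) \subset \Gamma_{i+1}$ together with a bounded amount of combinatorial choice at each intermediate scale. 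Coupled with the exponential decay $C_X m^{-n'}$ of connection length at scale $n'+1$, this will telescope to a geometric series bounded by $C_X \cdot |\pi_{i+1}(S(e_i))| \leq C_X m^{-i} \leq C_X \diam(Q)$, completing the proof.
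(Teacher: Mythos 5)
Your handling of the $\beta_E(Q)\geq\epsilon$ statement and of the Case A1 connections is correct and matches the paper. The genuine gap is in the main count, for the connections created at vertices $v_{n'}\in V_{n'}$ and labeled $Q(e_i)$ with $i$ the largest index such that $\pi_i(v_{n'})$ lies in a large-$\beta$ edge: here you only announce a plan (trace each connection back to an edge of $\pi_{i+1}(S(e_i))$ ``together with a bounded amount of combinatorial choice at each intermediate scale'' and let lengths telescope) and never carry it out. As stated the plan is also not obviously sound: a bounded choice at each of the $n'-i$ intermediate scales gives a count of order $C^{n'-i}$, and whether $C^{n'-i}m^{-n'}$ sums to $\lesssim m^{-i}$ requires $C<m$, which you neither verify nor actually need.

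The point you are missing is that the maximality of $i$ in the labeling does all the work in a single stroke: it says that for every $k$ with $i<k\leq n'$ (indeed up to the current scale $n$) the projection of the relevant vertex lies in no edge of $X_k$ with $\beta_E\geq\epsilon$, so Lemma \ref{onetoone} applies directly between scale $i+1$ and scale $n$, with no intermediate branching at all. This is exactly the paper's argument: any $C\in\mathcal{C}^2_n$ of this type with $c^2_n(C)=Q(e_i)$ joins a pair of vertices of $V_n$ having this maximal-index property; such vertices sit on edges of $\Gamma_n$ projecting onto the boundedly many edges of $X_{i+1}$ near $e_i$, and the small-$\beta$ condition at every intervening scale forces these edges of $\Gamma_n$ to be unique lifts, so there are at most $C_X$ such vertex pairs in $\Gamma_n$ and hence at most $C_X$ such connections, each of length at most $C_\eta m^{-i}\leq C_X\diam(Q)$. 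Since the claim is a bound on the sum over the single collection $\mathcal{C}^2_n$, this flat constant count finishes the proof with no series to sum. (Your geometric-series framing could be salvaged --- boundedly many creations per scale $n'$, each of length $\lesssim m^{-n'}$ --- but the per-scale bound again comes from this one-shot use of Lemma \ref{onetoone} via maximality of $i$, not from scale-by-scale combinatorial tracing, and in your proposal that step is precisely the part left unproved.)
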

\begin{proof}
There are two ways that a cube $Q=Q(e_i)\in \mathcal{Q}$ (where $e_i$ is an edge of $X_i$) can be in  $c_n^2(\mathcal{C}_n^2)$ for some $n\geq i\geq n_0$.

One way is by the argument given in Case A1 of Lemma \ref{connectable}. This can occur at most $C_X$ times for each $Q$, since it is used to initially define connections at scale $n=i+2$ that project into $e_i$, of which there are a controlled number. Each such connection has length bounded by $C_\eta m^{-i} \leq C_X \diam(Q)$.

The other way that a cube $Q=Q(e_i)\in \mathcal{Q}$ can be in  $c_n^2(\mathcal{C}_n^2)$ for some $n\geq i\geq n_0$ is by the argument at the beginning of Lemma \ref{connectable}. Let $Q=Q(e_i)\in Q$, where $e_i$ is an edge of $X_i$. Recall the definition of $V_n\subset V(\Gamma_n)$ from \ref{otheredgeslift}. A connection $C^2_n\in \mathcal{C}^2_n$ of this type having $c_n^2(C^2_n) = Q$ connects two vertices $v_n$ and $w_n$ of $V_n$ with the property that $i$ is the largest natural number $j$ such that $\pi_j(v_n)$ lies in an edge $e_j$ of $X_j$ with $\beta_E(Q(e_j))\geq \epsilon$. It follows from Lemma \ref{onetoone} that there are at most $C_X$ such pairs of vertices in $\Gamma_n$ and therefore at most $C_X$ elements of $\mathcal{C}_n^2$ that map to $Q(e_i)$ under $c^2_n$.

Finally, the fact that if $Q\in c_n^2(\mathcal{C}_n^2)$, then $\beta_E(Q)\geq \epsilon$ is clear from the construction.
\end{proof}

This completes the proof of Lemma \ref{connectable}.

\section{Sub-convergence to a rectifiable curve}\label{convergence}
Observe that the construction in Section \ref{liftingalgorithm} and Lemma \ref{connectable} now imply the following bound on lengths:
\begin{align*}
|\Gamma_n \cup \bigcup_{C^1 \in \mathcal{C}^1_n} C^1 \cup \bigcup_{C^2 \in \mathcal{C}^2_n} C^2| &\leq m^{-n_0} + C_X \sum_{Q\in\mathcal{Q}, \beta_E(Q)\geq\epsilon} \diam(Q) + \sum_{\mathcal{C}^1_n} |C^1| +  \sum_{\mathcal{C}^2_n} |C^2|\\
&\leq m^{-n_0} + C_X \sum_{Q\in\mathcal{Q}, \beta_E(Q)\geq\epsilon} \diam(Q) + \sum_{T\in\mathcal{T}_n}\sum_{c^1_n(C^1)=T} |C^1| +  \sum_{Q\in\mathcal{Q}, \beta_E(Q)\geq\epsilon} \sum_{c^2_n(C^2)=Q} |C^2|\\
&\leq m^{-n_0} + C_X \sum_{Q\in\mathcal{Q}, \beta_E(Q)\geq\epsilon} \diam(Q) + \sum_{T\in\mathcal{T}_n}C_{\eta}m^{-1}|T| +  \sum_{Q\in\mathcal{Q}, \beta_E(Q)\geq\epsilon} C_X \diam(Q)\\
&\leq m^{-n_0} + C_X \sum_{Q\in\mathcal{Q}, \beta_E(Q)\geq\epsilon} \diam(Q) + C_{\eta}m^{-1}N_{\eta,\Delta}|\Gamma_n|\\
&\leq m^{-n_0} + C_X \sum_{Q\in\mathcal{Q}, \beta_E(Q)\geq\epsilon} \diam(Q) + \frac{1}{100}|\Gamma_n|.
\end{align*}
In the last inequality above, we have used the assumption in \eqref{mlarge} that $m$ is large depending on $\eta$ and $\Delta$.

Hence
$$|\Gamma_n \cup \bigcup_{C^1 \in \mathcal{C}^1_n} C^1 \cup \bigcup_{C^2 \in \mathcal{C}^2_n} C^2| \leq 2m^{-n_0} + C_X \sum_{Q\in\mathcal{Q}, \beta_E(Q)\geq\epsilon} \diam(Q).$$
It follows that a subsequence of the sequence of continua
$$ \left\{\Gamma_n \cup \bigcup_{C^1 \in \mathcal{C}^1_n} C^1 \cup \bigcup_{C^2 \in \mathcal{C}^2_n} C^2 \right\} $$ 
converges (in the Gromov-Hausdorff sense) to a continuum $\Gamma$ in $X$. From standard results about Hausdorff convergence of connected sets (see, e.g., \cite{FFP07}, Theorem 5.1), the limit $\Gamma$ is a compact connected set  whose length satisfies
$$ |\Gamma| \leq 2m^{-n_0} + C_X \sum_{Q\in\mathcal{Q}, \beta_E(Q)\geq\epsilon} \diam(Q).$$
(Note that to obtain this conclusion, we may consider the Gromov-Hausdorff convergence of $\{X_n\}$ to $\{X\}$ as occuring as true Hausdorff convergence in some ambient compact metric space $Z$.)

\section{Decomposition of the complement and proof of Proposition \ref{constructionprop}}\label{propositionproof}
We now complete the proof of Proposition \ref{constructionprop}.

Fix our compact set $E\subset X$ as in Proposition \ref{constructionprop}. Apply the algorithm of Sections \ref{liftingalgorithm} and \ref{connectability} to obtain (disconnected) sets $\Gamma_n\subset X_n$ as well as a continuum $\Gamma\subset X$ (as in Section \ref{convergence}). 

Let
$$\mathcal{Q}_\Gamma = \{Q\in\mathcal{Q} : Q=Q(e_n), e_n\subset \Gamma_n, \beta_E(Q)\geq \epsilon\}.$$

Property \eqref{gammalength} of Proposition \ref{constructionprop} is then clear as above; the only cubes $Q$ appearing in the sum are those in $\mathcal{Q}_\Gamma$. Property \eqref{gammanearE} follows using Lemma \ref{projection} and the fact that $\pi_{n_0}(\Gamma_{n_0+1}) = e_{n_0}$. Property \eqref{gammaprojection} holds for similar reasons: if $Q(e_n)\in \mathcal{Q}_\Gamma$, then $e_n\subset \Gamma_n$ and hence by Lemma \ref{projection} $\pi_{n_0}(e_n)\subset e_{n_0}$.

We now write $E\setminus \Gamma$ as a union of sets $E(e_n)$ ($n\geq n_0+2$, $e_n$ an edge of $X_n$) in the following way:

For each $n\geq n_0$, let $\mathcal{E}_n$ be the collection of all edges $e_n$ in $X_n$ such that $\pi_{n_0}(e_n)\subset e_{n_0}$ and having the following property:
$$\dist(\pi_n(x),\Gamma_{n} \cap \pi_0^{-1}(\pi_0(x))) \geq m^{-n}$$
for some $x\in E \cap \pi_n^{-1}(e_n)$. Observe that, by construction, $\mathcal{E}_{n_0}$ and $\mathcal{E}_{n_0+1}$ are both empty. Furthermore, since $\Gamma$ contains a sub-sequential limit of the compacta $\Gamma_n$, each point of $E\setminus \Gamma$ is contained in some $E(e_n)$.

Let $\mathcal{G}_n \subset \mathcal{E}_n$ be the set of all edges $e_n \in \mathcal{E}_n$ such that $\pi_\ell(e_n)$ is not contained in an edge of $\mathcal{E}_\ell$ for any $\ell<n$. 

For each edge $e_n\in\mathcal{G}_n$, let $E(e_n) =  E \cap \pi_n^{-1}(e_n)$. (Otherwise set $E(e_n)=\emptyset$.) The union of all the sets $\{E(e_n)\}_{n\geq n_0+2, e_n\in\mathcal{G}_n}$ contains $E\setminus \Gamma$. Indeed, if $x\in E\setminus\Gamma$, then $\pi_n(x)$ must be contained in an edge of $\mathcal{E}_n$ for some $n$, and therefore in an edge of $\mathcal{G}_n$ for some $n$.

Observe that if $e_n\in\mathcal{G}_n$, then $\pi_{n_0}(e_n)\subseteq e_{n_0}$, simply because $e_n$ contains points of $\pi_n(E)$ and $\pi_{n_0}(E)\subset e_{n_0}$. The rest of property (\ref{bubblenotingamma}) is proven in the following claim:
\begin{claim}
If $Q(e_{\ell})\in \mathcal{Q}_\Gamma$, $n\leq \ell$, and $e_n\in\mathcal{G}_n$, then $\pi_n(e_\ell)\not\subseteq e_n$.
\end{claim}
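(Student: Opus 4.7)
My plan is to argue by contradiction. Suppose $\pi_n(e_\ell) \subseteq e_n$, and let $x \in E$ be a witness for $e_n \in \mathcal{E}_n$, so $\pi_n(x) \in e_n$ and $\dist(\pi_n(x), \Gamma_n \cap \pi_0^{-1}(\pi_0(x))) \geq m^{-n}$. I will produce a point $y \in \Gamma_n \cap \pi_0^{-1}(\pi_0(x))$ with $d_n(\pi_n(x), y) < m^{-n}$, contradicting this. First I will use iterated Lemma \ref{projection} to get $\pi_n(\Gamma_\ell) = \pi_n(\Gamma_{n+1})$ whenever $\ell \geq n+1$ (the case $\ell = n$ is trivial, and in fact gives the contradiction immediately since then $e_\ell = e_n \subseteq \Gamma_n$ forces $\pi_n(x) \in \Gamma_n$). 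In particular $\pi_n(e_\ell) \subseteq \pi_n(\Gamma_{n+1})$.

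Next I will use the inductive hypothesis \eqref{goodedgeprojection} to produce an edge $e_n^* \in \Gamma_n$ sharing both endpoints with $e_n$. Two cases arise. If $\pi_n(e_\ell) \subseteq \Gamma_n$, then $\Gamma_n$ contains a positive-length sub-arc of $e_n$ (of length $m^{-\ell} > 0$), and since $\Gamma_n$ is simplicial in $X_n$ this forces $e_n \subseteq \Gamma_n$, so we may take $e_n^* = e_n$. Otherwise, there is a point $z \in \pi_n(e_\ell) \cap (\pi_n(\Gamma_{n+1}) \setminus \Gamma_n)$; since $\pi_n(e_\ell)$ has positive length, we may in fact choose such a $z$ in the interior of $e_n$ (else all interior points of $\pi_n(e_\ell)$ would lie in $\Gamma_n$, returning us to the first case). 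Then \eqref{goodedgeprojection} applied to the symmetric difference $\Gamma_n \triangle \pi_n(\Gamma_{n+1})$ says $z$ lies in a good edge with a unique twin good edge in $\Gamma_n$, and the good edge containing the interior point $z$ of $e_n$ can only be $e_n$ itself; call the twin $e_n^* \in \Gamma_n$.

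Now let $[t_1, t_2] = \pi_0(e_n) = \pi_0(e_n^*)$, a closed interval of length $m^{-n}$, and let $t = \pi_0(x) \in [t_1, t_2]$. Since $\pi_0$ is an isometry on each edge of $X_n$, there is a unique $y \in e_n^*$ with $\pi_0(y) = t$, and $y \in \Gamma_n \cap \pi_0^{-1}(\pi_0(x))$. Routing a path from $\pi_n(x)$ along $e_n$ to the nearer common endpoint of $e_n$ and $e_n^*$ and back along $e_n^*$ to $y$ yields
$$ d_n(\pi_n(x), y) \leq 2\min(t - t_1, t_2 - t) \leq m^{-n}, $$
with equality throughout only when $t$ is the midpoint of $[t_1, t_2]$, i.e., when $\pi_n(x)$ is the midpoint of $e_n$. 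The hypothesis of Proposition \ref{constructionprop} that $\pi_n(E)$ avoids vertices and edge-midpoints of $X_n$ rules this out, so the inequality is strict: $d_n(\pi_n(x), y) < m^{-n}$, contradicting $e_n \in \mathcal{E}_n$.

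The main subtlety I foresee is in the second case of the dichotomy: the conclusion of \eqref{goodedgeprojection} only pins down a good edge when the point $z$ lies in a unique edge of $X_n$, so it is essential to choose $z$ in the interior of $e_n$ (not at a vertex); this is where I use that $\pi_n(e_\ell)$ is a positive-length arc in $e_n$. Note also that the minimality condition built into the definition of $\mathcal{G}_n$ is not actually invoked in this argument — only membership $e_n \in \mathcal{E}_n$ is needed — but this is consistent with the use of $\mathcal{G}_n$ elsewhere to index the nonempty sets $E(e_n)$.
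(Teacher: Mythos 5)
Your proof is correct and takes essentially the same route as the paper's: both use the inductive property (\ref{goodedgeprojection}) (together with Lemma \ref{projection}) to produce an edge of $\Gamma_n$ sharing both endpoints with $e_n$, and then invoke the hypothesis that $\pi_n(E)$ avoids edge-midpoints to contradict the defining inequality of $\mathcal{E}_n$. The interior-point dichotomy you spell out just makes explicit what the paper leaves implicit in ``it follows from property (\ref{goodedgeprojection})''.
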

\begin{proof}
If $Q(e_{\ell})\in \mathcal{Q}_\Gamma$, then $e_\ell\subset \Gamma_\ell$ for some $\ell\geq n$. Suppose that $\pi_n(e_\ell)\subseteq e_n$, where $e_n\in \mathcal{G}_n$. It follows from the property \eqref{goodedgeprojection} of the sets $\{\Gamma_i\}$ (from Section \ref{liftingalgorithm}) that there is an edge $f_n\subset \Gamma_n$ with the same endpoints as $e_n$. But then this violates the assumption that $e_n\in \mathcal{G}_n \subset \mathcal{E}_n$, because by assumption, $\pi_n(E)$ does not contain the midpoint of $e_n$, and therefore every point $x_n$ of $\pi_n(E) \cap e_n$ satisfies
$$\dist(x_n,\Gamma_{n} \cap \pi_0^{-1}(\pi_0(x_n))) < \dist(x_n,f_{n} \cap \pi_0^{-1}(\pi_0(x_n))) < m^{-n}.$$
\end{proof}

Property \eqref{bubblesdisjoint} of Proposition \ref{constructionprop} is immediate from the definition of $\mathcal{G}_n$. To see property \eqref{bubbleneargamma}, note that if $e_n\subset \mathcal{G}_n$ and $x\in E(e_n)$, then $\dist(\pi_{n-1}(x), \Gamma_{n-1}) \leq m^{-(n-1)}$. Property \eqref{goodedgeprojection} and Lemma \ref{projection} then imply that $\pi_{n-1}(\Gamma)$ contains a point within distance $2m^{-(n-1)}$ of $x$, and so \eqref{bubbleneargamma} follows from Lemma \ref{piproperties2}.

It remains only to verify property \eqref{bubblesum} of Proposition \ref{constructionprop}. To do so, we do using the following claim.
\begin{claim}\label{bubblefreebeta}
Fix $n\geq n_0$ and $e_n\in\mathcal{G}_n$. At least one of the following two options must hold:
\begin{enumerate}[(a)]
\item \label{bubble_free}
We have $n\geq n_0+3$ and there is a bad edge $g_{n-3}$ in $\Gamma_{n-3}$ with $\beta_E(Q(g_{n-3}))<\epsilon$ such that
$$ \dist( \pi_{n-3}(e_n), f_{n-3}) \leq m^{n-3}. $$

\item \label{bubble_beta} For some $0\leq \ell\leq 5$, there exists $Q\in \mathcal{Q}_{n-\ell}$ with $\beta_E(Q)\geq\epsilon$ such that $E(e_n)\subseteq Q$.
\end{enumerate}
Furthermore, at most $C_\Delta m$ edges of $\mathcal{G}_n$ that fall into case \eqref{bubble_free} are associated to each such edge $f_{n-3}$ in $\Gamma_{n-3}$.
\end{claim}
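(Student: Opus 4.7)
\emph{The plan} is to prove the claim by contrapositive: assuming \ref{bubble_beta} fails (so every cube at scales $n-\ell$, $0\leq\ell\leq 5$, containing $E(e_n)$ has $\beta_E<\epsilon$), I will locate a bad edge of $\Gamma_{n-3}$ with small $\beta$ near $\pi_{n-3}(e_n)$. To begin, since $e_n\in\mathcal{G}_n\subset\mathcal{E}_n$, pick $x\in E(e_n)$ with $\dist(\pi_n(x),\Gamma_n\cap\pi_0^{-1}(\pi_0(x)))\geq m^{-n}$. Because $e_n\in\mathcal{G}_n$, the ancestor $\pi_{n-1}(e_n)$ is not in $\mathcal{E}_{n-1}$, which forces $\dist(\pi_{n-1}(x),\Gamma_{n-1})<m^{-(n-1)}$ and hence $\dist(\pi_{n-3}(x),\Gamma_{n-3})<m^{-(n-1)}$. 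Let $g_{n-3}$ be an edge of $\Gamma_{n-3}$ nearest $\pi_{n-3}(x)$; using $\diam(E(e_n))\leq C_\eta m^{-n}$ one sees $E(e_n)\subset Q(g_{n-3})$, so $\beta_E(Q(g_{n-3}))<\epsilon$ by the failure of \ref{bubble_beta} at $\ell=3$.

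\emph{Key propagation.} The central tool is the observation that if at any scale $\kappa\in\{n-3,n-2,n-1\}$ the monotone geodesic used in $L(P_\kappa)$ (and hence, by inheritance, in $L(P_{n-1})$) above $\pi_0(x)$ coincides with the optimal geodesic $L$ for $Q(h)$ where $h$ is a good edge of $\Gamma_\kappa$, then $\beta_E(Q(h))<\epsilon$ gives $\dist(x,L)<2A\epsilon m^{-\kappa}$, and Lemma \ref{nearestpoint} yields $\dist(\pi_n(x),\pi_n(L))<4A\epsilon m^{-\kappa}\leq 4A\epsilon m^3\cdot m^{-n}$. Since $\pi_n(L)\subset\Gamma_n$ and our choice $\epsilon=(10C_\eta Am\Delta)^{-10}$ is much smaller than $1/(4Am^3)$, this contradicts $\dist(\pi_n(x),\Gamma_n)\geq m^{-n}$.

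\emph{Case analysis for $g_{n-3}$.} Suppose for contradiction $g_{n-3}$ is good, hence part of a monotone component $P_{n-3}$ of $\Gamma_{n-3}$. Inspecting the recipe in \ref{otheredgeslift}, above $\pi_0(x)$ either (i) $L(P_{n-3})$ equals the optimal geodesic for $Q(g_{n-3})$ or for an adjacent good edge of $P_{n-3}$ (covered interval or ``between adjacent good edges'' interval), or (ii) $L(P_{n-3})$ falls back to $L(P_{n-4})$. In case (i), propagating upward through $L(P_{n-2})$ and $L(P_{n-1})$---each of which either uses an optimal at its own scale (immediate contradiction via the propagation observation) or defers---we end up in the key-propagation scenario at some scale $\kappa\in\{n-3,n-2,n-1\}$, contradicting the choice of $x$. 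In case (ii), the rules of \ref{otheredgeslift} imply that on the side of $g_{n-3}$ toward $\pi_0(x)$ the adjacent edge in $P_{n-3}$ is a bad edge $g'_{n-3}$ of $\Gamma_{n-3}$. Since $g'_{n-3}$ shares a vertex with $g_{n-3}$, an elementary check gives $E(e_n)\subset Q(g'_{n-3})$, and the failure of \ref{bubble_beta} at $\ell=3$ forces $\beta_E(Q(g'_{n-3}))<\epsilon$: so $g'_{n-3}$ is the sought bad edge.

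\emph{Counting and main obstacle.} For the ``furthermore'' part: the edge of $X_{n-3}$ containing $\pi_{n-3}(e_n)$ is $g_{n-3}$ or one of the $\leq 2\Delta$ edges sharing a vertex with $g_{n-3}$, and the case analysis above localizes $\pi_0(x)$ inside a specific extension region; combining this with a Lemma \ref{onetoone}-style uniqueness for small-$\beta$ descendants yields the bound $C_\Delta m$. The main obstacle I anticipate is handling the ``boundary'' possibility where $\pi_0(x)$ lies near a vertex of $V_{n-3}$ coming from a high-$\beta$ ancestor at some scale $k\leq n-3$: if $k\in\{n-3,n-4,n-5\}$ this directly contradicts the failure of \ref{bubble_beta}, but if $k\leq n-6$ one must instead use an adjacent edge in a neighboring monotone component, requiring a separate but analogous argument.
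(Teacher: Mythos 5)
Your overall strategy is the same as the paper's: assume \eqref{bubble_beta} fails, trace the lift of a neighborhood of $\pi_0(x)$ back through scales $n-1$, $n-2$, $n-3$, use the smallness of $\epsilon$ (via Lemma \ref{nearestpoint}) to rule out the ``covered by an optimal monotone geodesic'' cases, and conclude that a bad edge of $\Gamma_{n-3}$ with $\beta_E<\epsilon$ lies near $\pi_{n-3}(e_n)$; the paper packages this case analysis in Lemma \ref{projectdata} and applies it at scales $n-1$, $n-2$, $n-3$. However, two pieces are genuinely missing. The first is the ``furthermore'' bound $C_\Delta m$, which you assert in one sentence but which is the delicate part of the claim. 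The paper obtains exactly one factor of $m$ in two steps: (i) all edges $e_n\in\mathcal{G}_n$ attached to a fixed edge $f_{n-1}$ of $\Gamma_{n-1}$ adjacent to $\pi_{n-1}(e_n)$ must lie within two edges of a single monotone geodesic, because $\beta_E(Q(f_{n-1}))<\epsilon$, so there are at most $\Delta m$ of them; and (ii) for a fixed bad edge $g_{n-3}$ there are only a bounded number (independent of $m$) of admissible $f_{n-1}$, because $f_{n-1}$ can project only into at most $4$ ``locations'' on $f_{n-2}$ (endpoints of bad edges or break points $a,b$), likewise $f_{n-2}$ into $f_{n-3}$, and Lemma \ref{onetoone} converts the $\le 16$ locations into $\le 16$ actual edges of $\Gamma_{n-1}$. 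Your localization (``within $2\Delta$ edges of $g_{n-3}$'' plus an unspecified ``onetoone-style uniqueness'') by itself allows on the order of $\Delta^{O(1)}m^{3}$ candidate edges $e_n$ over a scale-$(n-3)$ neighborhood, and the extra $m^{2}$ is fatal downstream: in the proof of property \eqref{bubblesum} the count must beat the factor $m^{-3}$ coming from $|e_n|/|g_{n-3}|$, and only a bound of the form $C_\Delta m$ produces the needed $C_{\eta,\Delta}m^{-2}|\Gamma|$, which in turn feeds the geometric decay in Section \ref{theoremproof}.

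The second gap is the one you yourself flag: the case where $\pi_0(x)$ lies near a vertex of $V_{n-3}$ coming from a $\beta\geq\epsilon$ ancestor at a scale $k\leq n-6$. This is exactly the point where your case (ii) assertion ``the adjacent edge in $P_{n-3}$ toward $\pi_0(x)$ is bad'' needs justification, and you leave it as ``a separate but analogous argument.'' The resolution, implicit in Lemma \ref{projectdata}, is structural: deferral of $L(P_{n-3})$ to $L(P_{n-4})$ over $\pi_0(x)$ can only occur to the left of $a(e^i)$ or to the right of $b(e^i)$ for a non-extremal edge of the component, and by the definition of the break points this forces the neighboring edge of the same component to be bad; for the first or last edge of a monotone component, $a$ and $b$ are the actual endpoints, so no deferral gap exists there and the $\epsilon$-smallness contradiction applies directly, so no new case involving the component-boundary vertex arises. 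Without carrying out this check (or simply invoking Lemma \ref{projectdata}), your dichotomy ``(i) covered by an optimal geodesic / (ii) adjacent bad edge'' is incomplete, and with it the existence of the bad edge $g_{n-3}$ in option \eqref{bubble_free} is not fully established.
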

\begin{proof}
First, we observe that if $e_n \in \mathcal{G}_n$ and $n\leq n_0+2$, then $\beta_E(Q(e_{n_0}))\geq \epsilon$ and therefore \eqref{bubble_beta} holds. Indeed, if $n\leq n_0+2$ and $\beta_E(Q(e_{n_0}))<\epsilon$, then it is clear from the construction that
$$\dist(\pi_n(x), \Gamma_n \cap \pi_0^{-1}(\pi_0(x))) \leq 2A\epsilon m^{-n_0} < m^{-n}/2, $$
and therefore that $\mathcal{G}_n$ is empty.

So we now suppose that $e_n\in\mathcal{G}_n$, that $n\geq n_0+3$, and that \eqref{bubble_beta} does not hold. Let $e_{n-1}$ contain $\pi_{n-1}(e_n)$. By definition of $\mathcal{G}_n$, it must be that $e_{n-1}$ shares at least one vertex with an edge $f_{n-1}$ of $\Gamma_{n-1}$ such that $\pi_0(f_{n-1})=\pi_0(e_{n-1})$. (It may be that $f_{n-1}=e_{n-1}$.) In addition, the fact that $e_n\in\mathcal{G}_n$ implies that $f_{n-1}$ can be chosen to satisfy the hypotheses of Lemma \ref{projectdata} (at scale $n-1$).

Note that $\beta_E(Q(f_{n-1}))<\epsilon$ by our assumption that \eqref{bubble_beta} does not hold. Thus, since $Q(f_{n-1})$ contains $e_{n-1}$, there are at most $\Delta m$ edges $e_n\in \mathcal{G}_n$ associated to $f_{n-1}$ in this way. (All of these edges must either be in or adjacent to a single monotone geodesic in $X_n$.)

Let $f_{n-2}$ be the edge of $X_{n-2}$ containing $\pi_{n-2}(f_{n-1})$. Consider the possibilities for $f_{n-2}$ outlined in Lemma \ref{projectdata}. By a similar argument as given in the first paragraph of this lemma (using $\epsilon < 2Am^{-10}$), it must be that either $f_{n-2}$ is a bad edge of $\Gamma_{n-2}$ or is adjacent to a bad edge of $\Gamma_{n-2}$, and furthermore there are at most $4$ locations on $f_{n-2}$ into which $f_{n-1}$ could project. In addition, $f_{n-2}$ satisfies the conditions of Lemma \ref{projectdata} (at scale $n-2$).

Similarly, the edge $f_{n-3}$ of $X_{n-3}$ containing $\pi_{n-3}(f_{n-2})$ is either a bad edge of $\Gamma_{n-3}$ or is adjacent to a bad edge of $\Gamma_{n-3}$, and furthermore there are at most $4$ locations on $f_{n-3}$ (or an edge sharing the same endpoints) into which $f_{n-2}$ could project.

There are therefore at most $16$ locations on $f_{n-3}$ (or an edge sharing the same endpoints) into which $f_{n-1}$ could project. Because of our assumption that \eqref{bubble_beta} does not hold, Lemma \ref{onetoone} implies that there are therefore at most $16$ edges $f_{n-1}$ as above that can project into $f_{n-3}$. Therefore there are at most $16\Delta m$ possible edges $e_n\in \mathcal{G}_n$ associated to the bad edge $f_{n-3}$. 

Let $g_{n-3}$ be the bad edge of $\Gamma_{n-3}$ which is either equal to or adjacent to $f_{n-3}$. Since $g_{n-3}$ may be adjacent to up to $2\Delta$ other edges like $f_{n-3}$, there are at most $32\Delta^2 m$ edges $e_n\in\mathcal{G}_n$ associated to $g_{n-3}$ in this way. This completes the proof.
\end{proof}

Property (\ref{bubblesum}) is now proven as follows: Fix any $N\geq n_0$. In each edge $g_{n-3}$ ($n\leq N$) used in option (\ref{bubble_free}) of Claim \ref{bubblefreebeta}, form an arc $I$ with
$$ m^{-{n+3}}/10 \leq |I| \leq m^{-(n+3)}$$
that lifts (by Lemma \ref{edgeslift}) to a set $T$ in $\Gamma_N$. As in Cases A2 and B of Section \ref{connectability}, these sets can be chosen to be $N_{\eta}$-overlapping. Observe also that each $Q\in \mathcal{Q}$ with $\beta_E(Q)\geq\epsilon$ can contain at most $C_X$ different sets $E(e_n)$, as in the second option in Claim \ref{bubblefreebeta}. Therefore, using Claim \ref{bubblefreebeta},
\begin{align*}
\sum_{n= n_0}^N\sum_{e_n\in \mathcal{G}_n} |e_n| &\leq \sum_{n= n_0}^N\sum_{e_n\in \mathcal{G}_n, (\ref{bubble_free})} |e_n| +  \sum_{n= n_0}^N\sum_{e_n\in \mathcal{G}_n, (\ref{bubble_beta})} |e_n|\\
&\leq C_{\eta,\Delta} m m^{-3}|\Gamma_N| + C_{X}\sum_{Q\subset \mathcal{Q}_\Gamma, \beta_E(Q)\geq\epsilon} \diam(Q)\\
&\leq C_{\eta,\Delta} m m^{-3}|\Gamma| + C_{X}\sum_{Q\subset \mathcal{Q}_\Gamma, \beta_E(Q)\geq\epsilon} \diam(Q),
\end{align*}
where the summation on the right side of the first line is broken based on which case of Claim \ref{bubblefreebeta} the edge $e_n$ falls into.

Letting $N$ tend to infinity completes the proof of Proposition \ref{constructionprop}.

\section{Proof of Theorem \ref{construction}}\label{theoremproof}

Without loss of generality, we may assume that $\pi_{n_0}(E)$ is contained in a single edge of $X_{n_0}$, where
$$ m^{-(n_0+1)}<\diam(E) \leq m^{-n_0}.$$
Indeed, suppose we can prove Theorem \ref{construction} in this case. For a general set $E$, $\pi_{n_0}(E)$ is contained in a union of at most $\Delta$ edges of $X_{n_0}$. We can then apply Theorem \ref{construction} to each of the $\Delta$ subsets of $E$ projecting into each edge, and then join each of those $\Delta$ curves by connections of total length at most $C_{\eta,\Delta} m^{-n_0} \leq C_{X}\diam(E)$, using Lemma \ref{piproperties}.

We will make one other convenient reduction: Without loss of generality, we can also assume that, for every $n\in\mathbb{N}$, $\pi_n(E)$ does not contain any vertex or any edge-midpoint of $X_n$. (Indeed, by standard arguments we can assume that $E$ is finite, and then perturb by arbitrarily small amounts to avoid such points.)

With the above assumptions in place, we can apply Proposition \ref{constructionprop} to $E$ to obtain a continuum $\Gamma$, a collection $\mathcal{Q}_\Gamma$, and a partition $\{E(e_n)\}$ of $E\setminus \Gamma$. Arbitrarily label those $E(e_n)$ in the partition which are non-empty by $E_1, E_2, \dots$. (There are countably many, possibly finitely many, of these sets.)

Apply Proposition \ref{constructionprop} to each $E_i$ to generate a continuum $\Gamma_i$ and a partition of $E_i\setminus \Gamma_i$ into sets $\{E_{i,j}\}_j$.

After $k$ iterations of this process, we have constructed connected sets $\{\Gamma_\alpha\}$, where each $\alpha$ is a string of integers of length $|\alpha|<k$, and $E\setminus \cup_\alpha \Gamma_\alpha$ is a union of disjoint sets $\{E_\beta\}$, where each $\beta$ is a string of integers of length $|\beta|=k$ whose first $k-1$ integers forms a substring corresponding to some $\Gamma_\alpha$. (We consider the original set $E$ as $E_\emptyset$ and the first curve $\Gamma$ as $\Gamma_\emptyset$.)

By Proposition \ref{constructionprop}, each $\Gamma_\alpha$ corresponds to an edge $e_{n_\alpha}\subset X_{n_\alpha}$ and satisfies
$$ |\Gamma_\alpha| \leq 2m^{-n_\alpha} + \sum_{Q\subset \mathcal{Q}_{\Gamma_\alpha}, \beta_E(Q)\geq \epsilon} \diam(B) $$
for some subcollection $\mathcal{Q}_{\Gamma_\alpha}\subset \mathcal{Q}$.

\begin{claim}\label{Qgammadisjoint}
Let $\alpha$ and $\beta$ be two distinct strings of integers such that $\Gamma_\alpha$ and $\Gamma_\beta$ are in our collection. Then $\mathcal{Q}_{\Gamma_\alpha}$ and $\mathcal{Q}_{\Gamma_\beta}$ are disjoint subcollections of $\mathcal{Q}$.
\end{claim}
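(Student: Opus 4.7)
The plan is to argue by contradiction, supposing $Q \in \mathcal{Q}_{\Gamma_\alpha}\cap\mathcal{Q}_{\Gamma_\beta}$ for distinct strings $\alpha\neq\beta$, and to split on whether one of $\alpha,\beta$ is an ancestor of the other in the iteration tree. Writing $Q=Q(e_n)$, we obtain a common edge $e_n\subset X_n$ appearing in both $\Gamma_{\alpha,n}$ and $\Gamma_{\beta,n}$. By Lemma \ref{projection} together with the construction of $\Gamma_\alpha$ in Section \ref{liftingalgorithm}, every edge of $\Gamma_{\alpha,n}$ satisfies $\pi_{n_\alpha}(e_n)\subseteq e_{n_\alpha}$; iterating property \eqref{bubblenotingamma} of Proposition \ref{constructionprop} up the ancestry of $\alpha$ yields $\pi_{n_{\alpha'}}(e_n)\subseteq e_{n_{\alpha'}}$ for every ancestor $\alpha'$ of $\alpha$, and the analogous statement holds along the $\beta$-chain.

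In the first case, I may assume $\alpha$ is a proper ancestor of $\beta$, so $\beta=\alpha j\sigma$ for some index $j$ and (possibly empty) string $\sigma$. Iterating property \eqref{bubblenotingamma} down the $\beta$-chain from $\Gamma_{\alpha j}$ to $\Gamma_\beta$ gives $\pi_{n_{\alpha j}}(e_{n_\beta})\subseteq e_{n_{\alpha j}}$, and combining with $\pi_{n_\beta}(e_n)\subseteq e_{n_\beta}$ yields $\pi_{n_{\alpha j}}(e_n)\subseteq e_{n_{\alpha j}}$. On the other hand, applying property \eqref{bubblenotingamma} of Proposition \ref{constructionprop} directly to $\Gamma_\alpha$ at the child edge $e_{n_{\alpha j}}$ (nonempty since $E_{\alpha j}$ gave rise to a descendant) with $e_\ell=e_n$, which is permitted because $Q(e_n)\in\mathcal{Q}_{\Gamma_\alpha}$ by hypothesis, I obtain $\pi_{n_{\alpha j}}(e_n)\not\subseteq e_{n_{\alpha j}}$, a contradiction.

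In the second case, $\alpha$ and $\beta$ are incomparable; let $\gamma$ be their longest common ancestor and write $\alpha=\gamma u\sigma_\alpha$, $\beta=\gamma v\sigma_\beta$ with $u\neq v$, assuming without loss of generality $n_{\gamma u}\leq n_{\gamma v}$. The two ancestor chains give $\pi_{n_{\gamma u}}(e_n)\subseteq e_{n_{\gamma u}}$ and $\pi_{n_{\gamma v}}(e_n)\subseteq e_{n_{\gamma v}}$, so composing yields $\pi_{n_{\gamma u}}(e_n)\subseteq \pi_{n_{\gamma u}}(e_{n_{\gamma v}})\cap e_{n_{\gamma u}}$. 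Because each $\pi_i$ in the inverse system is simplicial and isometric on every edge (Definition \ref{admissiblesystem}), the isometric image $\pi_{n_{\gamma u}}(e_{n_{\gamma v}})$ of an edge of length $m^{-n_{\gamma v}}\leq m^{-n_{\gamma u}}$ is contained in a single edge $e'$ of $X_{n_{\gamma u}}$; property \eqref{bubblesdisjoint} of Proposition \ref{constructionprop} applied to $\Gamma_\gamma$ forces $\pi_{n_{\gamma u}}(e_{n_{\gamma v}})\not\subseteq e_{n_{\gamma u}}$, hence $e'\neq e_{n_{\gamma u}}$. Therefore $\pi_{n_{\gamma u}}(e_n)$ lies in the intersection of two distinct edges $e'$ and $e_{n_{\gamma u}}$ of the graph $X_{n_{\gamma u}}$, which consists of at most their finitely many shared vertices; but $\pi_{n_{\gamma u}}(e_n)$ is a positive-length path of length $m^{-n}$, a contradiction. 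The main obstacle is precisely this geometric localization in Case 2: establishing that the simplicial/isometric structure of the inverse-system projections confines $\pi_{n_{\gamma u}}(e_{n_{\gamma v}})$ to a single edge of $X_{n_{\gamma u}}$, which is what allows property \eqref{bubblesdisjoint} to upgrade from ``not contained in $e_{n_{\gamma u}}$'' to ``contained in a distinct edge of $X_{n_{\gamma u}}$'' and thus produce the contradiction.
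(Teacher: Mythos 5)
Your proof is correct and takes essentially the same route as the paper's: the same split into the prefix and incomparable cases, the same iteration of properties \eqref{gammaprojection} and \eqref{bubblenotingamma} up the ancestry chains, and the same appeal to property \eqref{bubblesdisjoint} at the children of the common ancestor. The only difference is that you make explicit the final step of the incomparable case (that the projection of an edge lies in a single edge of the coarser graph, so non-containment in $e_{n_{\gamma u}}$ contradicts the positive length of $\pi_{n_{\gamma u}}(e_n)$), a step the paper leaves implicit in its ``it then follows.''
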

\begin{proof}
First suppose that neither $\alpha$ and $\beta$ is a prefix of the other. Let $\gamma$ denote the longest common initial substring of $\alpha$ and $\beta$, so that 
$$ \alpha = \gamma i \alpha' \text{ and } \beta = \gamma j \beta' $$
for integers $i\neq j$ and strings $\alpha'$, $\beta'$.

By our construction and property \eqref{bubblesdisjoint} of Proposition \ref{constructionprop}, this means that there are edges $e_{n_{\gamma i}}$ and $e_{n_{\gamma j}}$ (in some $X_{n_{\gamma i}}$ and $X_{n_{\gamma j}}$, respectively) such that neither $e_{n_{\gamma i}}$ nor $e_{n_{\gamma j}}$ project into the other and such that every edge $e$ with $Q(e)\in \mathcal{Q}_{\Gamma_\alpha}$ projects into $e_{n_{\gamma i}}$ and every edge $e$ with $Q(e)\in \mathcal{Q}_{\Gamma_\beta}$ projects into $e_{n_{\gamma j}}$. It then follows that no $Q(e)$ can be in both $\mathcal{Q}_{\Gamma_\alpha}$ and $\mathcal{Q}_{\Gamma_\beta}$.

Now suppose that $\alpha$ is a prefix of $\beta$, i.e. that $\beta = \alpha i \beta'$ for some integer $i$ and some (possibly empty) string $\beta'$. Consider any $Q=Q(e)\in \mathcal{Q}_{\Gamma_\beta}$. Let $e_{n_\beta}$ be the edge (in some graph $X_{n_\beta}$) such that $E_\beta=E(e_{n_\beta})$ when Proposition \ref{constructionprop} was applied to the previous step. Then $\pi_{n_\beta}(e)\subset e_{n_\beta}$, by property \eqref{bubblenotingamma} of Proposition \ref{constructionprop}. 

Let $e_{n_{\alpha i}}$ be the edge such that $E_{\alpha i}=E(e_{n_{\alpha i}})$ when Proposition \ref{constructionprop} was applied to $\Gamma_{\alpha}$. By repeated applications of property \eqref{bubblenotingamma}, we see that $\pi_{n_{\alpha i}}(e_{n_\beta})\subset e_{n_{\alpha i}}$. Therefore
$$ \pi_{n_{\alpha i}}(e) \subset \pi_{n_{\alpha i}}(e_{n_\beta})\subset e_{n_{\alpha i}}.$$

On the other hand, again by \eqref{bubblenotingamma}, if $Q(e)$ were in $\mathcal{Q}_{\Gamma_\alpha}$, then $e$ could not project into $e_{n_{\alpha i}}$.

Therefore $\mathcal{Q}_{\Gamma_\beta}$ and $\mathcal{Q}_{\Gamma_\alpha}$ are disjoint.

\end{proof}

Now for each string $\alpha\neq\emptyset$, let $C_\alpha$ be a continuum of length at most $2m^{-(n_\alpha-1)}=2m|e_{n_\alpha}|$ joining $\Gamma_{\alpha}$ to $\Gamma_\beta$, where $\beta$ is the prefix of $\alpha$. Such a continuum exists by (\ref{gammanearE}) and (\ref{bubbleneargamma}).

Thus, given a fixed string $\beta$, we have, using properties \eqref{gammalength} and \eqref{bubblesum} of Proposition \ref{constructionprop}, that
\begin{align}
\sum_{i\in\mathbb{N}} |\Gamma_{\beta i} \cup C_{\beta i}| &\leq \sum_i 2m^{-n_{\beta i}} + C_X \sum_i \sum_{Q\in \mathcal{Q}_{\Gamma_{\beta i}}} \diam(Q) + \sum_i m|e_{n_{\beta i}}|\\
\label{lengthalign}&\leq C_X \sum_i \sum_{Q\in \mathcal{Q}_{\Gamma_{\beta i}}} \diam(Q) + C_X\sum_{Q\in \mathcal{Q}_{\Gamma_\beta}} \diam (Q) + C_{\eta,\Delta}m^{-1}|\Gamma_\beta|.
\end{align}

Now let $G_0 = \Gamma_\emptyset$. Inductively define a compact connected set $G_k\subset X$ by
$$ G_k = G_{k-1} \cup \bigcup_{|\alpha| = k} (\Gamma_\alpha \cup  C_\alpha).$$
It follows from \eqref{lengthalign} and our choice of $m$ that (treating $G_{-1}=\emptyset$),
$$ |G_{k+1}\setminus G_k| \leq C_X\left(\sum_{|\beta|=k}\sum_{Q\in\mathcal{Q}_{\Gamma_\beta}, \beta_E(Q)\geq \epsilon} \diam (Q) + \sum_{|\alpha|=k+1}\sum_{Q\in\mathcal{Q}_{\Gamma_\alpha}, \beta_E(Q)\geq \epsilon} \diam (Q)\right) + \frac{1}{100}|G_k \setminus G_{k-1}|.$$

From this, Proposition \ref{constructionprop} \eqref{gammalength}, and Claim \ref{Qgammadisjoint}, it follows that
$$ |G_k| \leq C_X\left(\diam(E) + \sum_{B\subset \mathcal{B}, \beta_E(B)\geq\epsilon} \diam(B)\right)$$
for all $k$.

Now, from standard results about Hausdorff convergence of connected sets (see, e.g., \cite{FFP07}, Theorem 5.1), there is a subsequence of $\{G_k\}$ that converges in the Hausdorff sense to a compact connected set $G\subset X$ whose length is bounded as in Theorem \ref{construction}.

It follows from Proposition \ref{constructionprop} (the fact that $n\geq n_0+2$, (\ref{bubblenotingamma}) and (\ref{bubbleneargamma})) that, for each $k\geq 1$, $E$ is contained in the $C_\eta m^{-n_0-k}$-neighborhood of $G_k$. Hence, $G$ contains $E$.

\bibliography{TSTbib}{}

\def\cprime{$'$}
\begin{thebibliography}{10}

\bibitem{Azzam-Tolsa15-part-II}
J.~Azzam and X.~Tolsa.
\newblock Characterization of {$n$}-rectifiability in terms of {J}ones' square
  function: {P}art {II}.
\newblock {\em Geom. Funct. Anal.}, 25(5):1371--1412, 2015.

\bibitem{Badger-Schul-characterization}
M.~Badger and R.~Schul.
\newblock Multiscale analysis of 1-rectifiable measures {II}:
  characterizations.
\newblock preprint, \textsf{arXiv:1602.03823}, 2016.

\bibitem{Ba15}
D.~Bate.
\newblock Structure of measures in {L}ipschitz differentiability spaces.
\newblock {\em J. Amer. Math. Soc.}, 28(2):421--482, 2015.

\bibitem{BJ90}
C.~J. Bishop and P.~W. Jones.
\newblock Harmonic measure and arclength.
\newblock {\em Ann. of Math. (2)}, 132(3):511--547, 1990.

\bibitem{Ch99}
J.~Cheeger.
\newblock Differentiability of {L}ipschitz functions on metric measure spaces.
\newblock {\em Geom. Funct. Anal.}, 9(3):428--517, 1999.

\bibitem{CK13_inverse}
J.~Cheeger and B.~Kleiner.
\newblock Realization of metric spaces as inverse limits, and bilipschitz
  embedding in {$L_1$}.
\newblock {\em Geom. Funct. Anal.}, 23(1):96--133, 2013.

\bibitem{CK13_PI}
J.~Cheeger and B.~Kleiner.
\newblock Inverse limit spaces satisfying a {P}oincar\'e inequality.
\newblock {\em Anal. Geom. Metr. Spaces}, 3:15--39, 2015.

\bibitem{CKS15}
J.~Cheeger, B.~Kleiner, and A.~Schioppa.
\newblock Infinitesimal structure of differentiability spaces, and metric
  differentiation.
\newblock preprint, \textsf{arXiv:1503.07348}, 2015.

\bibitem{DS-sing-int-book}
G.~David and S.~Semmes.
\newblock Singular integrals and rectifiable sets in {${\bf R}^n$}: {B}eyond
  {L}ipschitz graphs.
\newblock {\em Ast\'erisque}, (193):152, 1991.

\bibitem{DS-blue-book}
G.~David and S.~Semmes.
\newblock {\em Analysis of and on uniformly rectifiable sets}, volume~38 of
  {\em Mathematical Surveys and Monographs}.
\newblock American Mathematical Society, Providence, RI, 1993.

\bibitem{FFP07}
F.~Ferrari, B.~Franchi, and H.~Pajot.
\newblock The geometric traveling salesman problem in the {H}eisenberg group.
\newblock {\em Rev. Mat. Iberoam.}, 23(2):437--480, 2007.

\bibitem{Hahlomaa-non-AR}
I.~Hahlomaa.
\newblock Menger curvature and {L}ipschitz parametrizations in metric spaces.
\newblock {\em Fund. Math.}, 185(2):143--169, 2005.

\bibitem{Hahlomaa-AR}
I.~Hahlomaa.
\newblock Curvature integral and {L}ipschitz parametrization in 1-regular
  metric spaces.
\newblock {\em Ann. Acad. Sci. Fenn. Math.}, 32(1):99--123, 2007.

\bibitem{HK98}
J.~Heinonen and P.~Koskela.
\newblock Quasiconformal maps in metric spaces with controlled geometry.
\newblock {\em Acta Math.}, 181(1):1--61, 1998.

\bibitem{Jones87-El-Escorial}
P.~W. Jones.
\newblock Square functions, {C}auchy integrals, analytic capacity, and harmonic
  measure.
\newblock In {\em Harmonic analysis and partial differential equations ({E}l
  {E}scorial, 1987)}, volume 1384 of {\em Lecture Notes in Math.}, pages
  24--68. Springer, Berlin, 1989.

\bibitem{Jones-TSP}
P.~W. Jones.
\newblock Rectifiable sets and the traveling salesman problem.
\newblock {\em Invent. Math.}, 102(1):1--15, 1990.

\bibitem{Juillet-counter}
N.~Juillet.
\newblock A counterexample for the geometric traveling salesman problem in the
  {H}eisenberg group.
\newblock {\em Rev. Mat. Iberoam.}, 26(3):1035--1056, 2010.

\bibitem{Ke04}
Stephen Keith.
\newblock A differentiable structure for metric measure spaces.
\newblock {\em Adv. Math.}, 183(2):271--315, 2004.

\bibitem{La00}
T.~Laakso.
\newblock Ahlfors ${Q}$-regular spaces with arbitrary ${Q}>1$ admitting weak
  {P}oincar\'e inequality.
\newblock {\em Geom. Funct. Anal.}, 10(1):111--123, 2000.

\bibitem{LP01}
U.~Lang and C.~Plaut.
\newblock Bilipschitz embeddings of metric spaces into space forms.
\newblock {\em Geom. Dedicata}, 87(1-3):285--307, 2001.

\bibitem{LS14}
S.~Li and R.~Schul.
\newblock The traveling salesman problem in the heisenberg group: upper
  bounding curvature.
\newblock To appear, \textit{Trans. Amer. Math. Soc.},
  \textsf{ar{X}iv:1307.0050}, {P}reprint, 2013.

\bibitem{LS15}
S.~Li and R.~Schul.
\newblock An upper bound for the length of a traveling salesman path in the
  {H}eisenberg group.
\newblock To appear, Rev. Mat. Iberoam., \textsf{ar{X}iv:1403.3951},
  {P}reprint, 2014.

\bibitem{Ok92}
K.~Okikiolu.
\newblock Characterization of subsets of rectifiable curves in {${\bf R}^n$}.
\newblock {\em J. London Math. Soc. (2)}, 46(2):336--348, 1992.

\bibitem{Sc07_metric}
R.~Schul.
\newblock Ahlfors-regular curves in metric spaces.
\newblock {\em Ann. Acad. Sci. Fenn. Math.}, 32(2):437--460, 2007.

\bibitem{Sc07_survey}
R.~Schul.
\newblock Analyst's traveling salesman theorems. {A} survey.
\newblock In {\em In the tradition of {A}hlfors-{B}ers. {IV}}, volume 432 of
  {\em Contemp. Math.}, pages 209--220. Amer. Math. Soc., Providence, RI, 2007.

\bibitem{Sc07_hilbert}
R.~Schul.
\newblock Subsets of rectifiable curves in {H}ilbert space---the analyst's
  {TSP}.
\newblock {\em J. Anal. Math.}, 103:331--375, 2007.

\bibitem{Tolsa15-part-I}
X.~Tolsa.
\newblock Characterization of {$n$}-rectifiability in terms of {J}ones' square
  function: part {I}.
\newblock {\em Calc. Var. Partial Differential Equations}, 54(4):3643--3665,
  2015.

\end{thebibliography}
\bibliographystyle{plain}

\end{document}